\numberwithin{equation}{section}
\theoremstyle{plain} 
\newtheorem{prop}[equation]{Proposition}
\newtheorem{coro}[equation]{Corollary}
\newtheorem{lemm}[equation]{Lemma}
\theoremstyle{definition}
\newtheorem{defi}[equation]{Definition}
\newtheorem{nota}[equation]{Notation}
\newtheorem{rema}[equation]{Remark}
\newcommand\BOX[1]{\begin{minipage}{105mm}{#1}\end{minipage}}
\newcounter{ITEM}
\newcommand\ITEM[1]{\setcounter{ITEM}{#1}\leavevmode\hbox{\rm(\roman{ITEM})}}
\renewcommand\aa{a}
\newcommand\AAA{\mathcal{A}}
\newcommand\Cat{\mathcal{C}\hspace{-0.3ex}a\hspace{-0.1ex}t}
\newcommand\cc{c}
\newcommand\CCC{\mathcal{C}}
\newcommand\CCCi{\CCC^{\!\scriptscriptstyle\times}}
\newcommand\dd{d}
\newcommand\Der[2]{#1_{\HS{-0.2}/_{\HS{-0.2}#2}}}
\renewcommand\div{\prec}
\newcommand\Div{\mathrm{Div}}
\newcommand\dive{\preccurlyeq\nobreak}
\newcommand\diveS{\mathrel{\preccurlyeq_{\!{}_{\SSS}}}}
\newcommand\divS{\mathrel{\prec_{\!{}_{\SSS}}}}
\newcommand\divve[1]{\mathrel{\preccurlyeq_{#1}}}
\newcommand\ee{e}
\newcommand\eqir{\mathbin{=^{\!\scriptscriptstyle\times}}}
\newcommand\eqirS{\mathrel{=^{\!\scriptscriptstyle\times}_{\!\SSS}}}
\newcommand\etc{\pdots}
\newcommand\ew{\varepsilon}
\newcommand\ff{f}
\newcommand\FF{F}
\newcommand\fft{\hat{f}}
\renewcommand\ge{\geqslant}
\renewcommand\gg{g}
\newcommand\GG{G}
\newcommand\GGG{\mathcal{G}}
\newcommand\hh{h}
\newcommand\HH{H}
\newcommand\HHH{\mathcal{H}}
\newcommand\hht{\hat{h}}
\newcommand\HHu{\underline{H}}
\newcommand\HS[1]{\hspace{#1ex}}
\newcommand\id[1]{1_{\!#1}}
\newcommand\Id[1]{\boldsymbol{1}_{\!#1}}
\newcommand\ii{i}
\newcommand\II{I}
\newcommand\III{\mathcal{I}}
\newcommand\IIIS{\III_{\SSS}}
\newcommand\inv{^{-1}}
\newcommand\jj{j}
\newcommand\JJ{J}
\newcommand\JJJ{\mathcal{J}}
\newcommand\JJJS{\JJJ_{\SSS}}
\newcommand\kk{k}
\newcommand\KK{K}
\renewcommand\le{\leqslant}
\newcommand\LGG[2]{\Vert#2\Vert_{#1}}
\newcommand\LT[2]{\Vert#1\Vert_{#2}}
\newcommand\MM{M}
\newcommand\NN{N}
\newcommand\noteqir{\mathbin{\not=^{\!\scriptscriptstyle\times}}}
\newcommand\Obj{\mathcal{O}\HS{-0.15}b\HS{-0.25}j}
\newcommand\OP{\,{\scriptstyle\bullet}\,}
\newcommand\Ord{\mathbf{Ord}}
\newcommand\pc{\HS{0.2}\vert\HS{0.2}}
\newcommand\pdots{\hspace{0.2ex}{\cdot}{\cdot}{\cdot}\hspace{0.2ex}}
\newcommand\pp{p}
\newcommand\Pref[1]{\mathrel{{\le}_{#1}}\nobreak}\newcommand\PRESp[2]{\langle#1\,\vert\, #2\rangle^{\scriptscriptstyle\!+}\!}
\newcommand\qq{q}
\newcommand\quot{\!\mathord{/}\!}
\newcommand\resp{\mbox{\it resp}}
\newcommand\rr{r}
\newcommand\RRR{\mathcal{R}}
\newcommand\seq[1]{#1}
\newcommand\Seq[2]{#1^{[#2]}}
\newcommand\seqq[2]{#1\pc#2}
\newcommand\seqqq[3]{#1\pc#2\pc#3}
\newcommand\seqqqq[4]{#1\pc#2\pc#3\pc#4}
\newcommand\seqqqqq[5]{#1\pc#2\pc#3\pc#4\pc#5}
\newcommand\seqqqqqq[6]{#1\pc#2\pc#3\pc#4\pc#5\pc#6}
\newcommand\SSS{\mathcal{S}}
\newcommand\SSSg{\underline\SSS}
\newcommand\SSSi{\SSS^{\scriptscriptstyle\times}}
\newcommand\SSSs{\SSS^{\scriptstyle\sharp}}
\newcommand\tta{\mathtt{a}}
\newcommand\ttb{\mathtt{b}}
\newcommand\TTu{\underline{T}}
\newcommand\ud{\HS{0.2}\hbox{-}}
\def\VR(#1,#2){\vrule width0pt height#1mm depth#2mm}
\newcommand\wdots{, ...\HS{0.2},}
\newcommand\ww{w}
\newcommand\xx{x}
\newcommand\XXX{\mathcal{X}}
\newcommand\yy{y}
\title{Garside families and Garside germs} 
\author{Patrick DEHORNOY}
\address{Laboratoire de Math\'ematiques Nicolas Oresme,
CNRS UMR 6139, Universit\'e de Caen, 14032 Caen, France}
\email{patrick.dehornoy@unicaen.fr}
\urladdr{www.math.unicaen.fr/\!\hbox{$\sim$}dehornoy}
\author{Fran\c cois DIGNE}
\address{Laboratoire Ami\'enois de Math\'ematique
Fondamentale et Appliqu\'ee, CNRS UMR 7352, Universit\'e de Picardie
Jules-Verne, 80039 Amiens, France}
\email{digne@u-picardie.fr}
\urladdr{www.mathinfo.u-picardie.fr/digne/}
\author{Jean MICHEL}
\address{Institut Math\'ematique de Jussieu, CNRS UMR 7586, Universit\'e Denis 
Diderot Paris 7,
175, rue du Chevaleret, 75013 Paris, France}
\email{jmichel@math.jussieu.fr}
\urladdr{www.math.jussieu.fr/\!\hbox{$\sim$}jmichel/}
\keywords{Garside family, Garside monoid, Garside group, germ, greedy normal 
form, Coxeter group, braid monoid}
\subjclass{20M05, 18B40, 20F10, 20F36}
\thanks{Work partially supported by the ANR grant TheoGar ANR-08-BLAN-0269-02}
\begin{document}

\begin{abstract}
Garside families have recently emerged as a relevant context for extending 
results involving Garside monoids and groups, which themselves extend the 
classical theory of (generalized) braid groups. Here we establish various 
characterizations of Garside families, that is, equivalently, various criteria 
for establishing the existence of normal decompositions of a certain type.
\end{abstract}

\maketitle

In 1969, F.A.\,Garside~\cite{Gar} solved the word and conjugacy problems of 
Artin's braid groups by using convenient monoids. This approach was pursued 
\cite{Adj, ElM, Thu, Eps, Cha} and extended in several steps, first to 
Artin-Tits groups of spherical type~\cite{BrS, Dlg}, then to a larger family 
of groups now known as Garside groups~\cite{Dfx, Dgk, Dgo}. More recently, it 
was realized that going to a categorical context allows for capturing further 
examples~\cite{DiM, BesD, Dht, McC}, and a coherent theory now emerges around 
a central unifying notion called a Garside family. The aim of this paper is to 
present the main basic results of this approach. A more comprehensive text, 
including examples and many further developments, will be found 
in~\cite{Garside}. Algorithmic issues are addressed in~\cite{Dig}.

The philosophy of Garside's theory as developed in the past decades is that, 
in some cases, a group can be realized as a group of fractions for a monoid 
and that the divisibility relations of the latter provide  a lot  of 
information about the group. The key technical ingredient in the approach is a 
certain distinguished decomposition for the elements of the monoid and the 
group in terms of some fixed (finite) family, usually called the greedy normal 
form. Our current approach consists in analyzing the abstract mechanism 
underlying the greedy normal form and developing it in the general context of 
what we call Garside families. The leading principle is that, with Garside 
families, one should retrieve all results about Garside monoids and groups at 
no extra cost. 

In the current paper, we concentrate on one fundamental question, namely 
characterizing Garside families. As the latter are defined to be those 
families that guarantee the existence of the normal form, this exactly amounts 
to establishing various (necessary and sufficient) criteria for this 
existence. Two types of characterizations will be established here: extrinsic 
characterizations consist in recognizing whether a subfamily of a given 
category is a Garside family, whereas intrinsic ones consist in recognizing 
whether an abstract family (more precisely, a precategory) generates a 
category in which it embeds as a Garside family. 

Beyond the results themselves, one of our goals is to show that the new 
framework, which properly extends those previously considered in literature, 
works efficiently and provides arguments that are both simple and natural. In 
particular, appealing to a categorical framework introduces no additional 
complexity and helps in many places to develop a better intuition of the 
situation. Among the specific features in the current approach are the facts 
that it is compatible with the existence of nontrivial invertible elements, 
and requires no Noetherianity assumption. 

The paper is organized as follows. In Section~\ref{S:Cat}, we quickly list the 
basic notions about categories and the derived divisibility relations needed 
for further developments. In Section~\ref{S:Normal}, we introduce the central 
notion of an $\SSS$-normal decomposition for an element of a category, and 
define a Garside family to be a family~$\SSS$ such that every element of the 
ambient category admits at least one $\SSS$-normal decomposition. Next, in 
Section~\ref{S:RecGar}, we establish what we called various extrinsic 
characterizations of Garside families, mainly based on closure properties and 
on properties of so-called head functions. Then, in Section~\ref{S:Germs}, we 
turn to intrinsic definitions and develop the notion of a germ, which, as the 
name suggests, is a sort of partial structure from which a category can be 
constructed. In Section~\ref{S:RecGerm}, we establish various 
characterizations of Garside germs, which are those germs that embed as 
Garside families in the category they generate. Finally, in 
Section~\ref{S:Appli},  we  give  an  interesting  application of the previous 
results,  namely the construction of the classical and dual braid monoids 
associated with a Coxeter or a complex reflection group.

\section{Categories and divisibility}
\label{S:Cat}

In this preliminary section, we fix some terminology. As mentioned in the 
introduction, it is convenient to work in a category framework that we 
describe here.

\subsection{The category framework}
\label{SS:Cat}

The general context is that of categories, which are seen here just as monoids 
with a conditional, not necessarily everywhere defined product. A 
\emph{precategory} is a family of elements with two objects called ``source'' 
and ``target'' attached to each element, and a \emph{category} is a 
precategory equipped with a partial binary product such that $\ff\gg$ exists 
if and only if the target of~$\ff$ coincides with the source of~$\gg$. In 
addition, the product is assumed to be associative whenever defined and, for 
each object~$\xx$, there exists an identity-element~$\id\xx$ attached 
to~$\xx$, so that $\id\xx \gg = \gg = \gg\id\yy$ holds for every~$\gg$ with 
source~$\xx$ and target~$\yy$. A monoid is the special case of a category when 
there is only one object, so that the product is always defined. It is 
convenient to represent an element~$\gg$ with source~$\xx$ and target~$\yy$ 
using an arrow as in \begin{picture}(13,5)
\put(0,0){$\xx$}
\put(11,0){$\yy$}
\pcline{->}(3,0.5)(10,0.5)
\taput{$\gg$}
\end{picture}.

\begin{defi}
A category~$\CCC$ is said to be \emph{left-cancellative} (\resp.\ 
\emph{right-cancellative}) if $\ff \gg = \ff \gg'$ (\resp.\ $\gg\ff = \gg' 
\ff$) implies $\gg = \gg'$ in~$\CCC$. It is called \emph{cancellative} is it 
is both left- and right-cancellative.
\end{defi} 

 (In other words, in the usual language of categories~\cite{Mac}, a category 
is left-cancellative if all morphisms are epimorphisms, and it is 
right-cancellative if all morphisms are monomorphisms.)  In the sequel, we 
shall always work with categories that are (at least) left-cancellative. This 
implies in particular that there exists a unique, well defined notion of 
invertible element.

\begin{lemm}
\label{L:Inv}
If $\CCC$ is a left-cancellative category, an element~$\gg$ of~$\CCC$ 
with source~$\xx$ and target~$\yy$ has a left-inverse, that is, there 
exists~$\ff$ satisfying $\ff\gg = \id\yy$, if and only if it has a 
right-inverse, that is, there exists~$\ff$ satisfying $\gg \ff = \id\xx$.
\end{lemm}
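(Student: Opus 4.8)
The plan is to prove the slightly stronger fact that a single element~$\ff$ can serve simultaneously as a left- and a right-inverse of~$\gg$, and that left-cancellativity alone---without any appeal to right-cancellativity---suffices to pass from one notion to the other. This makes the equivalence nearly immediate and explains why only half of the cancellation hypothesis is needed.

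First I would settle the bookkeeping of sources and targets, since this is what makes every product below legitimate. If $\ff\gg = \id\yy$ holds, then for the composite~$\ff\gg$ to be defined and to equal the identity at~$\yy$, the element~$\ff$ must have source~$\yy$ and target~$\xx$; likewise, any~$\ff$ satisfying $\gg\ff = \id\xx$ must have source~$\yy$ and target~$\xx$. Thus in both cases the candidate inverse is an arrow from~$\yy$ to~$\xx$, and in particular $\ff\,\id\xx = \ff$ and $\gg\,\id\yy = \gg$.

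For the forward implication, assume $\ff\gg = \id\yy$. I would form the product $\ff\gg\ff$ and read it in two ways: on one hand $\ff\gg\ff = (\ff\gg)\ff = \id\yy\,\ff = \ff = \ff\,\id\xx$, and on the other hand $\ff\gg\ff = \ff(\gg\ff)$. Hence $\ff(\gg\ff) = \ff\,\id\xx$, and left-cancelling~$\ff$ yields $\gg\ff = \id\xx$, so the same~$\ff$ is a right-inverse. The converse is entirely symmetric: assuming $\gg\ff = \id\xx$, I would read $\gg\ff\gg$ as $(\gg\ff)\gg = \id\xx\,\gg = \gg = \gg\,\id\yy$ and as $\gg(\ff\gg)$, and then left-cancel~$\gg$ in $\gg(\ff\gg) = \gg\,\id\yy$ to obtain $\ff\gg = \id\yy$.

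The only point that requires attention---rather than a genuine obstacle---is precisely the source/target bookkeeping, which guarantees both that the displayed composites exist and that multiplication by $\id\xx$ or $\id\yy$ is harmless; once this is in place, associativity together with a single left-cancellation does all the work. It is worth stressing that right-cancellativity is never invoked, which is exactly what one wants since~$\CCC$ is assumed only left-cancellative, and that the argument simultaneously shows the well-known fact that left- and right-inverses, when both exist, necessarily coincide.
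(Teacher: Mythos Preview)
Your proof is correct and follows essentially the same approach as the paper: form the triple product $\ff\gg\ff$ (resp.\ $\gg\ff\gg$), rewrite it using the assumed one-sided inverse, and left-cancel to obtain the other. Your version adds some extra bookkeeping on sources and targets and the observation that the same element serves as both inverses, but the core argument is identical.
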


\begin{proof}
Assume $\ff \gg = \id\yy$. Right-multiplying by~$\ff$, we deduce $\ff \gg \ff 
= \ff$, which, by left-cancellativity, implies $\gg \ff = \id\xx$. Similarly, 
$\gg \ff = \id\xx$ implies $\gg \ff \gg = \gg$, whence $\ff \gg = \id\yy$.
\end{proof}

If $\CCC$ is a left-cancellative category, we shall say that an element~$\gg$ 
of~$\CCC$ is \emph{invertible} if it admits a left- and right-inverse, 
naturally denoted by~$\gg\inv$. Note that the product of two invertible 
elements is always invertible, that the inverse of an invertible element is 
invertible, and that an identity-element~$\id\xx$ is invertible (and 
equal to its inverse). Thus the invertible elements of a left-cancellative 
category~$\CCC$ make a subgroupoid of~$\CCC$. 

\begin{nota}
For $\CCC$ a left-cancellative category, we denote by~$\CCCi$ the family of 
all invertible elements of~$\CCC$ and, for every subfamily~$\SSS$ of~$\CCC$, 
we put $\SSSs = \SSS\CCCi \cup \CCCi$.
\end{nota}

In the above notation, as everywhere in the paper, if $\XXX_1, \XXX_2$ are 
subfamilies of a category~$\CCC$, we denote by~$\XXX_1 \XXX_2$ the family of 
all elements of~$\CCC$ that can be expressed, in at least one way, as~$\gg_1 
\gg_2$ with $\gg_\ii \in \XXX_\ii$, $\ii = 1,2$. So, for instance, $\SSS\CCCi$ 
is the family of all elements obtained by right-multiplying an element 
of~$\SSS$ by an invertible element.  In such a context, we naturally write 
$\XXX^\rr$ for $\XXX \pdots \XXX$, $\rr$~factors. 

\subsection{The divisibility relations}
\label{SS:Div}

\begin{defi}
Assume that $\CCC$ is a left-cancellative category. For $\ff, \gg$ in~$\CCC$, 
we say that $\ff$ is a \emph{left-divisor} of~$\gg$, or, equivalently, that 
$\gg$ is a \emph{right-multiple} of~$\ff$, written $\ff \dive \gg$, if there 
exists~$\gg'$ in~$\CCC$ satisfying $\ff \gg' = \gg$. Symmetrically, we say 
that $\ff$ is a \emph{right-divisor} of~$\gg$, or, equivalently, that $\gg$ is 
a \emph{left-multiple} of~$\ff$, if there exists~$\gg'$ in~$\CCC$ satisfying 
$\gg = \gg' \ff$. 
\end{defi}

\noindent
\parbox{\textwidth}{\rightskip40mm
\VR(4,0) \quad In terms of arrows and diagrams, $\ff$ being a left-divisor 
of~$\gg$ corresponds to the existence of an arrow~$\gg'$ making the diagram on 
the right commutative.\qquad\VR(0,2)
\hfill
\rlap{%
\begin{picture}(32,0)(-7,-1)
\pcline(0,1)(0,5)
\psarc(7,5){7}{90}{180}
\pcline{->}(7,12)(14,12)
\put(-2,8){$\ff$}
\pcline{->}(1,0)(29,0)
\tbput{$\gg$}
\pcline[style=exist](16,12)(23,12)
\psarc[style=exist](23,5){7}{0}{90}
\pcline[style=exist]{->}(30,5)(30,1)
\put(30.5,8){$\gg'$}
\end{picture}
}}

\begin{nota}
For $\CCC$ a left-cancellative category and $\ff, \gg$ in~$\CCC$, we write $\ff \eqir \gg$ if there exists an invertible element~$\ee$ satisfying $\ff\ee = \gg$, and $\ff \div \gg$ for the conjunction of~$\ff \dive \gg$ and $\gg \not\dive \ff$. 
\end{nota}

\begin{lemm}
\label{L:DivOrder}
If $\CCC$ is a left-cancellative category, the relation~$\dive$ is a partial preordering on~$\CCC$, and the conjunction of $\ff \dive \gg$ and $\gg \dive \ff$ is equivalent to~$\ff \eqir \gg$.
\end{lemm}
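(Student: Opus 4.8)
The plan is to split the statement into its two halves: first that $\dive$ is a preordering, \emph{i.e.}\ reflexive and transitive, and then the characterization stating that the conjunction of $\ff \dive \gg$ and $\gg \dive \ff$ is equivalent to $\ff \eqir \gg$. The preordering part should be purely formal, so the real content lies in the second half.

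For reflexivity, I would note that if $\ff$ has target~$\yy$, then $\ff\id\yy = \ff$, so $\ff \dive \ff$ holds with witness~$\id\yy$. For transitivity, starting from $\ff\gg' = \gg$ and $\gg\hh' = \hh$, I would first check that the product $\gg'\hh'$ is defined: the target of~$\gg'$ equals the target of~$\gg$, which is the source of~$\hh'$. Associativity then gives $\ff(\gg'\hh') = (\ff\gg')\hh' = \gg\hh' = \hh$, whence $\ff \dive \hh$. Both verifications amount to bookkeeping with composable arrows.

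For the equivalence, the backward implication is the easy one: if $\ff\ee = \gg$ with $\ee$ invertible, then $\ff \dive \gg$ is immediate, and right-multiplying by~$\ee\inv$ yields $\gg\ee\inv = \ff$, so $\gg \dive \ff$ as well. The forward implication is where the argument has substance. Writing $\ff\gg' = \gg$ and $\gg\ff' = \ff$ and substituting, I would obtain $\ff(\gg'\ff') = (\ff\gg')\ff' = \gg\ff' = \ff$. Since $\ff\id\yy = \ff$ also holds, where $\yy$ is the target of~$\ff$ (equivalently the source of~$\gg'$), left-cancellativity gives $\gg'\ff' = \id\yy$, so $\gg'$ admits~$\ff'$ as a right-inverse.

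The main, and essentially only, obstacle is then to upgrade ``$\gg'$ has a right-inverse'' to ``$\gg'$ is invertible''. This is exactly what Lemma~\ref{L:Inv} supplies: in a left-cancellative category the existence of a right-inverse forces the existence of a left-inverse, hence full invertibility. Once $\gg'$ is known to be invertible, the relation $\ff\gg' = \gg$ gives $\ff \eqir \gg$ directly from the definition, closing the argument. Throughout, I expect the only delicate point to be tracking sources and targets, so as to confirm that every product written down is actually defined before associativity or cancellation is invoked.
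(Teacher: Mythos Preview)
Your proof is correct and follows essentially the same route as the paper: reflexivity and transitivity are handled identically, and for the forward implication you derive $\gg'\ff' = \id\yy$ by left-cancellation and then invoke Lemma~\ref{L:Inv} to conclude invertibility, which is exactly what the paper does (though the paper leaves the appeal to Lemma~\ref{L:Inv} implicit). Your extra care in checking that composites are defined before using them is a welcome addition but does not change the underlying argument.
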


\begin{proof}
For every~$\gg$ with target~$\yy$, we have $\gg = \gg \id\yy$, so $\gg \dive 
\gg$ always holds. Next, if we have both $\ff \gg' = \gg$ and $\gg \hh' = 
\hh$, we deduce $\ff (\gg' \hh') = (\ff \gg') \hh' = \gg \hh' = \hh$, so the  
conjunction of~$\ff \dive \gg$ and $\gg \dive \hh$ implies $\ff \dive \hh$. So 
$\dive$ is reflexive and transitive.

Assume $\ff \dive \gg$ and $\gg \dive \ff$. Then there exist~$\ff', \gg'$ 
satisfying $\ff \gg' = \gg$ and $\gg \ff' = \ff$. We deduce $\ff (\gg' \ff') = 
(\ff \gg') \ff' = \gg \ff' = \ff$, whence $\gg' \ff' = \id\yy$ where $\yy$ is 
the target of~$\ff$. It follows that $\ff'$ and $\gg'$ must be invertible, 
which 
implies $\ff \eqir \gg$. Conversely, $\ff \ee = \gg$ with $\ee \in \CCCi$ 
implies $\ff = \gg \ee\inv$, so $\ff \eqir \gg$ implies $\ff \dive \gg$ 
and~$\gg \dive \ff$.
\end{proof}

It follows that the left-divisibility relation of a category~$\CCC$ is a 
partial ordering on~$\CCC$ if and only if the relation~$\eqir$ is equality, 
that is, if and only if, the only invertible elements of~$\CCC$ are the 
identity-elements. In this case, we shall say that $\CCC$ has no 
\emph{nontrivial} invertible element.

\subsection{Paths and free categories}
\label{SS:Paths}

We now fix some terminology and notation for paths, which are the natural 
counterparts of sequences (or words) in the context of a monoid. The notion 
does not depend on the product of the category  but only on the sources and 
targets of the considered elements,  and it will be useful to define it in the 
more general context of a precategory, which is like a category but with no 
product and identity-elements. 

\begin{defi}
\label{D:Path}
Assume that $\XXX$ is a precategory. For $\qq \ge 1$, a \emph{path in~$\XXX$}, 
or \emph{$\XXX$-path}, of length~$\qq$ is a sequence $(\gg_1 \wdots \gg_\qq)$ 
of elements of~$\XXX$ such that, for $1 < \kk \le \qq$, the target 
of~$\gg_{\kk-1}$ coincides with the source of~$\gg_\kk$. In this case, the 
source (\resp.\ target) of the path is defined to be the source of~$\gg_1$ 
(\resp.\ the target of~$\gg_\qq$). In addition, for every object~$\xx$, one 
defines an \emph{empty path}~$\ew_\xx$ associated with~$\xx$, whose source and 
target are~$\xx$ and whose length is zero. The family of all 
$\XXX$-paths of 
length~$\qq$ (\resp.\ all $\XXX$-paths) is denoted by~$\XXX^{[\qq]}$ (\resp.\ 
by~$\XXX^*$). If $\ww_1, \ww_2$ are $\XXX$-paths and the target of~$\gg_1$ 
coincides with the source of~$\gg_2$, the \emph{concatenation} of~$\ww_1$ 
and~$\ww_2$ is denoted by~$\ww_1 \pc \ww_2$.
\end{defi}

In the case of a set, that is, a precategory with one object, the condition 
about source and target vanishes, and an $\XXX$-path is simply a finite 
sequence of elements of~$\XXX$, or, equivalently, a word in the 
alphabet~$\XXX$.

\begin{nota}
By definition, an $\XXX$-path $(\gg_1 \wdots \gg_\qq)$ is the concatenation of 
the $\qq$ length one paths $(\gg_1) \wdots (\gg_\qq)$. Identifying the 
length one path~$(\gg)$ with its unique entry~$\gg$, we then find $(\gg_1 
\wdots \gg_\qq) = \seqqq{\gg_1}\etc{\gg_\qq}$, a simplified notation used in 
the sequel. 
\end{nota}

\begin{lemm}
\label{L:Free}
For every precategory~$\XXX$, the family~$\XXX^*$ equipped with concatenation 
and the empty paths~$\ew_\xx$ for~$\xx$ in~$\Obj(\XXX)$ is a category, still 
denoted by~$\XXX^*$. Every category~$\CCC$  including~$\XXX$ and  generated 
by~$\XXX$ is a homomorphic image of~$\XXX^*$: there exists a surjective 
functor of~$\XXX^*$ onto~$\CCC$ that extends the identity on~$\XXX$.
\end{lemm}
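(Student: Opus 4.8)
The plan is to verify the two assertions separately: first that $\XXX^*$ with concatenation and the empty paths is a category, and then that it satisfies the stated universal property. The first part is a direct check of the category axioms, each reducing to an elementary property of concatenation of sequences. Source and target were defined precisely so that $\ww_1 \pc \ww_2$ is defined exactly when the target of~$\ww_1$ equals the source of~$\ww_2$, which is the condition demanded of a categorical product. Associativity is immediate, since both $(\ww_1 \pc \ww_2) \pc \ww_3$ and $\ww_1 \pc (\ww_2 \pc \ww_3)$ equal the single path obtained by listing the entries of~$\ww_1$, then those of~$\ww_2$, then those of~$\ww_3$. Finally, for a path~$\ww$ with source~$\xx$ and target~$\yy$, concatenating on the left with~$\ew_\xx$ or on the right with~$\ew_\yy$ inserts no entries and returns~$\ww$, so the empty paths serve as identity-elements. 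This settles the first assertion.

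For the second part, suppose $\CCC$ is a category including~$\XXX$ and generated by it. I would define a candidate functor $\pi\colon \XXX^* \to \CCC$ by sending each object of~$\XXX^*$, that is, each element of $\Obj(\XXX) \subseteq \Obj(\CCC)$, to itself, each empty path~$\ew_\xx$ to the identity~$\id\xx$ of~$\CCC$, and each nonempty path $\seqqq{\gg_1}\etc{\gg_\qq}$ to the product $\gg_1 \cdots \gg_\qq$ computed in~$\CCC$. The point requiring justification is that this product is legitimately defined: since the entries form an $\XXX$-path, consecutive targets and sources match in~$\XXX$, and as $\XXX$ sits inside~$\CCC$ preserving sources and targets, the same matching holds in~$\CCC$, so the product exists there.

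It then remains to check that $\pi$ is a functor, is surjective, and extends the identity on~$\XXX$. Compatibility with composition follows from associativity in~$\CCC$: for paths $\ww_1 = \seqqq{\gg_1}\etc{\gg_\pp}$ and $\ww_2 = \seqqq{\hh_1}\etc{\hh_\qq}$ with matching target and source, $\pi(\ww_1 \pc \ww_2)$ is the product $\gg_1 \cdots \gg_\pp \hh_1 \cdots \hh_\qq$, which by associativity equals $\pi(\ww_1)\,\pi(\ww_2)$; compatibility with identities is built into the definition $\pi(\ew_\xx) = \id\xx$. Surjectivity is exactly the hypothesis that $\XXX$ generates~$\CCC$, since then every element of~$\CCC$ is a finite product of elements of~$\XXX$, or an identity, hence the image under~$\pi$ of some path. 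Finally, identifying a length-one path~$(\gg)$ with its entry~$\gg$, we have $\pi((\gg)) = \gg$, so $\pi$ extends the identity on~$\XXX$.

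I do not anticipate a serious obstacle, as this is the familiar free-object construction. The only points needing genuine care are interpreting ``$\CCC$ is generated by~$\XXX$'' correctly, namely that every morphism of~$\CCC$ is a finite product of elements of~$\XXX$ with identity-elements available for the objects of~$\XXX$, and verifying that the product defining~$\pi$ on a path is well defined in~$\CCC$; both are handled by the source-and-target bookkeeping recorded above.
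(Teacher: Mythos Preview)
Your proposal is correct. The paper itself omits the proof entirely, writing only ``The proof is standard,'' so your detailed verification of the category axioms for~$\XXX^*$ and the construction of the evaluation functor~$\pi$ is precisely the routine argument the authors had in mind and chose not to spell out.
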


The proof is standard. Owing to Lemma~\ref{L:Free}, the category~$\XXX^*$ is 
called the \emph{free} category based on~$\XXX$ (or the free monoid in the 
case of a set). If $\CCC$ is a category and $\XXX$ is a subfamily of~$\CCC$, 
the unique functor of~$\XXX^*$ to~$\CCC$ that extends the identity maps 
on~$\XXX$ and~$\Obj(\XXX)$ is the evaluation map that associates to each 
$\XXX$-path $\seqqq{\gg_1}\etc{\gg_\qq}$ the product $\gg_1 \pdots \gg_\qq$ as 
computed using the product of~$\CCC$. In this case, we say that the path 
$\seqqq{\gg_1}\etc{\gg_\qq}$ is a \emph{decomposition} of the element $\gg_1 
\pdots \gg_\qq$ of~$\CCC$.

 If $\XXX$ is a precategory and $\RRR$ is a family of pairs of $\XXX$-paths of 
the form $\{\ww, \ww'\}$ with $\ww, \ww'$ sharing the same source and 
the same 
target, one denotes by~$\PRESp\XXX\RRR$ the category~$\XXX^* \quot \equiv$ 
where $\equiv$ is the smallest congruence on~$\XXX^*$ that includes~$\RRR$, 
that is, the smallest equivalence relation compatible with the product. If a 
pair $\{\ww, \ww'\}$ lies in~$\RRR$, the evaluations of the paths~$\ww$ 
and~$\ww'$ in the quotient-category $\PRESp\XXX\RRR$ coincide, and, therefore, 
it is customary to write $\ww = \ww'$ instead of $\{\ww, \ww'\}$ in this 
context, and to call such a pair a \emph{relation} of the presented 
category~$\PRESp\XXX\RRR$.  

\section{$\SSS$-normal decompositions}
\label{S:Normal}

We now arrive at a central topic in the current approach, namely the notion of 
an $\SSS$-normal decomposition for an element of a category.

\subsection{The notion of an $\SSS$-greedy path}
\label{SS:Greedy}

The first step is the notion of an $\SSS$-greedy path, which captures the 
intuition that each entry in the path contains as much of~$\SSS$ as is 
possible.

\begin{defi}
Assume that  $\SSS$ is a subfamily of a left-cancellative category~$\CCC$, 
that is, $\SSS$ is included in~$\CCC$. A 
$\CCC$-path~$\seqqq{\gg_1}\etc{\gg_\qq}$ is called \emph{$\SSS$-greedy} if, 
for every~$\ii < \qq$, we have
\begin{equation}
\label{E:Greedy}
\forall\hh{\in}\SSS \ \forall\ff{\in}\CCC \ (\hh \dive \ff\gg_\ii \gg_{\ii+1} 
\Rightarrow \hh \dive \ff\gg_\ii).
\end{equation}
\end{defi}

Note that $\SSS$-greediness is a local property in that a path 
$\seqqq{\gg_1}\etc{\gg_\qq}$ is $\SSS$-greedy if and only if every length two 
subpath $\seqq{\gg_\ii}{\gg_{\ii+1}}$ is $\SSS$-greedy.

\noindent
\parbox{\textwidth}{\rightskip40mm
\VR(4,0) \quad In terms of diagrams, the fact that $\seqq{\gg_1}{\gg_2}$ is 
$\SSS$-greedy means that every diagram as aside splits: whenever $\hh$ 
left-divides~$\ff\gg_1\gg_2$, it left-divides $\ff\gg_1$, so there 
exists~$\ff'$ satisfying $\hh \ff' = \ff \gg_1$. Then the assumption $\hh \gg' 
= \ff \gg_1 \gg_2$ implies $\hh \ff' \gg_2 =\nobreak \hh \gg'$, whence $\ff' 
\gg_2 = \gg'$ by left-cancelling~$\hh$.\VR(0,2)
\hfill
\rlap{%
\begin{picture}(33,0)(-6,-3)
\pcline{->}(1,0)(14,0)
\tbput{$\gg_1$}
\pcline{->}(16,0)(29,0)
\tbput{$\gg_2$}
\pcline{->}(1,12)(14,12)
\taput{$\hh \in \SSS$}
\pcline{->}(0,11)(0,1)
\tlput{$\ff$}
\pcline(16,12)(23,12)
\psarc(23,5){7}{0}{90}
\pcline{->}(30,5)(30,1)
\put(31,5){$\gg'$}
\pcline[style=exist]{->}(15,11)(15,1)
\trput{$\ff'$}
\end{picture}
}}

Before proceeding, we establish two technical results about greedy paths. The 
first result is that the strength of greediness is not changed by the possible 
existence of nontrivial invertible elements.

\begin{lemm}
\label{L:GreedyInv}
For every subfamily~$\SSS$ of a left-cancellative category~$\CCC$, being 
$\SSS$-greedy and $\SSSs$-greedy are equivalent properties. 
\end{lemm}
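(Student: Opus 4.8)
The plan is to use the locality of greediness recorded just before the statement: a path is $\SSS$-greedy (\resp.\ $\SSSs$-greedy) if and only if each of its length-two subpaths is, so it is enough to argue for a single length-two path $\seqq{\gg_1}{\gg_2}$ and to verify condition~\eqref{E:Greedy} (with $\ii = 1$) for it. One of the two implications then costs nothing: every~$s$ in~$\SSS$ with target~$\yy$ equals $s\id\yy$, and $\id\yy$ is invertible, so $\SSS \subseteq \SSS\CCCi \subseteq \SSSs$; since \eqref{E:Greedy} for $\SSSs$ quantifies the divisor~$\hh$ over the larger family~$\SSSs$, it immediately entails the same statement with $\hh$ restricted to~$\SSS$. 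Hence $\SSSs$-greedy implies $\SSS$-greedy.

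For the converse, I would assume $\seqq{\gg_1}{\gg_2}$ to be $\SSS$-greedy and check \eqref{E:Greedy} for an arbitrary~$\hh$ in~$\SSSs$ and~$\ff$ in~$\CCC$ with $\hh \dive \ff\gg_1\gg_2$. If $\hh$ is invertible, there is nothing to do: an invertible element left-divides every element with the same source, and the hypothesis $\hh \dive \ff\gg_1\gg_2$ forces the source of~$\hh$ to coincide with that of~$\ff\gg_1$, so $\hh \dive \ff\gg_1$ holds at once. Otherwise $\hh = s\ee$ with $s \in \SSS$ and $\ee \in \CCCi$, and the idea is to absorb the invertible factor~$\ee$ into the complementary factor. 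Writing $\hh\gg' = \ff\gg_1\gg_2$, I get $s(\ee\gg') = \ff\gg_1\gg_2$, so $s \dive \ff\gg_1\gg_2$; applying the defining condition of $\SSS$-greediness with~$s$ as divisor and the same~$\ff$ yields $s \dive \ff\gg_1$, say $s\gg'' = \ff\gg_1$; and then $\hh(\ee\inv\gg'') = s\ee\ee\inv\gg'' = s\gg'' = \ff\gg_1$, which gives $\hh \dive \ff\gg_1$, as wanted.

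The computation is short, and I expect the only delicate point to be the categorical bookkeeping rather than any algebraic difficulty: one must check that each composite written above, namely $\ee\gg'$, $s(\ee\gg')$, $\ee\inv\gg''$, and $\hh(\ee\inv\gg'')$, is actually defined, i.e.\ that the relevant targets and sources agree. This is precisely where right-multiplication by an invertible element is harmless, since $s$ and $\hh = s\ee$ have the same source; consequently left-divisibility of $\ff\gg_1\gg_2$, and of $\ff\gg_1$, typechecks identically for~$s$ and for~$\hh$, while~$\ee$ merely relays the complementary factors between the target of~$s$ and the target of~$\hh$. Once this is observed the argument goes through verbatim, and combining the two implications gives the asserted equivalence.
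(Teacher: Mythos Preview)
Your proof is correct and follows essentially the same approach as the paper's own proof: both reduce to length-two paths, handle the easy direction via the inclusion $\SSS \subseteq \SSSs$, and for the converse split into the invertible case (trivial) and the case $\hh = s\ee$ with $s \in \SSS$, $\ee \in \CCCi$, where one passes the greediness condition through~$s$ and then reabsorbs~$\ee$. Your additional paragraph on the categorical bookkeeping of sources and targets is a welcome explicit check that the paper leaves implicit.
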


\begin{proof}
By definition, a path that is $\SSSs$-greedy is $\XXX$-greedy for every~$\XXX$ 
included in~$\SSSs$. As $\SSS$ is included in~$\SSSs$, being $\SSSs$-greedy 
implies being $\SSS$-greedy.

Conversely, assume that $\seqq{\gg_1}{\gg_2}$ is $\SSS$-greedy, and we have 
$\hh \dive \ff \gg_1 \gg_2$ with $\hh \in \SSSs$. Two cases are possible. 
Assume first $\hh \in \SSS \CCCi$, say $\hh = \hh' \ee$ with $\hh' \in \SSS$ 
and $\ee \in \CCCi$. Then we have  $\hh' \ee \dive \ff \gg_1 \gg_2$,  hence 
$\hh' \dive \ff \gg_1$ since $\seqq{\gg_1}{\gg_2}$ is $\SSS$-greedy, that is, 
there exists~$\ff'$ satisfying $\hh' \ff' = \ff \gg_1$. We deduce $\hh \ee\inv 
\ff' = \ff \gg_1$, hence $\hh \dive \ff \gg_1$. Assume now $\hh \in \CCCi$. 
Then we can write $\hh \dive \hh \hh\inv$, whence $\hh \dive \ff 
\gg_1$ again. In every case, $\hh \dive \ff \gg_1$ holds, and 
$\seqq{\gg_1}{\gg_2}$ is $\SSSs$-greedy.
\end{proof}

The second result involves the connection between the current notion of 
greediness and the one in the literature. If $\seqq{\gg_1}{\gg_2}$ is 
$\SSS$-greedy, then, in particular, every element of~$\SSS$ that 
left-divides~$\gg_1\gg_2$ left-divides~$\gg_1$, which is the notion of a 
greediness considered for instance in~\cite{Eps} or~\cite{Dgk}. The current 
definition is stronger as it involves the additional factor~$\ff$ 
in~\eqref{E:Greedy}. However, this implication becomes an equivalence, that 
is, one recovers the usual notion of greediness, when the reference 
family~$\SSS$ satisfies some conditions.

\noindent
\parbox{\textwidth}{%
\begin{defi}
\label{D:RCClosed}
\rightskip40mm
\VR(6,0)A subfamily~$\XXX$ of a left-cancellative category~$\CCC$ is called 
\emph{closed under right-complement} if
\begin{equation}
\label{E:RCClosed}
\forall\ff, \gg {\in} \XXX\ \forall\hh{\in} \CCC \ (\ff, \gg \dive \hh 
\Rightarrow \exists \ff'\!,\gg' {\in} \XXX\, (\ff\gg' = \gg\ff' \dive 
\hh)).\hspace{40mm}
\end{equation}
is satisfied.\VR(0,2)
\end{defi}
\hfill
\rlap{%
\begin{picture}(0,0)(31,-3)
\pcline{->}(1,24)(14,24)
\taput{$\gg {\in}\XXX$}
\pcline{->}(0,23)(0,13)
\tlput{$\ff{\in}\XXX$}
\pcline(0,11)(0,7)
\psarc(7,7){7}{180}{270}
\pcline{->}(7,0)(29,0)
\pcline(16,24)(23,24)
\psarc(23,17){7}{0}{90}
\pcline{->}(30,17)(30,1)
\pcline[style=exist]{->}(1,12)(14,12)
\tbput{$\gg'{\in}\XXX$}
\pcline[style=exist]{->}(15,23)(15,13)
\trput{$\ff'{\in}\XXX$}
\pcline[style=exist]{->}(16,11)(29,0.5)
\end{picture}
}}

\begin{lemm}
\label{L:GreedyClosed}
Assume that $\CCC$ is a left-cancellative category, and $\SSS$ is a subfamily 
of~$\CCC$ such that $\SSSs$ generates~$\CCC$ and is closed under 
right-complement. 
Then $\seqq{\gg_1}{\gg_2}$ is $\SSS$-greedy if and only if one has
\begin{equation}
\label{E:Greedy2}
\forall\hh{\in}\SSS\ (\hh \dive \gg_1 \gg_2 \Rightarrow \hh \dive \gg_1).
\end{equation}
\end{lemm}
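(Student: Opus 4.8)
The plan is to prove the equivalence of $\SSS$-greediness with condition~\eqref{E:Greedy2}. The implication from $\SSS$-greediness to~\eqref{E:Greedy2} is immediate: setting $\ff = \id\xx$ (where $\xx$ is the source of $\gg_1$) in the defining condition~\eqref{E:Greedy} turns the premise $\hh \dive \ff\gg_1\gg_2$ into $\hh \dive \gg_1\gg_2$ and the conclusion $\hh \dive \ff\gg_1$ into $\hh \dive \gg_1$, which is exactly~\eqref{E:Greedy2}. So the whole content of the lemma lies in the converse direction, where the closure and generation hypotheses must come into play.

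For the converse, I would assume~\eqref{E:Greedy2} and establish the full condition~\eqref{E:Greedy}: given $\hh \in \SSS$ and $\ff \in \CCC$ with $\hh \dive \ff\gg_1\gg_2$, I must show $\hh \dive \ff\gg_1$. The first reduction is to replace the arbitrary factor~$\ff$ by something controlled by~$\SSS$. Since $\SSSs$ generates~$\CCC$, I can write $\ff$ as a product of elements of~$\SSSs$, and by Lemma~\ref{L:GreedyInv} I am free to work with $\SSSs$-greediness rather than $\SSS$-greediness, which lets me absorb the invertible elements cleanly. The strategy is then an induction on the length of a decomposition of~$\ff$ over~$\SSSs$: if I can handle the case where $\ff$ itself lies in~$\SSS$ (or $\SSSs$), then prepending one generator at a time should propagate the divisibility.

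The heart of the argument, and the step I expect to be the main obstacle, is the single-factor case: given $\hh, \ff \in \SSS$ with $\hh \dive \ff\gg_1\gg_2$, deduce $\hh \dive \ff\gg_1$. Here is where closure under right-complement is essential. Both $\ff$ and $\hh$ left-divide~$\ff\gg_1\gg_2$, so~\eqref{E:RCClosed} furnishes $\ff', \hh' \in \SSS$ with $\hh\hh' = \ff\ff'$ a common right-multiple that still left-divides $\ff\gg_1\gg_2$. The goal is to show that $\ff'$ — the ``complement of $\hh$ after $\ff$'' — left-divides $\gg_1$, for then left-multiplying by~$\ff$ and using $\hh\hh' = \ff\ff'$ yields $\hh \dive \ff\gg_1$. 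To reach $\ff' \dive \gg_1$, I would left-cancel $\ff$ from the relation $\ff\ff' \dive \ff\gg_1\gg_2$, reducing the problem to showing $\ff' \dive \gg_1\gg_2 \Rightarrow \ff' \dive \gg_1$; but $\ff'$ need not lie in~$\SSS$, so~\eqref{E:Greedy2} does not apply directly to it. The delicate point is therefore to arrange the right-complement so that the relevant divisor is an element of~$\SSS$ that divides $\gg_1\gg_2$, allowing~\eqref{E:Greedy2} to fire, and then to transport the resulting divisibility back through the cancellation. I expect the correct bookkeeping to involve applying the closure property to the pair $\ff, \hh$ relative to the witness of $\hh \dive \ff\gg_1\gg_2$ and carefully tracking which complements land in~$\SSS$; managing the left-cancellations and the interplay between divisibility in~$\CCC$ and membership in~$\SSS$ will be where the care is needed.
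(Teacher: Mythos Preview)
Your approach is essentially the paper's: decompose $\ff$ as $\ff_1\pdots\ff_\pp$ over~$\SSSs$, then use closure under right-complement to push~$\hh$ through the factors~$\ff_\ii$ one at a time, producing a chain $\hh = \hh_0, \hh_1, \dots, \hh_\pp$ in~$\SSSs$ with $\ff_\ii\hh_\ii = \hh_{\ii-1}\ff'_\ii$ and $\hh_\pp \dive \gg_1\gg_2$; then invoke the hypothesis to conclude $\hh_\pp \dive \gg_1$ and read off $\hh \dive \ff\gg_1$ from the commutative grid. Your single-factor case is exactly one step of this chain.

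The obstacle you describe at the end is not real. The complement~$\ff'$ (your notation; the paper's $\hh_1$) produced by~\eqref{E:RCClosed} lies in~$\SSSs$, not necessarily in~$\SSS$, so \eqref{E:Greedy2} as stated does not literally apply to it---but \eqref{E:Greedy2} extends from~$\SSS$ to~$\SSSs$ by the very same argument that proves Lemma~\ref{L:GreedyInv}: if $\hh' \in \SSSs$ divides $\gg_1\gg_2$, either $\hh'$ is invertible (and then $\hh' \dive \gg_1$ trivially), or $\hh' = \hh''\ee$ with $\hh'' \in \SSS$ and $\ee \in \CCCi$, in which case $\hh'' \dive \gg_1\gg_2$, so $\hh'' \dive \gg_1$ by~\eqref{E:Greedy2}, whence $\hh' \dive \gg_1$. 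The paper makes exactly this observation (``Repeating the proof of Lemma~\ref{L:GreedyInv}\dots'') at the point where it needs to apply the hypothesis to $\hh_\pp \in \SSSs$. Once you add this one line, your sketch is complete and coincides with the paper's proof.
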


\begin{proof}
As already observed, by definition, \eqref{E:Greedy2} holds whenever 
$\seqq{\gg_1}{\gg_2}$ is $\SSS$-greedy. Conversely, assume that 
\eqref{E:Greedy2} holds, and we have $\hh \dive \ff \gg_1 \gg_2$. By 
assumption, $\SSSs$ generates~$\CCC$, so we can write $\ff = \ff_1 \pdots 
\ff_\pp$ with $\ff_1 \wdots \ff_\pp$ in~$\SSSs$. Then $\ff_1$ and $\hh$ belong 
to~$\SSSs$ and, by assumption, both left-divide $\ff_1 \pdots \ff_\pp \gg_1 
\gg_2$. The assumption that $\SSSs$ is closed under right-complement implies 
the existence of~$\ff'_1$ and~$\hh_1$ in~$\SSSs$ satisfying $\ff_1 \hh_1 = \hh 
\ff'_1 \dive \ff_1 \pdots \ff_\pp \gg_1 \gg_2$, see 
Figure~\ref{F:GreedyClosed}. Left-cancelling~$\ff_1$, we deduce $\hh_1 \dive 
\ff_2 \pdots \ff_\pp \gg_1 \gg_2$ and, arguing similarly, we deduce the 
existence of~of~$\ff'_2$ and~$\hh_2$ in~$\SSSs$ satisfying $\ff_2 \hh_2 = 
\hh_1 \ff'_2 \dive \ff_2 \pdots \ff_\pp \gg_1 \gg_2$, and so on. After 
$\pp$~steps, we obtain $\ff'_\pp$ and $\hh_\pp$ in~$\SSSs$ satisfying $\ff_\pp 
\hh_\pp = \hh_{\pp-1} \ff'_\pp$ and $\hh_\pp \dive \gg_1 \gg_2$.
Repeating the proof of 
Lemma~\ref{L:GreedyInv}, we see that \eqref{E:Greedy2} implies 
$\forall\hh{\in}\SSSs\, (\hh \dive \gg_1 \gg_2 \Rightarrow \hh \dive \gg_1)$ 
and we deduce $\hh_\pp \dive \gg_1$, whence $\hh \dive \ff_1 \pdots \ff_\pp 
\gg_1$ using the commutativity of the diagram. Hence $\seqq{\gg_1}{\gg_2}$ is 
$\SSS$-greedy.
\end{proof}

\begin{figure}[htb]
\begin{picture}(75,32)(0,0)
\pcline{->}(1,24)(14,24)
\taput{$\ff_1$}
\pcline{->}(16,24)(29,24)
\taput{$\ff_2$}
\pcline[style=etc](31,24)(44,24)
\pcline{->}(46,24)(59,24)
\taput{$\ff_\pp$}
\psline(61,24)(68,24)
\psarc(68,17){7}{0}{90}
\psline{->}(75,17)(75,13)
\put(76,17){$\gg_1$}

\pcline[style=exist]{->}(1,12)(14,12)
\tbput{$\ff'_1$}
\pcline[style=exist]{->}(16,12)(29,12)
\tbput{$\ff'_2$}
\pcline[style=etc](31,12)(44,12)
\pcline[style=exist]{->}(46,12)(59,12)
\tbput{$\ff'_\pp$}
\pcline[style=exist]{->}(61,12)(74,12)

\psline(0,11)(0,7)
\psarc(7,7){7}{180}{270}
\psline{->}(7,0)(74,0)
\put(-2,0){$\fft$}
\psline[style=exist](15,11)(15,7)
\psarc[style=exist](22,7){7}{180}{270}
\psline[style=exist](30,11)(30,7)
\psarc[style=exist](37,7){7}{180}{270}
\psline[style=exist](45,11)(45,7)
\psarc[style=exist](52,7){7}{180}{270}
\psline[style=exist](60,11)(60,7)
\psarc[style=exist](67,7){7}{180}{270}
\pcline{->}(75,11)(75,1)
\trput{$\gg_2$}

\pcline{->}(0,23)(0,13)
\tlput{$\hh$}
\pcline[style=exist]{->}(15,23)(15,13)
\trput{$\hh_1$}
\pcline[style=exist]{->}(30,23)(30,13)
\trput{$\hh_2$}
\pcline[style=exist]{->}(45,23)(45,13)
\trput{$\hh_{\pp-1}$}
\pcline[style=exist]{->}(60,23)(60,13)
\trput{$\hh_\pp$}

\psline[style=thin](0,27)(0,30)(60,30)(60,27)
\put(29,32){$\ff$}
\end{picture}
\caption[]{\sf\smaller Proof of Lemma~\ref{L:GreedyClosed}: whenever $\SSSs$ 
generates the ambient category and is closed under right-complement, each 
relation $\hh \dive \ff \gg_1 \gg_2$ leads to a relation of the form $\hh_\pp 
\dive \gg_1 \gg_2$ and, therefore, the restricted form of greediness implies 
the general form.}
\label{F:GreedyClosed}
\end{figure}

\subsection{The notion of an $\SSS$-normal path}
\label{SS:Normal}

Building on the notion of an $\SSS$-greedy path, we can now introduce the 
distinguished decompositions we shall be interested in.

\begin{defi}
Assume that $\SSS$ is a subfamily of a left-cancellative category~$\CCC$. A 
$\CCC$-path~$\seqqq{\gg_1}\etc{\gg_\qq}$ is called \emph{$\SSS$-normal} if it 
is $\SSS$-greedy and, in addition, the entries $\gg_1 \wdots \gg_\qq$ lie 
in~$\SSSs$.
\end{defi}

Like greediness, $\SSS$-normality is a local property: a path 
$\seqqq{\gg_1}\etc{\gg_\qq}$ is $\SSS$-normal if and only if every length two 
subpath $\seqq{\gg_\ii}{\gg_{\ii+1}}$ is $\SSS$-normal. In diagrams, it will 
be convenient to indicate that a path $\seqq{\gg_1}{\gg_2}$ is $\SSS$-normal 
by drawing a small arc connecting the ends of the arrows, as in
\begin{picture}(30,5)(0,0)
\pcline{->}(1,0.5)(14,0.5)
\taput{$\gg_1$}
\pcline{->}(16,0.5)(29,0.5)
\taput{$\gg_2$}
\psarc[style=thin](14.5,0.5){3}{0}{180}
\end{picture}.
We shall naturally say that a path $\seqqq{\gg_1}\etc{\gg_\qq}$ is an 
$\SSS$-normal decomposition for an element~$\gg$ of the ambient category if 
$\seqqq{\gg_1}\etc{\gg_\qq}$ is a decomposition of~$\gg$ that is $\SSS$-normal.

For further reference, we first observe that, in an $\SSS$-normal path, 
invertible elements can always be added at the end and that, conversely, an 
invertible entry cannot be followed by a non-invertible one.

\begin{lemm}
\label{L:InvFirst}
Assume that $\SSS$ is a subfamily of a left-cancellative category~$\CCC$. 

\ITEM1 If $\gg_2$ is invertible, then every path of the form 
$\seqq{\gg_1}{\gg_2}$ is $\SSS$-greedy.

\ITEM2 If $\gg_1$ is invertible and $\gg_2$ lies in~$\SSSs$, then a path of 
the form $\seqq{\gg_1}{\gg_2}$ is $\SSS$-greedy (if and) only if  $\gg_2$  is 
invertible.
\end{lemm}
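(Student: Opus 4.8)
The plan is to unpack $\SSS$-greediness for a length-two path and then treat the two items in turn, using \ITEM1 as a lemma for the easy direction of \ITEM2. For a path $\seqq{\gg_1}{\gg_2}$, greediness reduces (taking $\ii = 1$ in~\eqref{E:Greedy}) to the single condition that, for all $\hh \in \SSS$ and $\ff \in \CCC$, the relation $\hh \dive \ff \gg_1 \gg_2$ entails $\hh \dive \ff \gg_1$.

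For \ITEM1, I would start from the observation that, when $\gg_2$ is invertible, the elements $\ff \gg_1$ and $\ff \gg_1 \gg_2$ differ only by an invertible factor, that is, $\ff \gg_1 \gg_2 \eqir \ff \gg_1$. By Lemma~\ref{L:DivOrder}, $\eqir$ coincides with mutual left-divisibility, so in particular $\ff \gg_1 \gg_2 \dive \ff \gg_1$ holds. Then any $\hh \in \SSS$ with $\hh \dive \ff \gg_1 \gg_2$ satisfies $\hh \dive \ff \gg_1$ by transitivity of~$\dive$ (again Lemma~\ref{L:DivOrder}), which is exactly greediness. No case distinction and no appeal to $\gg_1$ or to~$\SSS$ is needed here.

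For \ITEM2, the ``if'' direction is immediate from \ITEM1, so the content lies in ``only if''. Assuming $\seqq{\gg_1}{\gg_2}$ is $\SSS$-greedy, $\gg_1$ invertible, and $\gg_2 \in \SSSs$, I would split according to the definition $\SSSs = \SSS\CCCi \cup \CCCi$. If $\gg_2 \in \CCCi$ there is nothing to prove. Otherwise write $\gg_2 = \hh\ee$ with $\hh \in \SSS$ and $\ee \in \CCCi$; it then suffices to prove that $\hh$ is invertible. The key move is to feed greediness the test element $\ff = \gg_1\inv$, which is legitimate precisely because $\gg_1$ is invertible. Then $\ff \gg_1 = \id\yy$, where $\yy$ is the target of~$\gg_1$ (equivalently the source of~$\gg_2$), so $\ff \gg_1 \gg_2 = \gg_2 = \hh\ee$, giving $\hh \dive \ff \gg_1 \gg_2$. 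Greediness forces $\hh \dive \ff \gg_1 = \id\yy$, i.e.\ $\hh$ admits a right-inverse; Lemma~\ref{L:Inv} then makes $\hh$ invertible, whence $\gg_2 = \hh\ee$ is a product of invertibles and therefore invertible.

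There is no serious obstacle; the only points requiring care are the bookkeeping of objects (checking that $\gg_1\inv$ composes with $\gg_1 \gg_2$, so that $\ff \gg_1 \gg_2$ is defined and equals~$\gg_2$, and identifying the resulting identity-element as $\id\yy$) and the recognition that the hypothesis ``$\gg_1$ invertible'' is used exactly once, to license the choice $\ff = \gg_1\inv$ that cancels the first entry and exposes~$\gg_2$ to the greediness test.
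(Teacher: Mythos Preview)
Your proof is correct and follows essentially the same approach as the paper: for \ITEM1 you use that multiplying by an invertible element does not change left-divisibility, and for \ITEM2 you plug $\ff = \gg_1\inv$ into the greediness condition to force the $\SSS$-part of~$\gg_2$ to left-divide an identity. The only cosmetic difference is that the paper invokes Lemma~\ref{L:GreedyInv} to test~$\gg_2 \in \SSSs$ directly against $\SSSs$-greediness, whereas you inline that lemma's case split by writing $\gg_2 = \hh\ee$ and testing~$\hh \in \SSS$ against $\SSS$-greediness.
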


\begin{proof}
\ITEM1 Assume that $\gg_2$ is invertible. Then, for every~$\hh$ in~$\SSS$, the 
relation $\hh \dive \ff \gg_1 \gg_2$ implies $\hh \dive \ff \gg_1 \gg_2 
\gg_2\inv$, whence $\hh \dive \ff \gg_1$, and $\seqq{\gg_1}{\gg_2}$ is 
$\SSS$-greedy.

\ITEM2 Assume that $\gg_1$ is invertible, $\gg_2$ lies in~$\SSSs$, and 
$\seqq{\gg_1}{\gg_2}$ is $\SSS$-greedy. By Lemma~\ref{L:GreedyInv}, 
$\seqq{\gg_1}{\gg_2}$ is $\SSSs$-greedy and we have $\gg_2 = \gg_1\inv \gg_1 
\gg_2$ with $\gg_2 \in \SSSs$, whence $\gg_2 \dive \gg_1\inv \gg_1$. The 
latter relation implies that $\gg_2$ is invertible.
\end{proof}

It follows that, in an $\SSS$-normal path, the non-invertible entries always 
occur first, and the invertible entries always occur at the end.

We now address the uniqueness of $\SSS$-normal decompositions. The good news 
is that (some form of) uniqueness comes for free.

\begin{defi}
\label{D:Deformation}
(See Figure~\ref{F:Deformation}.) Assume that $\CCC$ is a left-cancellative 
category. A $\CCC$-path $\seqqq{\ff_1}\etc{\ff_\pp}$ is said to be a 
\emph{deformation by invertible elements}, or \emph{$\CCCi$-deformation}, of 
another path $\seqqq{\gg_1}\etc{\gg_\qq}$ if there exist $\ee_0 \wdots  
\ee_\rr$ in~$\CCCi$, $\rr = \max(\pp, \qq)$, such that $\ee_0$ and $\ee_\rr$  
are identity-elements and $\ee_{\ii-1} \gg_\ii = \ff_\ii \ee_\ii$ holds for $1 
 \le \ii \le \rr$, where, for $\pp \not=\qq$, the shorter path is expanded by 
identity-elements.
\end{defi}

\begin{figure}[htb]
\begin{picture}(105,15)(0,-1)
\pcline{->}(1,0)(14,0)
\tbput{$\gg_1$}
\pcline{->}(1,12)(14,12)
\taput{$\ff_1$}
\pcline{->}(16,0)(29,0)
\tbput{$\gg_2$}
\pcline{->}(16,12)(29,12)
\taput{$\ff_2$}
\pcline[style=etc](32,0)(43,0)
\pcline[style=etc](32,12)(43,12)
\pcline{->}(46,0)(59,0)
\tbput{$\gg_\pp$}
\pcline{->}(46,12)(59,12)
\taput{$\ff_\pp$}
\pcline{->}(61,0)(74,0)
\tbput{$\gg_{\pp+1}$}
\pcline[style=double](61,12)(74,12)
\taput{$\id\ud$}
\pcline[style=etc](76,0)(89,0)
\pcline[style=etc](76,12)(89,12)
\pcline{->}(91,0)(104,0)
\tbput{$\gg_\qq$}
\pcline[style=double](91,12)(104,12)
\taput{$\id\ud$}
\psline[style=double,style=thin](0,1)(0,11)
\pcline[style=exist]{<-}(14.5,1)(14.5,11)
\tlput{$\ee_1$}
\pcline[style=exist]{->}(15.5,1)(15.5,11)
\trput{$\ee_1\inv$}
\pcline[style=exist]{<-}(29.5,1)(29.5,11)
\tlput{$\ee_2$}
\pcline[style=exist]{->}(30.5,1)(30.5,11)
\trput{$\ee_2\inv$}
\pcline[style=exist]{<-}(44.5,1)(44.5,11)
\pcline[style=exist]{->}(45.5,1)(45.5,11)
\pcline[style=exist]{<-}(59.5,1)(59.5,11)
\tlput{$\ee_\pp$}
\pcline[style=exist]{->}(60.5,1)(60.5,11)
\trput{$\ee_\pp\inv$}
\pcline[style=exist]{<-}(74.5,1)(74.5,11)
\pcline[style=exist]{->}(75.5,1)(75.5,11)
\pcline[style=exist]{<-}(89.5,1)(89.5,11)
\pcline[style=exist]{->}(90.5,1)(90.5,11)
\psline[style=double](105,1)(105,11)
\end{picture}
\caption[]{\sf\smaller Deformation by invertible elements: invertible elements 
connect the corresponding entries; if one path is shorter (here we are in the 
case $\pp < \qq$), it is extended by identity-elements.}
\label{F:Deformation}
\end{figure}

If the ambient category~$\CCC$ contains no nontrivial invertible element, then 
being a $\CCCi$-deformation just means coinciding up to adding final 
identity-elements. The uniqueness result for $\SSS$-normal decompositions is 
as follows:

\begin{prop}
\label{P:Unique}
Assume that $\SSS$ is a subfamily of a left-cancellative category~$\CCC$. Then 
any two $\SSS$-normal decompositions of an element of~$\CCC$ (if any) are 
$\CCCi$-deformat\-ions of one another.
\end{prop}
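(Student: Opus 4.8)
The plan is to compare the two normal decompositions entry by entry and to manufacture the required invertible elements one step at a time. Write the decompositions as $\seqqq{\ff_1}\etc{\ff_\pp}$ and $\seqqq{\gg_1}\etc{\gg_\qq}$, both decomposing the same element~$\gg$ (with source~$\xx$ and target~$\yy$), and put $\rr = \max(\pp, \qq)$. First I would pad the shorter path on the right with identity-elements: this leaves it $\SSS$-normal, since the added entries are invertible, hence lie in~$\SSSs$, and each new length-two subpath has an invertible second entry, so is $\SSS$-greedy by Lemma~\ref{L:InvFirst}(i). So I may assume both paths have length~$\rr$. I then build $\ee_0 \wdots \ee_\rr$ in~$\CCCi$, starting from $\ee_0 = \id\xx$, while maintaining the two invariants $\ee_{\ii-1} \gg_\ii = \ff_\ii \ee_\ii$ and $\ff_{\ii+1} \pdots \ff_\rr = \ee_\ii\, \gg_{\ii+1} \pdots \gg_\rr$. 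The second invariant at $\ii = 0$ is just $\ff_1 \pdots \ff_\rr = \gg = \gg_1 \pdots \gg_\rr$, and at $\ii = \rr$ it forces $\ee_\rr = \id\yy$, matching the requirements of Definition~\ref{D:Deformation}.

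Two elementary observations drive the induction. The first is a \emph{telescoping} property: if $\seqqq{\aa_1}\etc{\aa_m}$ is $\SSS$-greedy and $\hh \in \SSSs$ satisfies $\hh \dive \aa_1 \pdots \aa_m$, then $\hh \dive \aa_1$. Indeed, applying the defining greediness of the last subpath $\seqq{\aa_{m-1}}{\aa_m}$ with external factor $\aa_1 \pdots \aa_{m-2}$ yields $\hh \dive \aa_1 \pdots \aa_{m-1}$, and repeating drops the factors one by one down to~$\aa_1$; Lemma~\ref{L:GreedyInv} is what lets the dividing element~$\hh$ range over~$\SSSs$ rather than just~$\SSS$. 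The second is that greediness is preserved when the first entry is left-multiplied by an invertible element: if $\seqq{\aa}{\cc}$ is $\SSS$-greedy and $\ee \in \CCCi$, then $\hh \dive \dd(\ee\aa)\cc$ reads $\hh \dive (\dd\ee)\aa\cc$, so greediness of $\seqq{\aa}{\cc}$ with external factor~$\dd\ee$ gives $\hh \dive (\dd\ee)\aa = \dd(\ee\aa)$, and $\seqq{\ee\aa}{\cc}$ is $\SSS$-greedy.

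For the inductive step, assume the second invariant at stage $\ii - 1$, namely $\ff_\ii \pdots \ff_\rr = \ee_{\ii-1}\, \gg_\ii \pdots \gg_\rr$. I claim $\ff_\ii \eqir \ee_{\ii-1} \gg_\ii$; by Lemma~\ref{L:DivOrder} this produces an invertible~$\ee_\ii$ with $\ff_\ii \ee_\ii = \ee_{\ii-1}\gg_\ii$, which is the first invariant, and left-cancelling~$\ff_\ii$ in the displayed equality then gives the second invariant at stage~$\ii$. To get $\ff_\ii \dive \ee_{\ii-1}\gg_\ii$, note that the path $\seqqqq{\ee_{\ii-1}\gg_\ii}{\gg_{\ii+1}}\etc{\gg_\rr}$ is $\SSS$-greedy by the second observation, its product equals $\ff_\ii \pdots \ff_\rr$, and $\ff_\ii \in \SSSs$ left-divides it, so telescoping yields $\ff_\ii \dive \ee_{\ii-1}\gg_\ii$. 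To get $\ee_{\ii-1}\gg_\ii \dive \ff_\ii$, observe that the path $\seqqqq{\ee_{\ii-1}\inv\ff_\ii}{\ff_{\ii+1}}\etc{\ff_\rr}$ is likewise $\SSS$-greedy and its product is $\ee_{\ii-1}\inv(\ff_\ii \pdots \ff_\rr) = \gg_\ii \pdots \gg_\rr$, which is left-divisible by $\gg_\ii \in \SSSs$; telescoping gives $\gg_\ii \dive \ee_{\ii-1}\inv \ff_\ii$, whence $\ee_{\ii-1}\gg_\ii \dive \ff_\ii$.

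The point to watch — and the reason the two directions are handled by different auxiliary paths rather than symmetrically — is that $\SSSs$ need not be closed under \emph{left}-multiplication by invertible elements, so the product $\ee_{\ii-1}\gg_\ii$ is itself not available as a dividing element in the telescoping property. The remedy is to keep the genuine element of~$\SSSs$ (namely $\ff_\ii$ in the first direction, $\gg_\ii$ in the second) in the role of the divisor~$\hh$, and to absorb the invertible~$\ee_{\ii-1}$ into the \emph{path} instead, where the second observation guarantees that greediness survives. I expect this asymmetry to be the only genuine subtlety; everything else is routine left-cancellation.
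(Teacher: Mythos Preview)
Your proposal is correct and follows essentially the same approach as the paper: pad the shorter decomposition with identity-elements, then use greediness (your ``telescoping'' is exactly Lemma~\ref{L:Grouping} combined with Lemma~\ref{L:GreedyInv}) to peel off one entry at a time and produce the connecting elements~$\ee_\ii$. The only organizational difference is that you establish invertibility of each~$\ee_\ii$ immediately at step~$\ii$ (via your second observation, left-multiplying the first entry of a greedy path by an invertible element), whereas the paper runs the one-sided construction twice---once for the~$\ff$'s against the~$\gg$'s, once symmetrically---obtaining elements~$\ee_\ii$ and~$\ee'_\ii$, and only afterwards checks that $\ee'_\ii = \ee_\ii\inv$ from the global relations $(\ff_1 \pdots \ff_\ii)\ee_\ii = \gg_1 \pdots \gg_\ii$ and $(\gg_1 \pdots \gg_\ii)\ee'_\ii = \ff_1 \pdots \ff_\ii$.
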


The proof appeals to an auxiliary result.

\begin{lemm}
\label{L:Grouping}
Assume that $\SSS$ is a subfamily of a left-cancellative category~$\CCC$. If 
$\seqqq{\gg_1}\etc{\gg_\qq}$ is an $\SSS$-greedy $\CCC$-path, then 
$\seqq{\gg_1}{\gg_2\pdots\gg_\qq}$ is $\SSS$-greedy as well.
\end{lemm}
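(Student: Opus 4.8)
The plan is to reduce the statement to an iterated use of the $\SSS$-greediness of the consecutive length-two subpaths of $\seqqq{\gg_1}\etc{\gg_\qq}$. Since $\SSS$-greediness is a local property, the hypothesis says precisely that each pair $\seqq{\gg_\ii}{\gg_{\ii+1}}$ with $\ii < \qq$ is $\SSS$-greedy. To show that $\seqq{\gg_1}{\gg_2\pdots\gg_\qq}$ is $\SSS$-greedy, I fix $\hh$ in~$\SSS$ and $\ff$ in~$\CCC$ satisfying $\hh \dive \ff\gg_1\gg_2\pdots\gg_\qq$, and I must derive $\hh \dive \ff\gg_1$.

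The crux is to exploit the flexibility of the left factor allowed in the definition~\eqref{E:Greedy}: the arbitrary prefix~$\ff$ is exactly what makes greediness compose along a path, letting one absorb the trailing factors one at a time. Concretely, I would prove by descending induction on~$\ii$, from $\ii = \qq$ down to $\ii = 1$, that $\hh \dive \ff\gg_1\pdots\gg_\ii$. The case $\ii = \qq$ is the starting hypothesis. For the step from~$\ii+1$ to~$\ii$ (with $1 \le \ii \le \qq - 1$), I rewrite the relation $\hh \dive \ff\gg_1\pdots\gg_{\ii+1}$, by associativity, as $\hh \dive (\ff\gg_1\pdots\gg_{\ii-1})\,\gg_\ii\,\gg_{\ii+1}$, and apply the $\SSS$-greediness of the pair $\seqq{\gg_\ii}{\gg_{\ii+1}}$ with the prefix factor $\ff\gg_1\pdots\gg_{\ii-1}$; this yields $\hh \dive (\ff\gg_1\pdots\gg_{\ii-1})\gg_\ii = \ff\gg_1\pdots\gg_\ii$. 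At the final value $\ii = 1$, where the prefix degenerates to~$\ff$ itself, one obtains $\hh \dive \ff\gg_1$, as wanted.

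I do not anticipate a real obstacle: the argument is short once this formulation is adopted, and the work is essentially bookkeeping. The one point to verify at each step is that the prefix $\ff\gg_1\pdots\gg_{\ii-1}$ is a legitimate element of~$\CCC$ composable with~$\gg_\ii$, which holds because the product $\ff\gg_1\pdots\gg_\qq$ is defined; this guarantees that the local greediness of $\seqq{\gg_\ii}{\gg_{\ii+1}}$ may indeed be invoked with that factor. The conceptual content is simply that the strong form of greediness built into~\eqref{E:Greedy}, carrying an arbitrary left factor, propagates from all consecutive pairs to the single pair obtained by grouping the tail $\gg_2\pdots\gg_\qq$ into one entry.
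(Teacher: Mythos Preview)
Your argument is correct and matches the paper's proof essentially step for step: both start from $\hh \dive \ff\gg_1\pdots\gg_\qq$ and peel off the trailing factors one at a time using the $\SSS$-greediness of each pair $\seqq{\gg_\ii}{\gg_{\ii+1}}$ with prefix $\ff\gg_1\pdots\gg_{\ii-1}$, arriving at $\hh \dive \ff\gg_1$. The only difference is cosmetic: you frame it as a formal descending induction on~$\ii$, whereas the paper writes out the first couple of steps and then says ``and so on''.
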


\begin{proof}
Assume $\hh \dive \ff \gg_1 (\gg_2 \pdots \gg_\qq)$, where $\hh$ lies 
in~$\SSS$. Using associativity, we write $\hh \dive (\ff \gg_1 \pdots 
\gg_{\qq-2}) \gg_{\qq-1} \gg_\qq$, and the assumption that 
$\seqq{\gg_{\qq-1}}{\gg_\qq}$ is $\SSS$-greedy implies $\hh \dive (\ff \gg_1 
\pdots \gg_{\qq-2}) \gg_{\qq-1}$, which is also $\hh \dive (\ff \gg_1 \pdots 
\gg_{\qq-3}) \gg_{\qq-2} \gg_{\qq-1}$. The assumption that 
$\seqq{\gg_{\qq-2}}{\gg_{\qq-1}}$ is $\SSS$-greedy implies now $\hh \dive (\ff 
\gg_1 \pdots \gg_{\qq-3}) \gg_{\qq-2}$, and so on. Finally, we obtain $\hh 
\dive (\ff) \gg_1 \gg_2$, and the assumption that $\seqq{\gg_1}{\gg_2}$ is 
$\SSS$-greedy implies $\hh \dive \ff \gg_1$. So 
$\seqq{\gg_1}{\gg_2\pdots\gg_\qq}$ is $\SSS$-greedy.
\end{proof}

\begin{proof}[Proof of Proposition~\ref{P:Unique}]
Assume that $\seqqq{\ff_1}\etc{\ff_\pp}$ and $\seqqq{\gg_1}\etc{\gg_\qq}$ are 
$\SSS$-normal decompositions of an element~$\gg$ of~$\CCC$. Let $\yy$ be the 
target of~$\gg$. At the expense of adding factors~$\id\yy$ at the end of the 
shorter path, we may assume $\pp = \qq$: by Lemma~\ref{L:InvFirst}, 
adding identity-elements at the end of an $\SSS$-greedy path yields an 
$\SSS$-greedy path.

Let $\ee_0 = \id\xx$, where $\xx$ is the source of~$\gg$. By 
Lemma~\ref{L:Grouping}, the path $\seqq{\gg_1}{\gg_2 \pdots \gg_\qq}$ is 
$\SSS$-greedy, hence, by Lemma~\ref{L:GreedyInv}, $\SSSs$-greedy. Now $\ff_1$ 
belongs to~$\SSSs$ and left-divides $\ff_1 \pdots \ff_\pp$, which is $\ee_0 
\gg_1 \pdots \gg_\qq$, so it must left-divide~$\gg_1$. In other words, there 
exists~$\ee_1$ satisfying $\ff_1 \ee_1 = \ee_0 \gg_1$. Left-cancelling~$\ff_1$ 
in $\ff_1 \pdots \ff_\pp = (\ff_1\ee_1) \gg_2 \pdots \gg_\pp$, we deduce 
$\ff_2 \pdots \ff_\pp = \ee_1 \gg_1 \pdots \gg_\qq$. Now $\ff_2$ belongs 
to~$\SSSs$ and, by Lemma~\ref{L:Grouping} again, $\seqq{\gg_2}{\gg_3 \pdots 
\gg_\qq}$ is $\SSS$-greedy, so we deduce the existence of~$\ee_2$ satisfying 
$\ff_2 \ee_2 = \ee_1 \gg_1$, and so on, giving the existence of~$\ee_\ii$ 
satisfying $\ff_\ii \ee_\ii = \ee_{\ii-1} \gg_\ii$ for every $1 \le \ii \le 
\qq$.

Exchanging the roles of $\seqqq{\ff_1}\etc{\ff_\pp}$ and 
$\seqqq{\gg_1}\etc{\gg_\qq}$ and arguing symmetrically from $\ee'_0 = \id\xx$, 
we obtain the existence of~$\ee'_\ii$ satisfying $\gg_\ii \ee'_\ii = 
\ee'_{\ii-1} \ff_\ii$ for every $1 \le \ii \le \pp$. We deduce, for 
every~$\ii$, the equalities $(\ff_1 \pdots \ff_\ii) \ee_\ii = \gg_1 \pdots 
\gg_\ii$ and $(\gg_1 \pdots \gg_\ii) \ee'_\ii = \ff_1 \pdots \ff_\ii$, which 
imply that $\ee'_\ii$ is the inverse of~$\ee_\ii$. Hence 
$\seqqq{\ff_1}\etc{\ff_\pp}$ is a $\CCCi$-deformation of 
$\seqqq{\gg_1}\etc{\gg_\qq}$.
\end{proof}

It can be checked that, conversely, every $\CCCi$-deformation of an 
$\SSS$-normal decomposition of an element~$\gg$ is again an $\SSS$-normal 
decomposition of~$\gg$ provided the relation $\CCCi \SSS \subseteq \SSSs$ is 
satisfied, but we shall not need that result here.

A direct application of Proposition~\ref{P:Unique} is that the number of 
non-invertible entries in an $\SSS$-normal decomposition of an element~$\gg$ 
does not depend on the choice of that decomposition: indeed, if $\gg$ and 
$\gg'$ are connected by an equality $\gg' \ee = \ee' \gg$ with $\ee, \ee'$ 
invertible, then $\gg$ is invertible if and only if $\gg'$ is invertible. So 
the following notion is non-ambiguous.

\begin{defi}
Assume that $\SSS$ is a subfamily of a left-cancellative category~$\CCC$. The 
\emph{$\SSS$-length} of an element~$\gg$ of~$\SSS$, written $\LGG\SSS\gg$, is 
the number of non-invertible entries in an $\SSS$-normal decomposition 
of~$\gg$ (if any).
\end{defi}

Of course, the number~$\LGG\SSS\gg$ is defined only if $\gg$ admits at least 
one $\SSS$-normal decomposition.

\begin{lemm}
\label{L:Length}
Assume that $\SSS$ is a subfamily of a left-cancellative category~$\CCC$. Then 
every element~$\gg$  of~$\CCC$  that admits an $\SSS$-normal decomposition 
admits an $\SSS$-normal decomposition of length~$\LGG\SSS\gg$.
\end{lemm}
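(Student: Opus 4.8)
The plan is to start from an arbitrary $\SSS$-normal decomposition of~$\gg$ and to absorb its trailing invertible entries into the last non-invertible entry, thereby producing a decomposition whose length equals the number of non-invertible entries. First I would fix any $\SSS$-normal decomposition $\seqqq{\gg_1}\etc{\gg_\qq}$ of~$\gg$. By the remark following Proposition~\ref{P:Unique}, the number of non-invertible entries of such a decomposition is independent of the chosen decomposition and equals $\ell := \LGG\SSS\gg$; and by Lemma~\ref{L:InvFirst}, these entries occur first, so that $\gg_1 \wdots \gg_\ell$ are non-invertible while $\gg_{\ell+1} \wdots \gg_\qq$ lie in~$\CCCi$. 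If $\qq = \ell$, the given decomposition already has the desired length, so I may assume $\qq > \ell$; I treat the main case $\ell \ge 1$ first.

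Next I would set $\ee := \gg_{\ell+1} \pdots \gg_\qq$, which is invertible as a product of invertible elements, and form the length-$\ell$ path $\seqqq{\gg_1}\etc{\gg_\ell\ee}$ obtained by discarding $\gg_{\ell+1} \wdots \gg_\qq$ and right-multiplying $\gg_\ell$ by~$\ee$. Its evaluation is $(\gg_1 \pdots \gg_\ell)\,\ee = \gg_1 \pdots \gg_\qq = \gg$, so it is again a decomposition of~$\gg$, and it remains to verify that it is $\SSS$-normal. For the membership condition, the first $\ell - 1$ entries are unchanged, and $\gg_\ell\ee$ lies in~$\SSSs$ because $\gg_\ell \in \SSSs$, $\ee \in \CCCi$, and $\SSSs\CCCi \subseteq \SSSs$ (indeed $\SSSs = \SSS\CCCi \cup \CCCi$ and a product of invertible elements is invertible); moreover $\gg_\ell\ee$ is non-invertible, since otherwise $\gg_\ell = (\gg_\ell\ee)\,\ee\inv$ would be invertible. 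For greediness, which is local, the pairs lying inside $\gg_1 \wdots \gg_{\ell-1}$ are untouched, so (when $\ell \ge 2$) only the pair $\seqq{\gg_{\ell-1}}{\gg_\ell\ee}$ must be checked: for $\hh$ in~$\SSS$ and $\ff$ in~$\CCC$, right-multiplication by the invertible~$\ee$ gives $\hh \dive \ff\gg_{\ell-1}\gg_\ell\ee$ if and only if $\hh \dive \ff\gg_{\ell-1}\gg_\ell$, and since $\seqq{\gg_{\ell-1}}{\gg_\ell}$ is $\SSS$-greedy the latter forces $\hh \dive \ff\gg_{\ell-1}$. Thus $\seqqq{\gg_1}\etc{\gg_\ell\ee}$ is $\SSS$-normal of length $\ell = \LGG\SSS\gg$.

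Finally, the case $\ell = 0$ is degenerate: here $\gg$ is invertible and all entries lie in~$\CCCi$. When $\gg$ is an identity-element the empty path is the desired length-$0$ decomposition; the status of a non-identity invertible~$\gg$ is the single boundary point I would check against the paper's conventions on invertible elements, since no path of length~$0$ can evaluate to it. Apart from this boundary, the statement is largely a bookkeeping consequence of Lemma~\ref{L:InvFirst} and Proposition~\ref{P:Unique}, and I expect the one substantive — though mild — point to be the verification that absorbing the invertible tail preserves greediness, which holds precisely because the left-divisibility of $\ff\gg_{\ell-1}\gg_\ell$ is insensitive to right-multiplication by the invertible~$\ee$.
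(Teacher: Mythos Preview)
Your proof is correct and takes the same approach as the paper's: collapse the trailing invertible entries into the last non-invertible one to obtain a path of length~$\LGG\SSS\gg$. The paper's version is terser and simply asserts that $\seqqqq{\gg_1}\etc{\gg_{\rr-1}}{\gg_\rr \pdots \gg_\qq}$ is $\SSS$-normal without spelling out the greediness check for the new final pair; that check follows either from Lemma~\ref{L:Grouping} applied to the subpath $\seqqq{\gg_{\rr-1}}\etc{\gg_\qq}$, or from your direct argument via the invertibility of~$\ee$. Your caution about $\ell = 0$ with $\gg$ a non-identity invertible element is legitimate: the paper's proof glosses over this case as well, and indeed no length-zero path can evaluate to such a~$\gg$; in the paper's applications (notably Lemma~\ref{L:LeftMult}) the invertible case is always handled separately, so the gap is harmless.
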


\begin{proof}
Assume that $\seqqq{\gg_1}\etc{\gg_\qq}$ is an $\SSS$-normal decomposition 
of~$\gg$. Let $\rr = \LGG\SSS\gg$. By definition, $\rr$ is the number of 
non-invertible entries in $\seqqq{\gg_1}\etc{\gg_\qq}$, so $\rr \le \qq$ 
holds. Assume $\rr < \qq$. By Lemma~\ref{L:InvFirst}, the entries $\gg_1 
\wdots \gg_\rr$ are non-invertible, whereas $\gg_{\rr+1} \wdots \gg_\qq$ are 
invertible. But, then, $\gg_\rr \pdots \gg_\qq$ is an element of~$\SSSs$, and 
therefore $\seqqqq{\gg_1}\etc{\gg_{\rr-1}}{\gg_\rr \pdots \gg_\qq}$ is an 
$\SSS$-normal decomposition of~$\gg$ that has length~$\rr$.
\end{proof}

We shall prove

\begin{prop}
\label{P:Length}
Assume that $\SSS$ is a subfamily of a left-cancellative category~$\CCC$. Then 
$\LGG\SSS\gg \le \rr$ holds for every element~$\gg$ in~$(\SSSs)^\rr$ that 
admits an $\SSS$-normal decomposition.
\end{prop}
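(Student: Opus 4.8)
The plan is to pit a shortest $\SSS$-normal decomposition of~$\gg$ against an arbitrary witness of $\gg\in(\SSSs)^\rr$, and to show the former can be no longer than the latter by \emph{peeling off} the factors of the latter one at a time. First I would fix, using Lemma~\ref{L:Length}, an $\SSS$-normal decomposition $\seqqq{\gg_1}\etc{\gg_\qq}$ of~$\gg$ with $\qq=\LGG\SSS\gg$; since its length equals its number~$\LGG\SSS\gg$ of non-invertible entries, all its entries are non-invertible (this is where the non-invertibility, needed at the very end, is secured). I would also write $\gg=\hh_1\pdots\hh_\rr$ with each~$\hh_\ii$ in~$\SSSs$. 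The goal is $\qq\le\rr$. The case $\rr=0$ is trivial ($\gg$ is then an identity-element, so $\qq=0$), and $\rr\ge\qq$ needs nothing; so I would assume $\rr\ge1$ and $\qq>\rr$ and derive a contradiction.

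The engine of the argument is an invariant equality maintained for $0\le\ii\le\rr$, namely $\hh_{\ii+1}\pdots\hh_\rr=\kk_\ii\,\gg_{\ii+1}\pdots\gg_\qq$ for suitable $\kk_\ii$ in~$\CCC$, starting from $\kk_0=\id\xx$ with $\xx$ the source of~$\gg$ (the case $\ii=0$ being just $\gg=\gg$). The crux is the inductive step: granting the stage-$(\ii-1)$ equality, I claim the length-two path $\seqq{\kk_{\ii-1}\gg_\ii}{\gg_{\ii+1}\pdots\gg_\qq}$ is $\SSS$-greedy. Indeed, the tail $\seqqq{\gg_\ii}\etc{\gg_\qq}$ is $\SSS$-greedy (greediness being local), so Lemma~\ref{L:Grouping} makes $\seqq{\gg_\ii}{\gg_{\ii+1}\pdots\gg_\qq}$ $\SSS$-greedy; and greediness of a length-two path survives left-multiplying its first entry by an arbitrary element, because testing the new path with a factor~$\ff$ in~\eqref{E:Greedy} amounts exactly to testing the old one with the factor~$\ff\kk_{\ii-1}$. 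This single observation, that the ``carry''~$\kk_{\ii-1}$ can be absorbed into the first entry without destroying greediness, is what I expect to be the real content of the proof; everything around it is bookkeeping.

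Granting the claim, Lemma~\ref{L:GreedyInv} upgrades the path to $\SSSs$-greedy, and since $\hh_\ii\in\SSSs$ left-divides $\hh_\ii\pdots\hh_\rr=(\kk_{\ii-1}\gg_\ii)(\gg_{\ii+1}\pdots\gg_\qq)$, greediness forces $\hh_\ii\dive\kk_{\ii-1}\gg_\ii$; writing $\kk_{\ii-1}\gg_\ii=\hh_\ii\kk_\ii$ and left-cancelling~$\hh_\ii$ advances the equality to stage~$\ii$. I would run this for $\ii=1\wdots\rr$, which is legitimate since $\ii\le\rr<\qq$ keeps each~$\gg_\ii$ available. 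At stage~$\rr$ the left-hand side is the empty product, i.e.\ $\id\yy$ with $\yy$ the target of~$\gg$, so $\id\yy=\kk_\rr\,\gg_{\rr+1}\pdots\gg_\qq$.

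To finish, note that this last equality exhibits a left-inverse of $\gg_{\rr+1}\pdots\gg_\qq$, so that element is invertible by Lemma~\ref{L:Inv}. Factoring it as $\gg_{\rr+1}\,(\gg_{\rr+2}\pdots\gg_\qq)$ then shows $\gg_{\rr+1}$ admits a right-inverse, hence is invertible, again by Lemma~\ref{L:Inv} (when $\qq=\rr+1$ this is immediate). This contradicts the fact that every entry of the chosen decomposition is non-invertible. Therefore $\qq>\rr$ is impossible, which gives $\LGG\SSS\gg=\qq\le\rr$, as wanted.
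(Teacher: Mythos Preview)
Your proof is correct and follows essentially the same approach as the paper's: both peel off the factors $\hh_1,\ldots,\hh_\rr$ one at a time using greediness together with Lemma~\ref{L:Grouping}, producing carry elements and concluding that the entries beyond position~$\rr$ are invertible. The only difference is packaging: the paper isolates the inductive step as a separate Lemma~\ref{L:Grouping2} (showing $\seqq{\gg_1\pdots\gg_\rr}{\gg_{\rr+1}\pdots\gg_\qq}$ is $(\SSSs)^\rr$-greedy) and applies it once to $\gg\in(\SSSs)^\rr$, whereas you inline the same induction and phrase the key point as ``greediness of $\seqq{a}{b}$ implies greediness of $\seqq{ka}{b}$''; your $\kk_\ii$'s are exactly the paper's $\ff'_\ii$'s.
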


We begin with a new auxiliary result about greedy paths. 

\begin{lemm}
\label{L:Grouping2}
Assume that $\SSS$ is a subfamily of a left-cancellative category~$\CCC$. If 
$\seqqq{\gg_1}\etc{\gg_\qq}$ is $\SSS$-greedy and $\rr < \qq$ holds, then 
$\seqq{\gg_1\pdots \gg_\rr}{\gg_{\rr+1}\pdots\gg_\qq}$ is $\SSS^\rr$-greedy.
\end{lemm}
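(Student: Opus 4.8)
The plan is to unfold the definition and then induct on~$\rr$. By definition, asserting that $\seqq{\gg_1 \pdots \gg_\rr}{\gg_{\rr+1} \pdots \gg_\qq}$ is $\SSS^\rr$-greedy amounts to the following implication, in which $\gg_1 \pdots \gg_\rr$ is the first block and $\gg_{\rr+1} \pdots \gg_\qq$ the second: for every~$\hh$ in~$\SSS^\rr$ and every~$\ff$ in~$\CCC$, the relation $\hh \dive \ff \gg_1 \pdots \gg_\qq$ implies $\hh \dive \ff \gg_1 \pdots \gg_\rr$. I would prove this by induction on~$\rr$, the essential feature being that the auxiliary left factor~$\ff$ is already part of the statement, so that the induction hypothesis can later be reused with a \emph{different} such factor. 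The base case $\rr = 1$ is nothing but the statement that $\seqq{\gg_1}{\gg_2 \pdots \gg_\qq}$ is $\SSS$-greedy, which is exactly Lemma~\ref{L:Grouping}.

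For the inductive step I would take $\rr \ge 2$, assume the result for $\rr - 1$, and fix~$\hh$ in~$\SSS^\rr$ written as $\hh = \hh_1 \pdots \hh_\rr$ with each~$\hh_\jj$ in~$\SSS$, together with~$\ff$ in~$\CCC$ satisfying $\hh \dive \ff \gg_1 \pdots \gg_\qq$. First I peel off~$\hh_1$: since $\hh_1 \dive \hh \dive \ff \gg_1 (\gg_2 \pdots \gg_\qq)$ and $\seqq{\gg_1}{\gg_2 \pdots \gg_\qq}$ is $\SSS$-greedy by Lemma~\ref{L:Grouping}, I obtain $\hh_1 \dive \ff \gg_1$, say $\hh_1 \ff_1 = \ff \gg_1$. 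Feeding this into $\hh_1 (\hh_2 \pdots \hh_\rr) \dive \ff \gg_1 \pdots \gg_\qq = \hh_1 (\ff_1 \gg_2 \pdots \gg_\qq)$ and left-cancelling~$\hh_1$, which is allowed since~$\CCC$ is left-cancellative, gives $\hh_2 \pdots \hh_\rr \dive \ff_1 \gg_2 \pdots \gg_\qq$. Now the shifted path $\seqqq{\gg_2}\etc{\gg_\qq}$ is still $\SSS$-greedy, because greediness is a local property; it has length $\qq - 1$, and $\rr - 1 < \qq - 1$ holds. As $\hh_2 \pdots \hh_\rr$ lies in~$\SSS^{\rr-1}$, the induction hypothesis applied to this path with~$\ff_1$ as auxiliary factor yields $\hh_2 \pdots \hh_\rr \dive \ff_1 \gg_2 \pdots \gg_\rr$. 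Left-multiplying by~$\hh_1$ and recalling $\hh_1 \ff_1 = \ff \gg_1$, I conclude $\hh \dive \hh_1 \ff_1 \gg_2 \pdots \gg_\rr = \ff \gg_1 \pdots \gg_\rr$, as required.

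The argument is essentially bookkeeping, and I expect the only delicate point to be the index arithmetic: one must verify that after discarding~$\hh_1$ and~$\gg_1$ one lands precisely on the $(\rr-1)$-instance of the claim for the shifted path $\seqqq{\gg_2}\etc{\gg_\qq}$, and that the hypothesis $\rr < \qq$ is inherited as $\rr - 1 < \qq - 1$. The conceptual point that makes the induction close is that the free factor~$\ff$ must be carried throughout: were it suppressed, there would be no way to feed the newly created factor~$\ff_1$ into the hypothesis for the shorter path. The remaining manipulations (transitivity of~$\dive$, left-cancellation, left-multiplication) are all routine in a left-cancellative category.
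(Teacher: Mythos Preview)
Your proof is correct and follows essentially the same route as the paper's: peel off the first factor~$\hh_1$ using Lemma~\ref{L:Grouping}, left-cancel, and repeat on the shifted path. The only cosmetic difference is that the paper writes this as a direct $\rr$-step iteration (``repeating the argument, we find after $\rr$~steps\dots'') rather than packaging it as an induction on~$\rr$, but the underlying mechanism is identical.
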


\begin{proof}
Assume $\hh \dive \ff \gg_1 \pdots \gg_\qq$ with $\hh \in \SSS^\rr$, say 
$\hh = \hh_1 \pdots \hh_\rr$ with $\hh_1 \wdots \hh_\rr \in \SSS$. By  
Lemma~\ref{L:Grouping}, $\seqq{\gg_1}{\gg_2 \pdots \gg_\qq}$ is $\SSS$-greedy, 
 hence the  assumption  implies $\hh_1 \dive \ff \gg_1$, so there 
exists~$\ff'_1$  satisfying $\hh_1 \ff'_1 = \ff \gg_1$. Using 
left-cancellativity, we deduce $\hh_2 \pdots \hh_\rr \dive \ff'_1 \gg_2 \pdots 
\gg_\qq$. By Lemma~\ref{L:Grouping} again, $\seqq{\gg_2}{\gg_3 \pdots 
\gg_\qq}$ is $\SSS$-greedy, and we deduce $\hh_2 \dive \ff'_1 \gg_2$, so there 
exists~$\ff'_2$  satisfying $\hh_2 \ff'_2 = \ff'_1 \gg_2$. Repeating the 
argument, we find after $\rr$~steps $\ff'_\rr$ satisfying $\hh_\rr \ff'_\rr = 
\ff'_{\rr-1} \gg_\rr$, and we deduce $\hh_1 \pdots \hh_\rr \ff'_\rr = \ff 
\gg_1 \pdots \gg_\rr$, whence $\hh \dive \ff \gg_1 \pdots \gg_\rr$. So 
$\seqq{\gg_1 \pdots \gg_\rr}{\gg_{\rr+1} \pdots\gg_\qq}$ is $\SSS^\rr$-greedy. 
\end{proof}

\begin{proof}[Proof of Proposition~\ref{P:Length}]
Assume that $\seqqq{\gg_1}\etc{\gg_\qq}$ is an $\SSS$-normal decomposition for 
an element~$\gg$ that belongs to~$(\SSSs)^\rr$. If $\qq \le \rr$ holds, then 
$\LGG\SSS\gg \le \rr$ is trivial. Assume now $\qq > \rr$. By 
Lemma~\ref{L:GreedyInv}, the path $\seqqq{\gg_1}\etc{\gg_\qq}$ is 
$\SSSs$-greedy so, by Lemma~\ref{L:Grouping2} (applied with~$\SSSs$), the 
sequence $\seqq{\gg_1 \pdots \gg_\rr}{\gg_{\rr+1} \pdots \gg_\qq}$ is 
$(\SSSs)^\rr$-greedy. As $\gg$ belongs to~$(\SSSs)^\rr$ and it left-divides 
$\gg_1 \pdots \gg_\qq$, we deduce that $\gg$ left-divides $\gg_1 \pdots 
\gg_\rr$, say $\gg \ff =\nobreak \gg_1 \pdots \gg_\rr$. As we have $\gg = 
\gg_1 \pdots 
\gg_\qq$, by left-cancelling $\gg_1 \pdots \gg_\rr$, we deduce 
$\gg_{\rr+1} \pdots \gg_\qq \ff = \id\yy$, where $\yy$ is the target of~$\ff$. It 
follows that  $\gg_{\rr+1} \wdots \gg_\qq$  all must be invertible. So the 
non-invertible entries are among $\gg_1 \wdots \gg_\rr$, and $\LGG\SSS\gg \le 
\rr$ follows.
\end{proof}

\subsection{Garside families}
\label{SS:Gar}

After uniqueness, we now address the existence of $\SSS$-normal 
decompositions. If every element of the ambient category admits an 
$\SSS$-normal decomposition, then, clearly, the family~$\SSSs$ must generate 
the category. However this necessary condition is not sufficient in general, 
and we shall introduce Garside families precisely as those families that 
guarantee the existence of normal decompositions.

\begin{defi}
\label{D:Gar}
A \emph{Garside family} in a left-cancellative category~$\CCC$ is a 
subfamily~$\SSS$ of~$\CCC$ such that every element of~$\CCC$ admits an 
$\SSS$-normal decomposition.
\end{defi}

Garside families exist in every category: indeed, if $\CCC$ is any 
left-cancellative category, then $\CCC$ is a Garside family in itself since, 
for every~$\gg$ in~$\CCC$, the length one path~$\seq\gg$ is a $\CCC$-normal 
decomposition of~$\gg$. On the other hand, it may happen that $\CCC$ is the 
only Garside family in~$\CCC$: for instance, this is the case for the monoid 
$\PRESp{\tta, \ttb}{\tta\ttb = \ttb\tta, \tta^2 = \ttb^2}$ as, 
anticipating on Definition~\ref{D:RMClosed} and Proposition~\ref{P:RecGarII}, 
one can see using induction on~$\rr \ge 0$ that every subset that is closed 
under right-comultiple and contains~$1, \tta$, and~$\ttb$ 
contains~$\tta^{\rr+1}$ and~$\tta^\rr\ttb$ for every~$\rr$, hence coincides 
with the whole monoid.  

The connection with the notion of a Garside monoid as developed in~\cite{Dfx, 
Dgk} is easily described:

\begin{prop}
Assume that $\MM$ is a Garside monoid with Garside element~$\Delta$. Then the 
family~$\Div(\Delta)$ of all divisors of~$\Delta$ is a Garside family in~$\MM$.
\end{prop}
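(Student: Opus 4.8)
The plan is to produce, for each~$\gg$ in~$\MM$, the classical greedy normal form by iterating the extraction of a \emph{head}, and to certify its greediness through Lemma~\ref{L:GreedyClosed}. I first recall the features of a Garside monoid that I shall use: $\MM$ is cancellative and atomic, it has no nontrivial invertible element (so that, for $\SSS = \Div(\Delta)$, the family~$\SSSs$ equals~$\SSS$), its left-divisibility relation~$\dive$ is a lattice order (left-gcds $\gcd$ and left-lcms $\lcm$ exist), and $\Delta$ is a Garside element, so that $\Div(\Delta)$ is a finite family generating~$\MM$ and, crucially, the left-divisors and the right-divisors of~$\Delta$ coincide.

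I would next check that $\SSS = \Div(\Delta)$ meets the hypotheses of Lemma~\ref{L:GreedyClosed}, which then lets me replace greediness by the weaker condition~\eqref{E:Greedy2}. Generation of~$\MM$ by $\SSSs = \SSS$ is immediate since $\Div(\Delta)$ generates~$\MM$. For closure under right-complement, take $\ff, \gg \in \SSS$ with $\ff, \gg \dive \hh$ and set $\ell = \ff \lcm \gg$. Since $\ff$ and $\gg$ both divide~$\Delta$, the element~$\Delta$ is a common right-multiple, so $\ell \dive \Delta$, that is, $\ell \in \SSS$; likewise $\ell \dive \hh$. Writing $\ell = \ff\gg' = \gg\ff'$, the heart of the matter is that $\gg'$ and $\ff'$ again lie in~$\SSS$: introducing the right-complement $\bar\ff$ defined by $\Delta = \ff\bar\ff$, the relation $\ff\gg' = \ell \dive \Delta = \ff\bar\ff$ gives $\gg' \dive \bar\ff$ after left-cancelling~$\ff$; as $\bar\ff$ is a right-divisor of~$\Delta$, the symmetric-divisor property of the Garside element makes it a left-divisor of~$\Delta$, whence $\gg' \dive \bar\ff \dive \Delta$ and $\gg' \in \SSS$, and symmetrically $\ff' \in \SSS$. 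This establishes~\eqref{E:RCClosed}.

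With this in hand I would build the decomposition. For $\gg \ne 1$, let its head be $H(\gg) = \gg \gcd \Delta$, the greatest common left-divisor of~$\gg$ and~$\Delta$; it lies in $\Div(\Delta) = \SSS$, and it is nontrivial because any atom left-dividing~$\gg$ also divides~$\Delta$, hence divides~$H(\gg)$. Cancelling, I write $\gg = H(\gg)\,\gg'$ and iterate on~$\gg'$; atomicity makes the process terminate, as passing from~$\gg$ to~$\gg'$ strictly decreases the atomic length, so after finitely many steps I reach the identity and obtain a factorization $\gg = s_1 \pdots s_\nn$ with all $s_\ii \in \SSS$ and, by construction, $s_\ii = (s_\ii \pdots s_\nn) \gcd \Delta$.

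Finally I would verify that the path $\seqqq{s_1}\etc{s_\nn}$ is $\SSS$-greedy. As greediness is local and, by Lemma~\ref{L:GreedyClosed}, equivalent to~\eqref{E:Greedy2}, it suffices to show for each~$\ii$ that $\hh \in \SSS$ and $\hh \dive s_\ii s_{\ii+1}$ force $\hh \dive s_\ii$. But $s_\ii s_{\ii+1}$ left-divides the tail $s_\ii \pdots s_\nn$, so $\hh \dive s_\ii \pdots s_\nn$, while $\hh \in \Div(\Delta)$ gives $\hh \dive \Delta$; hence $\hh$ divides the left-gcd $(s_\ii \pdots s_\nn) \gcd \Delta = s_\ii$. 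Thus $\seqqq{s_1}\etc{s_\nn}$ is an $\SSS$-normal decomposition of~$\gg$, and $\Div(\Delta)$ is a Garside family. I expect the main obstacle to be the closure-under-right-complement step, whose only nonformal ingredient is the symmetric-divisor property of~$\Delta$; the termination and the final greediness check are then routine.
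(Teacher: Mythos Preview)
Your proposal is correct and follows essentially the same route as the paper: construct the decomposition by iterating the head $H(\gg)=\gg\gcd\Delta$, invoke atomicity for termination, verify that $\Div(\Delta)$ is closed under right-complement via the symmetric-divisor property of~$\Delta$, and then use Lemma~\ref{L:GreedyClosed} to reduce greediness to the defining gcd property of the head. The only cosmetic difference is that you witness right-complement closure with the left-lcm $\ff\lcm\gg$, whereas the paper uses $\hh\gcd\Delta$; both arguments are equivalent.
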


\begin{proof}
Let $\gg$ be an element of $\MM \setminus \{1\}$. Let $\gg_1$ be the left-gcd 
of~$\gg$ and~$\Delta$. Then $\gg_1$ belongs to~$\Div(\Delta)$, and $\gg_1$ 
left-divides~$\gg$, say $\gg = \gg_1 \gg'$. If $\gg_1$ is not~$1$, we repeat 
the argument, finding a decomposition $\gg' = \gg_2 \gg''$ with 
$\gg_2$ the left-gcd of~$\gg'$ and~$\Delta$, and so on. The assumption that $\MM$ 
is 
atomic guarantees that the construction will stop after finitely many steps 
and one finds a decomposition $\gg = \gg_1 \pdots \gg_\qq$ for~$\gg$ in terms 
of divisors of~$\Delta$. 

So it remains to see that the sequence $\seqqq{\gg_1}\etc{\gg_\qq}$ is 
$\Div(\Delta)$-greedy. Now, we claim that the family~$\Div(\Delta)$ satisfies 
the conditions of Lemma~\ref{L:GreedyClosed}. Indeed, by definition of a 
Garside element, $\Div(\Delta)$ generates~$\MM$. On the other hand, assume 
$\ff, \gg \dive \Delta$ and $\ff, \gg \dive \hh$. Then $\ff$ and $\gg$ 
left-divide the left-gcd~$\hh_1$ of~$\hh$ and~$\Delta$, so we have $\ff \gg' = 
\gg \ff' \dive \hh_1$ for some~$\ff', \gg'$. By construction, $\hh_1$ belongs 
to~$\Div(\Delta)$, and so do $\ff'$ and $\gg'$, since all right-divisors 
of~$\Delta$ also are left-divisors. Hence $\Div(\Delta)$ is closed under 
right-complement. Now, assume $\hh \dive \gg_1 \pdots \gg_\qq$ with $\hh \in 
\Div(\Delta)$. Then $\hh$ left-divides the left-gcd of~$\gg_1 \pdots 
\gg_\qq$ 
and~$\Delta$, which is precisely~$\gg_1$. By Lemma~\ref{L:GreedyClosed}, we 
deduce that $\seqq{\gg_1}{\gg_2 \pdots \gg_\qq}$ is $\Div(\Delta)$-greedy and, 
\emph{a fortiori}, so is $\seqq{\gg_1}{\gg_2}$. Applying the same argument 
to~$\gg', \gg''$... inductively shows that $\seqq{\gg_\ii}{\gg_{\ii+1}}$ is 
$\Div(\Delta)$-greedy as well. 
So the sequence $\seqqq{\gg_1}\etc{\gg_\qq}$ is $\Div(\Delta)$-normal, every 
element of~$\MM$ admits a $\Div(\Delta)$-normal decomposition, and 
$\Div(\Delta)$ is a Garside family in~$\MM$.
\end{proof}

\section{Recognizing Garside families}
\label{S:RecGar}

Definition~\ref{D:Gar} gives no practical criterion for recognizing Garside 
families. Our aim for now on will be to establish various characterizations of 
such families, which amounts to establishing various necessary and sufficient 
conditions guaranteeing the existence of normal decompositions. 

\subsection{Recognizing Garside families I: incremental approach}
\label{SS:Increm}

The first characterization relies on the possibility of constructing 
$\SSS$-normal decompositions using an induction.

\begin{prop}
\label{P:RecGarI}
A subfamily~$\SSS$ of a left-cancellative category~$\CCC$ is a Garside family 
if and only if 
\begin{equation}
\label{E:RecGarI}
\BOX{\VR(3,1)$\SSSs$ generates~$\CCC$ and every element of $(\SSSs)^2$ admits 
an $\SSS$-normal decomposition.}
\end{equation} 
\end{prop}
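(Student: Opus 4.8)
The forward implication is immediate. If $\SSS$ is a Garside family, then every element of~$\CCC$ admits an $\SSS$-normal decomposition; since the entries of such a decomposition lie in~$\SSSs$, every element of~$\CCC$ is a product of elements of~$\SSSs$, so $\SSSs$ generates~$\CCC$, and \emph{a fortiori} every element of the subfamily~$(\SSSs)^2$ admits an $\SSS$-normal decomposition. Thus \eqref{E:RecGarI} holds, and the whole content of the statement is the converse.

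So assume \eqref{E:RecGarI}. As $\SSSs$ generates~$\CCC$, every element of~$\CCC$ belongs to~$(\SSSs)^\pp$ for some~$\pp \ge 1$, and I would prove by induction on~$\pp$ that every element of~$(\SSSs)^\pp$ admits an $\SSS$-normal decomposition. The cases $\pp = 1$ (a length-one path~$\seq\gg$ with~$\gg \in \SSSs$ is vacuously $\SSS$-greedy) and $\pp = 2$ (the second clause of~\eqref{E:RecGarI}) are immediate. For the induction step one writes an element of~$(\SSSs)^{\pp}$ as~$\ff\hh$ with $\ff \in (\SSSs)^{\pp-1}$ and $\hh \in \SSSs$, takes an $\SSS$-normal decomposition~$\seqqq{\gg_1}\etc{\gg_\qq}$ of~$\ff$ supplied by the induction hypothesis, and appeals to the following \emph{appending lemma}: right-multiplying an $\SSS$-normal path by a single element of~$\SSSs$ yields an element that again admits an $\SSS$-normal decomposition. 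Everything reduces to this lemma.

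To prove it I would reconstruct a normal decomposition of $\gg_1\pdots\gg_\qq\hh$ by assembling a ladder of commuting squares having the given normal path as one rail. Each rung is produced by the hypothesis: a product of two elements of~$\SSSs$ lies in~$(\SSSs)^2$ and hence, by~\eqref{E:RecGarI}, factors as a greedy head times a remainder, both in~$\SSSs$ (by Proposition~\ref{P:Length} its $\SSS$-length is at most~$2$, and Lemma~\ref{L:InvFirst} lets one normalize the lengths with invertible padding). Sweeping through the entries~$\gg_\qq, \ldots, \gg_1$ and factoring the two-element pieces one after another produces connectors in~$\SSSs$ together with a second rail~$\seqqq{\gg_1'}\etc{\gg_\qq'}$ whose product, after adjunction of the final connector, equals~$\gg_1\pdots\gg_\qq\hh$ and whose entries lie in~$\SSSs$; this is a candidate decomposition of length at most~$\qq+1$. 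Throughout, Lemma~\ref{L:GreedyInv} lets one pass freely between $\SSS$- and $\SSSs$-greediness, and Lemma~\ref{L:Grouping} reduces greediness of the whole candidate to that of its length-two junctions.

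The main obstacle is precisely to show that this new rail is $\SSS$-greedy. The difficulty is that greediness of a freshly normalized two-element corner does \emph{not} by itself survive when a neighbouring entry is enlarged: testing~\eqref{E:Greedy} with an arbitrary prefactor~$\ff$ fails unless $\ff$ happens to factor through the relevant connector, so naively normalizing each rung in isolation triggers an uncontrolled leftward cascade. The correct remedy is a \emph{domino}-type lemma by which greediness is \emph{transported} from the already-normal top rail to the new bottom rail through the commuting squares, using that every vertical connector lies in~$\SSSs$ and that the corner pairs are themselves greedy. Establishing this transfer of greediness along a grid of commuting squares — rather than normalizing the squares independently — is the technical heart of the argument; once it is in place, the ladder yields the desired $\SSS$-normal decomposition and closes the induction, the remaining manipulations being routine bookkeeping with invertible elements via Lemmas~\ref{L:GreedyInv} and~\ref{L:InvFirst}.
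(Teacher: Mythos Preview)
Your forward direction and the overall architecture (induction on the number of $\SSSs$-factors, a ladder of commuting squares, a domino-type transfer of greediness) are exactly right. The gap is in the direction you chose for the induction step: you split $g = fh$ with $f \in (\SSSs)^{p-1}$ and $h \in \SSSs$ and try to \emph{append} $h$ on the right to a normal decomposition of~$f$, sweeping from~$g_\qq$ to~$g_1$. The paper does the opposite: it splits $g = fh$ with $f \in \SSSs$ and $h \in (\SSSs)^{p-1}$ and \emph{prepends} $f$ on the left (Lemma~\ref{L:LeftMult}), sweeping from~$g_1$ to~$g_\qq$.

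This is not a cosmetic difference. The domino rule that the paper proves (Lemma~\ref{L:Domino}) says: if the \emph{bottom} pair $\seqq{\gg_1}{\gg_2}$ is greedy and the corner $\seqq{\gg'_1}{\ff_1}$ is greedy, then the \emph{top} pair $\seqq{\gg'_1}{\gg'_2}$ is greedy. That is, greediness is pushed from the old rail to the new rail when the new rail sits \emph{above} the old one, which is exactly what happens under \emph{left} multiplication. The proof works because $\gg'_1\gg'_2$ left-divides $\ff_0\gg_1\gg_2$, so any $s \dive f\gg'_1\gg'_2$ automatically gives $s \dive (f\ff_0)\gg_1\gg_2$, and one can then invoke greediness of the bottom. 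For your right-multiplication sweep, the new rail sits \emph{below} the old one, and you would need the reverse transfer: top greedy plus corner greedy implies bottom greedy. But now $s \dive f\gg_1\gg_2$ does \emph{not} yield $s \dive (\text{anything})\,\gg'_1\gg'_2$, because $\gg_1\gg_2$ does not left-divide $\gg'_1\gg'_2$; the relation goes the wrong way. This ``second domino rule'' is indeed a theorem in Garside theory, but its proof requires closure properties of~$\SSSs$ (closure under right-divisor, or the existence of $\SSS$-heads) that are only established \emph{after} Proposition~\ref{P:RecGarI}, in Lemma~\ref{L:Closed}. So the lemma you identify as ``the technical heart'' is genuinely unavailable at this point, and the approach does not close.

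The fix is simply to reverse the direction: write $g = f g'$ with $f \in \SSSs$ on the left, take a normal decomposition of $g' \in (\SSSs)^{p-1}$ by induction, and build the ladder left-to-right as in Lemma~\ref{L:LeftMult}. Then Lemma~\ref{L:Domino} applies verbatim at each square, and the argument goes through with no extra hypotheses beyond left-cancellativity.
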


As already noted, if $\SSS$ is a Garside family in~$\CCC$, then, by 
definition, every element of~$\CCC$ admits a decomposition in which every 
entry lies in~$\SSSs$, so $\SSSs$ must generate~$\CCC$, and every element 
admits an $\SSS$-normal decomposition hence, in particular, so does every 
element of~$(\SSSs)^2$. So every Garside family necessarily 
satisfies~\eqref{E:RecGarI}, and the point is to prove that, conversely, every 
family satisfying~\eqref{E:RecGarI} is a Garside family. We begin with 
preparatory results.

\begin{lemm}[``domino rule'']
\label{L:Domino}
\rightskip40mm
\begin{picture}(0,0)(-47,12)
\psarc[style=thin](15,0){3}{180}{360}
\psarc[style=thin](15,12){3.5}{180}{270}
\psarc[style=thinexist](15,12){3}{0}{180}
\pcline{->}(1,0)(14,0)
\tbput{$\gg_1$}
\pcline{->}(16,0)(29,0)
\tbput{$\gg_2$}
\pcline{->}(1,12)(14,12)
\taput{$\gg'_1$}
\pcline{->}(16,12)(29,12)
\taput{$\gg'_2$}
\pcline{->}(0,11)(0,1)
\trput{$\ff_0$}
\pcline{->}(15,11)(15,1)
\trput{$\ff_1$}
\pcline{->}(30,11)(30,1)
\tlput{$\ff_2$}
\end{picture}
Assume that $\SSS$ is a subfamily of a left-cancellative category~$\CCC$, and 
we have a commutative diagram with edges in~$\CCC$ as on the right. If 
$\seqq{\gg_1}{\gg_2}$ and $\seqq{\gg'_1}{\ff_1}$ are $\SSS$-greedy, then 
$\seqq{\gg'_1}{\gg'_2}$ is $\SSS$-greedy as well.
\end{lemm}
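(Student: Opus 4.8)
The plan is a direct divisibility chase that uses right-multiplication to transport the greediness condition for the top path $\seqq{\gg'_1}{\gg'_2}$ down to the already-known greediness of the bottom path $\seqq{\gg_1}{\gg_2}$. Reading the two squares of the diagram, commutativity gives the relations $\ff_0 \gg_1 = \gg'_1 \ff_1$ and $\ff_1 \gg_2 = \gg'_2 \ff_2$, and these are the only inputs besides the two greediness hypotheses.

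To verify that $\seqq{\gg'_1}{\gg'_2}$ is $\SSS$-greedy, I would fix $\hh \in \SSS$ and $\ff \in \CCC$ with $\hh \dive \ff \gg'_1 \gg'_2$, aiming to prove $\hh \dive \ff \gg'_1$. First I would right-multiply by~$\ff_2$: since left-divisibility is preserved under right-multiplication, $\hh \dive \ff \gg'_1 \gg'_2 \ff_2$. Now the two commutation relations rewrite this product along the bottom of the diagram, namely $\ff \gg'_1 \gg'_2 \ff_2 = \ff \gg'_1 \ff_1 \gg_2 = \ff \ff_0 \gg_1 \gg_2$, so that $\hh \dive (\ff \ff_0) \gg_1 \gg_2$.

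At this point the greediness of $\seqq{\gg_1}{\gg_2}$ applies with the auxiliary factor $\ff\ff_0$, yielding $\hh \dive (\ff \ff_0) \gg_1$. Rewriting once more via $\ff_0 \gg_1 = \gg'_1 \ff_1$ gives $\hh \dive (\ff \gg'_1) \ff_1$, and now the greediness of $\seqq{\gg'_1}{\ff_1}$ applies with the factor~$\ff$, delivering exactly $\hh \dive \ff \gg'_1$. As $\hh$ and $\ff$ were arbitrary, $\seqq{\gg'_1}{\gg'_2}$ is $\SSS$-greedy.

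The only real subtlety — the step that makes the lemma work rather than being a routine manipulation — is the opening move of appending $\ff_2$ on the right. It is harmless for the divisibility hypothesis, yet it precisely converts an assertion about the top path into one about the bottom path, at which point the two greediness hypotheses chain together: greediness of the bottom path strips off $\gg_2$, and greediness of the left vertical square strips off $\ff_1$. I would also note in passing that all the displayed products are defined (in particular $\ff\ff_0$ exists because $\gg'_1$ and $\ff_0$ share the source of the top-left vertex), so no source/target mismatch arises.
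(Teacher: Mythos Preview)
Your proof is correct and follows exactly the same route as the paper's: right-multiply by~$\ff_2$ to reach $\hh \dive \ff\ff_0\gg_1\gg_2$, apply greediness of $\seqq{\gg_1}{\gg_2}$ to strip~$\gg_2$, rewrite via the left square, and apply greediness of $\seqq{\gg'_1}{\ff_1}$ to strip~$\ff_1$. The paper compresses your first two steps into the single phrase ``as the diagram is commutative, we have $\hh \dive \ff\ff_0\gg_1\gg_2$'', but your explicit mention of the right-multiplication by~$\ff_2$ is a welcome clarification of that passage.
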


\begin{proof}
Assume $\hh \in \SSS$ and $\hh \dive \ff \gg'_1 \gg'_2$. As the diagram is 
commutative, we have $\hh \dive \ff \ff_0 \gg_1 \gg_2$. As 
$\seqq{\gg_1}{\gg_2}$ is $\SSS$-greedy , we deduce $\hh \dive \ff \ff_0 
\gg_1$, hence $\hh \dive \ff \gg'_1 \ff_1$. Now, as $\seqq{\gg'_1}{\ff_1}$ is 
$\SSS$-greedy, we deduce $\hh \dive \ff \gg'_1$. Therefore 
$\seqq{\gg'_1}{\gg'_2}$ is $\SSS$-greedy.
\end{proof}

\begin{lemm}
\label{L:LeftMult}
Assume that $\SSS$ is a subfamily of a left-cancellative category~$\CCC$ that 
satisfies~\eqref{E:RecGarI} and $\gg$ is an element of~$\CCC$ that admits an 
$\SSS$-normal decomposition of length~$\qq$. Then, for every~$\ff$ in~$\SSSs$, 
the element~$\ff \gg$ admits an $\SSS$-normal decomposition of length $\qq+1$ 
whenever it is defined. Moreover, we have $\LGG\SSS\gg \le \LGG\SSS{\ff\gg} 
\le \LGG\SSS\gg + 1$.
\end{lemm}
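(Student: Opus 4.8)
The plan is to build an $\SSS$-normal decomposition of~$\ff\gg$ explicitly, by pushing $\ff$ across the given decomposition of~$\gg$ one entry at a time, in a ``domino ladder''. Write $\seqqq{\gg_1}\etc{\gg_\qq}$ for the given length-$\qq$ $\SSS$-normal decomposition of~$\gg$ and set $\ff_0 = \ff$ (the construction presupposes, as in the statement, that $\ff\gg$ is defined). First I would construct inductively, for $\ii$ from~$1$ to~$\qq$, elements $\gg'_\ii, \ff_\ii$ of~$\SSSs$ with $\ff_{\ii-1}\gg_\ii = \gg'_\ii\ff_\ii$ and $\seqq{\gg'_\ii}{\ff_\ii}$ $\SSS$-greedy. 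This is legitimate because $\ff_{\ii-1}$ and $\gg_\ii$ both lie in~$\SSSs$, so $\ff_{\ii-1}\gg_\ii$ belongs to~$(\SSSs)^2$; by~\eqref{E:RecGarI} it admits an $\SSS$-normal decomposition, which by Proposition~\ref{P:Length} has $\SSS$-length at most~$2$, so by Lemma~\ref{L:Length} (padding with identity-elements at the end if needed, which preserves greediness by Lemma~\ref{L:InvFirst}\ITEM1) it admits a two-entry $\SSS$-normal decomposition $\seqq{\gg'_\ii}{\ff_\ii}$. Substituting the relations $\ff_{\ii-1}\gg_\ii = \gg'_\ii\ff_\ii$ in turn yields $\ff\gg = \gg'_1 \pdots \gg'_\qq \ff_\qq$, so that $\seqqqq{\gg'_1}\etc{\gg'_\qq}{\ff_\qq}$ is a decomposition of~$\ff\gg$ of length $\qq+1$ with all entries in~$\SSSs$.

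The heart of the argument is to show that this decomposition is $\SSS$-greedy. Since greediness is local, it suffices to check each length-two subpath. The last one, $\seqq{\gg'_\qq}{\ff_\qq}$, is $\SSS$-greedy by construction. For $\seqq{\gg'_\ii}{\gg'_{\ii+1}}$ with $\ii < \qq$, I would apply the domino rule (Lemma~\ref{L:Domino}) to the two adjacent commutative squares $\ff_{\ii-1}\gg_\ii = \gg'_\ii\ff_\ii$ and $\ff_\ii\gg_{\ii+1} = \gg'_{\ii+1}\ff_{\ii+1}$: the bottom path $\seqq{\gg_\ii}{\gg_{\ii+1}}$ is $\SSS$-greedy, being a subpath of the $\SSS$-normal decomposition of~$\gg$, and $\seqq{\gg'_\ii}{\ff_\ii}$ is $\SSS$-greedy by construction, whence the rule gives that the top path $\seqq{\gg'_\ii}{\gg'_{\ii+1}}$ is $\SSS$-greedy. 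This repeated application of the domino rule is where the real content lies, although the rule itself reduces each junction to a one-line check; the only delicate point is to align the indices so that the rule applies at every junction. This proves that $\ff\gg$ admits an $\SSS$-normal decomposition of length $\qq+1$.

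For the ``moreover'' statement, I would run the same construction starting from a decomposition of~$\gg$ of length exactly~$\LGG\SSS\gg$, which exists by Lemma~\ref{L:Length} and, by definition of the $\SSS$-length, has all its entries non-invertible. The key observation is that non-invertibility propagates upward through the ladder: if $\gg_\ii$ is non-invertible, then so is~$\gg'_\ii$. Indeed, if $\gg'_\ii$ were invertible, then since $\seqq{\gg'_\ii}{\ff_\ii}$ is $\SSS$-greedy and $\ff_\ii \in \SSSs$, Lemma~\ref{L:InvFirst}\ITEM2 would force $\ff_\ii$ invertible; then $\ff_{\ii-1}\gg_\ii = \gg'_\ii\ff_\ii$ would be invertible, and a product being invertible forces its right-hand factor invertible (the factor acquires a left-inverse, hence is invertible by Lemma~\ref{L:Inv}), contradicting that $\gg_\ii$ is non-invertible. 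Consequently all the entries~$\gg'_\ii$ produced from the $\LGG\SSS\gg$ non-invertible entries~$\gg_\ii$ are themselves non-invertible, which gives $\LGG\SSS{\ff\gg} \ge \LGG\SSS\gg$; and since the constructed normal decomposition of~$\ff\gg$ has only $\LGG\SSS\gg + 1$ entries, $\LGG\SSS{\ff\gg} \le \LGG\SSS\gg + 1$. By Proposition~\ref{P:Unique} these counts do not depend on the chosen decomposition, so the stated inequalities follow.
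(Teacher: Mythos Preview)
Your proof is correct and follows essentially the same approach as the paper's: the same domino-ladder construction pushing $\ff$ across the decomposition one entry at a time, the same appeal to the domino rule (Lemma~\ref{L:Domino}) at each junction, and the same propagation-of-non-invertibility argument via Lemma~\ref{L:InvFirst}\ITEM2 for the length inequalities. Your write-up is slightly more explicit in places (the padding by identity-elements, the invocation of Lemma~\ref{L:Inv}), but there is no substantive difference.
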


\begin{proof}
(See Figure~\ref{F:LeftMult}.) Assume that $\seqqq{\gg_1}\etc{\gg_\qq}$ is an 
$\SSS$-normal decomposition of~$\gg$. Put $\ff_0 = \ff$. By~\eqref{E:RecGarI} 
and Proposition~\ref{P:Length}, the element~$\ff_0 \gg_1$ of~$(\SSSs)^2$ 
admits an $\SSS$-normal decomposition of length two, say 
$\seqq{\gg'_1}{\ff_1}$. Then, similarly, the element~$\ff_1 \gg_2$ 
of~$(\SSSs)^2$ admits an $\SSS$-normal decomposition of length two, say 
$\seqq{\gg'_2}{\ff_2}$, and so on. Finally, $\ff_{\qq-1} \gg_\qq$ admits an 
$\SSS$-normal decomposition of length two, say $\seqq{\gg'_\qq}{\ff_\qq}$. By 
construction, $\seqqqq{\gg'_1}\etc{\gg'_\qq}{\ff_\qq}$ is a decomposition 
of~$\ff \gg$, and its entries lie in~$\SSSs$. Moreover, for $1 \le \ii < \qq$, 
the paths $\seqq{\gg_\ii}{\gg_{\ii+1}}$ and $\seqq{\gg'_\ii}{\ff_\ii}$ are 
$\SSS$-greedy, so the domino rule (Lemma~\ref{L:Domino}) implies that 
$\seqq{\gg'_\ii}{\gg'_{\ii+1}}$ is $\SSS$-greedy as well. Thus 
$\seqqqq{\gg'_1}\etc{\gg'_\qq}{\ff_\qq}$ is $\SSS$-greedy, hence $\SSS$-normal.

As for the $\SSS$-length, we can assume $\qq = \LGG\SSS\gg$, which is always 
possible by Lemma~\ref{L:Length}. Then the above argument provides an 
$\SSS$-normal decomposition of length~$\qq + 1$ for~$\ff\gg$, implying 
$\LGG\SSS{\ff\gg} \le \LGG\SSS\gg + 1$. If $\qq = 0$ holds, that is, if $\gg$ 
is invertible, then $\LGG\SSS\gg \le \LGG\SSS{\ff\gg}$ is trivial. Otherwise, 
we have $\qq \ge 1$ and, by assumption, the entries~$\gg_1 \wdots \gg_\qq$ are 
not invertible. This implies that $\gg'_1 \wdots \gg'_\qq$ are not invertible 
either: indeed, by Lemma~\ref{L:InvFirst}, the assumption that 
$\seqq{\gg'_\ii}{\ff_\ii}$ is $\SSS$-normal implies that $\ff_\ii$ is 
invertible whenever $\gg'_\ii$ is, so that $\ff_{\ii-1}\gg_\ii$, hence 
$\gg_\ii$, must be invertible, contrary to the assumption. Hence $\LGG\SSS\gg 
\le \LGG\SSS{\ff\gg}$ is always satisfied.
\end{proof}

\begin{figure}[htb]
\begin{picture}(45,18)(0,-2)
\pcline{->}(-14,0)(-1,0)
\tbput{$\ff$}
\pcline{->}(1,0)(14,0)
\tbput{$\gg_1$}
\pcline{->}(16,0)(29,0)
\tbput{$\gg_2$}
\psline[style=etc](33,0)(41,0)
\pcline{->}(46,0)(59,0)
\tbput{$\gg_\qq$}

\pcline{->}(1,12)(14,12)
\taput{$\gg'_1$}
\pcline{->}(16,12)(29,12)
\taput{$\gg'_2$}
\psline[style=etc](33,12)(41,12)
\pcline{->}(46,12)(59,12)
\taput{$\gg'_\qq$}

\pcline{->}(0,11)(0,1)
\trput{$\ff_0$}
\pcline{->}(15,11)(15,1)
\trput{$\ff_1$}
\pcline{->}(30,11)(30,1)
\trput{$\ff_2$}
\pcline{->}(45,11)(45,1)
\trput{$\ff_{\qq-1}$}
\pcline{->}(60,11)(60,1)
\trput{$\ff_\qq$}

\psline[style=double](-15,1)(-15,5)
\psarc[style=double](-8,5){7}{90}{180}
\psline[style=double](-8,12)(-1,12)

\psarc[style=thin](14.5,0){3}{180}{360}
\psarc[style=thin](29.5,0){3}{180}{360}
\psarc[style=thin](44.5,0){3}{180}{360}
\psarc[style=thinexist](14.5,12){3}{0}{180}
\psarc[style=thinexist](29.5,12){3}{0}{180}
\psarc[style=thinexist](44.5,12){3}{0}{180}
\psarc[style=thin](15,12){3}{180}{270}
\psarc[style=thin](30,12){3}{180}{270}
\psarc[style=thin](60,12){3}{180}{270}
\end{picture}
\caption[]{\sf\smaller Proof of Lemma~\ref{L:LeftMult}: starting from~$\ff$ 
in~$\SSSs$ and an $\SSS$-normal decomposition of~$\gg$, we inductively build 
an $\SSS$-normal decomposition of~$\ff\gg$.}
\label{F:LeftMult}
\end{figure}

We can now complete the argument.

\begin{proof}[Proof of Proposition~\ref{P:RecGarI}]
We already noted that, if $\SSS$ is a Garside family in~$\CCC$, then 
\eqref{E:RecGarI} is satisfied. 

Conversely, assume that $\SSS$ satisfies~\eqref{E:RecGarI}. We prove using 
induction on~$\rr$ that every element~$\gg$ of~$(\SSSs)^\rr$ admits an 
$\SSS$-normal decomposition. For $\rr = 0$, that is, if $\gg$ has the 
form~$\id\xx$, the empty path~$\ew_\xx$ is an $\SSS$-normal decomposition 
of~$\gg$ and, for $\rr = 1$, the sequence~$(\gg)$ is an $\SSS$-normal 
decomposition of~$\gg$. For $\rr \ge 2$, we write $\gg = \ff \gg'$ with $\ff$ 
in~$\SSSs$ and $\gg'$ in~$(\SSSs)^{\rr-1}$, and we apply 
Lemma~\ref{L:LeftMult}. So every element of~$\CCC$ that belongs 
to~$(\SSSs)^\rr$ admits an $\SSS$-normal decomposition. As, by assumption, 
$\SSSs$ generates~$\CCC$, every element of~$\CCC$ is eligible. Hence $\SSS$ is 
a Garside family in~$\CCC$.
\end{proof}

\subsection{Recognizing Garside families II: closure properties}
\label{SS:RecGarII}

We turn to further characterizations of Garside families involving closure 
properties, such as closure under right-complement 
(Definition~\ref{D:RCClosed}). Here we introduce another similar notion.

\noindent
\parbox{\textwidth}{%
\begin{defi}
\label{D:RMClosed}
\rightskip40mm
\VR(6,0)A subfamily~$\XXX$ of a left-cancellative category~$\CCC$ is called 
\emph{closed under right-comultiple} if
\begin{equation}
\label{E:RMClosed}
\forall\ff, \gg {\in} \XXX\ \forall\hh{\in} \CCC \ (\ff, \gg \dive \hh 
\Rightarrow \exists \hh' {\in} \XXX\, (\ff, \gg \dive \hh' \dive 
\hh)).\hspace{40mm}
\end{equation}
is satisfied.\VR(0,4)
\end{defi}
\hfill
\rlap{%
\begin{picture}(0,0)(30,-3)
\pcline{->}(1,24)(14,24)
\taput{$\gg {\in}\XXX$}
\pcline{->}(0,23)(0,13)
\tlput{$\ff{\in}\XXX$}
\pcline(0,11)(0,7)
\psarc(7,7){7}{180}{270}
\pcline{->}(7,0)(29,0)
\pcline(16,24)(23,24)
\psarc(23,17){7}{0}{90}
\pcline{->}(30,17)(30,1)
\pcline[style=exist]{->}(1,12)(14,12)
\pcline[style=exist]{->}(15,23)(15,13)
\pcline[style=exist]{->}(16,11)(29,0.5)
\pcline[style=exist]{->}(1,23)(14,13)
\put(6,20){$\hh'{\in}\XXX$}
\end{picture}
}}

So, in words, a family~$\XXX$ is closed under right-comultiple if and only if 
every common right-multiple of two elements of~$\XXX$ must be a right-multiple 
of some common right-multiple that lies in~$\XXX$. On the other hand, we shall 
naturally say that a family~$\XXX$ is closed \emph{under right-divisor} if 
every right-divisor of an element of~$\XXX$ belongs to~$\XXX$. For further 
reference, we immediately note an easy connection between closure properties.

\begin{lemm}
\label{L:RMRC}
A subfamily~$\XXX$ of a left-cancellative category that is closed under 
right-comultiple and right-divisor is closed under right-complement.
\end{lemm}

\begin{proof}
Assume $\ff, \gg \dive \hh$ with $\ff, \gg \in \XXX$. As $\XXX$ is closed 
under right-comultiple, there exists~$\hh'$ in~$\XXX$ satisfying $\ff, \gg 
\dive \hh' \dive \hh$. By definition, this means that there exist~$\ff', \gg'$ 
satisfying $\ff \gg' = \gg \ff' = \hh'$. Now, as $\XXX$ is closed under 
right-divisor, the assumption that $\hh$ lies in~$\XXX$ implies that $\ff'$ 
and $\gg'$ also do. But this means that $\XXX$ is closed under 
right-complement.
\end{proof}

We shall need one more notion.

\begin{defi}
Assume that $\SSS$ is a subfamily of a left-cancellative category~$\CCC$. 
For~$\gg$ in~$\CCC$, we say that $\gg_1$ is an \emph{$\SSS$-head} of~$\gg$ if 
$\gg_1$ belongs to~$\SSS$, it left-divides~$\gg$, and every element of~$\SSS$ 
that left-divides~$\gg$ left-divides~$\gg_1$.
\end{defi}

Here are the expected characterizations of Garside families.

\begin{prop}
\label{P:RecGarII}
A subfamily~$\SSS$ of a left-cancellative category~$\CCC$ is a Garside family 
if and only if it satisfies one of the following equivalent conditions:
\begin{gather}
\label{E:RecGarII1}
\BOX{$\SSSs$ generates~$\CCC$, it is closed under right-divisor, and every 
non-invertible element of $\CCC$ admits an $\SSS$-head;}\\
\label{E:RecGarII2}
\BOX{$\SSSs$ generates~$\CCC$, it is closed under right-complement, and every 
non-invertible element of $(\SSSs)^2$ admits an $\SSS$-head;}\\
\label{E:RecGarII3}
\BOX{$\SSSs$ generates~$\CCC$, it is closed under right-comultiple and 
right-divisor, and every non-invertible element of $(\SSSs)^2$ admits a 
$\div$-maximal left-divisor in~$\SSS$.}
\end{gather} 
\end{prop}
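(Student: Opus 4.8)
The plan is to route every implication through the incremental criterion of Proposition~\ref{P:RecGarI}, using the bridge between $\SSS$-heads and $\SSS$-greedy paths supplied by Lemma~\ref{L:GreedyClosed}. Concretely, I will establish the cycle \eqref{E:RecGarII1}\,$\Rightarrow$\,\eqref{E:RecGarII2}\,$\Rightarrow$\,(Garside)\,$\Rightarrow$\,\eqref{E:RecGarII1} and, in parallel, the equivalence \eqref{E:RecGarII2}\,$\Leftrightarrow$\,\eqref{E:RecGarII3}, so that Definition~\ref{D:Gar} joins the loop.

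The technical heart is a head-to-greedy translation. First I observe that, whenever $\SSSs$ generates~$\CCC$ and is closed under right-complement, Lemma~\ref{L:GreedyClosed} says that a pair $\seqq{\gg_1}{\gg_2}$ is $\SSS$-greedy exactly when the restricted condition~\eqref{E:Greedy2} holds; and for $\gg_1 \in \SSS$ with $\gg_1 \dive \gg_1\gg_2$, condition~\eqref{E:Greedy2} is precisely the statement that $\gg_1$ is an $\SSS$-head of $\gg_1\gg_2$. Thus, under the hypotheses of~\eqref{E:RecGarII2}, an $\SSS$-head $\gg_1$ of an element $\gg$ yields, writing $\gg = \gg_1\gg_2$, an $\SSS$-greedy pair $\seqq{\gg_1}{\gg_2}$. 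This is the engine that produces $\SSS$-normal decompositions.

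Given this engine, \eqref{E:RecGarII2}\,$\Rightarrow$\,(Garside) goes by Proposition~\ref{P:RecGarI}: it suffices to equip every element $\gg$ of $(\SSSs)^2$ with an $\SSS$-normal decomposition. Taking an $\SSS$-head $\gg_1$ of $\gg$ and writing $\gg = \gg_1\gg_2$, the pair $\seqq{\gg_1}{\gg_2}$ is $\SSS$-greedy by the engine above; it then remains to check that the tail $\gg_2$ lies in $\SSSs$, so that the pair is genuinely $\SSS$-normal. This last point — keeping the tail inside $\SSSs$, that is, extracting closure under right-divisor and controlling the left action of $\CCCi$ from the mere existence of heads — is the main obstacle. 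I expect it to require the invertible-element bookkeeping of Lemma~\ref{L:GreedyInv} together with a factorization $\gg = ab$ with $a,b \in \SSSs$: the head property forces $a \dive \gg_1$, whence $\gg_2$ is a right-divisor of $b \in \SSSs$, and one must argue that such a right-divisor stays in $\SSSs$.

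The remaining implications are more mechanical. For (Garside)\,$\Rightarrow$\,\eqref{E:RecGarII1}: generation is immediate, the first entry of an $\SSS$-normal decomposition (pushed into $\SSS$ by absorbing a trailing invertible) is an $\SSS$-head of any non-invertible element by greediness, and closure of $\SSSs$ under right-divisor follows from the normal-form machinery (Propositions~\ref{P:Unique} and~\ref{P:Length}). For \eqref{E:RecGarII1}\,$\Rightarrow$\,\eqref{E:RecGarII2}: heads over all of $\CCC$ restrict to heads over $(\SSSs)^2$, and right-complement closure is obtained from right-divisor closure together with heads — the head $\hh_1$ of a common multiple $\hh$ of $\ff,\gg \in \SSSs$ satisfies $\ff,\gg \dive \hh_1 \dive \hh$, and factoring $\hh_1$ through $\ff$ and $\gg$ gives the required complements inside $\SSSs$. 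Finally, for \eqref{E:RecGarII2}\,$\Leftrightarrow$\,\eqref{E:RecGarII3}: Lemma~\ref{L:RMRC} turns the right-comultiple-plus-right-divisor closure of~\eqref{E:RecGarII3} into right-complement closure, while a $\div$-maximal left-divisor in $\SSS$ is upgraded to an $\SSS$-head using right-comultiple closure (any $\hh \in \SSS$ dividing $\gg$ shares with $\gg_1$ a common multiple in $\SSSs$ below $\gg$, whose $\SSS$-part cannot strictly exceed the $\div$-maximal $\gg_1$, forcing $\hh \dive \gg_1$); conversely, heads directly provide right-comultiples, and right-divisor closure carries over from the equivalence with~\eqref{E:RecGarII1} already established.
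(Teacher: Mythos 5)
Your overall architecture coincides with the paper's: the same reduction to Proposition~\ref{P:RecGarI}, the same head-to-greedy engine (Lemma~\ref{L:GreedyClosed}, packaged in the paper as Lemma~\ref{L:Head}), the same factorization trick for \eqref{E:RecGarII2}$\Rightarrow$(Garside), and the same use of Lemma~\ref{L:RMRC} together with the upgrade of a $\div$-maximal divisor to an $\SSS$-head (the paper's Lemma~\ref{L:CMHead}) for handling~\eqref{E:RecGarII3}. However, the step you yourself single out as ``the main obstacle'' --- showing that the tail $\gg_2$ stays in~$\SSSs$, that is, that under~\eqref{E:RecGarII2} every right-divisor of an element of~$\SSSs$ again lies in~$\SSSs$ --- is left unproven, and your guess at how to prove it is off target. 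This closure property is not invertible-element bookkeeping in the style of Lemma~\ref{L:GreedyInv}; in the paper it is a separate lemma (Lemma~\ref{L:RCRD}) whose proof genuinely uses both hypotheses of~\eqref{E:RecGarII2}: one writes $\hh = \ff\gg$ with $\hh \in \SSSs$, decomposes $\ff$ as a product $\ff_1 \pdots \ff_\pp$ of elements of~$\SSSs$ (generation), then iterates the right-complement closure $\pp$~times across the resulting grid to produce $\hh_\pp \in \SSSs$ with $\hh_\pp \dive \gg$ and an invertible quotient, whence $\gg \in \SSSs\CCCi = \SSSs$. Without this lemma your pair $\seqq{\gg_1}{\gg_2}$ is only known to be $\SSS$-greedy with first entry in~$\SSS$, not $\SSS$-normal, so Proposition~\ref{P:RecGarI} cannot be invoked and the central implication \eqref{E:RecGarII2}$\Rightarrow$(Garside) remains open.

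A smaller inaccuracy: in the direction (Garside)$\Rightarrow$\eqref{E:RecGarII1} you attribute the right-divisor closure of~$\SSSs$ to Propositions~\ref{P:Unique} and~\ref{P:Length}. Those alone do not give it: a right-divisor $\ff$ of $\gg \in \SSSs$ is not presented as an element of~$(\SSSs)^1$, so Proposition~\ref{P:Length} says nothing about it. The paper's argument (Lemma~\ref{L:Closed}) instead uses the length monotonicity $\LGG\SSS\ff \le \LGG\SSS{\ff'\ff}$ of Lemma~\ref{L:LeftMult}, applied inductively along a decomposition of the complement~$\ff'$ into $\SSSs$-factors. Everything else in your sketch --- absorbing a trailing invertible to turn the first entry of a normal decomposition into an $\SSS$-head, deriving right-complement closure from heads plus right-divisor closure, and both directions of \eqref{E:RecGarII2}$\Leftrightarrow$\eqref{E:RecGarII3} --- matches the paper's proof.
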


The difference between the final conditions in~\eqref{E:RecGarII2} 
and~\eqref{E:RecGarII3} is that, in~\eqref{E:RecGarII3}, we do not demand that 
every element of~$\SSS$ that left-divides the considered element~$\gg$ 
left-divides the maximal left-divisor, but only that no proper multiple of the 
latter left-divides~$\gg$, a weaker condition.

Contrary to Proposition~\ref{P:RecGarI}, it is not obvious that the conditions 
of Proposition~\ref{P:RecGarII} necessarily hold for every Garside family, so 
a proof is needed in both directions. As usual we shall split the argument 
into several steps. The first one directly follows from our definitions.

\begin{lemm}
\label{L:Head}
Assume that $\SSS$ is a subfamily of a left-cancellative category~$\CCC$ and 
$\gg_1$ lies in~$\SSS$.

\ITEM1 If $\seqq{\gg_1}{\gg_2}$ is $\SSS$-greedy, then $\gg_1$ is an 
$\SSS$-head for~$\gg_1 \gg_2$.

\ITEM2 Conversely, if $\SSSs$ generates~$\CCC$ and is closed under 
right-complement, then $\gg_1$ being an $\SSS$-head of~$\gg_1 \gg_2$ implies 
that $\seqq{\gg_1}{\gg_2}$ is $\SSS$-greedy.
\end{lemm}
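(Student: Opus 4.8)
The plan is that both parts are essentially a matter of matching the definition of an $\SSS$-head against the greediness condition~\eqref{E:Greedy}, with the converse direction routed through Lemma~\ref{L:GreedyClosed}. No genuine computation is needed; the only care required is to invoke the correct specialization of each definition, and to keep track of which hypotheses each direction actually uses.

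For \ITEM1, I would start from the hypothesis that $\seqq{\gg_1}{\gg_2}$ is $\SSS$-greedy and verify the three clauses in the definition of an $\SSS$-head of~$\gg_1\gg_2$. The first two are immediate: $\gg_1$ lies in~$\SSS$ by assumption, and $\gg_1$ left-divides~$\gg_1\gg_2$ with witness~$\gg_2$. For the third clause, I would specialize the greediness condition~\eqref{E:Greedy} by taking $\ff=\id\xx$, where $\xx$ is the source of~$\gg_1$. Then $\ff\gg_1\gg_2=\gg_1\gg_2$ and $\ff\gg_1=\gg_1$, so greediness reads: every $\hh\in\SSS$ with $\hh\dive\gg_1\gg_2$ satisfies $\hh\dive\gg_1$. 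This is exactly the remaining clause, so $\gg_1$ is an $\SSS$-head of~$\gg_1\gg_2$.

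For \ITEM2, I would observe that the extra hypotheses---that $\SSSs$ generates~$\CCC$ and is closed under right-complement---are precisely those of Lemma~\ref{L:GreedyClosed}. Hence it suffices to verify the restricted greediness condition~\eqref{E:Greedy2}, namely that every $\hh\in\SSS$ left-dividing~$\gg_1\gg_2$ left-divides~$\gg_1$. But this is literally the third clause in the definition of $\gg_1$ being an $\SSS$-head of~$\gg_1\gg_2$, which is our assumption. Lemma~\ref{L:GreedyClosed} then upgrades~\eqref{E:Greedy2} to full $\SSS$-greediness of~$\seqq{\gg_1}{\gg_2}$, completing the argument.

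There is no real obstacle here: the content of the lemma is the recognition that the defining clause of an $\SSS$-head coincides verbatim with the weak form of greediness~\eqref{E:Greedy2}, and that the weak and strong forms agree under the generation-and-closure hypotheses by Lemma~\ref{L:GreedyClosed}. The one point to stay attentive to is the asymmetry in hypotheses: \ITEM1 needs nothing extra, since the specialization $\ff=\id\xx$ always recovers the $\SSS$-head condition from greediness, whereas \ITEM2 genuinely requires Lemma~\ref{L:GreedyClosed}, because without closure the $\SSS$-head condition (equivalently~\eqref{E:Greedy2}) is strictly weaker than greediness with an arbitrary prefactor~$\ff$.
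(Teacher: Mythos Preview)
Your proof is correct and follows essentially the same approach as the paper: for \ITEM1 you specialize the greediness condition to $\ff=\id\xx$ (which the paper does implicitly), and for \ITEM2 you invoke Lemma~\ref{L:GreedyClosed} to pass from the restricted form~\eqref{E:Greedy2} to full greediness, exactly as the paper does.
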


\begin{proof}
\ITEM1 We have $\gg_1 \dive \gg_1 \gg_2$. Assume $\hh \in 
\SSS$ and $\hh \dive \gg_1 \gg_2$. As $\seqq{\gg_1}{\gg_2}$ is $\SSS$-greedy, 
we deduce $\hh \dive \gg_1$, which, by definition, means that $\gg_1$ is an 
$\SSS$-head for~$\gg_1 \gg_2$.

\ITEM2 Assume that $\gg_1$ is an $\SSS$-head of~$\gg_1 \gg_2$. By definition, 
every element of~$\SSS$ that left-divides~$\gg_1 \gg_2$ left-divides~$\gg_1$. 
Now, by Lemma~\ref{L:GreedyClosed}, this implies that $\seqq{\gg_1}{\gg_2}$ is 
$\SSS$-greedy whenever $\SSSs$ generates~$\CCC$ and is closed under 
right-complement.
\end{proof}

\begin{lemm}
\label{L:Closed}
Assume that $\SSS$ is a Garside family in a left-cancellative category~$\CCC$. 
Then $\SSSs$ is closed under right-divisor, right-comultiple, and 
right-complement.  Moreover, we have $\CCCi \SSSs \subseteq \SSSs$.
\end{lemm}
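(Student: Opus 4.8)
The plan is to prove the four assertions by leaning on the fact that, since $\SSS$ is a Garside family, every element of~$\CCC$ has a well-defined $\SSS$-length, and on the handy characterization $\gg \in \SSSs \Leftrightarrow \LGG\SSS\gg \le 1$. One direction of this equivalence is immediate, since a length-one path $(\gg)$ is vacuously $\SSS$-greedy, so $\gg \in \SSSs$ gives $\LGG\SSS\gg \le 1$; conversely, if $\LGG\SSS\gg \le 1$, then by Lemma~\ref{L:Length} the element~$\gg$ admits an $\SSS$-normal decomposition of length $\le 1$, whose unique entry (if any) lies in~$\SSSs$ and equals~$\gg$. I would establish $\CCCi\SSSs \subseteq \SSSs$ first: for $\ee \in \CCCi$ and $s \in \SSSs$, since $\SSS$ satisfies~\eqref{E:RecGarI} by Proposition~\ref{P:RecGarI}, Lemma~\ref{L:LeftMult} applies and, used for~$\ee$ and then for~$\ee\inv$ acting on~$\ee s$, gives $\LGG\SSS{\ee s} = \LGG\SSS{s} \le 1$, whence $\ee s \in \SSSs$.

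Next I would treat closure under right-divisor. It suffices to show that any right-divisor~$\gg$ of an element of~$\SSS$ lies in~$\SSSs$: the case of a general $s \in \SSSs$ reduces to this, since if $s$ is invertible then a right-divisor of~$s$ is invertible by Lemma~\ref{L:Inv}, while if $s = s_0 \ee$ with $s_0 \in \SSS$ and $\ee \in \CCCi$, then $s = \ff\gg$ yields $s_0 = \ff(\gg\ee\inv)$, and $\gg\ee\inv \in \SSSs$ forces $\gg = (\gg\ee\inv)\ee \in \SSSs\CCCi = \SSSs$. So assume $s_0 = \ff\gg$ with $s_0 \in \SSS$, and take an $\SSS$-normal decomposition $\seqqq{\gg_1}\etc{\gg_\qq}$ of~$\gg$ with $\qq = \LGG\SSS\gg$, so that all entries are non-invertible by Lemma~\ref{L:Length}. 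By Lemma~\ref{L:Grouping} the path $\seqq{\gg_1}{\gg_2 \pdots \gg_\qq}$ is $\SSS$-greedy, and feeding $\hh = s_0 \in \SSS$ with prefix~$\ff$ into greediness, the trivial relation $s_0 \dive \ff\gg_1(\gg_2\pdots\gg_\qq) = s_0$ yields $s_0 \dive \ff\gg_1$. Left-cancelling~$s_0$ from $s_0 = (\ff\gg_1)(\gg_2\pdots\gg_\qq)$ then shows that $\gg_2\pdots\gg_\qq$ has a left-inverse, hence is invertible by Lemma~\ref{L:Inv}, which contradicts the non-invertibility of~$\gg_2$ unless $\qq \le 1$. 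Therefore $\gg \in \SSSs$.

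For closure under right-comultiple, suppose $\ff, \gg \in \SSSs$ and $\ff, \gg \dive \hh$. I would take an $\SSS$-normal decomposition $\seqqq{\hh_1}\etc{\hh_\qq}$ of~$\hh$ and set $\hh' = \hh_1 \in \SSSs$. By Lemma~\ref{L:Grouping} the path $\seqq{\hh_1}{\hh_2\pdots\hh_\qq}$ is $\SSS$-greedy, hence $\SSSs$-greedy by Lemma~\ref{L:GreedyInv}; applying $\SSSs$-greediness with trivial prefix to~$\ff$ and to~$\gg$, both dividing $\hh = \hh_1(\hh_2\pdots\hh_\qq)$, gives $\ff, \gg \dive \hh_1 = \hh' \dive \hh$, which is exactly~\eqref{E:RMClosed}. (Equivalently, $\hh_1$ is an $\SSS$-head of~$\hh$ by Lemma~\ref{L:Head}, promoted to $\SSSs$.) Closure under right-complement~\eqref{E:RCClosed} is then free: having both closure properties just proved, Lemma~\ref{L:RMRC} yields closure under right-complement.

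The one genuinely delicate point is the right-divisor closure, whose proof hinges on the slightly unexpected trick of feeding the dividend~$s_0$ itself into the greediness condition in order to force the tail $\gg_2\pdots\gg_\qq$ to collapse to an invertible element. The remaining steps are either purely formal (right-complement via Lemma~\ref{L:RMRC}) or short length manipulations (the invertible-multiple statement and right-comultiple closure), so once the right-divisor argument is in place the rest should go through without difficulty.
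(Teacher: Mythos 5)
Your proof is correct, and for two of the four claims it coincides with the paper's own argument: the right-comultiple closure (extract the first entry $\hh_1$ of a normal decomposition of the common multiple, use Lemma~\ref{L:Grouping} and Lemma~\ref{L:GreedyInv} to see that both $\ff$ and $\gg$ left-divide $\hh_1$) and the right-complement closure (free from Lemma~\ref{L:RMRC}). Where you diverge is in the right-divisor closure and in $\CCCi\SSSs \subseteq \SSSs$, and the divergence is essentially a swap of tools. The paper proves right-divisor closure in one line from Lemma~\ref{L:LeftMult}: if $\gg \in \SSSs$ and $\ff$ right-divides~$\gg$, write the complement as a product of elements of~$\SSSs$ (possible since $\SSSs$ generates~$\CCC$) and apply the inequality $\LGG\SSS{\gg'} \le \LGG\SSS{\hh\gg'}$ inductively to get $\LGG\SSS\ff \le \LGG\SSS\gg \le 1$; it then deduces $\CCCi\SSSs \subseteq \SSSs$ as a corollary, since $\gg = \ee\hh$ right-divides $\hh = \ee\inv\gg \in \SSSs$. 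You instead prove $\CCCi\SSSs \subseteq \SSSs$ first, by sandwiching with Lemma~\ref{L:LeftMult} applied to~$\ee$ and~$\ee\inv$, and for right-divisor closure you give a hands-on argument: reduce to a right-divisor~$\gg$ of some $s_0 \in \SSS$, take a normal decomposition of~$\gg$ of minimal length~$\qq$, and feed $s_0$ itself, with prefix~$\ff$, into the greediness of $\seqq{\gg_1}{\gg_2\pdots\gg_\qq}$, so that left-cancelling $s_0$ forces the tail $\gg_2\pdots\gg_\qq$ to be invertible, contradicting $\qq \ge 2$. This is valid — it is the same cancellation trick the paper uses to prove Proposition~\ref{P:Length} — and its only cost is the extra bookkeeping of the reduction from~$\SSSs$ to~$\SSS$ (which you handle correctly via $\SSSs\CCCi = \SSSs$); what it buys is independence from the induction packaged in the paper's use of Lemma~\ref{L:LeftMult} at that point. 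Both versions rest only on results preceding Lemma~\ref{L:Closed}, so neither is circular, and both gloss over the same harmless degenerate cases (invertible elements and empty decompositions) in exactly the same way.
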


\begin{proof}
Assume that $\gg$ lies in~$\SSSs$ and $\ff$ right-divides~$\gg$.  
Then we have $\LGG\SSS\gg \le 
1$, and Lemma~\ref{L:LeftMult} inductively implies $\LGG\SSS\ff \le 
\LGG\SSS\gg$, whence $\LGG\SSS\ff \le 1$, which in turn implies $\ff \in 
\SSSs$. So $\SSSs$ is closed under right-divisor.

Next, assume that $\ff, \gg$ lie in~$\SSSs$ and $\hh$ is a common 
right-multiple of~$\ff$ and~$\gg$. Let $\seqqq{\hh_1}\etc{\hh_\rr}$ be an 
$\SSS$-normal decomposition of~$\hh$, which exists as $\SSS$ is a Garside 
family. Then $\ff$ is an element of~$\SSSs$ that left-divides~$\hh_1 \pdots 
\hh_\rr$ and $\seqqq{\hh_1}\etc{\hh_\rr}$, hence $\seqq{\hh_1}{\hh_2 \pdots 
\hh_\rr}$, are $\SSSs$-greedy. It follows that $\ff$ left-divides~$\hh_1$.  
Similarly  $\gg$ left-divides~$\hh_1$. Hence $\hh_1$ is a common 
right-multiple of~$\ff$ and~$\gg$ that left-divides~$\hh$ and lies in~$\SSSs$. 
Hence $\SSSs$ is closed under right-comultiple.

Then, Lemma~\ref{L:RMRC} implies that $\SSSs$ is closed under right-complement 
since it is closed under right-comultiple and right-divisor.

 Finally, assume $\gg \in \CCCi \SSSs$, say $\gg = \ee \hh$ with $\ee \in 
\CCCi$ and $\hh \in \SSSs$. Then we have $\hh = \ee\inv \gg$, so $\gg$ 
right-divides an element of~$\SSSs$, hence it belongs to~$\SSSs$ by the first 
result above.
\end{proof}

\begin{lemm}
\label{L:Strict}
Assume that $\SSS$ is a subfamily of a left-cancellative category~$\CCC$ that 
satisfies $\CCCi \SSSs \subseteq \SSSs$. Then  every element  that admits an 
$\SSS$-normal decomposition admits one in which all entries except possibly 
the last one lie in~$\SSS \setminus\nobreak \CCCi$.
\end{lemm}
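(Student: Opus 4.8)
The plan is to start from an arbitrary $\SSS$-normal decomposition of the given element and to \emph{push all invertible factors to the right}, absorbing them into a single trailing invertible entry. The hypothesis $\CCCi\SSSs \subseteq \SSSs$ is exactly what will guarantee that the factors produced on the left stay inside~$\SSS$, and not merely inside~$\SSS\CCCi$.

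First I would fix an $\SSS$-normal decomposition $\seqqq{\gg_1}\etc{\gg_\qq}$ of the element~$\gg$, with source~$\xx$, and set $\rr = \LGG\SSS\gg$. By Lemma~\ref{L:InvFirst}, the non-invertible entries come first, so $\gg_1 \wdots \gg_\rr$ are non-invertible (hence lie in $\SSSs \setminus \CCCi$, thus in $\SSS\CCCi$), while $\gg_{\rr+1} \wdots \gg_\qq$ are invertible.

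Next comes the core construction. Put $\ee_0 = \id\xx$, and suppose $\ee_{\ii-1} \in \CCCi$ has been built. For $1 \le \ii \le \rr$, I consider $\ee_{\ii-1}\gg_\ii$: it lies in $\CCCi \SSSs$, hence in~$\SSSs$ by hypothesis, and it is non-invertible (otherwise $\gg_\ii = \ee_{\ii-1}\inv(\ee_{\ii-1}\gg_\ii)$ would be a product of invertibles). So it lies in $\SSS\CCCi$, and I may write $\ee_{\ii-1}\gg_\ii = \ff_\ii \ee_\ii$ with $\ff_\ii \in \SSS$ and $\ee_\ii \in \CCCi$; as $\ee_{\ii-1}\gg_\ii$ is non-invertible, so is $\ff_\ii$, whence $\ff_\ii \in \SSS \setminus \CCCi$. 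Telescoping the relations $\ff_\ii \ee_\ii = \ee_{\ii-1}\gg_\ii$ yields $\ff_1 \pdots \ff_\rr \ee_\rr = \gg_1 \pdots \gg_\rr$, so, setting $\ee = \ee_\rr \gg_{\rr+1} \pdots \gg_\qq \in \CCCi$, the path $\seqqqq{\ff_1}\etc{\ff_\rr}{\ee}$ is a decomposition of~$\gg$ all of whose entries lie in~$\SSSs$ and whose first $\rr$ entries lie in $\SSS \setminus \CCCi$, as wanted.

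It then remains to check that $\seqqqq{\ff_1}\etc{\ff_\rr}{\ee}$ is $\SSS$-greedy; since greediness is local, it suffices to treat each length-two subpath. For $1 \le \ii < \rr$, the equalities $\ee_{\ii-1}\gg_\ii = \ff_\ii \ee_\ii$ and $\ee_\ii \gg_{\ii+1} = \ff_{\ii+1}\ee_{\ii+1}$ form a commutative diagram of exactly the shape required by the domino rule (Lemma~\ref{L:Domino}), with bottom row $\seqq{\gg_\ii}{\gg_{\ii+1}}$, top row $\seqq{\ff_\ii}{\ff_{\ii+1}}$, and vertical edges $\ee_{\ii-1}, \ee_\ii, \ee_{\ii+1}$. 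The bottom row is $\SSS$-greedy, and the pair $\seqq{\ff_\ii}{\ee_\ii}$ is $\SSS$-greedy by Lemma~\ref{L:InvFirst}(i) since $\ee_\ii$ is invertible; hence the domino rule gives that the top row $\seqq{\ff_\ii}{\ff_{\ii+1}}$ is $\SSS$-greedy. Finally $\seqq{\ff_\rr}{\ee}$ is $\SSS$-greedy by Lemma~\ref{L:InvFirst}(i) again, $\ee$ being invertible. Thus $\seqqqq{\ff_1}\etc{\ff_\rr}{\ee}$ is $\SSS$-normal. The main obstacle is precisely this greediness check: the naive worry is that deforming a greedy path by invertible elements might destroy greediness, and the whole point of the domino rule, fed by the trivial greediness of a pair ending in an invertible, is to show that it does not. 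The degenerate cases $\rr = 0$ (so $\gg$ is invertible and the one-entry path $\seq\ee$ does the job) and $\rr = \qq$ (no trailing invertible entries, so $\ee = \ee_\rr$) are covered by the same formulas.
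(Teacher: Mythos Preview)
Your proof is correct and follows essentially the same approach as the paper's: both push invertible factors rightward via the relations $\ee_{\ii-1}\gg_\ii = \ff_\ii\ee_\ii$ with $\ff_\ii \in \SSS$ (using $\CCCi\SSSs \subseteq \SSSs$), and both verify greediness of the resulting path by combining the domino rule (Lemma~\ref{L:Domino}) with Lemma~\ref{L:InvFirst}\ITEM1. The only cosmetic difference is that the paper first trims to a decomposition of length~$\LGG\SSS\gg$ and absorbs the final invertible into the last entry, whereas you keep a separate trailing invertible entry~$\ee$; both yield a decomposition satisfying the stated conclusion.
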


\begin{proof}
Assume  first  that $\gg$ is non-invertible and $\seqqq{\gg_1}\etc{\gg_\qq}$ 
is an $\SSS$-normal decomposition of~$\gg$. The assumption that $\gg$ is 
non-invertible implies that $\LGG\SSS\gg$ is at least one, and, by 
Lemma~\ref{L:Length}, we can assume that $\qq$ is the $\SSS$-length of~$\gg$, 
implying that none of $\gg_1 \wdots \gg_\qq$ is invertible.

As $\gg_1$ is a non-invertible element of~$\SSSs$, we can write $\gg_1 = 
\gg'_1 \ee_1$ with~$\gg'_1$ in~$\SSS$ and $\ee_1$ in~$\CCCi$. Moreover, the 
assumption that $\gg_1$ is not invertible implies that $\gg'_1$ is not 
invertible either. Next, $\ee_1 \gg_2$ belongs to~$\CCCi \SSSs$, hence, by 
assumption, to~$\SSSs$. So we can write $\ee_1 \gg_2 = \gg'_2 \ee_2$ 
with~$\gg'_2$ in~$\SSS$ and $\ee_2$ in~$\CCCi$ and, again, the assumption that 
$\gg_2$ is non-invertible implies that $\gg'_2$ is non-invertible. We continue 
in the same way until~$\gg_\qq$, finding $\gg'_\qq$ in~$\SSS \setminus \CCCi$ 
and $\ee_\qq$ in~$\CCCi$ that satisfy $\ee_{\qq-1} \gg_\qq = \gg'_\qq 
\ee_\qq$. Then $\seqqqq{\gg'_1}\etc{\gg'_{\qq-1}}{\gg'_\qq\ee_\qq}$ is a 
decomposition of~$\gg$ whose non-terminal entries are non-invertible elements 
of~$\SSS$. 

It remains to see that the latter path is $\SSS$-greedy. But this follows from 
the domino rule (Lemma~\ref{L:Domino}) since, for every~$\ii$, the paths 
$\seqq{\gg_\ii}{\gg_{\ii+1}}$ and $\seqq{\gg'_\ii}{\ee_\ii}$ are 
$\SSS$-greedy. Hence $\seqqqq{\gg'_1}\etc{\gg'_{\qq-1}}{\gg'_\qq\ee_\qq}$ is 
an $\SSS$-normal decomposition of~$\gg$ with the expected properties.

 On the other hand, if $\ee$ is invertible, then, by definition, the length 
one path $\seq\ee$ is an $\SSS$-normal decomposition of~$\ee$ vacuously 
satisfying the condition of the statement.
\end{proof}

We can now complete one direction in the implications of 
Proposition~\ref{P:RecGarII}.

\begin{proof}[Proof of Proposition~\ref{P:RecGarII} ($\Rightarrow$)]
Assume that $\SSS$ is a Garside family in~$\CCC$. First, as already noted, 
$\SSSs$ must generate~$\CCC$. Next, by Lemma~\ref{L:Closed}, $\SSSs$ is closed 
under right-complement, right-comultiple, and right-divisor. Finally, let 
$\gg$ be a non-invertible element of~$(\SSSs)^2$. Then the $\SSS$-length 
of~$\gg$ is not zero. 

Assume first $\LGG\SSS\gg = 1$. Then $\gg$ belongs to~$\SSSs \setminus \CCCi$, 
so it can be written as $\gg_1 \ee$ with $\gg_1 \in \SSS$ and $\ee \in \CCCi$, 
in which case $\seqq{\gg_1}\ee$ is an $\SSS$-normal decomposition of~$\gg$, 
and, by Lemma~\ref{L:Head}, $\gg_1$ is an $\SSS$-head of~$\gg$. 

Assume  next  $\LGG\SSS\gg \ge 2$. By Lemmas~\ref{L:Closed} 
and~\ref{L:Strict}, $\gg$ admits an $\SSS$-normal 
decomposition~$\seqqq{\gg_1}\etc{\gg_\qq}$ such that $\gg_1\wdots \gg_{\qq-1}$ 
lie in~$\SSS$. Then $\seqq{\gg_1}{\gg_2 \pdots \gg_\qq}$ is $\SSS$-greedy and, 
by Lemma~\ref{L:Head}, $\gg_1$ is an $\SSS$-head of~$\gg$.

Hence \eqref{E:RecGarII1} and \eqref{E:RecGarII2} are satisfied, and 
so 
is~\eqref{E:RecGarII3} as an $\SSS$-head is \emph{a fortiori} a 
$\div$-maximal left-divisor lying in~$\SSS$.
\end{proof}

 Before going to the second half of the proof of Proposition~\ref{P:RecGarII}, 
we add two more observations about heads for further reference, namely a 
characterization of those elements that admit an $\SSS$-head and a connection 
between $\SSS$- and $\SSSs$-heads.

\begin{lemm}
\label{L:HeadExist}
Assume that $\SSS$ is a subfamily of a left-cancellative category~$\CCC$. 

\ITEM1 If $\SSS$ is a Garside family, an element of~$\CCC$ admits an 
$\SSS$-head if and only if it lies in~$\SSS\CCC$.

\ITEM2 Every $\SSS$-head is an $\SSSs$-head.
\end{lemm}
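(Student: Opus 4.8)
The plan is to treat the two statements separately, using for~\ITEM1 the existence machinery already developed and for~\ITEM2 a direct computation with the structure of~$\SSSs$.

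For~\ITEM1, the forward implication requires nothing beyond the definition: if $\gg_1$ is an $\SSS$-head of~$\gg$, then $\gg_1$ lies in~$\SSS$ and left-divides~$\gg$, so $\gg$ can be written $\gg_1 \gg'$ and thus lies in~$\SSS\CCC$. For the converse I would use that $\SSS$ is a Garside family to produce a normal decomposition whose first entry serves as the head. Assume $\gg \in \SSS\CCC$, say $\gg = \gg_1 \gg'$ with $\gg_1 \in \SSS$. If $\gg$ is invertible, then $\gg_1$, being a left-divisor of an invertible element, is invertible; since invertible elements sharing the source of~$\gg$ all left-divide one another (writing $\ee_1 = \gg_1(\gg_1\inv \ee_1)$), the element~$\gg_1$ is an $\SSS$-head of~$\gg$.

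If $\gg$ is non-invertible, I would first invoke Lemma~\ref{L:Closed}, which gives $\CCCi \SSSs \subseteq \SSSs$, so that Lemma~\ref{L:Strict} applies and furnishes an $\SSS$-normal decomposition $\seqqq{\hh_1}\etc{\hh_\qq}$ of~$\gg$ in which all entries but the last lie in $\SSS \setminus \CCCi$. When $\qq \ge 2$ this forces $\hh_1 \in \SSS$; when $\qq = 1$ the element~$\gg$ lies in $\SSSs \setminus \CCCi$, hence can be written $\gg = \hh_1 \ee$ with $\hh_1 \in \SSS$ and $\ee \in \CCCi$, and $\seqq{\hh_1}\ee$ is again a suitable $\SSS$-normal decomposition with $\hh_1 \in \SSS$. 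By Lemma~\ref{L:Grouping}, $\seqq{\hh_1}{\hh_2 \pdots \hh_\qq}$ is $\SSS$-greedy, so Lemma~\ref{L:Head}\,\ITEM1 makes $\hh_1$ an $\SSS$-head of~$\gg$. This mirrors the head construction already carried out in the proof of Proposition~\ref{P:RecGarII}~($\Rightarrow$).

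For~\ITEM2, let $\gg_1$ be an $\SSS$-head of~$\gg$; since $\SSS \subseteq \SSSs$ and $\gg_1 \dive \gg$, I only have to check that every $\hh$ in~$\SSSs$ that left-divides~$\gg$ in fact left-divides~$\gg_1$, and I would split according to $\SSSs = \SSS\CCCi \cup \CCCi$. If $\hh \in \CCCi$, then $\hh$ and $\gg_1$ share the source of~$\gg$, and $\gg_1 = \hh(\hh\inv \gg_1)$ yields $\hh \dive \gg_1$ directly. If $\hh = \hh' \ee$ with $\hh' \in \SSS$ and $\ee \in \CCCi$, then $\hh \dive \gg$ gives $\hh' \dive \gg$, whence $\hh' \dive \gg_1$ by the defining property of the $\SSS$-head; writing $\gg_1 = \hh' \kk$ I recover $\gg_1 = \hh(\ee\inv \kk)$, so $\hh \dive \gg_1$. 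No Garside hypothesis is needed here. The one delicate point is the converse of~\ITEM1: one must exclude that the leading entry of a normal decomposition is a genuine element of~$\SSSs \setminus \SSS$, which is exactly what Lemmas~\ref{L:Closed} and~\ref{L:Strict} arrange by absorbing the invertible factors; the remaining steps are routine.
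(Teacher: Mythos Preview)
Your proof is correct and follows essentially the same approach as the paper: part~\ITEM2 is argued identically by splitting $\SSSs = \SSS\CCCi \cup \CCCi$, and for part~\ITEM1 the paper handles the non-invertible case by a one-line appeal to the already-established direction of Proposition~\ref{P:RecGarII}, whereas you unpack that appeal by rerunning the head construction via Lemmas~\ref{L:Closed}, \ref{L:Strict}, \ref{L:Grouping}, and~\ref{L:Head}---as you yourself note.
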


\begin{proof}
\ITEM1 If $\gg$ admits an $\SSS$-head, then, by definition, $\gg$ is 
left-divisible by an element of~$\SSS$, so it belongs to~$\SSS\CCC$. 
Conversely, assume that $\gg$ lies in~$\SSS\CCC$, say $\gg = \gg_1 \gg'$ with 
$\gg_1 \in \SSS$. If $\gg$ is not invertible, $\gg$ admits an $\SSS$-head by 
Proposition~\ref{P:RecGarII}. Otherwise, $\gg'$ must be invertible, so $\gg 
\eqir \gg_1$ holds, and every element of~$\SSS$ that left-divides~$\gg$ also 
left-divides~$\gg_1$. Then $\gg_1$ is an $\SSS$-head of~$\gg$.

\ITEM2 Assume that $\gg_1$ is an $\SSS$-head of~$\gg$. First $\gg_1$ lies 
in~$\SSS$, hence \emph{a fortiori} in~$\SSSs$. Next, assume that $\hh$ belongs 
to~$\SSSs$ and $\hh \dive \gg$ holds. If $\hh$ is invertible, then $\hh \dive 
\gg_1$ necessarily holds. Otherwise, write $\hh = \hh' \ee$ with $\hh'$ 
in~$\SSS$ and $\ee$ in~$\CCCi$. Then $\hh \dive \gg$ implies $\hh' \dive \gg$, 
whence $\hh' \dive \gg_1$ since $\gg_1$ is an $\SSS$-head of~$\gg$, hence $\hh 
\dive \gg_1$. So $\gg_1$ is an $\SSSs$-head of~$\gg$.
\end{proof}

We now establish several preliminary results in view of the converse 
implication in Proposition~\ref{P:RecGarII}.

\begin{lemm}
\label{L:RCRD}
Assume that $\SSS$ is a subfamily of a left-cancellative category~$\CCC$  such 
that $\SSSs$  generates~$\CCC$ and is closed under right-complement. Then 
$\SSSs$ is closed under  right-divisor.
\end{lemm}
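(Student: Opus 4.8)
The plan is to reduce an arbitrary right-divisor to a chain of single-factor right-divisors and to extract each factor by feeding the right-complement axiom a degenerate common multiple.

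First I would prove the key special case: if $\ff$ and $\gg$ lie in $\SSSs$ and $\gg = \ff\hh$, then $\hh$ also lies in $\SSSs$. Here $\ff \dive \gg$ (since $\gg = \ff\hh$) and $\gg \dive \gg$ trivially, so $\ff$ and $\gg$ are two elements of $\SSSs$ admitting the common right-multiple $\gg$. Applying closure under right-complement (Definition~\ref{D:RCClosed}) to this pair, \emph{with the common multiple taken to be $\gg$ itself}, yields $\hh'$ and $\ff'$ in $\SSSs$ satisfying $\ff\hh' = \gg\ff' \dive \gg$. The relation $\gg\ff' \dive \gg$ gives some $u$ with $\gg\ff' u = \gg$, and left-cancelling $\gg$ shows that $\ff' u$ is an identity-element; thus $\ff'$ has a right-inverse and is invertible by Lemma~\ref{L:Inv}. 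Substituting $\gg = \ff\hh$ into $\ff\hh' = \gg\ff'$ and left-cancelling $\ff$ gives $\hh' = \hh\ff'$, whence $\hh = \hh'\ff'\inv$ lies in $\SSSs\CCCi$, hence in $\SSSs$.

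Next I would lift this to arbitrary right-divisors using that $\SSSs$ generates~$\CCC$. Given $\gg$ in $\SSSs$ and a factorization $\gg = \gg'\hh$, write $\gg' = \aa_1 \pdots \aa_\nn$ with $\aa_1 \wdots \aa_\nn$ in $\SSSs$. I then prove by induction on~$\nn$ that whenever $s$ in $\SSSs$ satisfies $s = \aa_1 \pdots \aa_\nn \hh$ with all $\aa_\ii$ in $\SSSs$, the element $\hh$ lies in $\SSSs$. For $\nn = 0$ this is immediate, as $\hh = s$. For the inductive step I group $s = \aa_1\,(\aa_2 \pdots \aa_\nn \hh)$ and apply the special case to the single-factor quotient $\aa_1$, concluding $\aa_2 \pdots \aa_\nn \hh \in \SSSs$; the induction hypothesis applied to this element, now presented with $\nn-1$ factors, then gives $\hh \in \SSSs$. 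Taking $s = \gg$ finishes the proof that $\SSSs$ is closed under right-divisor.

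I expect the crux to be the choice, in the special case, of feeding the right-complement axiom the common multiple $\gg$ rather than an external element: this degenerate input is exactly what collapses the complement $\ff'$ to an invertible and thereby exhibits $\hh$ as a member of $\SSS$ right-multiplied by an invertible. The remaining points are routine: the inclusion $\SSSs\CCCi \subseteq \SSSs$ follows at once from $\SSSs = \SSS\CCCi \cup \CCCi$ together with the closure of $\CCCi$ under products, and generation supplies the finite factorization of $\gg'$.
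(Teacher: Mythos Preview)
Your proof is correct and follows essentially the same strategy as the paper's: decompose the left cofactor as a product of elements of~$\SSSs$ and peel off one generator at a time via closure under right-complement, concluding with an invertibility argument and the inclusion $\SSSs\CCCi \subseteq \SSSs$. The one tactical difference is that in your single-step peel you feed the right-complement axiom the \emph{degenerate} common multiple~$\gg$ itself, which forces the complement~$\ff'$ to be invertible immediately; the paper instead runs a staircase diagram (as in Figure~\ref{F:RCRD}) with external common multiples $\ff_\ii\pdots\ff_\pp\gg$ and recovers invertibility only at the end from the product $\ff'_1\pdots\ff'_\pp\ee$ collapsing to an identity. Your variant is a pleasant local simplification but not a genuinely different route.
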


\begin{proof}
The argument is similar to that used for Lemma~\ref{L:GreedyClosed}, see 
Figure~\ref{F:RCRD}. So assume that $\hh$ belongs to~$\SSSs$ and $\gg$ 
right-divides~$\hh$, that is, we have $\hh = \ff \gg$. As $\SSSs$ 
generates~$\CCC$, we can write $\ff = \ff_1 \pdots \ff_\pp$ with $\ff_1 \wdots 
\ff_\pp \in \SSSs$. The assumption that $\SSSs$ is closed under 
right-complement and the fact that $\hh$ and $\ff_1$ left-divide~$\ff_1 \pdots 
\ff_\pp \gg$ imply the existence of~$\ff'_1, \hh_1$ in~$\SSSs$ satisfying $\hh 
\ff'_1 = \ff_1 \hh_1 \dive \ff_1 \pdots \ff_\pp \gg$. By 
left-cancelling~$\ff_1$, we deduce that $\hh_1$ and $\ff_2$ left-divide~$\ff_2 
\pdots \ff_\pp \gg$, whence the existence of~$\ff'_2, \hh_2$ satisfying $\hh_1 
\ff'_2 = \ff_2 \hh_2 \dive \ff_2 \pdots \ff_\pp \gg$, and so on until  
$\hh_{\pp-1} \ff'_\pp = \ff_\pp \hh_\pp \dive  \ff_\pp \gg$, which implies 
$\hh_\pp \dive \gg$,  say $\hh_\pp \ee = \gg$. By construction, $\ff'_1 \pdots 
\ff'_\pp \ee$ is an identity-element, hence all of the factors must 
be invertible. Now $\hh_\pp$ belongs to~$\SSSs$, hence $\gg$, which is 
$\hh_\pp \ee$, belongs to~$\SSSs \CCCi$, which is $\SSSs$. So $\SSSs$ is 
closed under right-divisor.
\end{proof}

\begin{figure}[htb]
\begin{picture}(75,32)(0,0)
\pcline{->}(1,24)(14,24)
\taput{$\ff_1$}
\pcline{->}(16,24)(29,24)
\taput{$\ff_2$}
\pcline[style=etc](31,24)(44,24)
\pcline{->}(46,24)(59,24)
\taput{$\ff_\pp$}
\psline(61,24)(68,24)
\psarc(68,17){7}{0}{90}
\pcline{->}(75,17)(75,1)
\trput{$\gg$}

\pcline[style=exist]{->}(1,12)(14,12)
\tbput{$\ff'_1$}
\pcline[style=exist]{->}(16,12)(29,12)
\tbput{$\ff'_2$}
\pcline[style=etc](31,12)(44,12)
\pcline[style=exist]{->}(46,12)(59,12)
\tbput{$\ff'_\pp$}
\pcline[style=exist]{->}(61,12)(74,1)
\put(68,7.5){$\ee$}

\psline[style=double](0,11)(0,7)
\psarc[style=double](7,7){7}{180}{270}
\psline[style=double](7,0)(74,0)
\psline[style=exist](15,11)(15,7)
\psarc[style=exist](22,7){7}{180}{250}
\psline[style=exist](30,11)(30,7)
\psarc[style=exist](37,7){7}{180}{250}
\psline[style=exist](45,11)(45,7)
\psarc[style=exist](52,7){7}{180}{250}

\pcline{->}(0,23)(0,13)
\tlput{$\hh$}
\pcline[style=exist]{->}(15,23)(15,13)
\trput{$\hh_1$}
\pcline[style=exist]{->}(30,23)(30,13)
\trput{$\hh_2$}
\pcline[style=exist]{->}(45,23)(45,13)
\trput{$\hh_{\pp-1}$}
\pcline[style=exist]{->}(60,23)(60,13)
\trput{$\hh_\pp$}

\psline[style=thin](0,27)(0,30)(60,30)(60,27)
\put(29,32){$\ff$}
\end{picture}
\caption[]{\sf\smaller Proof of Lemma~\ref{L:RCRD}: using the closure under 
right-complement, one fills the diagram, and concludes that $\gg$ must lie 
in~$\SSSs$, since $\hh_\pp$ lies in~$\SSSs$ and $\ee$ is invertible.}
\label{F:RCRD}
\end{figure}

\begin{lemm}
\label{L:HeadNormal}
Assume that $\SSS$ is a subfamily of a left-cancellative category~$\CCC$ that 
generates~$\CCC$ and is closed under right-complement. Then every element 
of~$(\SSSs)^2$ that admits an $\SSS$-head admits an $\SSS$-normal 
decomposition.
\end{lemm}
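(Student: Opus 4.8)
The plan is to build the normal decomposition as cheaply as possible. Given $\gg$ in $(\SSSs)^2$ and an $\SSS$-head $\gg_1$ of~$\gg$, the relation $\gg_1 \dive \gg$ lets me write $\gg = \gg_1 \gg_2$, and I claim that the length two path $\seqq{\gg_1}{\gg_2}$ is already $\SSS$-normal. By definition this amounts to two verifications: that $\gg_1$ and $\gg_2$ both lie in~$\SSSs$, and that $\seqq{\gg_1}{\gg_2}$ is $\SSS$-greedy.

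The greedy half costs nothing. An $\SSS$-head lies in~$\SSS$, hence $\gg_1 \in \SSS \subseteq \SSSs$; and, as $\gg_1$ is an $\SSS$-head of $\gg_1 \gg_2$ while $\SSSs$ generates~$\CCC$ and is closed under right-complement, Lemma~\ref{L:Head}\ITEM2 gives at once that $\seqq{\gg_1}{\gg_2}$ is $\SSS$-greedy. Thus the whole weight of the statement falls on the single point $\gg_2 \in \SSSs$, which is exactly where the assumption $\gg \in (\SSSs)^2$ must be exploited; greediness alone controls $\gg_1$ but says nothing about the tail~$\gg_2$.

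To obtain $\gg_2 \in \SSSs$ I would combine three already-available facts. First, by Lemma~\ref{L:HeadExist}\ITEM2 an $\SSS$-head is automatically an $\SSSs$-head, so $\gg_1$ is an $\SSSs$-head of~$\gg$. Next, writing $\gg = \aa \dd$ with $\aa, \dd \in \SSSs$ (possible since $\gg \in (\SSSs)^2$), the factor $\aa$ lies in~$\SSSs$ and left-divides~$\gg$, so the $\SSSs$-head property forces $\aa \dive \gg_1$, say $\gg_1 = \aa \cc$; substituting into $\gg_1 \gg_2 = \gg = \aa \dd$ and left-cancelling~$\aa$ yields $\cc \gg_2 = \dd$, so that $\gg_2$ right-divides~$\dd \in \SSSs$. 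Finally, since $\SSSs$ generates~$\CCC$ and is closed under right-complement, Lemma~\ref{L:RCRD} shows $\SSSs$ is closed under right-divisor, whence $\gg_2 \in \SSSs$.

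I expect the membership $\gg_2 \in \SSSs$ to be the only genuine obstacle, precisely because it forces one to weave together the upgrade to an $\SSSs$-head, the factorization furnished by $(\SSSs)^2$, and the right-divisor closure of Lemma~\ref{L:RCRD}; once it is in hand the conclusion is immediate, and no case split (for instance according to whether $\gg$ or $\gg_2$ is invertible) is required, since whenever $\gg$ already lies in~$\SSSs$ the length one path $\seq\gg$ is trivially $\SSS$-normal.
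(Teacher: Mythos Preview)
Your proof is correct and follows essentially the same route as the paper's: write $\gg = \gg_1\gg_2$ with $\gg_1$ the $\SSS$-head, upgrade $\gg_1$ to an $\SSSs$-head via Lemma~\ref{L:HeadExist}\ITEM2, use the decomposition $\gg = \aa\dd$ with $\aa,\dd\in\SSSs$ to force $\gg_2$ to right-divide~$\dd$, invoke Lemma~\ref{L:RCRD} to conclude $\gg_2\in\SSSs$, and finish with Lemma~\ref{L:Head}\ITEM2 for greediness. The only difference is the order of exposition (you treat greediness first, the paper last), which is immaterial.
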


\begin{proof}
Assume that $\gg$ is $\hh_1 \hh_2$ with $\hh_1, \hh_2 \in \SSSs$ and $\gg$ 
admits an $\SSS$-head, say~$\gg_1$. By assumption, we have $\hh_1 \dive \gg$ 
with $\hh_1 \in \SSSs$.  By Lemma~\ref{L:HeadExist}, $\gg_1$ is an 
$\SSSs$-head of~$\gg$, implying $\hh_1 \dive \gg_1$, say  $\gg_1 = \hh_1 \ff$. 
Then we have $\gg = \hh_1 \ff \gg_2 = \hh_1 \hh_2$, whence $\hh_2 = \ff 
\gg_2$. It follows that $\gg_2$ right-divides~$\hh_2$, an element of~$\SSSs$. 
By Lemma~\ref{L:RCRD}, $\SSSs$ must be closed under right-divisor, so $\gg_2$ 
lies in~$\SSSs$. Hence $\seqq{\gg_1}{\gg_2}$ is a decomposition of~$\gg$ whose 
entries lie in~$\SSSs$.

Finally, by Lemma~\ref{L:Head}, the assumption that $\gg_1$ is an $\SSS$-head 
of~$\gg_1 \gg_2$ implies that $\seqq{\gg_1}{\gg_2}$ is $\SSS$-greedy, hence it 
is an $\SSS$-normal decomposition of~$\gg$.
\end{proof}

\begin{lemm}
\label{L:CMHead}
Assume that $\SSS$ is a subfamily of a left-cancellative category~$\CCC$ such 
that $\SSSs$ is closed under right-comultiple. Then a $\div$-maximal 
left-divisor in~$\SSS$ is an $\SSS$-head.
\end{lemm}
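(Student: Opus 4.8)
The goal is to show that a $\div$-maximal left-divisor~$\gg_1$ of an element~$\gg$ that lies in~$\SSS$ is an $\SSS$-head of~$\gg$. Since $\gg_1$ belongs to~$\SSS$ and left-divides~$\gg$ by hypothesis, the only point to establish is the domination property: every~$\hh$ in~$\SSS$ satisfying $\hh \dive \gg$ also satisfies $\hh \dive \gg_1$. So I fix such an~$\hh$ and aim at proving $\hh \dive \gg_1$.

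The plan is to use the right-comultiple closure to build a common right-multiple of~$\gg_1$ and~$\hh$ that still left-divides~$\gg$, and then to argue that this right-multiple cannot be a \emph{proper} multiple of~$\gg_1$. First I would note that, as identity-elements are invertible, $\SSS$ is included in~$\SSSs$, so both~$\gg_1$ and~$\hh$ lie in~$\SSSs$ and satisfy $\gg_1, \hh \dive \gg$. Applying the closure of~$\SSSs$ under right-comultiple (Definition~\ref{D:RMClosed}) to the common right-multiple~$\gg$ then yields some~$\hh'$ in~$\SSSs$ with $\gg_1, \hh \dive \hh' \dive \gg$.

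The heart of the argument is to upgrade $\gg_1 \dive \hh'$ into $\gg_1 \eqir \hh'$, after which $\hh \dive \hh'$ together with $\hh' \dive \gg_1$ (the latter being part of~$\eqir$, by Lemma~\ref{L:DivOrder}) gives the desired $\hh \dive \gg_1$ by transitivity. The obstacle here is that $\hh'$ is only known to sit in $\SSSs = \SSS\CCCi \cup \CCCi$, not in~$\SSS$ itself, whereas $\div$-maximality is a statement about elements of~$\SSS$. So I would descend to a genuine element of~$\SSS$: writing $\hh' = \ff\ee$ with $\ff \in \SSS$ and $\ee \in \CCCi$, the relation $\ff \eqir \hh'$ propagates the two divisibilities to $\gg_1 \dive \ff$ and $\ff \dive \gg$. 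Now $\ff$ is an element of~$\SSS$ that left-divides~$\gg$ and is a right-multiple of~$\gg_1$; were $\ff \not\dive \gg_1$, we would have $\gg_1 \div \ff$, contradicting the $\div$-maximality of~$\gg_1$ among left-divisors of~$\gg$ in~$\SSS$. Hence $\ff \dive \gg_1$, so $\gg_1 \eqir \ff \eqir \hh'$, which is exactly what was needed.

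The one remaining configuration, which I expect to be a routine nuisance rather than a genuine difficulty, is when $\hh'$ is purely invertible (the branch $\hh' \in \CCCi$ carrying no $\SSS$-factor). There, $\gg_1 \dive \hh'$ and $\hh \dive \hh'$ force both~$\gg_1$ and~$\hh$ to be invertible, since an element left-dividing an invertible element is itself invertible (by Lemma~\ref{L:Inv}); and two invertible elements sharing the source of~$\gg$ automatically left-divide each other, so $\hh \dive \gg_1$ holds directly. Assembling the two cases completes the proof.
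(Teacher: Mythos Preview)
Your proof is correct and follows the same line as the paper's: apply the right-comultiple closure of~$\SSSs$ to~$\gg_1$ and~$\hh$ to obtain~$\hh' \in \SSSs$ with $\gg_1, \hh \dive \hh' \dive \gg$, then use $\div$-maximality of~$\gg_1$ to conclude $\gg_1 \eqir \hh'$ and hence $\hh \dive \gg_1$. The paper's proof is terser at the key step, simply asserting that $\div$-maximality of~$\gg_1$ forces $\gg_1 \eqir \hh'$, without explicitly addressing that~$\hh'$ lies in~$\SSSs$ rather than~$\SSS$; your case split (passing from~$\hh'$ to an element $\ff \in \SSS$ with $\ff \eqir \hh'$, and handling $\hh' \in \CCCi$ separately) makes this point explicit and is a genuine clarification of what the paper leaves implicit.
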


\begin{proof}
Assume that $\gg_1$ is a $\div$-maximal left-divisor of~$\gg$ lying 
in~$\SSS$. Write $\gg = \gg_1 \gg_2$. Let $\hh$ be an arbitrary left-divisor 
of~$\gg$ lying in~$\SSS$. We wish to prove $\hh \dive \gg_1$. Now, $\gg$ is a 
common right-multiple of~$\hh$ and~$\gg_1$, which both lie in~$\SSS$, hence 
in~$\SSSs$. As the latter is closer under right-comultiple, there must exist a 
common right-multiple~$\hh'$ of~$\hh$ and~$\gg_1$ that lies in~$\SSSs$ and 
left-divides~$\gg$. As we have $\gg_1 \dive \hh'$, the assumption that $\gg_1$ 
is a $\div$-maximal left-divisor of~$\gg$ implies $\gg_1 \eqir \hh'$, whence 
$\hh \dive \hh' \dive \gg_1$, and, finally, $\hh \dive \gg_1$. Hence $\gg_1$ 
is an $\SSS$-head of~$\gg$. 
\end{proof}

\begin{proof}[Proof of Proposition~\ref{P:RecGarII} ($\Leftarrow$)]
Assume first that $\SSS$ satisfies~\eqref{E:RecGarII1}.
We claim that $\SSS$ is also closed under right-comultiple and 
right-complement. Indeed, assume that $\ff, \gg$ belong to~$\SSSs$ and $\ff, 
\gg \dive \hh$ hold. Assume first $\hh \notin \CCCi$, and let $\hh_1$ be an 
$\SSS$-head of~$\hh$. As $\ff$ belongs to~$\SSSs$ and it left-divides~$\hh$, 
we must have $\ff \dive \hh_1$, hence $\ff \gg' = \hh_1$ for some~$\gg'$, and, 
similarly, $\gg \ff' = \hh_1$ for some~$\ff'$. As $\SSSs$ is closed under 
right-divisor, $\ff'$ and $\gg'$ must belong to~$\SSSs$. If $\hh$ is 
invertible, then $\hh$ belongs to~$\SSSs$ and we can take $\hh_1 = \hh$, $\gg' 
= \ff\inv \hh_1$, and $\ff' = \gg\inv \hh_1$. In all cases, $\hh_1, \ff', 
\gg'$ witness that $\SSSs$ is closed under right-comultiple and 
right-complement. Hence, \eqref{E:RecGarII1} implies~\eqref{E:RecGarII2}.

Next, assume that $\SSS$ satisfies~\eqref{E:RecGarII2}. Then, by assumption, 
every element of~$(\SSSs)^2$ admits an $\SSS$-head, hence, by 
Lemma~\ref{L:HeadNormal}, an $\SSS$-normal decomposition. Hence, by 
Proposition~\ref{P:RecGarI}, $\SSS$ is a Garside family in~$\CCC$. 

Finally, assume that $\SSS$ satisfies~\eqref{E:RecGarII3}. First, by 
Lemma~\ref{L:RMRC}, the assumption that $\SSSs$ is closed under 
right-comultiple and right-divisor implies that it is closed under 
right-complement. Then, by Lemma~\ref{L:CMHead}, the existence of a 
$\div$-maximal left-divisor in~$\SSS$ for every element~$\gg$ of~$(\SSSs)^2$ 
implies the existence of an $\SSS$-head whenever at least one such divisor 
lies in~$\SSS$, which is guaranteed when $\gg$ is non-invertible. So 
\eqref{E:RecGarII3} implies~\eqref{E:RecGarII2}, and $\SSS$ must be a Garside 
family again.
\end{proof}

\subsection{Special contexts}
\label{SS:Special}

All results established so far are valid in an arbitrary left-cancellative 
category. When the ambient category happens to satisfy additional properties, 
some of the conditions involved in the characterizations of Garside families 
may be automatically satisfied or take simpler forms.

\begin{defi}
\label{D:Noeth}
A category~$\CCC$ is called \emph{right-Noetherian} if right-divisibility 
in~$\CCC$ is a well-founded relation, that is, every nonempty subfamily has a 
least element.
\end{defi}

Above, by a least element we mean an element that right-divides every element of the considered subfamily. We recall that $\ff \div \gg$ stands for the conjunction of $\ff \dive \gg$ and $\ff \not\dive \gg$, that is, by Lemma~\ref{L:DivOrder}, the conjunction of $\ff \dive \gg$ and $\ff \noteqir \gg$.

\begin{lemm}
\label{L:Noeth}
A left-cancellative category~$\CCC$ is right-Noetherian if and only if there 
is no bounded $\div$-increasing sequence in~$\CCC$.
\end{lemm}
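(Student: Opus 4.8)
The plan is to work throughout with the sequential form of well-foundedness: right-divisibility is well-founded precisely when $\CCC$ contains no infinite sequence $(\hh_\ii)_{\ii\ge1}$ in which each $\hh_{\ii+1}$ is a \emph{proper right-divisor} of~$\hh_\ii$, where ``proper'' has to be read through the non-invertibility of the cofactor, i.e.\ $\hh_\ii = \kk_\ii\hh_{\ii+1}$ with $\kk_\ii$ non-invertible. Passing between this formulation and the ``every nonempty subfamily has a minimal element'' formulation is the usual dependent-choice argument, which I would isolate once and reuse. The real content is then a dictionary between such descending sequences and bounded $\div$-increasing sequences, and the hinge of that dictionary is left-cancellation together with Lemma~\ref{L:Inv}. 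I would prove both implications contrapositively.

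For the implication ``no bounded $\div$-increasing sequence $\Rightarrow$ right-Noetherian'', I start from an infinite sequence $(\hh_\ii)$ with $\hh_\ii = \kk_\ii\hh_{\ii+1}$ and each $\kk_\ii$ non-invertible (the sources and targets matching so that every displayed product is defined). Put $\gg_\ii = \kk_1 \pdots \kk_{\ii-1}$, with $\gg_1$ the relevant identity-element. Telescoping the relations $\hh_\jj = \kk_\jj\hh_{\jj+1}$ gives $\hh_1 = \kk_1 \pdots \kk_{\ii-1}\hh_\ii = \gg_\ii\hh_\ii$, so every $\gg_\ii$ left-divides the fixed element~$\hh_1$ and the sequence is bounded. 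Since $\gg_{\ii+1} = \gg_\ii\kk_\ii$, we have $\gg_\ii \dive \gg_{\ii+1}$, and this step is strict: if $\gg_{\ii+1}\dive\gg_\ii$ held, say $\gg_\ii = \gg_{\ii+1}m = \gg_\ii\kk_\ii m$, then left-cancelling~$\gg_\ii$ would show $\kk_\ii m$ to be an identity-element, so $\kk_\ii$ would have a right-inverse, hence be invertible by Lemma~\ref{L:Inv}, against our choice. Thus $(\gg_\ii)$ is a bounded $\div$-increasing sequence.

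For the converse, I start from a bounded $\div$-increasing sequence $\gg_1 \div \gg_2 \div \etc$, with all $\gg_\ii \dive \gg$ for some fixed~$\gg$. From $\gg_\ii \dive \gg_{\ii+1}$ write $\gg_{\ii+1} = \gg_\ii\kk_\ii$; exactly as above, the strictness $\gg_{\ii+1}\not\dive\gg_\ii$ forces $\kk_\ii$ to be non-invertible via Lemma~\ref{L:Inv}. Writing $\gg = \gg_\ii\hh_\ii$ and comparing $\gg = \gg_{\ii+1}\hh_{\ii+1} = \gg_\ii\kk_\ii\hh_{\ii+1}$ with $\gg = \gg_\ii\hh_\ii$, left-cancellativity yields $\hh_\ii = \kk_\ii\hh_{\ii+1}$. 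Hence each $\hh_{\ii+1}$ is a proper right-divisor of~$\hh_\ii$, so $(\hh_\ii)$ is an infinite properly right-descending sequence and right-divisibility is not well-founded.

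The step I expect to be delicate, and the reason the statement is confined to left-cancellative categories, is the treatment of ``properness'' on the right-divisibility side. Because $\CCC$ need not be right-cancellative, an element may be a proper right-divisor of itself, so properness cannot be read off from antisymmetry of right-divisibility and must be carried by the non-invertibility of the cofactor~$\kk_\ii$; the equivalence between ``$\kk_\ii$ has no right-inverse'', ``$\kk_\ii$ is non-invertible'', and ``$\gg_{\ii+1}\not\dive\gg_\ii$'' is exactly what Lemma~\ref{L:Inv} and left-cancellation provide, and it is what makes the dictionary reversible in both directions. The only other non-formal ingredient is the passage between the subfamily and the descending-sequence formulations of well-foundedness, which is the standard dependent-choice argument.
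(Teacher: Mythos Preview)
Your proof is correct and follows essentially the same approach as the paper's: both build the complementary sequence $\hh_\ii$ from the bounded sequence $\gg_\ii$ (and vice versa) via $\gg = \gg_\ii\hh_\ii$, use left-cancellation to get $\hh_\ii = \kk_\ii\hh_{\ii+1}$, and track invertibility of the cofactors to pass between strictness of~$\div$ and properness of right-divisibility. The only cosmetic difference is that you argue both directions contrapositively while the paper argues them directly; your explicit handling of ``properness'' via non-invertibility of the cofactor (rather than via antisymmetry of right-divisibility, which is unavailable without right-cancellation) is in fact more careful than the paper's somewhat terse ``necessarily $\ff_\ii$ is invertible for $\ii\ge N$''.
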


\begin{proof}
Assume that $\CCC$ is right-Noetherian and we have $\gg_1 \dive \gg_2 \dive 
\pdots \dive \gg$ in~$\CCC$. For each~$\ii$, write $\gg_\ii \ff_\ii = 
\gg_{\ii+1}$ and $\gg_\ii \hh_\ii = \gg$. Then we have $\gg_\ii \hh_\ii = 
\gg_{\ii+1} \hh_{\ii+1} = \gg_\ii \ff_\ii \hh_{\ii+1}$, whence $\hh_\ii = 
\ff_\ii \hh_{\ii+1}$ by left-cancelling $\gg_\ii$. So the sequence $(\hh_1, 
\hh_2, ...)$ is decreasing for right-divisibility. Let $\hh_\NN$ be a least 
element in $\{\hh_\ii \mid \ii \ge 1\}$. Necessarily $\ff_\ii$ is invertible 
for $\ii \ge \NN$, which implies  $\gg_{\ii+1} \eqir \gg_\ii$.  So $(\gg_1, 
\gg_2, ...)$ cannot be $\div$-increasing.

Conversely, assume that $\CCC$ contains no bounded $\div$-increasing sequence 
in~$\CCC$, and let $(\ff_1, \ff_2, ...)$ be a decreasing sequence with respect 
to right-divisibility. Write $\ff_\ii = \gg'_\ii \ff_{\ii+1}$. Set $\gg = 
\ff_1$, and $\gg_\ii = \gg'_1 \pdots \gg'_\ii$. Then we have $\gg_1 \dive 
\gg_2 \dive \pdots \dive \gg$. The assumption that $\CCC$ contains no bounded 
$\div$-increasing sequence implies that $\gg'_\ii$ is invertible for~$\ii$ 
large enough. It follows that right-divisibility admits no infinite descending 
sequence. By the Axiom of Dependent Choices, this implies that 
right-divisibility is a well-founded relation, that is, that $\CCC$ is 
right-Noetherian.
\end{proof}

\begin{prop}
\label{R:RecGarNoeth}
A subfamily~$\SSS$ of a left-cancellative category that is right-Noetherian is 
a Garside family if and only if 
\begin{equation}
\label{E:RecGarNoeth}
\BOX{$\SSSs$ generates~$\CCC$ and it is closed under right-comultiple and 
right-divisor.}
\end{equation}
\end{prop}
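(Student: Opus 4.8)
The plan is to leverage Proposition~\ref{P:RecGarII}, which has already done the heavy lifting by establishing three equivalent characterizations of Garside families. The statement to prove, Proposition~\ref{R:RecGarNoeth}, is nothing but a specialization of condition~\eqref{E:RecGarII3} to the right-Noetherian setting, so the entire task reduces to showing that the final clause of~\eqref{E:RecGarII3}---that every non-invertible element of~$(\SSSs)^2$ admits a $\div$-maximal left-divisor in~$\SSS$---is \emph{automatically} satisfied once $\CCC$ is right-Noetherian. First I would observe that the forward direction is essentially free: if $\SSS$ is a Garside family, then by Lemma~\ref{L:Closed} the family $\SSSs$ is closed under right-divisor and right-comultiple, and it generates~$\CCC$ by definition, so~\eqref{E:RecGarNoeth} holds regardless of any Noetherianity assumption.

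For the converse, I would assume~\eqref{E:RecGarNoeth} and aim to verify~\eqref{E:RecGarII3}; the first two clauses of~\eqref{E:RecGarII3} coincide verbatim with~\eqref{E:RecGarNoeth}, so the only thing left is the existence of a $\div$-maximal left-divisor in~$\SSS$ for each non-invertible $\gg$ in~$(\SSSs)^2$. The natural argument is by contradiction via Lemma~\ref{L:Noeth}, which characterizes right-Noetherianity by the absence of bounded $\div$-increasing sequences. If $\gg$ admitted \emph{no} $\div$-maximal left-divisor in~$\SSS$, then starting from any left-divisor of~$\gg$ in~$\SSS$ (for instance a non-invertible first letter of a decomposition of~$\gg$ over~$\SSSs$, which exists since $\gg$ is non-invertible and $\SSSs$ generates), one could repeatedly find a strictly $\div$-larger left-divisor in~$\SSS$, producing an infinite $\div$-increasing sequence $\gg_1 \div \gg_2 \div \pdots$ of left-divisors of~$\gg$. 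Each $\gg_\ii$ left-divides~$\gg$, so the sequence is bounded above by~$\gg$, contradicting Lemma~\ref{L:Noeth}.

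The delicate point---and the step I expect to be the main obstacle---is ensuring that the increasing chain stays inside~$\SSS$ and genuinely starts, i.e.\ that the set of left-divisors of~$\gg$ lying in~$\SSS$ is nonempty. This is where I would need to use that $\gg$ is non-invertible together with the fact that $\SSSs$ generates~$\CCC$: writing $\gg = \hh_1 \pdots \hh_\mm$ with the $\hh_\ii$ in~$\SSSs$, one extracts a non-invertible initial factor and, after absorbing invertible elements, obtains an honest left-divisor in~$\SSS$ (here the closure under right-divisor of~$\SSSs$, and the passage between $\SSS$ and $\SSSs$ as in Lemma~\ref{L:GreedyInv} and Lemma~\ref{L:HeadExist}, are what keep us within~$\SSS$ rather than merely~$\SSSs$). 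A subtle secondary worry is that a $\div$-maximal element need not be unique or need not dominate all other $\SSS$-left-divisors; but this is precisely why~\eqref{E:RecGarII3} only demands $\div$-maximality rather than the stronger head property, and Lemma~\ref{L:CMHead} has already shown that under closure under right-comultiple a $\div$-maximal left-divisor \emph{is} an $\SSS$-head, so no extra work is required on that front.

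Once the existence of a $\div$-maximal left-divisor in~$\SSS$ is secured, condition~\eqref{E:RecGarII3} is fully verified, and Proposition~\ref{P:RecGarII} immediately yields that $\SSS$ is a Garside family, completing the converse and hence the proof. In summary, the architecture is: forward direction from Lemma~\ref{L:Closed}; converse by reducing to~\eqref{E:RecGarII3} and supplying its last clause through a right-Noetherian chain-condition argument routed through Lemma~\ref{L:Noeth}.
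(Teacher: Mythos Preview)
Your proposal is correct and follows essentially the same route as the paper: both directions reduce to Proposition~\ref{P:RecGarII} via condition~\eqref{E:RecGarII3}, and the missing clause---existence of a $\div$-maximal left-divisor in~$\SSS$ for each non-invertible~$\gg$---is supplied by building a $\div$-increasing chain of left-divisors in~$\SSS$ bounded by~$\gg$ and invoking Lemma~\ref{L:Noeth} to force termination. The paper phrases the chain argument as a direct construction that must stop rather than as a contradiction, and handles the nonemptiness of the starting set in one line (a non-invertible element of~$\SSSs$ lies in~$\SSS\CCCi$, hence is $\eqir$-equivalent to an element of~$\SSS$), but the substance is identical.
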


\begin{proof}
Owing to Proposition~\ref{P:RecGarII} and~\eqref{E:RecGarII3}, it is enough to 
show that every non-invertible element~$\gg$ in~$\CCC$ (or only 
in~$(\SSSs)^2)$ admits a $\div$-maximal left-divisor lying in~$\SSS$. First, 
as $\SSSs$ generates~$\CCC$ and $\gg$ is non-invertible, the latter is 
left-divisible by some non-invertible element of~$\SSSs$, hence by some 
(non-invertible) element of~$\SSS$, say~$\gg_1$. If $\gg_1$ is not 
$\div$-maximal among left-divisors of~$\gg$ lying in~$\SSS$, we can find 
$\gg_2$ satisfying $\gg_1 \div \gg_2 \dive \gg$. If $\gg_2$ is not 
$\div$-maximal, we find $\gg_3$ satisfying $\gg_2 \div \gg_3 \dive \gg$, and 
so on. By Lemma~\ref{L:Noeth}, the construction cannot be repeated infinitely 
many times, which means that some $\gg_\NN$ must be $\div$-maximal among 
left-divisors of~$\gg$ lying in~$\SSS$. Then, every non-invertible element of 
$(\SSSs)^2$ admits a $\div$-maximal left-divisor in~$\SSS$.  So $\SSS$ 
satisfies \eqref{E:RecGarII3} hence, by Proposition~\ref{P:RecGarII}, it  is a 
Garside family in~$\CCC$.
\end{proof}

A further specialization leads to categories that admit least common 
right-multiples. 

\begin{defi}
Assume that $\CCC$ is a left-cancellative category. We say that $\hh$ is a 
\emph{least common right-multiple}, or \emph{right-lcm}, of~$\ff$ and~$\gg$ if 
$\hh$ is a right-multiple of~$\ff$ and~$\gg$ and every element~$\hh'$ that is 
a right-multiple of~$\ff$ and~$\gg$ is a right-multiple of~$\hh$. A 
subfamily~$\SSS$ of~$\CCC$ is said to be \emph{closed under right-lcm} 
in~$\CCC$ if every right-lcm of two elements of~$\SSS$ lies in~$\SSS$. 
\end{defi}

In other words, a right-lcm is a least common upper bound with 
respect to 
left-divisibility.

\begin{prop}
\label{R:RecGarLcm}
Assume that $\CCC$ is a left-cancellative category that is right-Noetherian 
and such that any two elements of~$\CCC$ that admit a common right-multiple 
admit a right-lcm. Then a subfamily~$\SSS$ of~$\CCC$ is a Garside family if 
and only if 
\begin{equation}
\label{E:RecGarLcm}
\BOX{$\SSSs$ generates~$\CCC$ and it is closed under right-lcm and 
right-divisor.}
\end{equation}
\end{prop}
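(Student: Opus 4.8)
The plan is to deduce this from the characterization in \eqref{E:RecGarNoeth}, that is, from Proposition~\ref{R:RecGarNoeth}, since both statements share the hypothesis that $\CCC$ is right-Noetherian and both have the same ``$\SSSs$ generates~$\CCC$'' requirement together with closure under right-divisor. Thus the whole matter reduces to showing that, under the lcm hypothesis on~$\CCC$ and given that $\SSSs$ is closed under right-divisor, the condition ``$\SSSs$ is closed under right-comultiple'' appearing in \eqref{E:RecGarNoeth} is \emph{equivalent} to the condition ``$\SSSs$ is closed under right-lcm'' appearing in \eqref{E:RecGarLcm}. Once that equivalence is in hand, both directions follow at once: if $\SSS$ satisfies~\eqref{E:RecGarLcm}, then it satisfies~\eqref{E:RecGarNoeth} and is Garside by Proposition~\ref{R:RecGarNoeth}; conversely, if $\SSS$ is Garside, then by Lemma~\ref{L:Closed} $\SSSs$ is closed under right-comultiple and right-divisor, hence (by the equivalence) closed under right-lcm, giving~\eqref{E:RecGarLcm}.

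First I would prove the easy direction of the equivalence, namely that closure under right-lcm implies closure under right-comultiple. Suppose $\ff, \gg \in \SSSs$ admit a common right-multiple~$\hh$. By the lcm hypothesis on~$\CCC$, they then admit a right-lcm, say~$\hh'$. Since $\SSSs$ is closed under right-lcm, $\hh'$ lies in~$\SSSs$; and by definition of a right-lcm we have $\ff, \gg \dive \hh'$ and $\hh' \dive \hh$ (the latter because $\hh$ is a common right-multiple and every common right-multiple is a right-multiple of the lcm). This is exactly the conclusion $\exists \hh' {\in} \SSSs\, (\ff, \gg \dive \hh' \dive \hh)$ required by~\eqref{E:RMClosed}, so $\SSSs$ is closed under right-comultiple.

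The converse direction is where I expect the main subtlety, and it is where the closure under right-divisor must be used. Suppose $\SSSs$ is closed under both right-comultiple and right-divisor, and let $\hh$ be a right-lcm in~$\CCC$ of two elements $\ff, \gg$ of~$\SSSs$; I must show $\hh \in \SSSs$. Since $\hh$ is in particular a common right-multiple of~$\ff$ and~$\gg$, closure under right-comultiple yields $\hh' \in \SSSs$ with $\ff, \gg \dive \hh' \dive \hh$. But $\hh'$ is then a common right-multiple of~$\ff$ and~$\gg$, so by the defining property of the right-lcm~$\hh$ we get $\hh \dive \hh'$. Combined with $\hh' \dive \hh$, Lemma~\ref{L:DivOrder} gives $\hh \eqir \hh'$, say $\hh = \hh' \ee$ with $\ee$ invertible; since $\SSSs = \SSS\CCCi \cup \CCCi$ is closed under right-multiplication by invertibles (it satisfies $\SSSs\CCCi = \SSSs$ directly from its definition), $\hh$ lies in~$\SSSs$. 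Hence $\SSSs$ is closed under right-lcm.

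The role of closure under right-divisor in this argument deserves a remark: in the converse direction I did not overtly invoke it, because closure under right-comultiple already produced the needed element of~$\SSSs$ below~$\hh$, and the right-lcm property then forced that element to be $\eqir$-equivalent to~$\hh$. The right-divisor hypothesis is what makes the two conditions genuinely interchangeable as hypotheses in the two propositions — it is carried along identically in~\eqref{E:RecGarNoeth} and~\eqref{E:RecGarLcm} — so no separate work is needed for it. With the equivalence of the two closure conditions established, the proof concludes by simply quoting Proposition~\ref{R:RecGarNoeth} in both directions as described above.
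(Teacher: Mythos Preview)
Your proof is correct and follows essentially the same approach as the paper's own proof: both establish that, under the lcm hypothesis on~$\CCC$, closure of~$\SSSs$ under right-lcm and closure under right-comultiple are interchangeable, and then invoke Proposition~\ref{R:RecGarNoeth} (or the closure results for Garside families) to conclude. Your organization is slightly more modular---you isolate the closure equivalence first---but the arguments in each direction are the same as the paper's, down to the use of $\hh \eqir \hh'$ and $\SSSs\CCCi = \SSSs$.
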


\begin{proof}
Assume that $\SSS$ is a Garside family in~$\CCC$. Then, by 
Proposition~\ref{P:RecGarII}, $\SSSs$ is closed under right-comultiple. Assume 
that $\ff, \gg$ are elements of~$\SSSs$ that admit a common 
right-multiple. Then 
$\ff$ and~$\gg$ admit a right-lcm, say~$\hh$. As $\SSSs$ is closed under 
right-comultiple, there exists~$\hh'$ in~$\SSSs$ satisfying $\ff, \gg \dive 
\hh' \dive \hh$. By definition of a right-lcm, $\hh \dive \hh'$ must hold, 
whence $\hh' \eqir \hh$. So there exists an invertible element~$\ee$ 
satisfying 
$\hh = \hh' \ee$, whence $\hh \in \SSSs \CCCi = \SSSs$. This shows that 
$\SSSs$ is closed under right-lcm.

Conversely, assume that $\SSS$ satisfies~\eqref{E:RecGarLcm} and we have $\ff, 
\gg \dive \hh$ with $\ff, \gg \in \SSSs$. As $\ff, \gg$ admit a common 
right-multiple, they admit a right-lcm, say~$\hh'$, and, by assumption, $\hh'$ 
belongs to~$\SSSs$. By definition of a right-lcm, $\hh' \dive \hh$ holds, so 
$\SSSs$ is closed under right-comultiple. Then,  by 
Proposition~\ref{R:RecGarNoeth},  $\SSS$ is a Garside family in~$\SSS$.
\end{proof}

Specializing even more, we consider the case of categories that are both 
right- and left-Noetherian, the latter meaning that left-divisibility is well 
founded. In that case, standard arguments show that every non-invertible  
element  is a product of \emph{atoms}, defined to be those non-invertible 
elements that cannot be expressed as the product of at least two 
non-invertible elements. Then, if $\CCC$ is a Noetherian category containing 
no nontrivial invertible element, a subfamily~$\SSS$ of~$\CCC$ 
generates~$\CCC$ if and only if $\SSS$ contains all atoms of~$\CCC$. Then 
Proposition~\ref{R:RecGarLcm} directly implies

\begin{prop}
\label{R:RecGarAtom}
Assume that $\CCC$ is a left-cancellative category that is Noetherian, such 
that any two elements that admit a common right-multiple admit a right-lcm, 
and contains no nontrivial invertible element. Then a subfamily~$\SSS$ 
of~$\CCC$ is a Garside family if and only if 
\begin{equation}
\label{E:RecGarAtom}
\BOX{$\SSS$ contains the atoms of~$\CCC$ and it is closed under right-lcm and 
right-divisor.}
\end{equation}
\end{prop}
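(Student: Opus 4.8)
The plan is to obtain the statement as a direct specialization of Proposition~\ref{R:RecGarLcm}, translating each of its three conditions on~$\SSSs$ into the corresponding condition on~$\SSS$ appearing in~\eqref{E:RecGarAtom}. First I would record the effect of the hypotheses: since $\CCC$ has no nontrivial invertible element, $\CCCi$ is exactly the family of identity-elements, so $\SSSs = \SSS\CCCi \cup \CCCi$ collapses to $\SSS \cup \CCCi$, that is, $\SSS$ together with the identities. Moreover Noetherianity entails right-Noetherianity, and the right-lcm hypothesis is assumed outright, so the hypotheses of Proposition~\ref{R:RecGarLcm} are met. That proposition then tells us that $\SSS$ is a Garside family if and only if $\SSSs$ generates~$\CCC$ and is closed under right-lcm and right-divisor, and it remains only to match these three conditions on~$\SSSs$ with the conditions on~$\SSS$.

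For the generation clause I would invoke the fact recalled just above the statement: in a Noetherian category with no nontrivial invertible element, every non-invertible element is a product of atoms, so a subfamily generates~$\CCC$ if and only if it contains all atoms. Applying this to~$\SSSs$, the latter generates~$\CCC$ if and only if it contains all atoms. Since atoms are non-invertible, hence non-identity, and $\SSSs$ and~$\SSS$ contain exactly the same non-identity elements, $\SSSs$ contains all atoms if and only if $\SSS$ does. Hence $\SSSs$ generates~$\CCC$ if and only if $\SSS$ contains every atom of~$\CCC$, which is the first clause of~\eqref{E:RecGarAtom}.

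For the two closure clauses the point is that identity-elements behave trivially, so that passing between $\SSS$ and $\SSSs = \SSS \cup \CCCi$ changes nothing. On the one hand, the only right-divisors of an identity-element are identity-elements; on the other hand, if $\hh$ is a right-lcm of an identity-element~$\id\xx$ and an element~$\gg$ with source~$\xx$, then $\hh \eqir \gg$, and a non-invertible element of~$\SSS$ can never have an identity-element as a right-multiple, since $\gg \dive \id\xx$ would force $\gg$ invertible. Consequently, adjoining the identities to~$\SSS$ introduces no new non-trivial right-divisor or right-lcm and destroys no closure already present, so $\SSSs$ is closed under right-divisor (respectively right-lcm) exactly when $\SSS$ is. Combining the three equivalences with Proposition~\ref{R:RecGarLcm} gives that $\SSS$ is a Garside family if and only if it satisfies~\eqref{E:RecGarAtom}.

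I do not expect a genuine obstacle, the statement being explicitly a specialization of Proposition~\ref{R:RecGarLcm}; the only step carrying real content is the factorization-into-atoms result, which is where left-Noetherianity is actually used (everything else flows through right-Noetherianity via Proposition~\ref{R:RecGarLcm}). The sole place demanding care is the routine bookkeeping of identity-elements when moving between~$\SSS$ and~$\SSSs$, which the remarks of the previous paragraph dispatch.
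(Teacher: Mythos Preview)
Your proposal is correct and matches the paper's approach exactly: the paper gives no separate proof but simply states that Proposition~\ref{R:RecGarLcm} ``directly implies'' the result, relying on the observation (stated just before the proposition) that in a Noetherian category with no nontrivial invertible element a generating family is precisely one containing all atoms. Your write-up spells out in detail what the paper leaves implicit, including the routine passage between $\SSS$ and $\SSSs$ once $\CCCi$ reduces to the identity-elements.
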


\begin{coro}
Under the assumptions of Proposition~\ref{R:RecGarAtom}, there exists a 
smallest Garside family in~$\CCC$.
\end{coro}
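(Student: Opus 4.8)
The plan is to exhibit the smallest Garside family as the intersection of \emph{all} Garside families in~$\CCC$, so that the entire argument reduces to checking that the class of Garside families is stable under arbitrary intersection. The key leverage is Proposition~\ref{R:RecGarAtom}, which, under the present assumptions, characterizes Garside families by three conditions that are purely closure conditions: containing the atoms of~$\CCC$, being closed under right-lcm, and being closed under right-divisor. Each of these is manifestly inherited by intersections, which is exactly what makes the approach work.

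First I would note that the class of Garside families in~$\CCC$ is nonempty, since $\CCC$ itself is a Garside family (the length one path~$\seq\gg$ is a $\CCC$-normal decomposition of every~$\gg$). Let $\SSS_0$ denote the intersection of all Garside families in~$\CCC$. I then verify that $\SSS_0$ satisfies the three conditions of~\eqref{E:RecGarAtom}. Since every Garside family contains all atoms of~$\CCC$, so does~$\SSS_0$. If $\gg$ lies in~$\SSS_0$ and $\ff$ right-divides~$\gg$, then $\ff$ belongs to every Garside family, each being closed under right-divisor and containing~$\gg$; hence $\ff \in \SSS_0$, and $\SSS_0$ is closed under right-divisor. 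Finally, if $\ff, \gg$ lie in~$\SSS_0$ and $\hh$ is a right-lcm of~$\ff$ and~$\gg$, then $\hh$ belongs to every Garside family, each being closed under right-lcm; hence $\hh \in \SSS_0$, and $\SSS_0$ is closed under right-lcm.

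By Proposition~\ref{R:RecGarAtom}, these three properties imply that $\SSS_0$ is itself a Garside family in~$\CCC$. Since $\SSS_0$ is by construction contained in every Garside family, it is the smallest one, which is the desired conclusion.

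I do not expect a genuine obstacle here: the only point requiring attention is the recognition that Proposition~\ref{R:RecGarAtom} expresses the Garside condition entirely through closure conditions, after which stability under intersection is immediate. In particular, working with the atom formulation rather than the equivalent requirement that $\SSSs$ generate~$\CCC$ sidesteps any concern about whether a generation property survives intersection, since ``containing the atoms'' is trivially preserved.
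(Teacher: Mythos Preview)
Your proposal is correct and follows exactly the same approach as the paper: both argue that under the present hypotheses Proposition~\ref{R:RecGarAtom} characterizes Garside families purely by closure properties together with containing the atoms, so an arbitrary intersection of Garside families is again one, and the intersection of all of them is the smallest. Your version is simply a more detailed unpacking of the paper's two-sentence proof.
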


\begin{proof}
 Under the considered assumptions, an intersection of Garside families is a 
Garside family since under these assumptions a Garside family is defined by 
closure properties plus the fact that it contains all atoms.  In 
particular, the intersection of all Garside families of~$\CCC$ is a smallest 
Garside family in~$\CCC$.
\end{proof}

\subsection{Recognizing Garside families III: head functions}
\label{SS:Head}

We return to the general case, and establish further characterizations of 
Garside families, this time in terms of what is called a head 
function.

\begin{defi}
\label{D:HLaw}
Assume that $\CCC$ is a left-cancellative category. A partial map~$\HH$ 
of~$\CCC$ to itself is said to obey the \emph{$\HHH$-law} if 
\begin{equation}
\label{E:HLaw}
\HH(\ff\gg) \eqir \HH(\ff\HH(\gg))
\end{equation}
holds whenever both terms are defined. If \eqref{E:HLaw} holds with equality 
instead of~$\eqir$, we say that $\HH$ obeys the \emph{sharp} $\HHH$-law. 
\end{defi}

 In the same context, in addition to the $\HHH$-law, we shall also consider 
the following conditions, supposed to hold for all~$\ff, \gg$ in the domain of 
the considered map~$\HH$:
\begin{equation}
\label{E:RecGarIII0}
\mbox{\ITEM1~$\HH(\gg) \dive \gg$, 
\quad \ITEM2~$\ff \dive \gg \Rightarrow \HH(\ff) \dive \HH(\gg)$, 
\quad \ITEM3~$\gg \in \SSSs \Rightarrow \HH(\gg) \eqir \gg$.}
\end{equation}

\begin{prop}
\label{P:RecGarIII}
A subfamily~$\SSS$ of a left-cancellative category~$\CCC$ is a Garside family 
if and only if it satisfies one of the following equivalent 
conditions:
\begin{gather}
\label{E:RecGarIII1}
\BOX{$\SSSs$ generates~$\CCC$ and there exists $\HH : \CCC \setminus \CCCi \to 
\SSS$ satisfying the $\HHH$-law and~\eqref{E:RecGarIII0};}\\
\label{E:RecGarIII2}
\BOX{$\SSSs$ generates~$\CCC$, $\CCCi\SSSs \subseteq \SSSs$ holds, and there 
exists $\HH : \SSS\CCC \to \SSS$ satisfying the sharp $\HHH$-law 
and~\eqref{E:RecGarIII0}.}
\end{gather}
\end{prop}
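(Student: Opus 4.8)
The plan is to prove Proposition~\ref{P:RecGarIII} by establishing a cycle of implications together with the two Garside-family equivalences, leveraging the head-function machinery to connect back to the $\SSS$-head characterizations of Proposition~\ref{P:RecGarII}. The leading idea is that a head function~$\HH$ is essentially a systematic choice of $\SSS$-heads: condition~\eqref{E:RecGarIII0}\ITEM1 says $\HH(\gg)$ left-divides~$\gg$, the $\HHH$-law encodes the compatibility that makes iterated application produce a genuine greedy decomposition, and~\eqref{E:RecGarIII0}\ITEM2--\ITEM3 pin down that $\HH(\gg)$ captures as much of~$\SSS$ as possible. So first I would show that \emph{if $\SSS$ is a Garside family, then~\eqref{E:RecGarIII2} holds}: using Lemma~\ref{L:Closed} we get $\CCCi\SSSs \subseteq \SSSs$ and the relevant closure properties, and then by Lemma~\ref{L:HeadExist}\ITEM1 every element of~$\SSS\CCC$ admits an $\SSS$-head, so I would \emph{define} $\HH(\gg)$ to be a canonically chosen $\SSS$-head of~$\gg$ (using Lemma~\ref{L:Strict} and Proposition~\ref{P:Unique} to arrange the sharp form, i.e.\ the exact equality $\HH(\ff\gg) = \HH(\ff\HH(\gg))$ rather than mere $\eqir$).

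Next I would verify that~\eqref{E:RecGarIII2}~$\Rightarrow$~\eqref{E:RecGarIII1}; this should be essentially formal, since a map $\HH : \SSS\CCC \to \SSS$ obeying the sharp law \emph{a fortiori} obeys the ordinary $\HHH$-law, and restricting it from the domain $\SSS\CCC$ to $\CCC \setminus \CCCi$ requires only checking that every non-invertible element lies in~$\SSS\CCC$ (which follows because $\SSSs$ generates~$\CCC$, so any non-invertible~$\gg$ is left-divisible by some non-invertible element of~$\SSSs$, hence of~$\SSS$). The real work is the implication~\eqref{E:RecGarIII1}~$\Rightarrow$~\emph{Garside family}: given a head function~$\HH$ satisfying the $\HHH$-law and~\eqref{E:RecGarIII0}, I would show that for every non-invertible~$\gg$ the element $\HH(\gg)$ is an $\SSS$-head of~$\gg$, so that $\SSS$ satisfies condition~\eqref{E:RecGarII1} of Proposition~\ref{P:RecGarII} and is therefore a Garside family. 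Here~\eqref{E:RecGarIII0}\ITEM1 gives $\HH(\gg)\dive\gg$ and $\HH(\gg) \in \SSS$, so it remains to check that every $\hh \in \SSS$ with $\hh \dive \gg$ satisfies $\hh \dive \HH(\gg)$; this is where I would use~\eqref{E:RecGarIII0}\ITEM2 (monotonicity) together with~\eqref{E:RecGarIII0}\ITEM3, since $\hh \in \SSSs$ forces $\HH(\hh) \eqir \hh$ and monotonicity applied to $\hh \dive \gg$ gives $\hh \eqir \HH(\hh) \dive \HH(\gg)$.

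I would also need to verify independently that~\eqref{E:RecGarIII1} forces $\SSSs$ to be closed under right-divisor, which is the other hypothesis of~\eqref{E:RecGarII1}; this should follow from the $\HHH$-law combined with~\eqref{E:RecGarIII0}, by an argument analogous to Lemma~\ref{L:RCRD}, peeling off generators one at a time and tracking heads. The main obstacle I anticipate is the \emph{sharp} direction: upgrading the head-function equality from $\eqir$ to genuine equality when constructing~$\HH$ in~\eqref{E:RecGarIII2}. The ordinary $\HHH$-law only determines heads up to right-multiplication by an invertible element (consistent with Proposition~\ref{P:Unique}), so to obtain the sharp law I would have to make a coherent global choice of representatives in each $\eqir$-class so that the chosen head of $\ff\HH(\gg)$ literally equals the chosen head of $\ff\gg$; this requires that the choice function respect the relevant commuting squares of invertible elements, and I expect the condition $\CCCi\SSSs \subseteq \SSSs$ (rather than just $\SSS\CCCi \subseteq \SSSs$) to be exactly what makes such a coherent choice possible. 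The $\eqir$-versus-equality bookkeeping in the presence of nontrivial invertible elements, rather than any deep structural fact, will be the delicate part of the argument.
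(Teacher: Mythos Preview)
Your overall architecture matches the paper's proof: Garside $\Rightarrow$ \eqref{E:RecGarIII2} $\Rightarrow$ \eqref{E:RecGarIII1} $\Rightarrow$ Garside, with the last step going through Proposition~\ref{P:RecGarII} via~\eqref{E:RecGarII1}, and your argument that $\HH(\gg)$ is an $\SSS$-head (using \eqref{E:RecGarIII0}\ITEM2 and~\ITEM3) is exactly right.

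Two tactical points where you diverge from the paper and make your life harder than necessary. First, for the sharp $\HHH$-law you invoke Lemma~\ref{L:Strict} and Proposition~\ref{P:Unique} and then worry about a ``coherent choice'' respecting commuting squares; none of that is needed. The paper simply picks an arbitrary $\eqir$-selector $\SSS_0 \subseteq \SSS$ (one representative per $\eqir$-class, by the Axiom of Choice) and sets $\HH_0(\gg)$ to be the element of~$\SSS_0$ that is $\eqir$-equivalent to~$\HH(\gg)$. The ordinary $\HHH$-law plus \eqref{E:RecGarIII0}\ITEM2 give $\HH_0(\ff\gg) \eqir \HH_0(\ff\HH_0(\gg))$, and since both sides lie in~$\SSS_0$ they are equal. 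No coherence is required, and the hypothesis $\CCCi\SSSs \subseteq \SSSs$ plays no role here; its only use is in \eqref{E:RecGarIII2} $\Rightarrow$ \eqref{E:RecGarIII1}, to ensure $\CCC \setminus \CCCi \subseteq \SSS\CCC$.

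Second, for closure of~$\SSSs$ under right-divisor you propose a Lemma~\ref{L:RCRD}-style peeling of generators; that lemma relies on closure under right-complement, which you do not have under~\eqref{E:RecGarIII1}. The paper instead uses a single application of the $\HHH$-law: if $\gg \in \SSSs$ and $\gg = \gg'\ff$ with~$\ff$ non-invertible, one shows $\HH(\gg) \eqir \gg$ from~\eqref{E:RecGarIII0}\ITEM2--\ITEM3, then
\[
\gg'\ff \eqir \HH(\gg'\ff) \eqir \HH(\gg'\HH(\ff)) \dive \gg'\HH(\ff),
\]
whence $\ff \dive \HH(\ff)$ by left-cancellation, so $\ff \eqir \HH(\ff) \in \SSS$. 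This is where the $\HHH$-law actually does the work in the converse direction; your plan never uses it there.
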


\begin{proof}
Let us first observe that, if $\SSSs$ generates~$\CCC$ and $\CCCi\SSSs 
\subseteq \SSSs$ holds, then $\CCC \setminus \CCCi$ is included in~$\SSS\CCC$: 
indeed, in this case, if $\gg$ is non-invertible, it must be left-divisible by 
an element of the form~$\ee\hh$ with $\ee \in \CCCi$ and $\hh \in 
\SSS\setminus \CCCi$, hence by an element of~$\SSS\setminus \CCCi$ owing to 
$\CCCi\SSSs \subseteq \SSSs$. Hence \eqref{E:RecGarIII2} 
implies~\eqref{E:RecGarIII1}.

Assume that $\SSS$ is a Garside family in~$\CCC$. For~$\gg$ in~$\SSS\CCC$, 
define~$\HH(\gg)$ to be an $\SSS$-head of~$\gg$, which exists  by 
Lemma~\ref{L:HeadExist}.  Then, by definition, $\HH(\gg) \dive \gg$ always 
holds. Next, if $\ff \dive \gg$ holds, $\HH(\ff)$ is an element of~$\SSS$ that 
left-divides~$\ff$, hence~$\gg$, so $\HH(\ff) \dive \HH(\gg)$ must hold. Then, 
if $\gg$ belongs to~$\SSSs \cap \SSS\CCC$, we have $\gg \eqir \gg'$ for 
some~$\gg'$ in~$\SSS$, and we deduce $\gg' \dive \HH(\gg) \dive \gg$, whence 
$\HH(\gg) \eqir \gg$.  So $\HH$ satisfies~\eqref{E:RecGarIII0}.  Finally, 
assume that $\ff\gg$ exists and $\gg$ lies in~$\SSS\CCC$. If $\gg$ is 
non-invertible, then $\HH(\gg)$ is non-invertible as well since $\LGG\SSS\gg 
\ge 1$ holds, meaning that $\gg$ is left-divisible by at least one 
non-invertible element of~$\SSS$, so both $\HH(\ff\gg)$ and $\HH(\ff\HH(\gg))$ 
are defined in this case. On the other hand, if $\gg$ is invertible, then 
$\HH(\gg) \eqir \gg$ holds, and $\ff\gg$ belongs to~$\SSS\CCC$ if and only if 
$\ff\HH(\gg)$ does. In all cases, by~\eqref{E:RecGarIII0}\ITEM1, we 
have $\HH(\gg) \dive \gg$, 
whence $\ff\HH(\gg) \dive \ff\gg$, and, by~\eqref{E:RecGarIII0}\ITEM2, 
$\HH(\ff\HH(\gg)) \dive 
\HH(\ff\gg)$. On the other hand, write $\gg = \HH(\gg) \gg'$. By 
Lemma~\ref{L:Head}, the path $\seqq{\HH(\gg)}{\gg'}$ is $\SSS$-greedy. Then 
the relation $\HH(\ff\gg) \dive \ff\gg$, which holds 
by~\eqref{E:RecGarIII0}\ITEM1, implies 
$\HH(\ff\gg) \dive \ff \HH(\gg)$, whence $\HH(\ff\gg) \dive \HH(\ff \HH(\gg))$ 
since $\HH(\ff\gg)$ lies in~$\SSS$. So we deduce $\HH(\ff\gg) \eqir 
\HH(\ff\HH(\gg))$, and $\HH$ satisfies the  $\HHH$-law.

Next, let $\SSS_0$ be an $\eqir$-selector on~$\SSS$, that is, a subfamily 
of~$\SSS$ that contains exactly one element in each $\eqir$-class (which 
exists by the Axiom of Choice). For~$\gg$ in~$\SSS\CCC$, define $\HH_0(\gg)$ 
to be the unique element of the $\eqir$-class of~$\HH(\gg)$ that lies 
in~$\SSS_0$. By construction, $\HH_0(\gg) \eqir \HH(\gg)$ holds for 
every~$\gg$ in~$\SSS\CCC$ and, therefore, the function~$\HH_0$  
satisfies~\eqref{E:RecGarIII0}  and the $\HHH$-law: for the latter, we have 
$\ff \HH(\gg) \eqir \ff\HH_0(\gg)$, whence, 
by~\eqref{E:RecGarIII0}\ITEM2, $\HH(\ff\HH(\gg)) \eqir 
\HH(\ff\HH_0(\gg))$, and, from there, 
$$\HH_0(\ff \gg) \eqir \HH(\ff\gg) \eqir \HH(\ff\HH(\gg)) \eqir \HH(\ff 
\HH_0(\gg)) \eqir \HH_0(\ff\HH_0(\gg)).$$
Now, by construction, $\HH_0(\ff\gg)$ and  $\HH_0(\ff\HH_0(\gg))$  belong 
to~$\SSS_0$, so, being $\eqir$-equivalent, they must be equal. Hence $\HH_0$ 
obeys the sharp $\HHH$-law, and \eqref{E:RecGarIII2} is satisfied. By the 
initial remark, \eqref{E:RecGarIII1} is satisfied too. 

Conversely, assume that $\SSS$ satisfies~\eqref{E:RecGarIII1}, with $\HH : 
\CCC \setminus \CCCi \to \SSS$ witnessing the expected conditions.  We shall 
prove that $\SSS$ satisfies~\eqref{E:RecGarII1}.  The first step is to show 
that, for each non-invertible~$\gg$, the element~$\HH(\gg)$ is an $\SSS$-head 
of~$\gg$. So let $\gg$ be a non-invertible element of~$\CCC$. By assumption, 
$\HH(\gg)$ belongs to~$\SSS$ and left-divides~$\gg$. Assume $\hh \dive \gg$ 
with $\hh \in \SSS$. By \eqref{E:RecGarIII0}\ITEM2 and~\ITEM3, we have $\hh 
\eqir \HH(\hh) \dive \HH(\gg)$, whence $\hh \dive \HH(\gg)$. Hence $\HH(\gg)$ 
is an $\SSS$-head of~$\gg$.


The  second  step is to show that $\SSSs$ is closed under right-divisor. So 
assume that $\gg$ belongs to~$\SSSs$ and $\ff$ right-divides~$\gg$, say $\gg = 
\gg' \ff$. If $\ff$ is invertible, it belongs to~$\SSSs$ by definition. Assume 
now that $\ff$, and therefore~$\gg$, are not invertible. Then we have $\gg 
\eqir \hh$ for some~$\hh$ lying in~$\SSS$.  By~\eqref{E:RecGarIII0}\ITEM2  
and \ITEM3, we have $\HH(\gg) \eqir \HH(\hh)$, whence $\HH(\hh) \eqir \hh$. We 
deduce $\HH(\gg) \eqir \gg$, that is, $\HH(\gg' \ff) \eqir \gg' \ff$. Then the 
$\HHH$-law implies $\HH(\gg' \ff) \eqir \HH(\gg' \HH(\ff))$, whereas  
\eqref{E:RecGarIII0}\ITEM1  gives $\HH(\gg' \HH(\ff)) \dive \gg' \HH(\ff)$, so 
we find 
$$\gg' \ff \eqir \HH(\gg' \ff) \eqir \HH(\gg' \HH(\ff)) \dive \gg' \HH(\ff),$$
hence $\gg' \ff \dive \gg' \HH(\ff)$, and $\ff \dive \HH(\ff)$ by 
left-cancelling~$\gg'$. As $\HH(\ff) \dive \ff$ always holds  
by~\eqref{E:RecGarIII0}\ITEM1,  we deduce $\ff \eqir \HH(\ff)$. As, by 
definition, $\HH(\ff)$ lies in~$\SSS$, it follows that $\ff$ lies in~$\SSSs$. 
So $\SSSs$ is closed under right-divisor. 

It is now easy to conclude. 
By assumption, $\SSSs$ generates~$\CCC$, we saw above that it is closed under 
right-divisor, and that every non-invertible element of~$\CCC$ admits an 
$\SSS$-head. So $\SSS$ satisfies~\eqref{E:RecGarII1} and, by 
Proposition~\ref{P:RecGarII}, it is a Garside family in~$\CCC$. 
\end{proof}

\section{Germs}
\label{S:Germs}

So far, we have established extrinsic characterizations of Garside families, namely 
conditions that describe a Garside family in a given pre-existing category. We 
turn now to intrinsic characterizations, that is, we do not start from a 
pre-existing category but consider instead an abstract family~$\SSS$ equipped 
with a partial product and investigate necessary and sufficient conditions for 
such a structure, here called a germ, to generate a category in which $\SSS$ 
embeds as a Garside family. 

\subsection{The notion of a germ}
\label{SS:Germ}

If $\SSS$ is a subfamily of a category~$\CCC$, then, for $\ff, \gg$ in~$\SSS$ 
such that $\ff \gg$ is defined, $\ff \gg$ may belong or not to~$\SSS$. 
Restricting to the case when the product belongs to~$\SSS$ gives a partial map from~$\Seq\SSS2$ to~$\SSS$. The resulting structure will be 
called a germ, and we shall be interested in the case when the whole category 
can be retrieved from the germ.

In the above situation, we shall denote by~$\OP_\SSS$, or simply $\OP$, the 
partial operation on~$\SSS$ induced by the product of the ambient category. It 
is easy to see that a partial operation of this type must obey some 
constraints.

\begin{lemm}
\label{L:Germ}
Assume that $\SSS$ is a subfamily of a category~$\CCC$  that contains all 
identity-elements of~$\CCC$.  The partial operation~$\OP$ of~$\Seq\SSS2$ 
to~$\SSS$ induced by the product of~$\CCC$ obeys the following rules:
\begin{gather}
\label{E:Germ1}
\BOX{If $\ff \OP \gg$ is defined, the source (\resp.\ target) of~$\ff \OP 
\gg$ is the source of~$\ff$ (\resp.\ the target of~$\gg$);}\\
\label{E:Germ2}
\BOX{$\id\xx \OP \ff = \ff = \ff \OP \id\yy$ hold for each~$\ff$ in~$\SSS(\xx, 
\yy)$,}\\
\label{E:Germ3}
\BOX{If $\ff \OP \gg$ and $\gg \OP \hh$ are defined, then $(\ff \OP \gg) \OP 
\hh$ is defined if and only if $\ff \OP (\gg \OP \hh)$ is, in which case they 
are equal.}
\end{gather}
Moreover, if $\SSS$ is closed under right-divisor in~$\CCC$, then $\OP$ 
satisfies
\begin{equation}
\label{E:LeftAss}
\BOX{If $(\ff \OP \gg) \OP \hh$ is defined, then so is $\gg \OP \hh$.}
\end{equation} 
\end{lemm}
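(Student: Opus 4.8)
The plan is to unfold what ``defined'' means for the partial operation~$\OP$ and then to isolate the single place where the closure hypothesis is needed. Recall that $\ff \OP \gg$ is defined precisely when the product~$\ff\gg$ exists in~$\CCC$ and, moreover, lies in~$\SSS$. Thus the hypothesis that $(\ff \OP \gg) \OP \hh$ is defined unpacks into two assertions: the product~$\ff\gg$ exists and belongs to~$\SSS$, and then the product $(\ff\gg)\hh$ exists and belongs to~$\SSS$. The goal is to show that $\gg \OP \hh$ is defined, that is, that $\gg\hh$ exists in~$\CCC$ and lies in~$\SSS$.

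First I would dispose of the existence of~$\gg\hh$, which holds in any category regardless of the closure assumption. Since $\ff\gg$ exists, the target of~$\gg$ equals the target of~$\ff\gg$ by~\eqref{E:Germ1}; since $(\ff\gg)\hh$ exists, this common target coincides with the source of~$\hh$. Hence the target of~$\gg$ equals the source of~$\hh$, so the product~$\gg\hh$ is defined in~$\CCC$.

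The membership statement is where closure under right-divisor enters, and it is the only genuine step. By associativity in~$\CCC$ we have $(\ff\gg)\hh = \ff(\gg\hh)$, and the left-hand side lies in~$\SSS$ by hypothesis; hence $\ff(\gg\hh)$ lies in~$\SSS$. Now $\gg\hh$ is a right-divisor of $\ff(\gg\hh)$, being obtained from it by left-multiplication by~$\ff$. Since $\SSS$ is closed under right-divisor in~$\CCC$ and $\ff(\gg\hh)$ lies in~$\SSS$, I conclude $\gg\hh \in \SSS$. Combined with the existence established above, this shows that $\gg \OP \hh$ is defined, as required.

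I do not expect a real obstacle here: the argument is a direct unpacking of definitions together with a single application of the closure hypothesis. The only point demanding care is to keep the two components of ``$\OP$ is defined''---existence of the product in~$\CCC$ and membership in~$\SSS$---clearly separated, and to notice that associativity in the ambient category is what lets us view~$\gg\hh$ as a right-divisor of an element known to lie in~$\SSS$.
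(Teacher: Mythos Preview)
Your argument for~\eqref{E:LeftAss} is correct and is essentially the paper's proof, only spelled out more carefully: you separate the existence of~$\gg\hh$ in~$\CCC$ from its membership in~$\SSS$, whereas the paper compresses both into one sentence. The key step---that $\gg\hh$ right-divides $\ff(\gg\hh) = (\ff\gg)\hh \in \SSS$, so closure under right-divisor forces $\gg\hh \in \SSS$---is identical.

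One remark: the lemma has four parts, and your proposal addresses only~\eqref{E:LeftAss}. You clearly regard~\eqref{E:Germ1}--\eqref{E:Germ3} as routine, and they are, but a complete write-up should at least acknowledge them. The paper dispatches~\eqref{E:Germ1} and~\eqref{E:Germ2} in one line (they are immediate from $\OP$ being the restriction of the category product), and handles~\eqref{E:Germ3} by observing that, given $\ff \OP \gg$ and $\gg \OP \hh$ both defined, $(\ff \OP \gg) \OP \hh$ defined means $(\ff\gg)\hh \in \SSS$, hence $\ff(\gg\hh) \in \SSS$ by associativity, which is exactly $\ff \OP (\gg \OP \hh)$ defined (and conversely). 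A sentence to that effect would make your proof complete.
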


\begin{proof}
Points~\eqref{E:Germ1} and~\eqref{E:Germ2} follow from the fact that $\OP$ is induced by the product of~$\CCC$. Next, \eqref{E:Germ3} follows from 
associativity in~$\CCC$: saying that $(\ff \OP \gg) \OP \hh$ exists means that 
$(\ff \gg) \hh$ belongs to~$\SSS$, hence so does $\ff (\gg \hh)$. As, by 
assumption, $\gg \OP \hh$ exists, this amounts to $\ff \OP( \gg \OP \hh)$ 
being defined.

For~\eqref{E:LeftAss}, the hypotheses imply that $(\ff \gg) \hh$, hence $\ff 
(\gg \hh)$, belongs to~$\SSS$. As $\SSS$ is closed under right-divisor, this 
implies that $\gg \hh$ belongs to~$\SSS$, hence that $\gg \OP \hh$ is 
defined.
\end{proof}

We shall therefore start from abstract families that obey the rules of 
Lemma~\ref{L:Germ}.  We recall from Definition~\ref{D:Path} that $\Seq\SSS\rr$ 
denotes the family of all length~$\rr$ paths in~$\SSS$. 

\begin{defi}
\label{D:Germ}
A \emph{germ} is a triple~$(\SSS, \Id\SSS, \OP)$ where $\SSS$ is a 
precategory, $\Id\SSS$ is a subfamily of~$\SSS$ containing an 
element~$\id\xx$ with source and target~$\xx$ for each object~$\xx$, and $\OP$ 
is a partial map from~$\Seq\SSS2$ to~$\SSS$ that satisfies~\eqref{E:Germ1}, 
\eqref{E:Germ2}, and~\eqref{E:Germ3}. If, moreover, \eqref{E:LeftAss} holds, 
the germ is said to be \emph{left-associative}. If $(\SSS, \Id\SSS, \OP)$ is a 
germ, we denote by~$\Cat(\SSS, \Id\SSS, \OP)$ the 
category~$\PRESp\SSS{\RRR_{\OP}}$, where $\RRR_{\OP}$ is the family of all 
relations~$\seqq\ff\gg = \seq{\ff \OP \gg}$ with $\ff, \gg$ in~$\SSS$ and $\ff 
\OP \gg$ defined. 
\end{defi}

In practice, we shall use the generic notation~$\SSSg$ for a germ with 
domain~$\SSS$. In the sequel, an equality of the form $\gg = \gg_1 \OP \gg_2$ always means ``$\gg_1 \OP \gg_2$ is defined and $\gg$ equals it''. 

For every subfamily~$\SSS$ of a category~$\CCC$, there exists an induced 
germ~$\SSSg$, and the corresponding relations~$\RRR_{\OP}$ are valid 
in~$\CCC$ by construction. Hence $\CCC$ is a quotient of~$\Cat(\SSSg)$. In 
most cases, even if $\SSS$ generates~$\CCC$, the partial product~$\OP$ does 
not determine the product of~$\CCC$, and $\CCC$ is a proper quotient 
of~$\Cat(\SSSg)$. For instance, if $\CCC$ contains no nontrivial 
invertible element and is generated by a family of atoms~$\AAA$, the induced 
partial product on~$\AAA$ consists of the trivial instances listed in~\eqref{E:Germ2} 
only, and the resulting category is a free category based on~$\AAA$. We shall 
now see that this cannot happen when $\SSS$ is a Garside family: in this case, 
the induced structure $\SSSg$ contains enough information to retrieve 
the initial category~$\CCC$. Here one has to be careful: if $\SSS$ is a 
general Garside family in~$\CCC$, then $\CCC$ is generated by~$\SSSs$, hence 
by~$\SSS \cup \CCCi$, but not necessarily by~$\SSS$ itself. To avoid problems, 
we shall restrict to particular Garside families.

\begin{defi}
A subfamily~$\SSS$ of a category~$\CCC$ is called \emph{full} if $\SSS$ contains all identity-elements of~$\CCC$ and it is closed under 
right-divisor in~$\CCC$. 
\end{defi}

It follows from Lemma~\ref{L:Closed} that, for every Garside family~$\SSS$, the family~$\SSSs$ is a full Garside family, which we know gives rise to the same greedy and normal 
paths. So considering full Garside families is not a proper restriction.  (On 
the other hand, for~$\SSS$ to be full is weaker than satisfying $\SSS = 
\SSSs$: one can exhibit a full Garside family~$\SSS$ such that $\SSS$ is 
properly included in~$\SSSs$.)  The precise result we shall prove is then

\begin{prop}
\label{P:GarGerm}
Assume that $\SSS$ is a full Garside family in a left-cancellative 
category~$\CCC$, and let $\SSSg$ be the induced germ. Then 
$\Cat(\SSSg)$ is isomorphic to~$\CCC$.
\end{prop}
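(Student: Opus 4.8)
The plan is to exhibit an explicit isomorphism between $\Cat(\SSSg)$ and $\CCC$ by exploiting the evaluation functor together with the $\SSS$-normal form supplied by the Garside hypothesis. Let me write out the approach.

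Since $\SSS$ is full, it contains all identity-elements and generates~$\CCC$ (because $\SSSs$ generates and, $\SSS$ being closed under right-divisor with $\CCCi \subseteq \SSS$, one checks $\SSS = \SSSs$ here, so $\SSS$ itself generates). Hence there is a canonical surjective functor $\pi \colon \SSS^* \to \CCC$ extending the identity on~$\SSS$, by Lemma~\ref{L:Free}. By construction the relations $\RRR_{\OP}$ of Definition~\ref{D:Germ} hold in~$\CCC$, so $\pi$ factors through a functor $\bar\pi \colon \Cat(\SSSg) \to \CCC$ which is the identity on~$\SSS$ and on objects, and which is surjective since $\SSS$ generates~$\CCC$. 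The whole problem thus reduces to proving that $\bar\pi$ is injective, that is, that whenever two $\SSS$-paths evaluate to the same element of~$\CCC$ they are already equal in $\Cat(\SSSg)$ (i.e.\ connected by the congruence generated by~$\RRR_{\OP}$).

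For injectivity I would use $\SSS$-normal decompositions as canonical representatives. First I would show that every $\SSS$-path $\seqqq{\gg_1}\etc{\gg_\qq}$ in~$\SSS$ can be transformed, using only the relations in~$\RRR_{\OP}$, into an $\SSS$-normal path having the same $\CCC$-evaluation. This is the key lemma and where the main work lies: starting from any $\SSS$-path, one repeatedly applies the local rewriting furnished by Lemma~\ref{L:LeftMult} (which shows how to prepend one factor of~$\SSSs$ to a normal decomposition and rebuild a normal decomposition of the product), and crucially one must check that each step is realized by relations of~$\RRR_{\OP}$ rather than merely by equalities holding in~$\CCC$. Concretely, the elementary operation is to replace a length-two subpath $\seqq{\ff}{\gg}$ by a new length-two $\SSS$-normal subpath $\seqq{\ff'}{\gg'}$ with $\ff\gg = \ff'\gg'$ in~$\CCC$; since all four entries lie in~$\SSS$ and $\SSS$ is closed under right-divisor (so every intermediate product that lands in~$\SSS$ genuinely equals the corresponding $\OP$-product), each such rebalancing is a composite of instances of $\RRR_{\OP}$. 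Running this normalization process, whose termination is guaranteed because the $\SSS$-length $\LGG\SSS{\cdot}$ is a fixed invariant of the evaluated element (Proposition~\ref{P:Length} and Lemma~\ref{L:Length}) and the domino rule (Lemma~\ref{L:Domino}) keeps the rebuilt paths greedy, produces from any $\SSS$-path a $\Cat(\SSSg)$-equivalent $\SSS$-normal path of the same evaluation.

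Granting that lemma, injectivity of~$\bar\pi$ follows quickly. If two $\SSS$-paths $\ww, \ww'$ satisfy $\bar\pi(\ww) = \bar\pi(\ww')$ in~$\CCC$, normalize both to $\SSS$-normal paths $\mathbf{u}, \mathbf{u}'$ that are $\Cat(\SSSg)$-equivalent to $\ww, \ww'$ respectively and have the same evaluation, namely the common element $\gg = \bar\pi(\ww)$. Then $\mathbf{u}$ and $\mathbf{u}'$ are both $\SSS$-normal decompositions of~$\gg$, so by the uniqueness Proposition~\ref{P:Unique} they are $\CCCi$-deformations of one another. The remaining task is to verify that a $\CCCi$-deformation between two $\SSS$-normal paths is itself realized inside $\Cat(\SSSg)$: the connecting invertible elements $\ee_\ii$ of Definition~\ref{D:Deformation} all lie in~$\SSS$ (as $\CCCi \subseteq \SSSs = \SSS$), and each commuting square $\ee_{\ii-1}\gg_\ii = \ff_\ii\ee_\ii$ is an equality of $\SSS$-products, hence an instance of the congruence generated by~$\RRR_{\OP}$. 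Chaining these squares shows $\mathbf{u} \equiv \mathbf{u}'$ in $\Cat(\SSSg)$, whence $\ww \equiv \ww'$. Therefore $\bar\pi$ is injective, and being a surjective functor that is bijective on objects and on arrows, it is the desired isomorphism. I expect the normalization lemma to be the principal obstacle, precisely because one must certify that every manipulation is a consequence of the germ relations~$\RRR_{\OP}$ and not of extra relations valid only in~$\CCC$; the fullness of~$\SSS$ (closure under right-divisor) is exactly what guarantees this.
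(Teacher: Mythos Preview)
Your overall strategy---normalize both $\SSS$-paths inside $\Cat(\SSSg)$ and then invoke uniqueness of $\SSS$-normal decompositions---is sound and is a genuinely different route from the paper's. The paper never touches normal forms here: it proves a stand-alone presentation lemma (Lemma~\ref{L:Present}) saying that whenever $\SSS$ generates~$\CCC$, is full, and is closed under right-comultiple, the relations $\seqq\ff\gg = \seq\hh$ with $\ff,\gg,\hh\in\SSS$ already present~$\CCC$. The proof factors an arbitrary equality $\ff_1\cdots\ff_\pp=\gg_1\cdots\gg_\qq$ through a rectangular grid built from right-comultiples, obtaining $2\pp\qq$ elementary relations in~$\RRR_{\OP}$. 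That lemma is more general (it does not use the Garside hypothesis, only closure properties), whereas your argument exploits the normal-form machinery more directly.

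Two points need repair. First, your claim $\SSS=\SSSs$ is false: the paper explicitly remarks, right after the definition of ``full'', that a full Garside family can be properly included in~$\SSSs$. What you actually need, and what does hold, is $\CCCi\subseteq\SSS$ (each invertible element right-divides an identity, which lies in~$\SSS$), whence $\SSSs=\SSS\CCCi\subseteq\SSS\cdot\SSS$ and $\SSS$ generates~$\CCC$.

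Second, the local rewriting step is asserted but not justified. Saying ``every intermediate product that lands in~$\SSS$ genuinely equals the corresponding $\OP$-product'' does not establish that such an intermediate product exists. The missing argument is: when $\seqq{\ff'}{\gg'}$ is an $\SSS$-normal decomposition of~$\ff\gg$ with $\ff,\gg\in\SSS$, the element~$\ff'$ is an $\SSSs$-head of~$\ff\gg$ (Lemma~\ref{L:HeadExist}), so $\ff\dive\ff'$, say $\ff'=\ff\hh$; then $\hh$ right-divides~$\ff'\in\SSS$, hence $\hh\in\SSS$ and $\ff'=\ff\OP\hh$; left-cancelling~$\ff$ gives $\gg=\hh\gg'$, and since $\gg\in\SSS$ this is $\gg=\hh\OP\gg'$; therefore $\seqq\ff\gg\equiv\seqqq\ff\hh{\gg'}\equiv\seqq{\ff'}{\gg'}$. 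The deformation step has the same gap: from $\ee_{\ii-1}\gg_\ii=\ff_\ii\ee_\ii$ you cannot conclude that $\ee_{\ii-1}\OP\gg_\ii$ is defined (the product lies in~$\SSSs$, not necessarily in~$\SSS$). Instead set $\hh=\gg_\ii\ee_\ii^{-1}$; then $\hh$ right-divides~$\ff_\ii\in\SSS$, so $\hh\in\SSS$, $\ff_\ii=\ee_{\ii-1}\OP\hh$, $\gg_\ii=\hh\OP\ee_\ii$, and $\seqq{\ee_{\ii-1}}{\gg_\ii}\equiv\seqqq{\ee_{\ii-1}}\hh{\ee_\ii}\equiv\seqq{\ff_\ii}{\ee_\ii}$. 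With these two fixes your proof goes through.
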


By definition, the category~$\Cat(\SSSg)$ is specified by a presentation. In 
order to establish Proposition~\ref{P:GarGerm}, we shall show that every 
(full) Garside family provides a presentation of its ambient category.

\begin{lemm}
\label{L:Present}
Assume $\SSS$ is a generating subfamily of a left-cancell\-at\-ive 
category~$\CCC$ that 
is full and closed under right-comultiple. Then $\CCC$ admits the 
presentation~$\PRESp\SSS\RRR$ where $\RRR$ consists of all relations 
$\seqq\ff\gg = \seq\hh$ with $\ff, \gg, \hh$ in~$\SSS$ that are valid 
in~$\CCC$. 
\end{lemm}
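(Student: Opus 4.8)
The plan is to show that the canonical evaluation functor is an isomorphism. Since $\SSS$ generates~$\CCC$, Lemma~\ref{L:Free} provides a surjective functor $\SSS^* \to \CCC$ extending the identity on~$\SSS$; as every relation of~$\RRR$ is valid in~$\CCC$ by construction, this functor factors through a surjective functor $\bar\pi : \PRESp\SSS\RRR \to \CCC$ that is the identity on~$\SSS$ and on objects. Everything then reduces to the injectivity of~$\bar\pi$, that is, to proving that any two $\SSS$-paths with the same value in~$\CCC$ are $\equiv$-equivalent, where $\equiv$ is the congruence generated by~$\RRR$ (two paths with equal value automatically share source and target, so this is well-posed). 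Before the main argument I would dispose of the bookkeeping about identities: using that $\SSS$ contains all identity-elements of~$\CCC$ and that $\CCCi \subseteq \SSS$ (every invertible element right-divides an identity-element, since $\id\xx = \ee\inv\ee$, so fullness forces it into~$\SSS$), one checks that $\seq{\id\xx} \equiv \ew_\xx$ and that consecutive invertible entries may be merged by valid relations; hence any $\SSS$-path whose value is invertible is $\equiv$-equivalent to a length~$\le 1$ path recording that value.

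The technical heart is an auxiliary \emph{prepending claim}, which I would prove by induction on~$\qq$: if $\ff$ lies in~$\SSS$, if $\seqqq{\gg_1}\etc{\gg_\qq}$ is an $\SSS$-path of value~$\aa$, and if $\ff \dive \aa$, say $\aa = \ff b$, then there exists an $\SSS$-path~$\ww$ of value~$b$ with $\seqqq{\gg_1}\etc{\gg_\qq} \equiv \seqq\ff\ww$. The base case $\qq = 0$ forces $\ff$ invertible (Lemma~\ref{L:Inv}) and is covered by the identity bookkeeping. For $\qq \ge 1$ I would invoke the two closure hypotheses exactly once each. Both $\ff$ and~$\gg_1$ left-divide~$\aa$, so $\aa$ is a common right-multiple of~$\ff$ and~$\gg_1$; closure under right-comultiple yields $\hh$ in~$\SSS$ with $\ff, \gg_1 \dive \hh \dive \aa$. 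Write $\hh = \ff u = \gg_1 v$ and $\aa = \hh c$. Since $\hh$ lies in~$\SSS$ and $u, v$ right-divide~$\hh$, fullness (closure under right-divisor) gives $u, v \in \SSS$, and left-cancelling gives $b = u c$ and $\gg_2 \pdots \gg_\qq = v c$. The induction hypothesis applied to the length~$\qq-1$ path $\seqqq{\gg_2}\etc{\gg_\qq}$ and the divisor~$v$ produces an $\SSS$-path~$\ww'$ of value~$c$ with $\seqqq{\gg_2}\etc{\gg_\qq} \equiv \seqq{v}{\ww'}$. Using that $\gg_1 v = \hh$ and $\ff u = \hh$ are valid relations,
\[
\seqqq{\gg_1}\etc{\gg_\qq} \equiv \seqqq{\gg_1}{v}{\ww'} \equiv \seqq\hh{\ww'} \equiv \seqqq{\ff}{u}{\ww'},
\]
so $\ww := \seqq{u}{\ww'}$, of value $u c = b$, witnesses the claim.

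With the prepending claim established, the injectivity of~$\bar\pi$ follows by a second induction, on the length~$\pp$ of the first path. Given $\SSS$-paths $\seqqq{\ff_1}\etc{\ff_\pp}$ and~$\ww'$ of the same value~$\aa$, the case $\pp = 0$ (value invertible) is the bookkeeping; for $\pp \ge 1$ I apply the prepending claim to~$\ff_1$ and~$\ww'$ to obtain $\ww' \equiv \seqq{\ff_1}{\ww''}$ with $\ww''$ of value $\ff_2 \pdots \ff_\pp$. The paths $\seqqq{\ff_2}\etc{\ff_\pp}$ and~$\ww''$ then share that value, and the former has length $\pp-1$, so the induction hypothesis gives $\seqqq{\ff_2}\etc{\ff_\pp} \equiv \ww''$; prepending~$\ff_1$ (a congruence move) yields $\seqqq{\ff_1}\etc{\ff_\pp} \equiv \seqq{\ff_1}{\ww''} \equiv \ww'$, which is what is needed.

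I expect the main obstacle to be arranging an induction that closes up. A single induction on the total length fails, because rewriting the tail of one path forces one to decompose the unknown cofactor~$c$ into \emph{unboundedly many} $\SSS$-factors, so the combined length is not controlled, and a symmetric attempt to reduce both paths at once runs into this asymmetry. The device that circumvents it is precisely the prepending claim, whose own induction runs on the length of the path being rewritten while the prepended divisor stays fixed, thereby decoupling the two inductions. The only other points requiring care are the systematic use of left-cancellativity to extract the cofactor identities $b = uc$ and $\gg_2 \pdots \gg_\qq = vc$, and the handling of invertible and identity-elements, for which fullness (guaranteeing $\CCCi \subseteq \SSS$ together with closure under right-divisor) is exactly the hypothesis that is used.
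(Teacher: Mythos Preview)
Your proof is correct. The core mechanism is the same as the paper's: at each step, apply closure under right-comultiple to a pair of elements of~$\SSS$ with a common right-multiple to produce an intermediate element~$\hh$ of~$\SSS$, and use closure under right-divisor to ensure the cofactors $u,v$ lie in~$\SSS$; the handling of invertible elements via $\CCCi \subseteq \SSS$ is also the same. The organization, however, differs. The paper builds an explicit $\pp \times \qq$ rectangular grid all at once, filling each cell~$(\ii,\jj)$ by applying the comultiple-closure step to the edges coming in from the left and from above, and then observes that the boundary terms $\hh'_{\ii,\qq}$ and $\hh'_{\pp,\jj}$ are invertible. Your prepending claim is exactly the construction of one row of that grid, and your outer induction on~$\pp$ processes the rows one at a time; the two arguments unfold into the same diagram. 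Your packaging has the advantage of avoiding double-indexed bookkeeping and of making the induction invariant explicit, while the paper's grid makes the global structure of the factorization (into $2\pp\qq$ triangle relations plus boundary terms) visible at a glance.
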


\begin{proof}
First, by assumption, $\SSS$ generates~$\CCC$. Next, by 
definition, all relations of~$\RRR$ are valid in~$\CCC$. So the point is 
to show that every equality involving elements of~$\SSS$ that is valid 
in~$\CCC$ is a consequence of finitely many relations of~$\RRR$. 

So assume that $\seqqq{\ff_1}\etc{\ff_\pp}$ and $\seqqq{\gg_1}\etc{\gg_\qq}$ 
are two $\SSS$-paths with the same evaluation in~$\CCC$, that is, $\ff_1 
\pdots \ff_\pp = \gg_1 \pdots \gg_\qq$ holds in~$\CCC$. Then we inductively 
construct a rectangular grid as displayed in Figure~\ref{F:Present}.

 Put $\hh'_{\ii, 0} = \ff_{\ii+1} \pdots \ff_\pp$ for $0 \le \jj \le \pp$, and 
$\hh'_{0,\jj} = \gg_{\jj+1} \pdots \gg_\qq$ for $0 \le \jj \le \qq$.  First, 
we have $\ff_1 \hh'_{1,0} = \gg_1 \hh'_{0,1} = \hh'_{0,0}$ with $\ff_1, \gg_1$ 
in~$\SSS$. The assumption that $\SSS$ is closed under  right-comultiple  
implies the existence of~$\hh_{1, 1}$ in~$\SSS$ and $\ff_{1, 1}, \gg_{1, 1}, 
\hh'_{1,1}$ satisfying $\ff_1 \gg_{1, 1} = \gg_1 \ff_{1, 1} = \hh_{1, 1}$, 
$\hh'_{0,1} = \ff_{1,1} \hh'_{1,1}$, and $\hh'_{1,0} = \gg_{1,1} \hh'_{1,1}$. 
Moreover, the assumption that $\SSS$ is closed under right-divisor implies 
that $\ff_{1,1}$ and~$\gg_{1,1}$, which right-divide~$\hh_{1,1}$, belong 
to~$\SSS$. 

Now we repeat the same argument with~$\hh'_{0, 1} \wdots \hh'_{0, \qq-1}$, 
then $\hh'_{1,0} \wdots \hh'_{1, \qq-1}$, and so on until~$\hh'_{\pp-1, 
\qq-1}$: starting from the vertex~$(\ii-1, \jj-1)$, by induction hypothesis, 
we have  $\ff_{\ii, \jj-1}\hh'_{\ii, \jj-1} = \gg_{\ii-1, \jj} \hh'_{\ii-1, 
\jj} = \hh'_{\ii-1, \jj-1}$ with $\ff_{\ii, \jj-1}$ and~$\gg_{\ii-1, \jj}$  
in~$\SSS$. As $\SSS$ is closed under right-comultiple and 
right-divisor, there 
exist~$\ff_{\ii, \jj}, \gg_{\ii, \jj}, \hh_{\ii, \jj}$ in~$\SSS$ and 
$\hh'_{\ii, \jj}$ in~$\CCC$ satisfying $\ff_{\ii, \jj-1} \gg_{\ii, \jj} = 
\gg_{\ii-1,\jj} \ff_{\ii, \jj} = \hh_{\ii, \jj}$, $\hh'_{\ii-1,\jj} = 
\ff_{\ii, \jj} \hh'_{\ii, \jj}$, and $\hh'_{\ii, \jj-1} = \gg_{\ii, \jj} 
\hh'_{\ii, \jj}$. 

At this point, we see that the equality $\ff_1 \pdots \ff_\pp = \gg_1 \pdots 
\gg_\qq$ is the consequence of $2\pp\qq$ relations of the form $\seqq\ff\gg = 
\seq\hh$ with $\ff, \gg, \hh$ in~$\SSS$, namely the relations $\seqq{\ff_{\ii, 
\jj-1}}{\gg_{\ii, \jj}} = \seq{\hh_{\ii, \jj}}$ and $\seqq{\gg_{\ii-1, 
\jj}}{\ff_{\ii, \jj}} = \seq{\hh_{\ii, \jj}}$, plus the $\pp + \qq$ relations 
$\seqq{\ff_{\ii, \qq}}{\hh'_{\ii, \qq}} = \seq{\hh'_{\ii-1, \qq}}$ and 
$\seqq{\gg_{\pp, \jj}}{\hh'_{\pp, \jj}} = \seq{\hh'_{\pp, \jj-1}}$. By 
construction, all elements $\hh'_{\ii, \qq}$ and $\hh'_{\pp, \jj}$ are 
invertible, and so are all $\ff_{\ii, \qq}$ and $\gg_{\pp, \jj}$. As $\SSS$ 
generates~$\CCC$, every element of~$\CCCi$ is a finite product of elements 
of~$\SSS \cap \CCCi$ and, therefore, every relation of the form 
$\seqq{\ee_1}{\ee_2} = \seq\ee$ holding in~$\CCC$ follows from finitely many 
relations of this form with $\ee_1, \ee_2, \ee$ in~$\SSS \cap \CCCi$. This 
completes the argument.
\end{proof}

\begin{figure}[htb]
\begin{picture}(75,50)(0,0)
\pcline{->}(1,48)(14,48)
\taput{$\gg_1$}
\pcline{->}(17,48)(28,48)
\taput{$\gg_2$}
\psline[style=etc](32,48)(43,48)
\pcline{->}(46,48)(59,48)
\taput{$\gg_\qq$}
\psline[doubleline=true](61,48)(68,48)
\psarc[doubleline=true](68,41){7}{0}{90}
\pcline[doubleline=true,fillcolor=white](75,41)(75,1)
\trput{$\id\ud$}

\pcline{->}(0,47)(0,37)
\tlput{$\ff_1$}
\pcline{->}(1,47)(14,37)
\put(7,43){$\hh_{1,1}$}
\pcline{->}(15,47)(15,37)
\trput{$\ff_{1,1}$}
\pcline{->}(16,47)(29,37)
\put(22,43){$\hh_{1,2}$}
\pcline{->}(30,47)(30,37)
\trput{$\ff_{1,2}$}
\pcline{->}(45,47)(45,37)
\put(45.5,39){$\ff_{1,\qq-1}$}
\pcline{->}(46,47)(59,37)
\put(52,43){$\hh_{1,\qq}$}
\pcline{->}(60,47)(60,37)
\trput{$\ff_{1,\qq}$}

\pcline{->}(1,36)(14,36)
\tbput{$\gg_{1,1}$}
\pcline{->}(16,36)(29,36)
\tbput{$\gg_{1,2}$}
\psline[style=etc](32,36)(43,36)
\pcline{->}(46,36)(59,36)
\tbput{$\gg_{1,\qq}$}
\pcline[style=exist](61,36)(67,36)
\tbput{$\quad\hh'_{1,\qq}$}
\psarc[style=exist](67,29){7}{0}{90}
\psline[style=exist]{->}(74,29)(74,2)

\psline[style=etc](0,34)(0,26)
\psline[style=etc](15,34)(15,26)
\psline[style=etc](30,34)(30,26)
\psline[style=etc](45,34)(45,26)
\psline[style=etc](60,34)(60,26)

\pcline{->}(1,24)(14,24)
\taput{$\gg_{\pp-1,1}$}
\pcline{->}(16,24)(29,24)
\taput{$\gg_{\pp-1,2}$}
\psline[style=etc](32,24)(43,24)
\pcline{->}(46,24)(59,24)
\taput{$\gg_{\pp-1,\qq}$}
\pcline[style=exist](61,24)(67,24)
\taput{$\quad\hh'_{\pp-1,\qq}$}
\psarc[style=exist](67,17){7}{0}{90}
\psline[style=exist]{->}(74,17)(74,2)

\pcline{->}(0,23)(0,13)
\tlput{$\ff_\pp$}
\pcline{->}(1,23)(14,13)
\put(7,19){$\hh_{\pp,1}$}
\pcline{->}(15,23)(15,13)
\trput{$\ff_{\pp,1}$}
\pcline{->}(16,23)(29,13)
\put(22,19){$\hh_{\pp,2}$}
\pcline{->}(30,23)(30,13)
\trput{$\ff_{\pp,2}$}
\pcline{->}(45,23)(45,13)
\put(45.5,15){$\ff_{\pp,\qq-1}$}
\pcline{->}(46,23)(59,13)
\put(52,19){$\hh_{\pp,\qq}$}
\pcline{->}(60,23)(60,13)
\trput{$\ff_{\pp,\qq}$}

\pcline{->}(1,12)(14,12)
\tbput{$\gg_{\pp,1}$}
\pcline{->}(16,12)(29,12)
\tbput{$\gg_{\pp,2}$}
\psline[style=etc](32,12)(43,12)
\pcline{->}(46,12)(59,12)
\tbput{$\gg_{\pp,\qq}$}
\pcline[style=exist]{->}(61,11)(74,1)
\put(64,10){$\hh'_{\pp,\qq}$}

\psline[doubleline=true](0,11)(0,7)
\psarc[doubleline=true](7,7){7}{180}{270}
\psline[doubleline=true,fillcolor=white](7,0)(74,0)
\put(-3,2){$\id\ud$}
\psline[style=exist](15,11)(15,7)
\psarc[style=exist](22,8){7}{180}{270}
\put(17,4){$\hh'_{\pp, 1}$}
\psline[style=exist](30,11)(30,7)
\psarc[style=exist](37,8){7}{180}{270}
\put(32,4){$\hh'_{\pp, 2}$}
\psline[style=exist](45,11)(45,7)
\psarc[style=exist](52,8){7}{180}{270}
\put(47,4){$\hh'_{\pp, \qq-1}$}
\psline[style=exist]{->}(22,1)(73,1)
\end{picture}
\caption[]{\sf\smaller Factorization of the equality $\ff_1 \pdots \ff_\pp = 
\gg_1 \pdots \gg_\qq$ in terms of relations $\ff \gg = \hh$ with $\ff, \gg, 
\hh$ in~$\SSS$.}
\label{F:Present}
\end{figure}

We can now complete the proof of Proposition~\ref{P:GarGerm}.

\begin{proof}[Proof of Proposition~\ref{P:GarGerm}]
First, by Proposition~\ref{P:RecGarII}, the family~$\SSSs$ is closed under 
right-comultiple. An easy direct verification shows that, in any case, a 
subfamily~$\SSS$ is closed under right-comultiple if and only if $\SSSs$ is. 
So, here, $\SSS$ is closed under right-comultiple. By assumption, it is full, 
so, by Lemma~\ref{L:Present}, $\CCC$ is presented by the relations 
$\seqq\ff\gg = \seq\hh$ with $\ff, \gg, \hh$ in~$\SSS$. This means that $\CCC$ 
admits the presentation $\PRESp\SSS{\RRR_{\OP}}$. So $\CCC$ is isomorphic 
to~$\Cat(\SSSg)$, which, by definition, admits that presentation.
\end{proof}

So, in the case of a full Garside family, the induced germ contains all 
information needed to determine the ambient category, and 
it is therefore natural to investigate the germs that occur in this way. 


\subsection{The embedding problem}
\label{SS:Embed}

From now on, we start from an abstract germ~$\SSSg$, and investigate the 
properties of the category~$\Cat(\SSSg)$ and of (the image of)~$\SSS$ 
in~$\Cat(\SSSg)$. The first question is whether $\SSSg$ embeds 
in~$\Cat(\SSSg)$. 
This need not be the case in general, but we shall see that left-associativity 
is a sufficient condition.

\begin{nota}
\label{N:eq}
For $\SSSg$ a germ, we denote by~$\equiv$ the congruence on~$\SSS^*$ generated 
by the relations of~$\RRR_{\OP}$, and by~$\iota$ the prefunctor of~$\SSS$ 
to~$\Cat(\SSSg)$ that is the identity on~$\Obj(\SSS)$ and maps~$\gg$ to the 
$\equiv$-class of~$(\gg)$.
\end{nota}

So, by definition, $\equiv$ is the equivalence relation 
on~$\SSS^*$ generated by all pairs
\begin{equation}
\label{E:Pair}
( \seqqqqqq{\gg_1}\etc{\gg_\ii}{\gg_{\ii+1}}\etc{\gg_\qq} \ , \ 
\seqqqqq{\gg_1}\etc{\gg_\ii \OP \gg_{\ii+1}}\etc{\gg_\qq} ),
\end{equation}
that is, the pairs in which two adjacent entries are replaced with their 
$\OP$-product, assuming that the latter exists. The category~$\Cat(\SSSg)$ is 
then ~$\SSS^*\quot\equiv$.

\begin{prop}
\label{P:Embed}
If $\SSSg$ is a left-associative germ, the map~$\iota$ of Notation~\ref{N:eq} 
is injective and the product of~$\Cat(\SSSg)$ extends the image of~$\OP$ 
under~$\iota$. Moreover, $\iota\SSS$ is closed under right-divisor 
in~$\Cat(\SSSg)$.
\end{prop}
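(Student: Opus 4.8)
The plan is to distil everything into a single $\equiv$-invariant that recovers a germ element from any path. Concretely, I would define a partial map $\Lambda\colon \SSS^* \to \SSS$ by \emph{right-associated} $\OP$-evaluation: put $\Lambda(\ew_\xx) = \id\xx$, and for a nonempty path set $\Lambda(\seqqq{\gg_1}\etc{\gg_\qq}) = \gg_1 \OP \Lambda(\seqqq{\gg_2}\etc{\gg_\qq})$ when the tail value is defined and the displayed $\OP$-product exists, leaving $\Lambda$ undefined otherwise. Since $\OP$ takes its values in~$\SSS$ and $\id\xx$ lies in~$\SSS$, every defined intermediate value of this right-to-left fold already lies in~$\SSS$; in particular $\Lambda(\seq\gg) = \gg$ for each $\gg$ in~$\SSS$.

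The heart of the argument is to show that $\Lambda$ is constant on $\equiv$-classes. By Notation~\ref{N:eq}, $\equiv$ is generated by the pairs~\eqref{E:Pair}, so it suffices to prove that $\Lambda$ is unchanged when two adjacent entries $\ff,\gg$ of a path are replaced by $\ff\OP\gg$. Writing the path as $L \pc \seqq\ff\gg \pc R$ and setting $\rho = \Lambda(R)$, the entries of $L$ are applied identically on the left, so the whole verification collapses to the single identity $\ff \OP (\gg \OP \rho) = (\ff \OP \gg) \OP \rho$, understood as ``both sides defined and equal, or both undefined''. This is exactly where left-associativity is used. If $(\ff \OP \gg) \OP \rho$ is defined, then~\eqref{E:LeftAss} forces $\gg \OP \rho$ to be defined, and~\eqref{E:Germ3} then makes $\ff \OP (\gg \OP \rho)$ defined and equal to it; conversely, if $\ff \OP (\gg \OP \rho)$ is defined then $\gg\OP\rho$ is defined and~\eqref{E:Germ3} again yields the equality; and if $\gg \OP \rho$ is undefined, then~\eqref{E:LeftAss} shows $(\ff \OP \gg) \OP \rho$ is undefined too, so both sides fail together. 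I would stress that the symmetric left-associated fold does \emph{not} work here, and that this step is the only place the hypothesis is genuinely needed.

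Granting the invariance of $\Lambda$, the three conclusions follow quickly. For injectivity: $\iota\ff = \iota\gg$ means $\seq\ff \equiv \seq\gg$, whence $\ff = \Lambda(\seq\ff) = \Lambda(\seq\gg) = \gg$. That the product of $\Cat(\SSSg)$ extends the image of $\OP$ is immediate from the definition of $\RRR_{\OP}$: whenever $\ff \OP \gg$ is defined, the relation $\seqq\ff\gg = \seq{\ff\OP\gg}$ lies in $\RRR_{\OP}$, so the classes satisfy $\iota\ff \cdot \iota\gg = \iota(\ff \OP \gg)$. For closure under right-divisor, suppose some $b$ right-divides $\iota s$ with $s$ in~$\SSS$, say $\iota s = a\, b$, and represent $b$ by a path $c$ and $a$ by a path $p$. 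Then $p \pc c \equiv \seq s$, so $\Lambda(p \pc c) = s$ is defined; but $\Lambda(c)$ occurs as an intermediate value of the right-to-left evaluation of $p \pc c$, so it too is defined, say $\Lambda(c) = t$ in~$\SSS$. The successive products witnessing $\Lambda(c)=t$ contract $c$ to $\seq t$, so $c \equiv \seq t$ and $b = \iota t$ lies in $\iota\SSS$.

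The main obstacle is the invariance of $\Lambda$, and inside it the verification that right-associated evaluation is insensitive to regrouping against the \emph{right} context, which is precisely~\eqref{E:LeftAss} together with~\eqref{E:Germ3}; note that naive contraction rewriting is \emph{not} confluent for a general germ, which is why one works with this robust invariant rather than with a unique normal form. The only routine care needed is the bookkeeping of empty paths and of the identity entries $\id\xx$ in the base and context cases, and the observation that defined $\OP$-products stay in~$\SSS$, so that $\Lambda$ really lands in~$\SSS$ and the ``intermediate value'' used in the right-divisor step is a bona fide element of~$\SSS$.
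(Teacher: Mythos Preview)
Your proof is correct and takes essentially the same approach as the paper: your map $\Lambda$ is exactly the paper's map~$\Pi$, defined by the same right-to-left fold, and the invariance under the generating pairs~\eqref{E:Pair} is established by the identical associativity argument using~\eqref{E:Germ3} and~\eqref{E:LeftAss}. Your treatment of the right-divisor closure is, if anything, slightly more explicit than the paper's in spelling out that the defining products witnessing $\Lambda(c)=t$ give $c \equiv \seq t$, but the substance is the same.
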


\begin{proof}
We inductively define a partial map~$\Pi$ from~$\SSS^*$ to~$\SSS$ by
\begin{equation}
\label{E:Embed}
\Pi(\ew_\xx) = \id\xx \ \mbox{and} \ \Pi(\seq\gg \pc \ww) = \gg \OP \Pi(\ww) 
\mbox{ if $\gg$ lies in~$\SSS$ and $\gg \OP \Pi(\ww)$ is defined}.
\end{equation}
We claim that $\Pi$ induces a well defined partial map from~$\Cat(\SSSg)$ 
to~$\SSS$, more precisely that, if $\ww, \ww'$ are $\equiv$-equivalent 
elements of~$\SSS^*$, then $\Pi(\ww)$ exists if and only if $\Pi(\ww')$ does, 
and in this case they are equal. To prove this, we may assume that $\Pi(\ww)$ 
or $\Pi(\ww')$ is defined and that $(\ww, \ww')$ is of the 
type~\eqref{E:Pair}. Let $\gg = \Pi(\seqqq{\gg_{\ii+2}}\etc{\gg_\pp})$. The 
assumption that $\Pi(\ww)$ or $\Pi(\ww')$ is defined implies that $\gg$ is 
defined. Then \eqref{E:Embed} gives $\Pi(\ww) = 
\Pi(\seqqqq{\gg_1}\etc{\gg_{\ii-1}}\hh)$ whenever $\Pi(\ww)$ is defined, and 
$\Pi(\ww') =\nobreak \Pi(\seqqqq{\gg_1}\etc{\gg_{\ii-1}}{\hh'})$ whenever 
$\Pi(\ww')$ is defined, with $\hh = \Pi(\seqqq{\gg_\ii}{\gg_{\ii+1}}\gg)$ and 
$\hh' = \Pi(\seqq{\gg_\ii \OP \gg_{\ii+1}}\gg)$, that is,
\begin{equation*}
\hh = \gg_\ii \OP (\gg_{\ii+1} \OP \gg) \quad \mbox{and} \quad \hh' = (\gg_\ii 
\OP \gg_{\ii+1}) \OP \gg.
\end{equation*}
So the point is to prove that $\hh$ is defined if and only if $\hh'$ is, in 
which case they are equal. Now, if $\hh$ is defined, the assumption that 
$\gg_\ii \OP \gg_{\ii+1}$ is defined plus~\eqref{E:Germ3} imply that $\hh'$ 
exists and equals~$\hh$. Conversely, if $\hh'$ is defined, \eqref{E:LeftAss} 
implies that $\gg_{\ii+1} \OP \gg$ is defined, and then \eqref{E:Germ3} 
implies that $\hh$ exists and equals~$\hh'$.

Assume now that $\gg, \gg'$ lie in~$\SSS$ and $\iota\gg = \iota\gg'$ holds, 
that is, the length one paths $\seq\gg$ and $\seq{\gg'}$ are 
$\equiv$-equivalent. The above claim gives $\gg = \Pi(\seq\gg) = 
\Pi(\seq{\gg'}) =\nobreak \gg'$, so $\iota$ is injective.

Next, assume that $\ff, \gg$ belong to~$\SSS$ and $\ff \OP \gg$ is defined. We 
have $\seqq\ff\gg \equiv \seq{\ff \OP \gg}$, which means that the product of 
$\iota\ff$ and~$\iota\gg$ in~$\Cat(\SSSg)$ is~$\iota(\ff \OP \gg)$.

Finally, assume that $\gg$ belongs to~$\SSS$ and $\iota\gg'$ is a 
right-divisor of~$\iota\gg$ in~$\Cat(\SSSg)$. This means that there exist 
elements $\ff_1 \wdots \ff_\pp, \gg_1 \wdots \gg_\qq$ of~$\SSS$ such that 
$\iota\gg$ is the $\equiv$-class 
of~$\seqqqqqq{\ff_1}\etc{\ff_\pp}{\gg_1}\etc{\gg_\qq}$ and $\iota\gg'$ is the 
$\equiv$-class of $\seqqq{\gg_1}\etc{\gg_\qq}$. By the claim above, the first 
relation implies that $\Pi(\seqqqqqq{\ff_1}\etc{\ff_\pp}{\gg_1}\etc{\gg_\qq})$ 
exists (and equals~$\gg$). By construction, this implies that 
$\Pi(\seqqq{\gg_1}\etc{\gg_\qq})$ exists as well, hence that $\gg'$ belongs 
to~$\SSS$. So $\iota\SSS$ is closed under right-divisor in~$\Cat(\SSSg)$.
\end{proof}

In the context of Proposition~\ref{P:Embed}, we shall from now on 
identify~$\SSS$ with its image in the category~$\Cat(\SSSg)$, that is, drop 
the 
canonical injection~$\iota$. Before going on, we establish a few consequences 
of the existence of the above function~$\Pi$. If $\SSSg$ is a germ, an 
element~$\ee$ of~$\SSS$ is naturally called \emph{invertible} if there 
exists~$\ee'$ in~$\SSS$ satisfying $\ee \OP \ee' = \id\xx$ and $\ee' \OP \ee = 
\id\yy$, with $\xx$ the source of~$\ee$ and $\yy$ its target. We denote 
by~$\SSSi$ the family of all invertible elements of~$\SSS$. Also we 
introduce a local version of left-divisibility (a symmetric notion of local 
right-divisibility will also be considered below).

\begin{defi}
\label{D:Cancel}
\ITEM1 Assume that $\SSSg$ is a germ. For $\ff, \gg$ in~$\SSS$, we say that 
$\ff \diveS \gg$ (\resp.\ $\ff \eqirS \gg$) holds if $\gg = \ff \OP \gg'$ 
holds for some~$\gg'$ in~$\SSS$ (\resp.\ in~$\SSSi$).

\ITEM2 A germ~$\SSSg$ is called \emph{left-cancellative} if there exist no 
triple $\ff, \gg, \gg'$ in~$\SSS$ satisfying $\gg \not= \gg'$ and $\ff \OP \gg 
= \ff \OP \gg'$.
\end{defi}

\begin{lemm}
\label{L:LocalDiv}
Assume that $\SSSg$ is a left-associative germ. 

\ITEM1 For~$\ff, \gg$ in~$\SSS$, the relation $\ff \dive \gg$ holds 
in~$\Cat(\SSSg)$ if and only if $\ff \diveS \gg$~holds.

\ITEM2 The relation~$\diveS$ is transitive. If $\hh \OP \ff$ and $\hh \OP \gg$ 
are defined, then $\ff \diveS \gg$ implies $\hh \OP \ff \diveS \hh \OP \gg$, 
and $\ff \eqirS \gg$ implies $\hh \OP \ff \eqirS \hh \OP \gg$.

\ITEM3 If, in addition, $\SSSg$ is left-cancellative, then $\ff \eqirS \gg$ is 
equivalent to the conjunction of~$\ff \diveS \gg$ and~$\gg \diveS \ff$ and, if 
$\hh \OP \ff$ and $\hh \OP \gg$ are defined, then $\ff \diveS \gg$ is 
equivalent to $\hh \OP \ff \diveS \hh \OP \gg$, and $\ff \eqirS \gg$ is 
equivalent to $\hh \OP \ff \eqirS \hh \OP \gg$.
\end{lemm}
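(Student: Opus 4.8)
The plan is to prove the three items directly from the germ axioms~\eqref{E:Germ3} and~\eqref{E:LeftAss}, supplemented for~\ITEM1 by the partial function~$\Pi$ built in Proposition~\ref{P:Embed} and for~\ITEM3 by the left-cancellativity of the germ (Definition~\ref{D:Cancel}). Throughout I identify~$\SSS$ with its image in~$\Cat(\SSSg)$, so that the product of~$\Cat(\SSSg)$ extends~$\OP$.

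For~\ITEM1, the implication $\ff \diveS \gg \Rightarrow \ff \dive \gg$ is immediate, since $\gg = \ff \OP \gg'$ with $\gg' \in \SSS$ reads as $\gg = \ff\gg'$ in~$\Cat(\SSSg)$. For the converse, suppose $\ff \dive \gg$ in~$\Cat(\SSSg)$, say $\gg = \ff\ww$ with $\ww$ the $\equiv$-class of an $\SSS$-path $\seqqq{\gg_1}\etc{\gg_\qq}$, so that $\gg$ is the $\equiv$-class of $\seqqqq\ff{\gg_1}\etc{\gg_\qq}$. Since $\Pi(\seq\gg) = \gg$ is defined and $\Pi$ is constant on $\equiv$-classes (Proposition~\ref{P:Embed}), the value $\Pi(\seqqqq\ff{\gg_1}\etc{\gg_\qq})$ is defined and equals~$\gg$; by the defining recursion for~$\Pi$ this says that $\ff \OP \Pi(\seqqq{\gg_1}\etc{\gg_\qq})$ is defined and equals~$\gg$, so with $\gg' = \Pi(\seqqq{\gg_1}\etc{\gg_\qq}) \in \SSS$ we get $\ff \diveS \gg$. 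I expect this collapsing of an arbitrary divisibility witness in the presented category down to a single germ element to be the one genuinely nontrivial step; it is exactly where the content of Proposition~\ref{P:Embed} enters, the remaining items being direct manipulations of the germ axioms (together with left-cancellativity for~\ITEM3).

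For~\ITEM2, transitivity comes from $\gg = \ff\OP\gg'$ and $\hh = \gg\OP\hh'$, giving $\hh = (\ff\OP\gg')\OP\hh'$: then~\eqref{E:LeftAss} makes $\gg'\OP\hh'$ defined and~\eqref{E:Germ3} rebrackets it as $\ff\OP(\gg'\OP\hh') = \hh$, so $\ff \diveS \hh$. The compatibility statements use the same rebracketing: assuming $\hh\OP\ff$ and $\hh\OP\gg$ defined and $\gg = \ff\OP\aa$, axiom~\eqref{E:Germ3} applied to $\hh\OP\ff$ and $\ff\OP\aa$ shows $(\hh\OP\ff)\OP\aa$ is defined and equals $\hh\OP(\ff\OP\aa) = \hh\OP\gg$, giving $\hh\OP\ff \diveS \hh\OP\gg$; if moreover $\aa$ is invertible, the same equation gives $\hh\OP\ff \eqirS \hh\OP\gg$.

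For~\ITEM3 I would first note that $\ff\eqirS\gg$ forces $\ff\diveS\gg$ and $\gg\diveS\ff$: writing $\gg = \ff\OP\ee$ with $\ee$ invertible of inverse~$\ee'$, axiom~\eqref{E:Germ3} gives $\gg\OP\ee' = \ff\OP(\ee\OP\ee')$, and since $\ee\OP\ee'$ is an identity-element this reduces by~\eqref{E:Germ2} to~$\ff$, so even $\gg\eqirS\ff$ holds. For the converse, from $\gg = \ff\OP\aa$ and $\ff = \gg\OP\aa'$ I substitute and rebracket (via~\eqref{E:LeftAss} and~\eqref{E:Germ3}) to obtain $\ff = \ff\OP(\aa\OP\aa')$ and $\gg = \gg\OP(\aa'\OP\aa)$; left-cancellativity of the germ then forces $\aa\OP\aa'$ and $\aa'\OP\aa$ to be the respective identity-elements, so $\aa$ is invertible and $\ff\eqirS\gg$. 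The last two equivalences are handled identically: their forward directions are~\ITEM2, and for the backward ones I write $\hh\OP\gg = (\hh\OP\ff)\OP\cc$, rebracket to $\hh\OP(\ff\OP\cc)$ through~\eqref{E:LeftAss} and~\eqref{E:Germ3}, and left-cancel~$\hh$ to conclude $\gg = \ff\OP\cc$ — with $\cc$ invertible precisely when the hypothesis was~$\eqirS$.
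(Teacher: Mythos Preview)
Your proof is correct and follows essentially the same approach as the paper's: both use the function~$\Pi$ from Proposition~\ref{P:Embed} for~\ITEM1, the associativity axioms~\eqref{E:Germ3} and~\eqref{E:LeftAss} for~\ITEM2, and left-cancellativity for~\ITEM3. The only cosmetic differences are that the paper deduces transitivity of~$\diveS$ from~\ITEM1 (via transitivity of~$\dive$ in~$\Cat(\SSSg)$) rather than directly from the germ axioms as you do, and that for~\ITEM1 the paper first invokes closure of~$\SSS$ under right-divisor to place~$\gg'$ in~$\SSS$ before applying~$\Pi$, whereas you extract~$\gg'$ straight from the recursion for~$\Pi$; both routes amount to the same content.
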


\begin{proof}
\ITEM1 Assume $\ff, \gg \in \SSS$ and $\gg = \ff \gg'$ in~$\Cat(\SSSg)$. As 
$\gg$ lies in~$\SSS$ and $\SSS$ is closed under right-divisor 
in~$\Cat(\SSSg)$, 
the 
element~$\gg'$ lies in~$\SSS$. So we have $\seq\gg \equiv \seqq\ff{\gg'}$, 
whence, applying the function~$\Pi$ of~\eqref{E:Embed}, $\gg = \Pi(\seq\gg) = 
\Pi(\seqq\ff{\gg'}) = \ff \OP \gg'$. Therefore $\ff \diveS \gg$ is satisfied. 
The converse implication is straightforward.

\ITEM2 As the relation~$\dive$ in~$\Cat(\SSSg)$ is transitive, \ITEM1 implies 
that $\diveS$ is transitive as well. Next, assume that $\hh \OP \ff$ and $\hh 
\OP \gg$ are defined, and $\gg = \ff \OP \gg'$ holds. By~\eqref{E:Germ3}, we 
deduce $\hh \OP \gg = \hh \OP (\ff \OP \gg') = (\hh \OP \ff) \OP \gg'$, whence 
$\hh \OP \ff \diveS \hh \OP \gg$.  Considering the special case when $\gg'$ 
belongs to~$\SSSi$, we deduce that $\ff \eqirS \gg$ implies $\hh \OP \ff 
\eqirS \hh \OP \gg$.

\ITEM3 Assume $\ff = \gg \OP \ee$ and $\gg = \ff \OP \ee'$. We deduce $\ff = 
(\ff \OP \ee') \OP \ee$, whence
$\ff =\nobreak \ff \OP (\ee' \OP \ee)$ by left-associativity. By 
left-cancellativity, we deduce $\ee' \OP \ee = \id\yy$, where $\yy$ is the 
target of~$\ff$. So $\ee$ and $\ee'$ are invertible, and $\ff \eqirS \gg$ 
holds.

Assume now that $\hh \OP \ff$ and $\hh \OP \gg$ are defined and $\hh \OP \ff 
\diveS \hh \OP \gg$ holds. So we have $\hh \OP \gg = (\hh \OP \ff) \OP \gg'$ 
for some~$\gg'$. By left-associativity, $\ff \OP \gg'$ must be defined and we 
obtain $\hh \OP \gg = \hh \OP (\ff \OP \gg')$, whence $\gg = \ff \OP \gg'$ by 
left-cancellativity, and $\ff \diveS \gg$.

 Finally, $\ff \eqirS \gg$ implies $\hh \OP \ff \eqirS \hh \OP \gg$ by~\ITEM2. 
Conversely, $\hh \OP \ff \eqirS \hh \OP \gg$ implies both $\hh \OP \ff \diveS 
\hh \OP \gg$ and $\hh \OP \gg \diveS \hh \OP \ff$, hence $\ff \diveS \gg$ and 
$\gg \diveS \ff$ by the result above, whence in turn $\ff \eqirS \gg$. 
\end{proof}


\subsection{Garside germs}
\label{SS:GarGerm}

Our goal will be to characterize those germs that give 
rise to Garside families. To state the results easily, we introduce a 
terminology.

\begin{defi}
\label{D:GarGerm}
A germ~$\SSSg$ is said to be a \emph{Garside germ} if there exists a 
left-cancellative category~$\CCC$ such that $\SSS$ is a full Garside family 
of~$\CCC$.
\end{defi}

By Proposition~\ref{P:GarGerm}, if $\SSS$ is a full Garside family 
in some 
left-cancellative category~$\CCC$, the latter must isomorphic 
to~$\Cat(\SSSg)$. 
So, a germ~$\SSSg$ is a Garside germ if and only if the category~$\Cat(\SSSg)$ 
is left-cancellative and $\SSS$ is a full Garside family in~$\Cat(\SSSg)$. In 
other words, in Definition~\ref{D:GarGerm}, we can assume that the 
category~$\CCC$ is~$\Cat(\SSSg)$.

We shall now state and  begin to establish  (the argument will be completed in 
Section~\ref{S:RecGerm} only) simple conditions that characterize 
Garside germs.

\begin{defi}
\label{D:Patch}
Assume that $\SSSg$ is a germ. For~$\seqq{\gg_1}{\gg_2}$ in~$\Seq\SSS2$, we put
\begin{gather}
\label{E:I}
\IIIS(\gg_1, \gg_2) = \{\hh \in \SSS \mid \exists \gg \in \SSS \ (\hh = 
\gg_1 \OP \gg \mbox{ and } \gg \diveS \gg_2 ) \},\\
\label{E:J}
\JJJS(\gg_1, \gg_2) = \{\gg \in \SSS \mid \gg_1 \OP \gg \mbox{ is 
defined and } \gg \diveS \gg_2 \}.
\end{gather}
A map from~$\Seq\SSS2$ to~$\SSS$ is called an \emph{$\III$-function} (\resp.\ 
a 
\emph{$\JJJ$-function}) for~$\SSSg$ if, for 
every~$\seqq{\gg_1}{\gg_2}$ in~$\Seq\SSS2$, 
the value at~$(\gg_1, \gg_2)$ lies in~$\IIIS(\gg_1, \gg_2)$ (\resp.\ 
in~$\JJJS(\gg_1, \gg_2)$).
\end{defi}

We recall that, in~\eqref{E:I}, writing $\hh = \gg_1 \OP \gg$ implies that 
$\gg_1 \OP \gg$ is defined. An element of~$\JJJS(\gg_1, \gg_2)$ is a 
fragment of~$\gg_2$ that can be added to~$\gg_1$ legally, that is, without 
going out of~$\SSS$.  Note that $\gg_1$ always belongs to~$\IIIS(\gg_1, 
\gg_2)$ and that, if $\yy$ is the target of~$\gg_1$, then  $\id\yy$ always 
belongs to~$\JJJS(\gg_1, \gg_2)$. So, in particular, $\IIIS(\gg_1, 
\gg_2)$ and~$\JJJS(\gg_1, \gg_2)$ are never empty. 

The connection between~$\IIIS(\gg_1, \gg_2)$ and~$\JJJS(\gg_1, 
\gg_2)$ is clear: with obvious notation, we have $\IIIS(\gg_1, \gg_2) = 
\gg_1 \OP \JJJS(\gg_1, \gg_2)$. However, it turns out that, depending on 
the situation, using $\III$ or~$\JJJ$ is more convenient, and it is 
useful to 
introduce both notions. Note that, if $\hh$ belongs to~$\IIIS(\gg_1, 
\gg_2)$ and $\SSSg$ embeds in~$\Cat(\SSSg)$, then, in~$\Cat(\SSSg)$, we have 
$\hh 
\in \SSS$ and  $\gg_1 \dive \hh \dive \gg_1 \gg_2$.  However, the latter 
relations need not imply $\hh \in \IIIS(\gg_1, \gg_2)$  \emph{a priori} 
since $\gg_1 \gg \dive \gg_1 \gg_2$ is not known to imply $\gg \dive \gg_2$ as 
long as $\Cat(\SSSg)$ has not been proved to be left-cancellative.  

We shall characterize Garside germs by the existence of $\III$- or 
$\JJJ$-functions satisfying some algebraic laws reminiscent of the $\HHH$-law 
of~\eqref{E:HLaw}.

\begin{defi}
\label{D:IJLaws}
If $\SSSg$ is a germ, a map~$\FF$ from~$\Seq\SSS2$ to~$\SSS$ is said to obey 
the \emph{$\III$-law} if, for every $\seqqq{\gg_1}{\gg_2}{\gg_3}$ 
in~$\Seq\SSS3$ such that $\gg_1 \OP \gg_2$ is defined, we have 
\begin{equation}
\label{E:ILaw}
\FF(\gg_1, \FF(\gg_2, \gg_3)) \eqir \FF(\gg_1 \OP \gg_2, \gg_3).
\end{equation}
The map $\FF$ is said to obey the \emph{$\JJJ$-law} if, under the same 
hypotheses, we have 
\begin{equation}
\label{E:JLaw}
\FF(\gg_1, \gg_2 \OP \FF(\gg_2, \gg_3)) \eqir \gg_2 \OP \FF(\gg_1 \OP 
\gg_2, \gg_3). 
\end{equation}
If, in \eqref{E:ILaw} or \eqref{E:JLaw}, $=$ replaces~$\eqir$, we speak of the 
\emph{sharp} $\III$- or $\JJJ$-law. 
\end{defi}

The result we shall prove below is as follows.

\begin{prop}
\label{P:RecGarGermI}
A germ~$\SSSg$ is a Garside germ if and only if it satisfies any one of the 
following equivalent conditions:
\begin{gather}
\label{E:RecGarGermI1}
\BOX{$\SSSg$ is left-associative, left-cancellative, and it admits an 
$\III$-function that satisfies the sharp $\III$-law;}\\
\label{E:RecGarGermI2}
\BOX{$\SSSg$ is left-associative, left-cancellative, and it admits a 
$\JJJ$-function that satisfies the sharp $\JJJ$-law.}
\end{gather}
\end{prop}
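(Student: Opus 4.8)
The plan is to establish the chain ``$\SSSg$ is a Garside germ'' $\Rightarrow$ \eqref{E:RecGarGermI1} $\Rightarrow$ ``$\SSSg$ is a Garside germ'', together with the equivalence \eqref{E:RecGarGermI1} $\Leftrightarrow$ \eqref{E:RecGarGermI2}. Throughout I will use Proposition~\ref{P:GarGerm} to rephrase ``Garside germ'' as ``$\CCC := \Cat(\SSSg)$ is left-cancellative and $\SSS$ is a full Garside family in~$\CCC$'', so that the only candidate ambient category is~$\CCC$. For the first implication, assume $\SSS$ is a full Garside family in~$\CCC$. The induced germ then inherits left-cancellativity from~$\CCC$, and inherits left-associativity because a full family is closed under right-divisor, via~\eqref{E:LeftAss} in Lemma~\ref{L:Germ}. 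To produce an $\III$-function I would take the sharp head map $\HH : \SSS\CCC \to \SSS$ granted by condition~\eqref{E:RecGarIII2} of Proposition~\ref{P:RecGarIII} (applicable since $\SSS$ is a Garside family, so $\CCCi\SSSs \subseteq \SSSs$ by Lemma~\ref{L:Closed}) and set $\FF(\gg_1,\gg_2) = \HH(\gg_1\gg_2)$. Its value lies in $\IIIS(\gg_1,\gg_2)$: one has $\gg_1 \dive \HH(\gg_1\gg_2) \dive \gg_1\gg_2$, so writing $\HH(\gg_1\gg_2)=\gg_1\gg$ forces $\gg\in\SSS$ by closure under right-divisor and $\gg\diveS\gg_2$ after left-cancelling~$\gg_1$ (Lemma~\ref{L:LocalDiv}). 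The sharp $\III$-law then follows from the sharp $\HHH$-law, since for $\gg_1\OP\gg_2$ defined both $\FF(\gg_1\OP\gg_2,\gg_3)=\HH(\gg_1\gg_2\gg_3)$ and $\FF(\gg_1,\FF(\gg_2,\gg_3))=\HH(\gg_1\HH(\gg_2\gg_3))$ agree by~\eqref{E:HLaw} (sharp) with $\ff=\gg_1$, $\gg=\gg_2\gg_3$.

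For \eqref{E:RecGarGermI1} $\Leftrightarrow$ \eqref{E:RecGarGermI2} I would exploit the identity $\IIIS(\gg_1,\gg_2)=\gg_1\OP\JJJS(\gg_1,\gg_2)$. Left-cancellativity of the germ lets me pass from an $\III$-function $\FF$ to the $\JJJ$-function $\GG$ defined by $\FF(\gg_1,\gg_2)=\gg_1\OP\GG(\gg_1,\gg_2)$, and conversely recover $\FF$ from $\GG$ by the same formula. Substituting this factorization into the sharp $\III$-law $\FF(\gg_1,\FF(\gg_2,\gg_3))=\FF(\gg_1\OP\gg_2,\gg_3)$ and using left-associativity to rewrite $(\gg_1\OP\gg_2)\OP\GG(\gg_1\OP\gg_2,\gg_3)$ as $\gg_1\OP(\gg_2\OP\GG(\gg_1\OP\gg_2,\gg_3))$, a single left-cancellation of the leading $\gg_1$ converts the identity into the sharp $\JJJ$-law for $\GG$; the reverse passage is symmetric. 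Since both conditions carry the same left-associativity and left-cancellativity hypotheses, this yields the desired equivalence.

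The substantive direction is \eqref{E:RecGarGermI1} $\Rightarrow$ ``Garside germ''. First, left-associativity and Proposition~\ref{P:Embed} give that $\iota$ embeds $\SSS$ in~$\CCC$, that the product of~$\CCC$ extends~$\OP$, and that $\SSS$ is closed under right-divisor; as $\SSS$ contains all identity-elements, it is full. The hard part, which I expect to be the main obstacle, is that $\CCC$ is left-cancellative: the germ is only \emph{locally} cancellative, and there is no formal reason for cancellativity to propagate to arbitrary products of generators. My plan is to use $\FF$ and the sharp $\III$-law to normalize paths in~$\SSS^*$ directly, rewriting a length-two path $\seqq{\gg_1}{\gg_2}$ within its $\equiv$-class to $\seqq{\FF(\gg_1,\gg_2)}{\gg'}$, where $\gg_2=\GG(\gg_1,\gg_2)\OP\gg'$, thereby pushing the greedy head to the front. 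The sharp $\III$-law is precisely the associativity statement making this leftmost rewriting confluent, so that each $\equiv$-class has a unique normal representative; from that uniqueness I would extract left-cancellativity of~$\CCC$ and the fact that $\FF(\gg_1,\gg_2)$ is a genuine $\SSS$-head of~$\gg_1\gg_2$. Once $\CCC$ is known left-cancellative, $\SSSs$ generates~$\CCC$, is closed under right-complement, and admits $\SSS$-heads on~$(\SSSs)^2$, so the head criterion of Proposition~\ref{P:RecGarII} applies and certifies $\SSS$ as a Garside family in~$\CCC$, whence $\SSSg$ is a Garside germ. The confluence analysis behind left-cancellativity is the technical core, to be completed in Section~\ref{S:RecGerm}.
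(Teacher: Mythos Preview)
Your proposal is correct and follows the paper's strategy closely; the necessity direction and the $\III$--$\JJJ$ equivalence are handled exactly as in Lemmas~\ref{L:Necessary} and~\ref{L:EquivIJ}. Two small points of divergence are worth flagging. First, for the hard direction the paper elects to work with the $\JJJ$-function rather than the $\III$-function (noting this is ``more convenient''), and in place of a confluence argument it gives an explicit recursive construction of a head map~$\HHu$ and a tail map~$\TTu$ on~$\SSS^*$ (Lemma~\ref{L:Main}), proving directly via the sharp $\JJJ$-law that both are invariant under~$\equiv$; left-cancellativity is then extracted by tracking~$\TTu$, not only~$\HHu$. This is the same mechanism as your confluence plan but packaged more concretely, and the explicit tail is what makes the cancellativity step clean. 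Second, in the final step the paper invokes~\eqref{E:RecGarII1} (closure under right-divisor plus $\SSS$-heads for \emph{every} non-invertible element of~$\CCC$) rather than~\eqref{E:RecGarII2}: since~$\HHu$ is defined on all of~$\SSS^*$, heads are available globally, so you need not verify closure under right-complement separately.
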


Note that all conditions in Proposition~\ref{P:RecGarGermI} are local in that 
they only involve the elements of~$\SSS$ and computations taking place 
inside~$\SSS$. In particular, if $\SSS$ is finite, the conditions are 
effectively checkable in finite time.

 In this section, we prove only that the conditions are necessary. The 
converse is deferred to Section~\ref{S:RecGerm} below.

\begin{lemm}
\label{L:EquivIJ}
Assume that $\SSSg$ is a germ that is left-associative and left-cancellative 
and $\II, \JJ : \Seq\SSS2 \to \SSS$ are connected by $\II(\gg_1, \gg_2) = 
\gg_1 \OP \JJ(\gg_1, \gg_2)$ for all~$\gg_1, \gg_2$. 

\ITEM1 The map~$\II$ is an $\III$-function for~$\SSSg$ if and only if 
$\JJ$ is a $\JJJ$-function for~$\SSSg$.

\ITEM2 In the situation of~\ITEM1, the map~$\II$ obeys the $\III$-law (\resp.\ 
the sharp $\III$-law) if and only if $\JJ$ obeys the $\JJJ$-law (\resp.\ the 
sharp $\JJJ$-law).
\end{lemm}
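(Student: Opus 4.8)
The plan is to handle~\ITEM1 by a direct translation through the connecting identity $\II(\gg_1, \gg_2) = \gg_1 \OP \JJ(\gg_1, \gg_2)$, and~\ITEM2 by rewriting each side of the two laws so that it differs from the matching side of the other law only by a leading factor~$\gg_1$, which can then be cancelled. Throughout I use the convention that an equality $\gg = \gg_1 \OP \gg_2$ asserts that $\gg_1 \OP \gg_2$ is defined, so the hypothesis already guarantees that $\gg_1 \OP \JJ(\gg_1, \gg_2)$ is defined for all~$\gg_1, \gg_2$.

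For~\ITEM1, if $\JJ$ is a $\JJJ$-function then $\JJ(\gg_1, \gg_2) \diveS \gg_2$, and taking $\gg = \JJ(\gg_1, \gg_2)$ in~\eqref{E:I} immediately witnesses $\II(\gg_1, \gg_2) \in \IIIS(\gg_1, \gg_2)$. Conversely, if $\II$ is an $\III$-function, pick $\gg$ with $\II(\gg_1, \gg_2) = \gg_1 \OP \gg$ and $\gg \diveS \gg_2$; comparing this with $\II(\gg_1, \gg_2) = \gg_1 \OP \JJ(\gg_1, \gg_2)$ and cancelling~$\gg_1$ on the left (left-cancellativity of the germ) yields $\gg = \JJ(\gg_1, \gg_2)$, whence $\JJ(\gg_1, \gg_2) \in \JJJS(\gg_1, \gg_2)$. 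The only real step is this left-cancellation; everything else is unfolding the definitions.

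For~\ITEM2, fix $\seqqq{\gg_1}{\gg_2}{\gg_3}$ in~$\Seq\SSS3$ with $\gg_1 \OP \gg_2$ defined. Using $\II(\gg_2, \gg_3) = \gg_2 \OP \JJ(\gg_2, \gg_3)$ and the connection again, the left-hand side $\II(\gg_1, \II(\gg_2, \gg_3))$ of the $\III$-law~\eqref{E:ILaw} equals $\gg_1 \OP \JJ(\gg_1, \gg_2 \OP \JJ(\gg_2, \gg_3))$, that is, $\gg_1$ times the left-hand side of the $\JJJ$-law~\eqref{E:JLaw}. The right-hand side $\II(\gg_1 \OP \gg_2, \gg_3)$ equals $(\gg_1 \OP \gg_2) \OP \JJ(\gg_1 \OP \gg_2, \gg_3)$; since this product is defined, left-associativity~\eqref{E:LeftAss} makes $\gg_2 \OP \JJ(\gg_1 \OP \gg_2, \gg_3)$ defined, and then associativity~\eqref{E:Germ3} rewrites the product as $\gg_1 \OP (\gg_2 \OP \JJ(\gg_1 \OP \gg_2, \gg_3))$, that is, $\gg_1$ times the right-hand side of~\eqref{E:JLaw}. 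Thus each side of~\eqref{E:ILaw} is the corresponding side of~\eqref{E:JLaw} left-multiplied by~$\gg_1$.

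It then remains to cancel~$\gg_1$. For the sharp laws this is immediate from left-cancellativity. For the plain laws I would invoke Lemma~\ref{L:LocalDiv}\ITEM3: when $\gg_1 \OP \ff$ and $\gg_1 \OP \gg$ are defined, $\ff \eqirS \gg$ is equivalent to $\gg_1 \OP \ff \eqirS \gg_1 \OP \gg$, and here both products are values of~$\II$, hence defined. The one point needing care is that the laws are stated with~$\eqir$ whereas Lemma~\ref{L:LocalDiv}\ITEM3 is stated with~$\eqirS$: since $\SSSg$ is left-associative, Proposition~\ref{P:Embed} embeds $\SSS$ in~$\Cat(\SSSg)$, and combining parts~\ITEM1 and~\ITEM3 of Lemma~\ref{L:LocalDiv} shows that $\eqir$ and~$\eqirS$ agree on~$\SSS$, so the two formulations coincide. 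The main obstacle is really the bookkeeping in the previous paragraph---checking that every $\OP$-product occurring is defined, so that the associativity rearrangement of the right-hand side is legitimate; once that is in place, the equivalence of the laws is a one-line cancellation.
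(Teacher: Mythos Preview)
Your proof is correct and follows the same route as the paper: rewrite both sides of the $\III$-law as $\gg_1 \OP (\text{corresponding side of the }\JJJ\text{-law})$ using left-associativity and~\eqref{E:Germ3}, then cancel~$\gg_1$ via left-cancellativity (sharp case) or Lemma~\ref{L:LocalDiv} (plain case). You are in fact slightly more explicit than the paper in flagging the $\eqir$ versus $\eqirS$ issue; your justification via Lemma~\ref{L:LocalDiv}\ITEM1 and~\ITEM3 is sound, since $\ff \eqir \gg$ in~$\Cat(\SSSg)$ trivially gives $\ff \dive \gg$ and $\gg \dive \ff$, hence $\ff \diveS \gg$ and $\gg \diveS \ff$ by~\ITEM1, hence $\ff \eqirS \gg$ by~\ITEM3, and the converse is immediate.
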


\begin{proof}
\ITEM1 First, the assumption that $\SSSg$ is left-cancellative implies 
that, for every~$\II$, there exists at most one associated~$\JJ$. Then the 
definitions of an $\III$- and a $\JJJ$-functions for~$\SSSg$ directly give the 
expected equivalence.

\ITEM2 Assume that $\II$ is an $\III$-function obeying the $\III$-law.  
Assume 
that~$\seqqq{\gg_1}{\gg_2}{\gg_3}$ lies in~$\Seq\SSS3$ and $\gg_1 \OP \gg_2$ 
is defined. By assumption, we have $\II(\gg_1, \II(\gg_2, \gg_3)) \eqir 
\II(\gg_1 \OP \gg_2, \gg_3)$, which translates into
\begin{equation}
\label{E:EquivIJ1}
\gg_1 \OP \JJ(\gg_1, \gg_2 \OP \JJ(\gg_2, \gg_3)) \eqir (\gg_1 \OP \gg_2) \OP 
\JJ(\gg_1 \OP \gg_2, \gg_3).
\end{equation}
Then the assumption that $\SSSg$ is left-associative implies that $\gg_2 \OP 
\JJ(\gg_1 \OP \gg_2, \gg_3)$ is defined and, therefore, \eqref{E:EquivIJ1} 
implies
\begin{equation}
\label{E:EquivIJ2}
\gg_1 \OP \JJ(\gg_1, \gg_2 \OP \JJ(\gg_2, \gg_3)) \eqir \gg_1 \OP (\gg_2 \OP 
\JJ(\gg_1 \OP \gg_2, \gg_3)).
\end{equation}
Finally,  by Lemma~\ref{L:LocalDiv}, we may left-cancel~$\gg_1$ 
in~\eqref{E:EquivIJ2}, and what remains is the expected instance of the 
$\JJJ$-law. So $\JJ$ obeys the $\JJJ$-law. 

The argument in the case when $\II$ obeys the sharp $\III$-law is similar: now 
\eqref{E:EquivIJ1} and \eqref{E:EquivIJ2} are equalities, and, applying the 
assumption that $\SSSg$ is left-cancellative, we directly deduce the expected 
instance of the sharp $\JJJ$-law. So $\JJ$ obeys the sharp $\JJJ$-law. 

Conversely, assume that $\JJ$ is a $\JJJ$-function for~$\SSSg$ that satisfies 
the $\JJJ$-law. Assume that~$\seqqq{\gg_1}{\gg_2}{\gg_3}$ lies in~$\Seq\SSS3$ 
and $\gg_1 \OP \gg_2$ is defined. By the $\JJJ$-law, we have $\JJ(\gg_1, \gg_2 
\OP \JJ(\gg_2, \gg_3)) \eqir \gg_2 \OP \JJ(\gg_1 \OP \gg_2, \gg_3)$. By 
definition of a $\JJJ$-function, the expression $\gg_1 \OP \JJ(\gg_1, \gg_2 
\OP \JJ(\gg_2, \gg_3))$ is defined, hence so is $\gg_1 \OP (\gg_2 \OP 
\JJ(\gg_1 \OP \gg_2, \gg_3))$, and we obtain~\eqref{E:EquivIJ2}. 
Applying~\eqref{E:Germ3}, which is legal as $\gg_1 \OP \gg_2$ is defined, we 
deduce~\eqref{E:EquivIJ1}, whence $\II(\gg_1, \II(\gg_2, \gg_3)) \eqir 
\II(\gg_1 \OP \gg_2, \gg_3)$, the expected instance of the $\III$-law.  So 
$\II$ obeys the $\III$-law.

Finally, if $\JJ$ obeys the sharp $\JJJ$-law, the argument is similar: 
\eqref{E:EquivIJ1} and \eqref{E:EquivIJ2} are equalities, and one obtains the 
expected instance of the sharp $\III$-law. So $\II$ obeys the sharp $\III$-law 
in this case. 
\end{proof}

The (sharp) $\III$- and $\JJJ$-laws are closely connected with the $\HHH$-law, 
which implies that the conditions of Proposition~\ref{P:RecGarGermI} are 
necessary. 

\begin{lemm}
\label{L:Necessary}
Every Garside germ satisfies~\eqref{E:RecGarGermI1} and~\eqref{E:RecGarGermI2}.
\end{lemm}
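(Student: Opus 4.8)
The plan is to take as witnessing category $\CCC = \Cat(\SSSg)$ itself, which is legitimate by the remark following Definition~\ref{D:GarGerm}; thus $\SSS$ is a full Garside family in the left-cancellative category~$\CCC$ and $\OP$ is induced by the product of~$\CCC$. Two of the four required properties come essentially for free. Left-associativity of~$\SSSg$ is immediate from Lemma~\ref{L:Germ}: being full, $\SSS$ contains all identity-elements and is closed under right-divisor, which is exactly the hypothesis under which $\OP$ satisfies~\eqref{E:LeftAss}. Left-cancellativity of~$\SSSg$ follows from that of~$\CCC$: if $\ff \OP \gg = \ff \OP \gg'$ in~$\SSS$, then $\ff\gg = \ff\gg'$ in~$\CCC$, whence $\gg = \gg'$. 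Finally, by Lemma~\ref{L:EquivIJ} it suffices to produce an $\III$-function obeying the sharp $\III$-law; the associated $\JJJ$-function (well defined by left-cancellativity) then obeys the sharp $\JJJ$-law, so that \eqref{E:RecGarGermI1} and~\eqref{E:RecGarGermI2} are obtained together.

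For the $\III$-function, the natural candidate is the head. Since $\SSS$ is a Garside family, Proposition~\ref{P:RecGarIII}, condition~\eqref{E:RecGarIII2}, provides a map $\HH : \SSS\CCC \to \SSS$ obeying the sharp $\HHH$-law and~\eqref{E:RecGarIII0}. For $(\gg_1, \gg_2)$ in~$\Seq\SSS2$ the product $\gg_1\gg_2$ is defined in~$\CCC$ and lies in~$\SSS\CCC$ (as $\gg_1 \in \SSS$ left-divides it), so I set $\II(\gg_1, \gg_2) = \HH(\gg_1\gg_2)$. I then check that $\II(\gg_1, \gg_2)$ lies in~$\IIIS(\gg_1, \gg_2)$. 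First, $\gg_1 \dive \HH(\gg_1\gg_2)$: by~\eqref{E:RecGarIII0}\ITEM3 we have $\gg_1 \eqir \HH(\gg_1)$, and applying~\eqref{E:RecGarIII0}\ITEM2 to $\gg_1 \dive \gg_1\gg_2$ gives $\HH(\gg_1) \dive \HH(\gg_1\gg_2)$, whence $\gg_1 \dive \HH(\gg_1\gg_2)$. Write $\HH(\gg_1\gg_2) = \gg_1\gg$ in~$\CCC$; since $\gg$ right-divides the element $\HH(\gg_1\gg_2)$ of~$\SSS$ and $\SSS$ is closed under right-divisor, $\gg$ lies in~$\SSS$, so $\gg_1 \OP \gg$ is defined and equals $\HH(\gg_1\gg_2)$. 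Finally $\HH(\gg_1\gg_2) \dive \gg_1\gg_2$ by~\eqref{E:RecGarIII0}\ITEM1 gives $\gg_1\gg \dive \gg_1\gg_2$, and left-cancelling $\gg_1$ yields $\gg \dive \gg_2$ in~$\CCC$, that is, $\gg \diveS \gg_2$ by Lemma~\ref{L:LocalDiv}\ITEM1. Thus $\II(\gg_1, \gg_2) = \gg_1 \OP \gg$ with $\gg \in \SSS$ and $\gg \diveS \gg_2$, so $\II$ is an $\III$-function (and this $\gg$ is the value at $(\gg_1,\gg_2)$ of the associated $\JJJ$-function).

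It remains to verify the sharp $\III$-law. For $\seqqq{\gg_1}{\gg_2}{\gg_3}$ with $\gg_1 \OP \gg_2$ defined, the left-hand side $\II(\gg_1, \II(\gg_2, \gg_3))$ equals $\HH(\gg_1\HH(\gg_2\gg_3))$ and the right-hand side $\II(\gg_1 \OP \gg_2, \gg_3)$ equals $\HH((\gg_1\gg_2)\gg_3) = \HH(\gg_1\gg_2\gg_3)$. Taking $\ff = \gg_1$ and $\gg = \gg_2\gg_3$ in the sharp $\HHH$-law gives $\HH(\gg_1(\gg_2\gg_3)) = \HH(\gg_1\HH(\gg_2\gg_3))$, which is exactly the required equality. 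I expect the two genuinely load-bearing points to be, first, the passage from $\gg_1\gg \dive \gg_1\gg_2$ to $\gg \dive \gg_2$, where left-cancellativity of~$\CCC$ together with Lemma~\ref{L:LocalDiv} is essential (this is where the abstract germ relation $\diveS$ must be reconciled with divisibility in~$\Cat(\SSSg)$), and second, the bookkeeping of definedness in the sharp $\HHH$-law: one must confirm that both $\HH(\gg_1\gg_2\gg_3)$ and $\HH(\gg_1\HH(\gg_2\gg_3))$ are defined, which holds because each of $\gg_1\gg_2\gg_3$ and $\gg_1\HH(\gg_2\gg_3)$ is left-divisible by $\gg_1 \in \SSS$ and hence lies in~$\SSS\CCC$. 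Obtaining the sharp rather than merely $\eqir$-law is precisely what forces the use of the sharp head function of~\eqref{E:RecGarIII2} rather than an arbitrary $\SSS$-head.
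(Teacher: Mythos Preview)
Your proof is correct and follows essentially the same approach as the paper's own proof: both set $\CCC = \Cat(\SSSg)$, derive left-associativity and left-cancellativity from the ambient category, invoke the sharp head function~$\HH$ of~\eqref{E:RecGarIII2}, define $\II(\gg_1,\gg_2) = \HH(\gg_1\gg_2)$, verify it is an $\III$-function via $\gg_1 \dive \HH(\gg_1\gg_2) \dive \gg_1\gg_2$ together with closure of~$\SSS$ under right-divisor and Lemma~\ref{L:LocalDiv}, and deduce the sharp $\III$-law directly from the sharp $\HHH$-law; both then pass to~\eqref{E:RecGarGermI2} via Lemma~\ref{L:EquivIJ}. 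Your derivation of left-associativity from Lemma~\ref{L:Germ} is in fact cleaner than the paper's reference to Proposition~\ref{P:Embed}, and your explicit check of definedness for the $\HHH$-law is a welcome addition, but these are presentational refinements rather than a different route.
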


\begin{proof}
Assume that $\SSSg$ is a Garside germ. Let $\CCC = \Cat(\SSSg)$. By 
definition, 
$\SSS$ embeds in~$\CCC$, so, by Proposition~\ref{P:Embed}, $\SSSg$ must be 
left-associative. Next, by definition again, $\CCC$ is left-cancellative and, 
therefore, $\SSSg$ must be left-cancellative as, if $\ff, \gg, \gg'$ lie 
in~$\SSS$ and satisfy $\ff \OP \gg = \ff \OP \gg'$, then $\ff \gg = \ff \gg'$ 
holds in~$\CCC$, implying $\gg = \gg'$. 

Next, as $\SSS$ is a Garside family in~$\CCC$, it 
satisfies~\eqref{E:RecGarIII2}, so there exists~$\HH$ defined on~$\SSS\CCC$, 
hence in particular on~$\SSS^2$, satisfying the sharp $\HHH$-law. Then we 
define $\II : \Seq\SSS2 \to \SSS$ by $\II(\gg_1, \gg_2) = \HH(\gg_1 \gg_2)$.
 
First we claim that $\II$ is an $\III$-function for~$\SSSg$. Indeed, assume 
$\seqq{\gg_1}{\gg_2} \in \Seq\SSS2$. By definition, we have $\gg_1 \eqir 
\HH(\gg_1) \dive \HH(\gg_1 \gg_2) \dive \gg_1\gg_2$ in~$\CCC$, hence 
$\HH(\gg_1 \gg_2) = \gg_1 
\gg$ for some~$\gg$ satisfying $\gg \dive \gg_2$. As $\gg$ 
right-divides~$\HH(\gg_1 \gg_2)$, which lies in~$\SSS$, and, by 
Proposition~\ref{P:Embed}, $\SSS$ is closed under right-divisor, $\gg$ 
must lie in~$\SSS$. By Lemma~\ref{L:LocalDiv}, it follows that 
$\HH(\gg_1 \gg_2)$ lies in~$\IIIS(\gg_1, \gg_2)$.

Next, assume that $\seqqq{\gg_1}{\gg_2}{\gg_3}$ lies in~$\Seq\SSS3$ 
and $\gg = \gg_1 \OP \gg_2$ holds. Then the sharp $\HHH$-law gives 
$\HH(\gg_1\HH(\gg_2 
\gg_3)) = \HH(\gg_1(\gg_2 \gg_3)) = \HH(\gg \gg_3)$. This directly translates 
into $\II(\gg_1, \II(\gg_2, \gg_3)) = \II(\gg_1 \OP \gg_2, \gg_3)$, 
the 
expected instance of the sharp $\III$-law. So \eqref{E:RecGarGermI1} is 
satisfied. 

Finally, by Lemma~\ref{L:EquivIJ}, \eqref{E:RecGarGermI2} is satisfied as well.
\end{proof}

\section{Recognizing Garside germs}
\label{S:RecGerm}

We shall now establish two intrinsic characterizations of Garside germs, 
beginning with the one already stated in Proposition~\ref{P:RecGarGermI}.

\subsection{Using the $\JJJ$-law}
\label{SS:Rec}

The principle for establishing that the conditions of 
Proposition~\ref{P:RecGarGermI} imply that $\SSSg$ is a Garside germ is 
obvious, namely using the given $\III$- or $\JJJ$-function to construct a head 
function on~$\SSS^2$ and then using Proposition~\ref{P:RecGarIII}. However, 
the argument is more delicate, because we do not know \emph{a priori} that the 
category~$\Cat(\SSSg)$ is left-cancellative and, therefore, eligible for 
Proposition~\ref{P:RecGarIII}. So what we shall do is simultaneously 
constructing the head function and proving left-cancellativity. The main point 
for that is to be able to control not only the head~$\HH(\gg)$ of an 
element~$\gg$, but also its tail, defined to be the element~$\gg'$ satisfying 
$\gg = \HH(\gg) \gg'$. To perform the construction, using a $\JJJ$-function is 
more convenient than using an $\III$-function. Here is the key technical 
result. 

\begin{lemm}
\label{L:Main}
Assume that $\SSSg$ is a left-associative, left-cancellative germ and $\JJ$ is 
a $\JJJ$-function for~$\SSSg$ that satisfies the sharp $\JJJ$-law. Define 
functions 
$$\KK: \Seq\SSS2 \to \SSS, \qquad \HHu : \SSS^* \to \SSS, \qquad \TTu : 
\SSS^* \to \SSS^*$$ 
by $\gg_2 = \JJ(\gg_1, \gg_2) \OP \KK(\gg_1, \gg_2)$, $\HHu(\ew_\xx) = 
\id\xx$, $\TTu(\ew_\xx) = \ew_\xx$ and, for $\gg$ in~$\SSS$ and $\ww$ 
in~$\SSS^*$,
\begin{equation}
\HHu(\gg\pc\ww) = \gg \OP \JJ(\gg, \HHu(\ww)) \quad\mbox{and}\quad 
\TTu(\gg\pc\ww) = \KK(\gg, \HHu(\ww)) \pc \TTu(\ww).
\end{equation}
Then, for each~$\ww$ in~$\SSS^*$, we have
\begin{equation}
\label{E:Decomp}
\ww \equiv \HHu(\ww) \pc \TTu(\ww),
\end{equation}
and $\ww \equiv \ww'$ implies $\HHu(\ww) = \HHu(\ww')$ and $\TTu(\ww) 
\equiv \TTu(\ww')$.
\end{lemm}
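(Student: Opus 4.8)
The plan is to prove the two assertions in turn: first the decomposition \eqref{E:Decomp}, then the invariance of $\HHu$ and $\TTu$ under $\equiv$. For \eqref{E:Decomp} I would induct on the length of~$\ww$. The base case $\ww = \ew_\xx$ only needs $\seq{\id\xx} \equiv \ew_\xx$, which holds because $\id\xx$ evaluates to the identity at~$\xx$ in~$\Cat(\SSSg)$. For the inductive step, write $\ww = \gg \pc \ww'$ and set $\hh = \HHu(\ww')$; by induction $\ww' \equiv \hh \pc \TTu(\ww')$, so $\gg \pc \ww' \equiv \gg \pc \hh \pc \TTu(\ww')$. The definition of~$\KK$ gives $\hh = \JJ(\gg,\hh) \OP \KK(\gg,\hh)$, hence $\seq\hh \equiv \seqq{\JJ(\gg,\hh)}{\KK(\gg,\hh)}$, and since $\gg \OP \JJ(\gg,\hh)$ is defined (because $\JJ(\gg,\hh) \in \JJJS(\gg,\hh)$) the relation $\seqq\gg{\JJ(\gg,\hh)} \equiv \seq{\gg \OP \JJ(\gg,\hh)}$ applies. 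Chaining these, $\gg \pc \hh \equiv (\gg \OP \JJ(\gg,\hh)) \pc \KK(\gg,\hh)$, and appending $\TTu(\ww')$ gives $\ww \equiv \HHu(\ww)\pc\TTu(\ww)$.

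For the invariance statement, since $\equiv$ is the equivalence relation generated by the elementary moves~\eqref{E:Pair}, transitivity reduces the claim to the case where $\ww$ and $\ww'$ differ by a single such move (the assertion is symmetric in $\ww, \ww'$, so reversed moves cost nothing); I would then induct on the length of~$\ww$. If the move lies past the first entry, write $\ww = \gg \pc \uu$ and $\ww' = \gg \pc \uu'$ with $\uu \to \uu'$ a shorter move. By induction $\HHu(\uu) = \HHu(\uu')$ and $\TTu(\uu) \equiv \TTu(\uu')$, so the recursive formulas give $\HHu(\ww) = \gg \OP \JJ(\gg,\HHu(\uu)) = \HHu(\ww')$, while $\TTu(\ww) = \KK(\gg,\HHu(\uu)) \pc \TTu(\uu) \equiv \KK(\gg,\HHu(\uu')) \pc \TTu(\uu') = \TTu(\ww')$ because $\equiv$ is a congruence. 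This case is routine.

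The real content is the front move: $\ww = \gg_1 \pc \gg_2 \pc \vv$ and $\ww' = (\gg_1 \OP \gg_2) \pc \vv$ with $\gg_1 \OP \gg_2$ defined. Write $\hh_3 = \HHu(\vv)$ and $\hh_2 = \HHu(\gg_2\pc\vv) = \gg_2 \OP \JJ(\gg_2,\hh_3)$. Feeding the triple $(\gg_1,\gg_2,\hh_3)$ into the sharp $\JJJ$-law~\eqref{E:JLaw} yields $\JJ(\gg_1,\hh_2) = \gg_2 \OP \JJ(\gg_1 \OP \gg_2, \hh_3)$, so that $\HHu(\ww) = \gg_1 \OP \JJ(\gg_1,\hh_2) = \gg_1 \OP (\gg_2 \OP \JJ(\gg_1 \OP \gg_2,\hh_3))$; reassociating by~\eqref{E:Germ3} gives $(\gg_1 \OP \gg_2) \OP \JJ(\gg_1 \OP \gg_2,\hh_3) = \HHu(\ww')$, so the heads agree.

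I expect the tail equality $\TTu(\ww) \equiv \TTu(\ww')$ to be the main obstacle, as it requires showing the two leading tail entries collapse, namely $\KK(\gg_1,\hh_2) \OP \KK(\gg_2,\hh_3) = \KK(\gg_1 \OP \gg_2, \hh_3)$. I would obtain this in two cancellation steps. Substituting the sharp-law identity into $\hh_2 = \JJ(\gg_1,\hh_2) \OP \KK(\gg_1,\hh_2)$ and reassociating (using left-associativity~\eqref{E:LeftAss} to guarantee the inner product is defined, then~\eqref{E:Germ3}) rewrites $\gg_2 \OP \JJ(\gg_2,\hh_3)$ as $\gg_2 \OP (\JJ(\gg_1 \OP \gg_2,\hh_3) \OP \KK(\gg_1,\hh_2))$; left-cancelling~$\gg_2$ (germ left-cancellativity) gives $\JJ(\gg_2,\hh_3) = \JJ(\gg_1 \OP \gg_2,\hh_3) \OP \KK(\gg_1,\hh_2)$. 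Plugging this into $\hh_3 = \JJ(\gg_2,\hh_3) \OP \KK(\gg_2,\hh_3)$, reassociating once more, and comparing with $\hh_3 = \JJ(\gg_1 \OP \gg_2,\hh_3) \OP \KK(\gg_1 \OP \gg_2,\hh_3)$, then left-cancelling $\JJ(\gg_1 \OP \gg_2,\hh_3)$, yields the collapse. Since $\KK(\gg_1,\hh_2) \OP \KK(\gg_2,\hh_3)$ is thereby defined, the germ relation gives $\KK(\gg_1,\hh_2) \pc \KK(\gg_2,\hh_3) \equiv \seq{\KK(\gg_1 \OP \gg_2,\hh_3)}$, and appending $\TTu(\vv)$ finishes the case. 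The delicate points are bookkeeping of which $\OP$-products are guaranteed defined — exactly what~\eqref{E:LeftAss} supplies — and the repeated use of germ left-cancellativity to strip common left factors.
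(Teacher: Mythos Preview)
Your proof is correct and matches the paper's argument essentially step for step: the same induction for~\eqref{E:Decomp}, the same reduction of the invariance claim to the front-move case, the same application of the sharp $\JJJ$-law to obtain $\JJ(\gg_1,\hh_2) = \gg_2 \OP \JJ(\gg_1 \OP \gg_2,\hh_3)$, and the same two left-cancellations (first of~$\gg_2$, then of $\JJ(\gg_1 \OP \gg_2,\hh_3)$) to collapse the tail entries. Your explicit handling of the non-front move via induction on length is just a spelled-out version of what the paper compresses into ``owing to the inductive definitions of~$\equiv$, $\HHu$ and~$\TTu$''.
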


\begin{proof}
First, the definition of~$\KK$ makes sense and is unambiguous. Indeed, by 
definition, $\JJ(\gg_1, \gg_2) \diveS \gg_2$ holds, so there exists~$\gg$ 
in~$\SSS$ satisfying $\gg_2 = \JJ(\gg_1, \gg_2) \OP \gg$. Moreover, as $\SSSg$ 
 
is left-cancellative, the element~$\gg$ is unique.

As for proving~\eqref{E:Decomp}, we use induction on the length of~$\ww$. For 
$\ww = 
\ew_\xx$, we have $\ew_\xx \equiv \seq{\id\xx} \pc \ew_\xx$. Otherwise, for 
$\gg$ in~$\SSS$ and $\ww$ in~$\SSS^*$, we find
\begin{align*}
\gg \pc \ww 
&\equiv \gg \pc \HHu(\ww) \pc \TTu(\ww) 
&& \mbox{by induction hypothesis,}\\
&\equiv \gg \pc \JJ(\gg, \HHu(\ww)) \OP \KK(\gg, \HHu(\ww)) \pc 
\TTu(\ww)
&& \mbox{by definition of~$\KK$,}\\
&\equiv \gg \pc \JJ(\gg, \HHu(\ww)) \pc \KK(\gg, \HHu(\ww)) \pc 
\TTu(\ww)
&& \mbox{by definition of $\equiv$,}\\
&\equiv \gg \OP \JJ(\gg, \HHu(\ww)) \pc \KK(\gg, \HHu(\ww)) \pc 
\TTu(\ww)
&& \mbox{as $\JJ(\gg, \HHu(\ww))$ lies in~$\JJJS(\gg, 
\HHu(\ww))$,}\\
&= \HHu(\gg \pc \ww) \pc \TTu(\gg \pc \ww)
&& \mbox{by definition of $\HHu$ and~$\TTu$.}
\end{align*}

As for the compatibility of~$\HHu$ and~$\TTu$ with respect to~$\equiv$, 
owing to the inductive definitions of~$\equiv$, $\HHu$ and~$\TTu$, it is 
sufficient to establish the relations
\begin{equation}
\label{E:Comp}
\HHu(\gg \pc \ww) = \HHu(\gg_1 \pc \gg_2 \pc \ww) 
\quad\mbox{and}\quad
\TTu(\gg \pc \ww) \equiv \TTu(\gg_1 \pc \gg_2 \pc \ww)
\end{equation}
for $\gg_1 \OP \gg_2 = \gg$ and $\ww$ in~$\SSS^*$ such that $\gg \pc \ww$ is 
a path (see Figure~\ref{F:Main}). 
Now, applying  the sharp form of~\eqref{E:JLaw}  with $\gg_3 = \HHu(\ww)$ 
and then using the 
definition of~$\HHu(\gg_2 \pc \ww)$, we obtain 
\begin{equation}
\label{E:Comp2}
\gg_2 \OP \JJ(\gg, \HHu(\ww)) = \JJ(\gg_1, \HHu(\gg_2 \pc \ww)).
\end{equation}
Then the first relation of~\eqref{E:Comp} is satisfied since we can write 
\begin{align*}
\HHu(\gg \pc \ww)
&= (\gg_1 \OP \gg_2) \OP \JJ(\gg, \HHu(\ww))
&& \mbox{by definition of $\HHu$,}\\
&= \gg_1 \OP (\gg_2 \OP \JJ(\gg, \HHu(\ww)))
&& \mbox{by  left-associativity, }\\
&= \gg_1 \OP \JJ(\gg_1, \HHu(\gg_2 \pc \ww)) = \HHu(\gg_1 \pc \gg_2 
\pc \ww)
&& \mbox{by \eqref{E:Comp2} and the definition of $\HHu$}.
\end{align*}
 We turn to the second relation in~\eqref{E:Comp}. Applying the definition 
of~$\HHu$, we first find
\begin{align*}
\gg_2 \OP \JJ(\gg_2, \HHu(\ww))
&=\HHu(\gg_2 \pc \ww)\\
&= \JJ(\gg_1, \HHu(\gg_2 \pc \ww)) \OP \KK(\gg_1, \HHu(\gg_2 \pc \ww))
&& \mbox{by definition of $\KK$,}\\
&= (\gg_2 \OP \JJ(\gg, \HHu(\ww))) \OP \KK(\gg_1, \HHu(\gg_2 \pc \ww))
&& \mbox{by \eqref{E:Comp2},}\\
&= \gg_2 \OP (\JJ(\gg, \HHu(\ww)) \OP \KK(\gg_1, \HHu(\gg_2 \pc \ww)))
&& \mbox{by  left-associativity, }
\end{align*}
whence, as $\SSSg$ is a left-cancellative germ,
\begin{equation}
\label{E:Comp3}
\JJ(\gg_2, \HHu(\ww)) = \JJ(\gg, \HHu(\ww)) \OP \KK(\gg_1, \HHu(\gg_2 
\pc \ww)).
\end{equation}
We deduce
\begin{align*}
\JJ(\gg, \HHu(\ww)) &\OP \KK(\gg, \HHu(\ww))
=\HHu(\ww)
&& \mbox{by definition of $\KK$,}\\
&=\JJ(\gg_2, \HHu(\ww)) \OP \KK(\gg_2, \HHu(\ww))
&& \mbox{by definition of $\KK$,}\\
&=(\JJ(\gg, \HHu(\ww)) \OP \KK(\gg_1, \HHu(\gg_2 \pc \ww))) \OP 
\KK(\gg_2, \HHu(\ww))
&& \mbox{by \eqref{E:Comp3},}\\
&=\JJ(\gg, \HHu(\ww)) \OP (\KK(\gg_1, \HHu(\gg_2 \pc \ww)) \OP 
\KK(\gg_2, \HHu(\ww)))
&& \mbox{by  left-associativity.}
\end{align*}
As $\SSSg$ is a left-cancellative germ, we may left-cancel~$\JJ(\gg, 
\HHu(\ww))$, and we obtain
$$\KK(\gg, \HHu(\ww)) = \KK(\gg_1, \HHu(\gg_2 \pc \ww)) \OP 
\KK(\gg_2, 
\HHu(\ww)),$$
whence $\KK(\gg, \HHu(\ww)) \pc \TTu(\ww) \equiv \KK(\gg_1, \HHu(\gg_2 
\pc 
\ww)) \pc \KK(\gg_2, \HHu(\ww)) \pc \TTu(\ww)$. Owing to the definition 
of~$\TTu$, this is exactly the second relation 
in~\eqref{E:Comp}.
\end{proof}

\begin{figure}[htb]
\begin{picture}(120,26)(0,-4)
\pcline{->}(49,19)(1,1)
\put(17,4){\rotatebox{18}{\hbox{$\JJ(\gg, \HHu(\ww))$}}}
\pcline{->}(0,19)(0,1)
\put(0.5,14){$\HHu(\gg \pc \ww)$}
\pcline{->}(1,0)(49,0)
\tbput{$\KK(\gg, \HHu(\ww))$}
\pcline{->}(50,19)(50,1)
\put(50.5,14){$\HHu(\ww)$}
\pcline{->}(1,20)(49,20)
\taput{$\gg$}
\pcline{->}(66,0)(89,0)
\tbput{$\KK(\gg_1, \HHu(\gg_2 \!\pc\! \ww))$}
\pcline{->}(91,0)(114,0)
\tbput{$\KK(\gg_2, \HHu(\ww))$}
\pcline{->}(66,20)(89,20)
\taput{$\gg_1$}
\pcline{->}(91,20)(114,20)
\taput{$\gg_2$}
\pcline{->}(89,19)(66,1)
\put(70,1){\rotatebox{38}{\hbox{$\JJ(\gg_1,\! \HHu(\gg_2 \!\pc\! 
\ww))$}}}
\pcline{->}(114,19)(91,1)
\put(98,3){\rotatebox{38}{\hbox{$\JJ(\gg_2,\! \HHu(\ww))$}}}
\pcline{->}(65,19)(65,1)
\put(65.5,14){$\HHu(\gg_1 \!\pc\! \gg_2 \!\pc\! \ww)$}
\pcline{->}(90,19)(90,1)
\put(90.5,14){$\HHu(\gg_2 \!\pc\! \ww)$}
\pcline{->}(115,19)(115,1)
\put(115.5,14){$\HHu(\ww)$}
\end{picture}
\caption[]{\sf\smaller Proof of Lemma~\ref{L:Main}: attention! as long as 
the ambient category is not proved to be left-cancellative, the above diagrams 
should be taken with care.}
\label{F:Main}
\end{figure}

We can now complete the argument easily.

\begin{proof}[Proof of Proposition~\ref{P:RecGarGermI}]
 Owing to Lemmas~\ref{L:Necessary} and~\ref{L:EquivIJ}, it suffices to prove 
now that \eqref{E:RecGarGermI2} implies that $\SSSg$ is a Garside germ.

So assume that $\SSSg$ is a germ that is left-associative and 
left-cancellative, and $\JJ$ is a $\JJJ$-function on~$\SSSg$ that satisfies 
the 
sharp $\JJJ$-law. Let $\CCC = \Cat(\SSSg)$. As $\SSSg$ is left-associative, 
Proposition~\ref{P:Embed} implies that $\SSS$ embeds in~$\CCC$ and is a full 
family in~$\CCC$.

Now we appeal to the functions~$\KK$, $\HHu$, and~$\TTu$ of 
Lemma~\ref{L:Main}. First, for each~$\ww$ in~$\SSS^*$ and each~$\gg$ in~$\SSS$ 
such that the target of~$\gg$ is the source of~$\ww$, we have
\begin{equation}
\label{E:Axiom1}
\ww \equiv \JJ(\gg, \HHu(\ww)) \pc \TTu(\gg \pc \ww):
\end{equation}
indeed, we have $\gg \OP \JJ(\gg, \HHu(\ww)) = \HHu(\gg \pc \ww)$ 
and 
\begin{align*}
\ww 
&\equiv \HHu(\ww) \pc \TTu(\ww)
&& \mbox{by \eqref{E:Decomp},}\\
&= \JJ(\gg, \HHu(\ww)) \OP \KK(\gg, \HHu(\ww)) \pc \TTu(\ww)
&& \mbox{by definition of~$\KK$,}\\
&\equiv \JJ(\gg, \HHu(\ww)) \pc \KK(\gg, \HHu(\ww)) \pc \TTu(\ww)
&& \mbox{by definition of $\equiv$,}\\
&= \JJ(\gg, \HHu(\ww)) \pc \TTu(\gg \pc \ww).
&& \mbox{by definition of $\TTu(\gg \pc \ww)$.}
\end{align*}
Assume now $\gg \pc \ww \equiv \gg \pc \ww'$. First, Lemma~\ref{L:Main} 
implies $\HHu(\gg \pc \ww) = \HHu(\gg \pc \ww')$, that is, $\gg \OP 
\JJ(\gg, \HHu(\ww)) = \gg \OP \JJ(\gg, \HHu(\ww'))$ owing to the 
definition of~$\HHu$. As $\SSSg$ is a left-cancellative germ, we may 
left-cancel~$\gg$ and we deduce $\JJ(\gg, 
\HHu(\ww)) = \JJ(\gg, \HHu(\ww'))$. Then, applying~\eqref{E:Axiom1} 
twice and Lemma~\ref{L:Main} again, we find
$$\ww \equiv \JJ(\gg, \HHu(\ww)) \pc \TTu(\gg \pc \ww) \equiv \JJ(\gg, 
\HHu(\ww')) \pc \TTu(\gg \pc \ww') \equiv \ww',$$
which implies that $\CCC$ is left-cancellative. 

Next, Lemma~\ref{L:Main} shows that the function~$\HHu$ induces a well 
defined function of~$\CCC$ to~$\SSS$, say~$\HH$. Then \eqref{E:Decomp} implies 
that $\HH(\gg) \dive \gg$ holds for every~$\gg$ in~$\CCC$. On the other hand, 
assume that $\hh$ belongs to~$\SSS$, and that $\hh \dive \gg$ holds in~$\CCC$. 
This means that there exists~$\ww$ in~$\SSS^*$ such that $\hh \pc \ww$ 
represents~$\gg$. By construction, we have $\HHu(\hh \pc \ww) = \hh \OP 
\JJ(\hh, \HHu(\ww))$, which implies $\hh \dive \HH(\gg)$ in~$\CCC$. So 
$\HH(\gg)$ is an $\SSS$-head of~$\gg$ and, 
therefore, every element of~$\CCC$ admits an $\SSS$-head. 

Finally, by Proposition~\ref{P:Embed}, $\SSS$ is closed under right-divisor 
in~$\CCC$, which implies that $\SSSs$ is also closed under right-divisor: 
indeed, a right-divisor of an element of~$\CCCi$ must lie in~$\CCCi$, and, if 
$\ff$ right-divides~$\gg\ee$ with $\gg \in \SSS$ and $\ee \in \CCCi$, then 
$\ff\ee\inv$ right-divides~$\gg$, hence it belongs to~$\SSS$, and therefore 
$\ff$ belongs to~$\SSS\CCCi$, hence to~$\SSSs$. Therefore,  $\SSS$ 
satisfies~\eqref{E:RecGarII1} in~$\CCC$ hence, by 
Proposition~\ref{P:RecGarII}, it is a Garside family  in~$\CCC$, which, we 
recall, is~$\Cat(\SSSg)$. Hence $\SSSg$ is  a Garside germ.
\end{proof}

\subsection{Maximum $\III$-functions}
\label{SS:Max}

Continuing the investigation of Garside germs, we establish alternative 
characterizations of the latter involving maximality conditions. A point 
of interest is that such characterizations are automatically satisfied in 
convenient Noetherian contexts, leading to simplified versions of the criteria 
similar to  the results of Subsection~\ref{SS:Special}.

In Proposition~\ref{P:RecGarGermI}, we characterized Garside germs by the 
existence of an $\III$- or a $\JJJ$-function that satisfies the (sharp) 
$\III$-law or $\JJJ$-law. We now consider $\III$- or $\JJJ$-functions that 
satisfy maximality conditions.

\begin{defi}
\label{D:SUMaximum}
An $\III$-function (\resp.\ a $\JJJ$-function)~$\FF$ is called \emph{maximum} 
if, for all~$\gg_1, \gg_2$, every element~$\hh$ of~$\IIIS(\gg_1, \gg_2)$ 
(\resp.\ of $\JJJS(\gg_1, \gg_2)$) satisfies $\hh \diveS \FF(\gg_1, 
\gg_2)$.
\end{defi}

\begin{prop}
\label{P:RecGarGermII}
 A germ~$\SSSg$ is a Garside germ if and only if it satisfies any one of the 
following equivalent conditions: 
\begin{gather}
\label{E:RecGarGermII1Bis}
\BOX{$\SSSg$ is left-associative, left-cancellative, and, for 
every~$\seqq{\gg_1}{\gg_2}$ in~$\Seq\SSS2$, the family~$\IIIS(\gg_1, \gg_2)$ 
admits a $\diveS$-greatest element;}\\
\label{E:RecGarGermII1}
\BOX{$\SSSg$ is left-associative, left-cancellative, and admits a maximum 
$\III$-function;}\\
\label{E:RecGarGermII2Bis}
\BOX{$\SSSg$ is left-associative, left-cancellative, and for 
every~$\seqq{\gg_1}{\gg_2}$ in~$\Seq\SSS2$, the family~$\JJJS(\gg_1, \gg_2)$ 
admits a $\diveS$-greatest element.}\\
\label{E:RecGarGermII2}
\BOX{$\SSSg$ is left-associative, left-cancellative, and admits a maximum 
$\JJJ$-function.}
\end{gather}
\end{prop}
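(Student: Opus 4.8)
The plan is to establish the four conditions equivalent to one another and to being a Garside germ, using Proposition~\ref{P:RecGarGermI} as the bridge: once I know that a maximum $\III$-function gives rise to an $\III$-function obeying the \emph{sharp} $\III$-law, that proposition does the remaining work. Since left-associativity and left-cancellativity of the germ occur in all four conditions and in Proposition~\ref{P:RecGarGermI}, I treat them as a standing hypothesis and concentrate on the maximality statements.

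First I would dispose of the equivalences among the four conditions. The equivalence of~\eqref{E:RecGarGermII1Bis} and~\eqref{E:RecGarGermII1} is a restatement: a maximum $\III$-function is precisely a choice, for each $\seqq{\gg_1}{\gg_2}$, of a $\diveS$-greatest element of~$\IIIS(\gg_1,\gg_2)$, so the two conditions differ only by an appeal to the Axiom of Choice, already used elsewhere in the paper; similarly for~\eqref{E:RecGarGermII2Bis} and~\eqref{E:RecGarGermII2}. To connect the $\III$- and $\JJJ$-versions, I would use $\IIIS(\gg_1,\gg_2) = \gg_1 \OP \JJJS(\gg_1,\gg_2)$ together with Lemma~\ref{L:LocalDiv}\ITEM3: for a left-cancellative germ, $\gg \mapsto \gg_1 \OP \gg$ both preserves and reflects~$\diveS$, hence is an order isomorphism of~$\JJJS(\gg_1,\gg_2)$ onto~$\IIIS(\gg_1,\gg_2)$. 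Consequently one family has a $\diveS$-greatest element if and only if the other does, giving \eqref{E:RecGarGermII1Bis}$\Leftrightarrow$\eqref{E:RecGarGermII2Bis}.

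The core is the implication \eqref{E:RecGarGermII1}$\Rightarrow$``Garside germ''. Here I would show that any maximum $\III$-function~$\FF$ automatically obeys the $\III$-law. Fixing $\seqqq{\gg_1}{\gg_2}{\gg_3}$ with $\gg_1 \OP \gg_2$ defined, write $\FF(\gg_1 \OP \gg_2, \gg_3) = (\gg_1 \OP \gg_2)\OP\gg$ with $\gg \diveS \gg_3$; left-associativity~\eqref{E:LeftAss} guarantees $\gg_2 \OP \gg$ is defined, and \eqref{E:Germ3} rewrites the element as $\gg_1 \OP (\gg_2 \OP \gg)$, where $\gg_2 \OP \gg \in \IIIS(\gg_2,\gg_3)$, hence $\gg_2 \OP \gg \diveS \FF(\gg_2,\gg_3)$. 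This exhibits $\FF(\gg_1 \OP \gg_2, \gg_3)$ as a member of $\IIIS(\gg_1, \FF(\gg_2,\gg_3))$; a symmetric computation, using maximality of~$\FF$ on $\IIIS(\gg_1 \OP \gg_2, \gg_3)$ and transitivity of~$\diveS$ (Lemma~\ref{L:LocalDiv}\ITEM2), shows it to be a $\diveS$-greatest element of $\IIIS(\gg_1, \FF(\gg_2,\gg_3))$. As $\FF(\gg_1, \FF(\gg_2,\gg_3))$ is, by definition, another such greatest element, the two are $\eqirS$-equivalent by Lemma~\ref{L:LocalDiv}\ITEM3, which is exactly the $\III$-law. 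To pass to the sharp $\III$-law I would repeat the $\eqirS$-selector device from the proof of Proposition~\ref{P:RecGarIII}: replacing~$\FF$ by the function~$\FF_0$ whose value is the selector representative of the class of~$\FF(\gg_1,\gg_2)$ (the set $\IIIS(\gg_1,\gg_2)$ being closed under~$\eqirS$, $\FF_0$ is again a maximum $\III$-function), both sides of the $\III$-law now lie in the selector and, being $\eqirS$-equivalent, coincide. Proposition~\ref{P:RecGarGermI}\,\eqref{E:RecGarGermI1} then applies.

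For the converse, assuming $\SSSg$ is a Garside germ, Lemma~\ref{L:Necessary} already provides left-associativity and left-cancellativity, and inside $\CCC = \Cat(\SSSg)$ the family~$\SSS$ is a full Garside family. I would set $\FF(\gg_1,\gg_2) = \HH(\gg_1 \gg_2)$ for an $\SSS$-head function~$\HH$, defined on~$\SSS\CCC$ by Lemma~\ref{L:HeadExist}. Using left-cancellativity of~$\CCC$ and closure of~$\SSS$ under right-divisor, one identifies $\IIIS(\gg_1,\gg_2)$ with $\{\hh \in \SSS : \gg_1 \dive \hh \dive \gg_1 \gg_2\}$; since $\HH(\gg_1 \gg_2)$ is the $\dive$-greatest element of~$\SSS$ left-dividing~$\gg_1 \gg_2$ and is a right-multiple of~$\gg_1$, it is the $\diveS$-greatest element of~$\IIIS(\gg_1,\gg_2)$, so $\FF$ is a maximum $\III$-function and~\eqref{E:RecGarGermII1} holds. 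I expect the main obstacle to be the computation in the third paragraph: one must track carefully which $\OP$-products are defined—leaning on left-associativity rather than on any (as yet unavailable) cancellativity in~$\Cat(\SSSg)$—so that the greatest-element comparison inside $\IIIS(\gg_1, \FF(\gg_2,\gg_3))$ is legitimate.
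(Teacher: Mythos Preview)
Your proposal is correct and follows essentially the same route as the paper: Lemma~\ref{L:Necessary2} handles the pairwise equivalences and the converse direction via the head function just as you describe, while Lemmas~\ref{L:MaximumI}, \ref{L:Eqir}, and~\ref{L:Strong} carry the forward direction through the selector trick and Proposition~\ref{P:RecGarGermI}. The one point to watch is that your ``symmetric computation'' is not in fact symmetric: to place $\FF(\gg_1,\FF(\gg_2,\gg_3))$ inside $\IIIS(\gg_1\OP\gg_2,\gg_3)$ you must first observe $\gg_1\OP\gg_2 \diveS \FF(\gg_1,\FF(\gg_2,\gg_3))$ (from $\gg_2 \diveS \FF(\gg_2,\gg_3)$ and maximality) and only then left-cancel to extract the required $k \diveS \gg_3$---this extra step is precisely why the paper proves the $\JJJ$-law version first (Lemma~\ref{L:MaximumJ}) and derives Lemma~\ref{L:MaximumI} via Lemma~\ref{L:EquivIJ} rather than arguing directly as you do.
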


The next lemma establishes the equivalence of the conditions involving maximum  functions and greatest elements and proves that the conditions of Proposition~\ref{P:RecGarGermII} are satisfied in every Garside germ (this is easy).

\begin{lemm}
\label{L:Necessary2}
\ITEM1 For every germ, \eqref{E:RecGarGermII1Bis} is equivalent 
to~\eqref{E:RecGarGermII1}, and \eqref{E:RecGarGermII2Bis} is equivalent 
to~\eqref{E:RecGarGermII2}.

\ITEM2 Every Garside germ 
satisfies~\eqref{E:RecGarGermII1Bis}--\eqref{E:RecGarGermII2}.
\end{lemm}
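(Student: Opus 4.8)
The plan is to treat the two parts separately, the first being essentially a repackaging of the definition of a maximum function, the second relying on the head function already exploited in Lemma~\ref{L:Necessary}.

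For part~\ITEM1, I would start from the observation that, by Definition~\ref{D:SUMaximum}, a maximum $\III$-function~$\FF$ assigns to each $\seqq{\gg_1}{\gg_2}$ a value $\FF(\gg_1, \gg_2)$ that lies in~$\IIIS(\gg_1, \gg_2)$ and is $\diveS$-divided by every element of~$\IIIS(\gg_1, \gg_2)$; this is verbatim the assertion that $\FF(\gg_1, \gg_2)$ is a $\diveS$-greatest element of~$\IIIS(\gg_1, \gg_2)$. Hence, given a maximum $\III$-function, its values witness~\eqref{E:RecGarGermII1Bis}; conversely, given a $\diveS$-greatest element in each~$\IIIS(\gg_1, \gg_2)$, the Axiom of Choice provides a map selecting one such element for every pair, and that map is an $\III$-function that is maximum. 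This establishes \eqref{E:RecGarGermII1Bis}\,$\Leftrightarrow$\,\eqref{E:RecGarGermII1}, and the identical argument applied to~$\JJJS$ yields \eqref{E:RecGarGermII2Bis}\,$\Leftrightarrow$\,\eqref{E:RecGarGermII2}.

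For part~\ITEM2, let $\SSSg$ be a Garside germ and put $\CCC = \Cat(\SSSg)$. By Lemma~\ref{L:Necessary}, $\SSSg$ is left-associative and left-cancellative, so only the existence of greatest elements is at stake, and since $\SSSg$ is a Garside germ, $\SSS$ embeds in~$\CCC$. I would reuse the $\III$-function $\II(\gg_1, \gg_2) = \HH(\gg_1 \gg_2)$ constructed in the proof of Lemma~\ref{L:Necessary} from a head function~$\HH$ on~$\CCC$, which was already shown there to take values in~$\IIIS(\gg_1, \gg_2)$. The only point to check is that $\II$ is \emph{maximum}: if $\hh$ lies in~$\IIIS(\gg_1, \gg_2)$, then in~$\CCC$ we have $\hh \in \SSS$ and $\gg_1 \dive \hh \dive \gg_1 \gg_2$, so $\hh$ is an element of~$\SSS$ left-dividing~$\gg_1 \gg_2$; as $\HH(\gg_1 \gg_2)$ is an $\SSS$-head of~$\gg_1 \gg_2$, this forces $\hh \dive \HH(\gg_1 \gg_2)$ in~$\CCC$, which by Lemma~\ref{L:LocalDiv}\ITEM1 is exactly $\hh \diveS \II(\gg_1, \gg_2)$. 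Thus $\II$ is a maximum $\III$-function, so \eqref{E:RecGarGermII1} holds, and \eqref{E:RecGarGermII1Bis} follows by part~\ITEM1.

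To pass to the $\JJJ$-versions I would use the identity $\IIIS(\gg_1, \gg_2) = \gg_1 \OP \JJJS(\gg_1, \gg_2)$ together with the fact that, for $\gg, \gg'$ in~$\JJJS(\gg_1, \gg_2)$ the products $\gg_1 \OP \gg$ and $\gg_1 \OP \gg'$ are defined, so by Lemma~\ref{L:LocalDiv}\ITEM3 the map $\gg \mapsto \gg_1 \OP \gg$ both preserves and reflects~$\diveS$. Consequently a $\diveS$-greatest element of one family corresponds to one of the other, and the greatest element of~$\IIIS(\gg_1, \gg_2)$ obtained above yields a greatest element of~$\JJJS(\gg_1, \gg_2)$, giving~\eqref{E:RecGarGermII2Bis} and hence~\eqref{E:RecGarGermII2} by part~\ITEM1. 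No serious obstacle arises here; the only step demanding care is the translation between divisibility in~$\CCC$ and the local relation~$\diveS$, for which Lemma~\ref{L:LocalDiv} must be invoked, and it is precisely the left-cancellativity of the germ that makes the $\JJJ$-transfer reversible.
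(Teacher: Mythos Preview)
Your proposal is correct and follows essentially the same approach as the paper's proof: part~\ITEM1 is handled identically via the definition of a maximum function plus the Axiom of Choice, and part~\ITEM2 reuses the head-based $\III$-function $\II(\gg_1,\gg_2)=\HH(\gg_1\gg_2)$ from Lemma~\ref{L:Necessary} and checks maximality by the $\SSS$-head property. The only cosmetic difference is that for the $\JJJ$-version the paper simply says ``the argument is similar for~$\JJ$'', whereas you derive it from the $\III$-case via the bijection $\gg\mapsto\gg_1\OP\gg$ and Lemma~\ref{L:LocalDiv}\ITEM3; both routes are equally short and valid.
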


\begin{proof}
\ITEM1 By definition, an $\III$-function~$\FF$ on~$\Seq\SSS2$ is maximum 
if and only if, for every~$\seqq{\gg_1}{\gg_2}$ in~$\Seq\SSS2$, the 
value~$\FF(\gg_1, \gg_2)$ is a $\diveS$-greatest element in~$\IIIS(\gg_1, 
\gg_2)$. So \eqref{E:RecGarGermII1} directly 
implies~\eqref{E:RecGarGermII1Bis}. Conversely, if \eqref{E:RecGarGermII1Bis} 
is satisfied, we obtain a maximum $\III$-function by picking, possibly using 
the Axiom of Choice, a $\diveS$-greatest element in~$\IIIS(\gg_1, \gg_2)$ for 
each~$\seqq{\gg_1}{\gg_2}$. So \eqref{E:RecGarGermII1Bis} 
implies~\eqref{E:RecGarGermII1}. The argument is similar 
for~\eqref{E:RecGarGermII2Bis} and~\eqref{E:RecGarGermII2} mutatis mutandis.

\ITEM2 Assume that $\SSSg$ is a Garside germ. By Lemma~\ref{L:Necessary}, 
$\SSSg$ is 
left-associative and left-cancellative. For every~$\gg$ in~$\Cat(\SSSg)$, let 
$\HH(\gg)$ be an $\SSS$-head of~$\gg$: by Proposition~\ref{P:RecGarII}, such a head always exists as $\SSS$ is 
full in~$\Cat(\SSSg)$ and includes~$\Id\SSS$. Now define $\II, \JJ : \Seq\SSS2 
\to \SSS$ by $\II(\gg_1, \gg_2) = \gg_1 \OP \JJ(\gg_1, \gg_2) = \HH(\gg_1 
\gg_2)$. Then, as in the proof of Lemma~\ref{L:Necessary}, $\II$ is a 
$\III$-function and $\JJ$ is a $\JJJ$-function for~$\SSSg$. 

Moreover, assume that $\seqq{\gg_1}{\gg_2}$ lies in~$\Seq\SSS2$ and we have 
$\hh = \gg_1 \OP \gg$ with $\gg \dive \gg_2$. Then we have $\hh \in \SSS$ and 
$\hh \dive \gg_1 \gg_2$, whence $\hh \dive \HH(\gg_1 \gg_2)$, that is, $\hh 
\dive \II(\gg_1, \gg_2)$, since $\HH(\gg_1 \gg_2)$ is an $\SSS$-head of~$\gg_1 
\gg_2$. Hence $\II$ is a maximum $\III$-function for~$\SSSg$. The argument is 
similar for~$\JJ$.
\end{proof}
 
We shall prove the converse implications by using 
Proposition~\ref{P:RecGarGermI}. The main observation is that a maximum 
$\JJJ$-function necessarily satisfies the $\JJJ$-law.

\begin{lemm}
\label{L:MaximumJ}
Assume that $\SSSg$ is a germ that is left-associat\-ive and 
left-cancellat\-ive, and $\JJ$ is a maximum $\JJJ$-function for~$\SSSg$. Then 
$\JJ$ satisfies the $\JJJ$-law.
\end{lemm}

\begin{proof}
Assume $\seqqq{\gg_1}{\gg_2}{\gg_3} \in \Seq\SSS3$ and $\gg_1 \OP \gg_2$ is 
defined. 
Set $\hh =\nobreak \JJ(\gg_1, \gg_2 \OP \JJ(\gg_2, \gg_3))$ and $\hh' = \gg_2 
\OP \ff'$ with $\ff' = \JJ(\gg_1 \OP \gg_2, \gg_3)$.  Our aim is to prove $\hh 
\eqir \hh'$. 

First, $\gg_1 \OP \gg_2$ is defined and $\gg_2 \diveS \gg_2 \OP \JJ(\gg_2, 
\gg_3)$ is true, so, by maximality of $\JJ(\gg_1, \gg_2 \OP \JJ(\gg_2, 
\gg_3))$, we must have $\gg_2 \diveS \JJ(\gg_1, \gg_2 \OP \JJ(\gg_2, \gg_3))$, 
that is, $\gg_2 \diveS \hh$. Write $\hh = \gg_2 \OP \ff$. By assumption, $\hh$ 
belongs to $\JJJS(\gg_1, \gg_2 \OP \JJ(\gg_2, \gg_3))$, hence we have 
$\hh \diveS \gg_2 \OP \JJ(\gg_2, \gg_3)$, that is $\gg_2 \OP \ff \diveS \gg_2 
\OP \JJ(\gg_2, \gg_3)$, which implies $\ff \diveS \JJ(\gg_2, \gg_3)$ as 
$\SSSg$ 
is left-cancellative, whence in turn $\ff \diveS \gg_3$ since $\JJ(\gg_2, 
\gg_3)$ belongs to $\JJJS(\gg_2, \gg_3)$. Now, by assumption, $\gg_1 \OP 
\hh$, that is, $\gg_1 \OP (\gg_2 \OP \ff)$, is defined, and so is $\gg_1 \OP 
\gg_2$ by assumption. Hence $(\gg_1 \OP \gg_2) \OP \ff$ is defined as well, 
and $\ff \diveS \gg_3$ holds. By maximality of $\JJ(\gg_1 \OP \gg_2, \gg_3)$, 
we deduce $\ff \diveS \ff'$, whence 
$\hh \diveS \gg_2 \OP \ff' = \hh'$.

For the other direction, the definition of~$\ff'$ implies that $(\gg_1 \OP 
\gg_2) \OP \ff'$ is defined and $\ff' \diveS \gg_3$ holds. By 
left-associativity, the first relation implies that 
$\gg_1 \OP (\gg_2 \OP \ff')$, that is, $\gg_1 \OP \hh'$, is defined. On the 
other hand, $\gg_2 \OP \ff'$ is defined by assumption and $\ff' \diveS \gg_3$ 
holds, so the maximality of~$\JJ(\gg_2, \gg_3)$ implies $\hh' \diveS \gg_2 \OP 
\JJ(\gg_2, \gg_3)$. Then the maximality of $\JJ(\gg_1, \gg_2 \OP \JJ(\gg_2, 
\gg_3))$ implies $\hh' \diveS \JJ(\gg_1, \gg_2 \OP \JJ(\gg_2, \gg_3))$, that 
is, $\hh' \diveS \hh$. 
So $\hh 
\eqir \hh'$ is satisfied, the desired instance of the $\JJJ$-law.
\end{proof}

Not surprisingly, we have a similar property for a maximum $\II$-function with 
respect to the $\III$-law.

\begin{lemm}
\label{L:MaximumI}
Assume that $\SSSg$ is a germ that is left-associat\-ive and 
left-cancellat\-ive, and $\II$ is a maximum $\III$-function for~$\SSSg$. Then 
$\II$ satisfies the $\III$-law.
\end{lemm}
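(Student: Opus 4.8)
The plan is to reduce the statement to its $\JJJ$-counterpart, Lemma~\ref{L:MaximumJ}, through the dictionary between $\III$- and $\JJJ$-functions established in Lemma~\ref{L:EquivIJ}. Given the maximum $\III$-function~$\II$, I would first produce the associated map~$\JJ : \Seq\SSS2 \to \SSS$: since $\II(\gg_1, \gg_2)$ lies in~$\IIIS(\gg_1, \gg_2)$, there is by definition some $\gg$ in~$\SSS$ with $\II(\gg_1, \gg_2) = \gg_1 \OP \gg$ and $\gg \diveS \gg_2$, and left-cancellativity of the germ makes this~$\gg$ unique; setting $\JJ(\gg_1, \gg_2) = \gg$ gives $\II(\gg_1, \gg_2) = \gg_1 \OP \JJ(\gg_1, \gg_2)$ by construction. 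By Lemma~\ref{L:EquivIJ}\ITEM1, $\JJ$ is then a $\JJJ$-function for~$\SSSg$.

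The heart of the argument is to check that $\JJ$ is again \emph{maximum}. Fix $\seqq{\gg_1}{\gg_2}$ in~$\Seq\SSS2$ and let $\gg$ be any element of~$\JJJS(\gg_1, \gg_2)$, that is, $\gg_1 \OP \gg$ is defined and $\gg \diveS \gg_2$ holds. Then $\gg_1 \OP \gg$ belongs to~$\IIIS(\gg_1, \gg_2)$, so the maximality of~$\II$ yields $\gg_1 \OP \gg \diveS \II(\gg_1, \gg_2) = \gg_1 \OP \JJ(\gg_1, \gg_2)$. Applying the left-cancellativity clause of Lemma~\ref{L:LocalDiv}\ITEM3, whose hypotheses are met since both $\gg_1 \OP \gg$ and $\gg_1 \OP \JJ(\gg_1, \gg_2)$ are defined, this is equivalent to $\gg \diveS \JJ(\gg_1, \gg_2)$, which is exactly the maximality of~$\JJ$. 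With this in hand, Lemma~\ref{L:MaximumJ} shows that $\JJ$ satisfies the $\JJJ$-law, and Lemma~\ref{L:EquivIJ}\ITEM2 transfers this back to the conclusion that $\II$ satisfies the $\III$-law.

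The only delicate point is the maximality transfer, where one must pass from a divisibility relation between the products $\gg_1 \OP \gg$ and $\gg_1 \OP \JJ(\gg_1, \gg_2)$ to one between the tails~$\gg$ and~$\JJ(\gg_1, \gg_2)$; this is precisely the equivalence $\hh \OP \ff \diveS \hh \OP \gg \Leftrightarrow \ff \diveS \gg$ supplied by Lemma~\ref{L:LocalDiv}\ITEM3, and it is the one spot where left-cancellativity of the germ is genuinely used. Everything else is bookkeeping through the already-established correspondence. Alternatively, one could mimic the direct proof of Lemma~\ref{L:MaximumJ} verbatim, interchanging the roles of~$\III$ and~$\JJJ$, but routing through Lemmas~\ref{L:EquivIJ} and~\ref{L:MaximumJ} avoids repeating that computation.
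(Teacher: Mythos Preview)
Your proof is correct and follows essentially the same route as the paper: define the associated $\JJJ$-function~$\JJ$ via $\II(\gg_1,\gg_2)=\gg_1\OP\JJ(\gg_1,\gg_2)$, show $\JJ$ is maximum using Lemma~\ref{L:LocalDiv}, apply Lemma~\ref{L:MaximumJ}, and transfer back via Lemma~\ref{L:EquivIJ}. The paper likewise remarks that one could instead mimic the direct argument of Lemma~\ref{L:MaximumJ} but opts for the transfer route.
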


\begin{proof}
One could mimic the argument used for Lemma~\ref{L:MaximumJ}, but the 
exposition is less convenient, and we shall instead derive the result from 
Lemma~\ref{L:MaximumJ}.

So,  assume that $\II$ is a maximum $\III$-function for~$\SSSg$ and  let $\JJ 
: 
\Seq\SSS2 \to \SSS$ be defined by $\II(\gg_1, \gg_2) = \gg_1 \OP \JJ(\gg_1, 
\gg_2)$.  By Lemma~\ref{L:EquivIJ}, $\JJ$ is a $\JJJ$-function for~$\SSSg$.  
Moreover, the assumption that $\II$ is maximum implies that $\JJ$ is maximum 
too. Indeed, assume $\hh \in \JJJS(\gg_1, \gg_2)$. Then $\gg_1 \OP \hh$ 
is defined and belongs to~$\IIIS(\gg_1, \gg_2)$, hence $\gg_1 \OP \hh 
\diveS \II(\gg_1, \gg_2)$, that is, $\gg_1 \OP \hh \diveS \gg_1 \OP \JJ(\gg_1, 
\gg_2)$, whence $\hh \diveS \JJ(\gg_1, \gg_2)$ by Lemma~\ref{L:LocalDiv}. Then, by 
Lemma~\ref{L:MaximumJ}, $\JJ$ satisfies the $\JJJ$-law.  By 
Lemma~\ref{L:EquivIJ} again, this in turn implies that $\II$ satisfies the 
$\III$-law. 
\end{proof}

We shall now manage to go from a function obeying the $\III$-law to one 
obeying the sharp $\III$-law, that is, force equality instead of 
$\eqirS$-equivalence.

\begin{lemm}
\label{L:Eqir}
Assume that $\SSSg$ is a left-associative and left-cancellative germ, and 
$\II$ 
is a maximum $\III$-function for~$\SSSg$. Then every function $\II' :  
\Seq\SSS2 \to \SSS$ satisfying $\II'(\gg_1, \gg_2) \eqirS \II(\gg_1, \gg_2)$ 
for all~$\gg_1, \gg_2$ is a maximum $\III$-function for~$\SSSg$.
\end{lemm}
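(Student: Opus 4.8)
The plan is to verify directly the two clauses defining a maximum $\III$-function for~$\II'$: first that $\II'$ is an $\III$-function, that is, $\II'(\gg_1, \gg_2)$ lies in~$\IIIS(\gg_1, \gg_2)$ for every pair~$(\gg_1, \gg_2)$; and second that it is maximum, that is, every element of~$\IIIS(\gg_1, \gg_2)$ satisfies $\hh \diveS \II'(\gg_1, \gg_2)$. Throughout I would rely on Lemma~\ref{L:LocalDiv}: since $\SSSg$ is left-cancellative, the relation~$\eqirS$ is symmetric and equivalent to the conjunction of the two corresponding~$\diveS$ relations, and $\diveS$ is transitive.

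The maximality clause is immediate. Fix $(\gg_1, \gg_2)$ and let $\hh$ lie in~$\IIIS(\gg_1, \gg_2)$. As $\II$ is maximum, $\hh \diveS \II(\gg_1, \gg_2)$ holds. By hypothesis $\II'(\gg_1, \gg_2) \eqirS \II(\gg_1, \gg_2)$, so $\eqirS$ being symmetric gives $\II(\gg_1, \gg_2) \diveS \II'(\gg_1, \gg_2)$, and transitivity of~$\diveS$ then yields $\hh \diveS \II'(\gg_1, \gg_2)$. Hence $\II'(\gg_1, \gg_2)$ dominates~$\IIIS(\gg_1, \gg_2)$.

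The first clause is the only step needing care. Since $\II$ is an $\III$-function, we have $\II(\gg_1, \gg_2) = \gg_1 \OP \gg$ for some~$\gg$ in~$\SSS$ with $\gg \diveS \gg_2$. Using the symmetry of~$\eqirS$, I would write $\II'(\gg_1, \gg_2) = \II(\gg_1, \gg_2) \OP \ee = (\gg_1 \OP \gg) \OP \ee$ for some invertible~$\ee$. As this product is defined, left-associativity~\eqref{E:LeftAss} guarantees that $\gg \OP \ee$ is defined, and then~\eqref{E:Germ3} gives $\II'(\gg_1, \gg_2) = \gg_1 \OP (\gg \OP \ee)$. Setting $\gg'' = \gg \OP \ee$, the invertibility of~$\ee$ gives $\gg \eqirS \gg''$, hence $\gg'' \diveS \gg$, so $\gg'' \diveS \gg_2$ follows by transitivity. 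Thus $\II'(\gg_1, \gg_2) = \gg_1 \OP \gg''$ with $\gg'' \diveS \gg_2$, which is exactly the condition $\II'(\gg_1, \gg_2) \in \IIIS(\gg_1, \gg_2)$.

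I expect the only genuine obstacle to be the bookkeeping in this last paragraph: one must invoke left-associativity to know that $\gg \OP \ee$ is defined \emph{before} reassociating the triple product, rather than manipulating the partial products formally. Once that is observed, everything reduces to the transitivity of~$\diveS$ and the symmetry of~$\eqirS$ furnished by Lemma~\ref{L:LocalDiv}.
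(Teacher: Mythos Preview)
Your argument is correct and follows essentially the same route as the paper's proof: verify membership in~$\IIIS(\gg_1,\gg_2)$ and then maximality, using left-associativity to reassociate $(\gg_1\OP\gg)\OP\ee$ and Lemma~\ref{L:LocalDiv} for the order-theoretic facts. The only cosmetic difference is that where you invoke transitivity of~$\diveS$ to conclude $\gg''\diveS\gg_2$ from $\gg''\diveS\gg\diveS\gg_2$, the paper instead writes out the explicit chain $\gg_2=(\gg\OP\ee)\OP(\ee^{-1}\OP\ff)$; your version is marginally cleaner.
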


\begin{proof}
 Assume that $\II'(\gg_1, \gg_2) \eqirS \II(\gg_1, \gg_2)$ holds for 
all~$\gg_1, \gg_2$. First $\II'$ must be an $\III$-function for~$\SSSg$. 
Indeed, let $\seqq{\gg_1}{\gg_2}$ belong to~$\Seq\SSS2$.  We have $\gg_1 
\diveS  \II(\gg_1, \gg_2) \diveS \II'(\gg_1, \gg_2)$, whence $\gg_1 \diveS 
\II'(\gg_1,  \gg_2)$. Write $\II(\gg_1, \gg_2) = \gg_1 \OP \gg$ and 
$\II'(\gg_1, \gg_2) =  \gg_1 \OP \gg'$. By definition, we have $\gg \diveS 
\gg_2$, that is, $\gg_2 =  \gg \OP \ff$ for some~$\ff$, and, by assumption, 
$\gg' = \gg \OP \ee$ for  some invertible element~$\ee$ of~$\SSS$. We find 
$$\gg_2 = (\gg \OP (\ee \OP \ee\inv)) \OP \ff = ((\gg \OP \ee) \OP \ee\inv)  
\OP \ff = (\gg \OP \ee) \OP (\ee\inv \OP \ff),$$
whence $\gg' \diveS \gg_2$: the second equality comes from~\eqref{E:Germ3}, 
and the last one from the assumption that $\SSSg$ is left-associative. Hence  
$\II'(\gg_1, \gg_2)$ belongs to~$\IIIS(\gg_1, \gg_2)$. 

Now assume $\hh = \gg_1 \OP \gg$ with $\gg \dive \gg_2$. Then we have $\hh  
\diveS \II(\gg_1, \gg_2)$ by assumption, hence $\hh \diveS \II'(\gg_1, \gg_2)$ 
 by transitivity of~$\diveS$. So $\II'$ is a maximum $\III$-function  
for~$\SSSg$.
\end{proof}

\begin{lemm}
\label{L:Strong}
Assume that $\SSSg$ is a germ that is left-associat\-ive, left-cancellat\-ive, 
 
and admits a maximum $\III$-function. Then $\SSSg$ admits a maximum 
$\III$-function that satisfies the sharp $\III$-law.
\end{lemm}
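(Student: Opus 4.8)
The plan is to mimic the $\eqir$-selector trick used in the proof of Proposition~\ref{P:RecGarIII}, now carried out at the level of the germ. Starting from a maximum $\III$-function $\II$, I would first invoke Lemma~\ref{L:MaximumI} to learn that $\II$ already obeys the (non-sharp) $\III$-law~\eqref{E:ILaw}, so that the only thing missing is to upgrade the $\eqir$-equivalence appearing there into an honest equality.

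The key device is to fix once and for all a representative in each equivalence class. First I would record that, on $\SSS$, the relations $\eqir$ and $\eqirS$ coincide: by Lemma~\ref{L:LocalDiv}, using that $\SSSg$ is left-associative and left-cancellative, $\ff \eqirS \gg$ amounts to $\ff \diveS \gg$ together with $\gg \diveS \ff$, hence to $\ff \dive \gg$ together with $\gg \dive \ff$ in $\Cat(\SSSg)$, hence to $\ff \eqir \gg$ by Lemma~\ref{L:DivOrder}. Using the Axiom of Choice I would then pick an $\eqirS$-selector $\SSS_0 \subseteq \SSS$ meeting every $\eqirS$-class in exactly one element, and define $\II_0(\gg_1, \gg_2)$ to be the unique element of $\SSS_0$ that is $\eqirS$-equivalent to $\II(\gg_1, \gg_2)$.

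Since $\II_0(\gg_1, \gg_2) \eqirS \II(\gg_1, \gg_2)$ holds for all $\gg_1, \gg_2$, Lemma~\ref{L:Eqir} guarantees that $\II_0$ is again a maximum $\III$-function for $\SSSg$, so Lemma~\ref{L:MaximumI} applies to $\II_0$ as well and yields the $\III$-law for it. The final step is the punchline: in the identity
$$\II_0(\gg_1, \II_0(\gg_2, \gg_3)) \eqir \II_0(\gg_1 \OP \gg_2, \gg_3)$$
both sides are values of $\II_0$, hence lie in $\SSS_0$; being $\eqir$-equivalent, they are $\eqirS$-equivalent by the coincidence noted above, and, living in a family that meets each $\eqirS$-class exactly once, they must be equal. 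This is precisely the sharp $\III$-law for $\II_0$.

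I do not expect a serious obstacle here, since the heavy lifting is already packaged in Lemmas~\ref{L:MaximumI} and~\ref{L:Eqir}; the two points that need care are the bookkeeping that $\eqir$ and $\eqirS$ agree on $\SSS$ (so that ``lying in the same $\eqirS$-class'' really forces equality of the two sides of the $\III$-law) and the simple but essential observation that both of those sides genuinely land in the selector $\SSS_0$.
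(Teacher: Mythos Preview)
Your proposal is correct and is essentially identical to the paper's own proof: pick an $\eqirS$-selector, replace $\II$ by its selector-normalized version, invoke Lemma~\ref{L:Eqir} and Lemma~\ref{L:MaximumI}, and observe that two values in the selector that are $\eqir$-equivalent must coincide. One small remark: your appeal to Lemma~\ref{L:DivOrder} for the coincidence of $\eqir$ and $\eqirS$ presupposes that $\Cat(\SSSg)$ is left-cancellative, which has not yet been established at this point; the paper avoids this by tacitly reading $\eqir$ as $\eqirS$ throughout the germ sections (note that Lemma~\ref{L:MaximumJ} actually proves $\diveS$ in both directions, i.e.\ $\eqirS$), so no separate comparison of the two relations is needed.
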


\begin{proof}
Let $\II$ be a maximum $\III$-function for~$\SSSg$, and let~$\SSS'$ be an 
$\eqirS$-selector on~$\SSS$. For $\seqq{\gg_1}{\gg_2}$ in~$\Seq\SSS2$, define 
$\II'(\gg_1, \gg_2)$ to be the unique element of~$\SSS'$ that is  
$\eqir$-equivalent to~$\II(\gg_1, \gg_2)$. Then, by construction, $\II'$ is a 
function from~$\Seq\SSS2$ to~$\SSS$ satisfying $\II'(\gg_1, \gg_2) \eqirS  
\II(\gg_1, \gg_2)$ for every $\seqq{\gg_1}{\gg_2}$ in~$\Seq\SSS2$, hence, by 
Lemma~\ref{L:Eqir}, $\II'$ is a maximum $\III$-function for~$\SSS$. By  
Lemma~\ref{L:MaximumI}, $\II'$ satisfies the $\III$-law, that is, for every 
$\seqqq{\gg_1}{\gg_2}{\gg_3}$ in~$\Seq\SSS3$ such that $\gg_1 \OP \gg_2$ is 
defined, we have
$$\II'(\gg_1, \II'(\gg_2, \gg_3)) \eqir \II'(\gg_1 \OP \gg_2, \gg_3).$$
Now, by definition of a selector, two elements in the image of the 
function~$\II'$ must be equal whenever they are $\eqir$-equivalent. So $\II'$ 
satisfies the sharp $\III$-law.
\end{proof}

We can now complete the proof of Proposition~\ref{P:RecGarGermII}.

\begin{proof}[Proof of Proposition~\ref{P:RecGarGermII}]
Owing to Lemma~\ref{L:Necessary2}, it remains to prove that each of 
\eqref{E:RecGarGermII1} and \eqref{E:RecGarGermII2} implies that $\SSSg$ is a 
Garside germ.

Assume that $\SSSg$ is a germ that is left-associative, left-cancellative, and 
admits a maximum $\III$-function. Then, by Lemma~\ref{L:Strong}, $\SSSg$ 
admits 
an $\III$-function~$\II$ that satisfies the sharp $\III$-law. Therefore, by 
Proposition~\ref{P:RecGarGermI}, $\SSSg$ is a Garside germ. So 
\eqref{E:RecGarGermII1} implies that $\SSSg$ is a Garside germ.

Finally, assume that $\SSSg$ is a germ that is left-associative, 
left-cancellative, and admits a maximum $\JJJ$-function~$\JJ$. As already seen 
in the proof of Lemma~\ref{L:MaximumI}, the function~$\II$ defined 
on~$\Seq\SSS2$ by $\II(\gg_1, \gg_2) = \gg_1 \OP  \JJ(\gg_1, \gg_2)$ is a 
maximum $\III$-function for~$\SSSg$. So \eqref{E:RecGarGermII2} 
implies~\eqref{E:RecGarGermII1} and, therefore, it implies that $\SSSg$ is a 
Garside germ.
\end{proof}

\subsection{Noetherian germs}
\label{SS:NoethGerm}

Noetherianity assumptions guarantee the existence of maximum (or minimal) 
elements with respect to left- or right-divisibility. In the context of germs, 
we shall use such assumptions to guarantee the existence of a maximum 
$\JJJ$-function under weak assumptions.

\begin{defi}
\label{D:RNoeth}
A germ~$\SSSg$ is said to be \emph{left-Noetherian} (\resp.\ 
\emph{right-Noether\-ian}) if every nonempty subfamily of~$\SSS$ has a 
least element with respect to the local left-divisibility relation~$\diveS$ 
(\resp.\ the local right-divisibility relation). The germ is called 
\emph{Noetherian} if it is both left- and right-Noetherian. 
\end{defi}

Adapting the proof of Lemma~\ref{L:Noeth}, one easily sees that a germ~$\SSSg$ that is left-associative and left-cancellative is right-Noetherian if and only if, using $\ff \divS$ for ``$\ff \diveS \gg$ and $\ff \noteqir_{\HS{-1.3}\SSS} \gg$'', there exists no infinite 
bounded $\divS$-increasing sequence in~$\SSS$, that is, there is no sequence $\ff_0, \ff_1, ...$ satisfying $\ff_0 \divS \ff_1 \divS ... 
\diveS \gg$ in~$\SSS$. 

 A subfamily~$\XXX$ of a category~$\CCC$ is said to \emph{admit common 
right-multiples} if any two elements of~$\XXX$ that share the same source 
admit a common right-multiple lying in~$\XXX$. The principle for deducing the 
existence of maximal elements from Noetherianity is as follows.

\begin{lemm}
\label{L:Maximum}
Assume that $\SSSg$ is a left-cancella\-tive germ that is right-Noether\-ian, 
$\gg$ belongs to~$\SSS$, and $\XXX$ is a nonempty subfamily of~$\SSS$ such 
that $\ff \diveS \gg$ holds for every~$\ff$ in~$\XXX$.  Then

\ITEM1 The family~$\XXX$ admits a $\divS$-maximal element.

\ITEM2 If $\XXX$ admits common right-multiples, $\XXX$ admits a $\diveS$-greatest element.
\end{lemm}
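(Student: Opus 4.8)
The plan is to deduce both statements from right-Noetherianity, used in the form recalled just before the statement: a left-associative, left-cancellative germ that is right-Noetherian admits no infinite bounded $\divS$-increasing sequence. Before anything, I would record the elementary but crucial observation that every element of~$\XXX$ has the same source as~$\gg$: indeed, $\ff \diveS \gg$ means $\gg = \ff \OP \ff'$ for some~$\ff'$ in~$\SSS$, so the source of~$\ff$ equals that of~$\gg$. In particular any two elements of~$\XXX$ share a source, which is exactly what makes the hypothesis of~\ITEM2 applicable to every pair. For~\ITEM1, I would argue by contradiction: if $\XXX$ had no $\divS$-maximal element, then every~$\ff$ in~$\XXX$ would admit some~$\ff'$ in~$\XXX$ with $\ff \divS \ff'$, and, starting from an arbitrary element of the nonempty family~$\XXX$ and appealing to the Axiom of Dependent Choices, I would build an infinite sequence $\ff_0 \divS \ff_1 \divS \ff_2 \divS \cdots$ lying entirely in~$\XXX$. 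Since each entry lies in~$\XXX$, each satisfies $\ff_\ii \diveS \gg$, so this is a bounded $\divS$-increasing sequence, contradicting right-Noetherianity. Hence $\XXX$ admits a $\divS$-maximal element, say~$\ff_0$.

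For~\ITEM2, I would show that the maximal element~$\ff_0$ produced by~\ITEM1 is in fact $\diveS$-greatest. Let $\ff$ be arbitrary in~$\XXX$. By the source observation and the assumption that $\XXX$ admits common right-multiples, there is an element~$\hh$ of~$\XXX$ that is a common right-multiple of~$\ff$ and~$\ff_0$; write $\hh = \ff_0 \OP \cc$. Maximality of~$\ff_0$ forbids $\ff_0 \divS \hh$, so $\cc$ cannot be non-invertible: were it non-invertible, left-cancellativity would show that $\ff_0$ and~$\hh$ are not $\eqirS$-equivalent, whence $\ff_0 \divS \hh$, a contradiction. Thus $\cc$ is invertible and $\hh \eqirS \ff_0$, so $\ff_0 = \hh \OP \cc\inv$ by left-associativity. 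Combining $\ff \diveS \hh$ with this identity and using left-associativity once more yields $\ff \diveS \ff_0$. As $\ff$ was arbitrary, $\ff_0$ is a $\diveS$-greatest element of~$\XXX$.

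The hard part is~\ITEM1, and specifically the passage from the bare definition of right-Noetherianity to a usable finiteness statement. If one tried instead to run the argument through the complement map $\ff \mapsto \ff'$ and a least element of the family of complements, one would repeatedly meet fixed-point relations of the shape $\aa = \mm \OP \aa$ with $\mm$ \emph{a priori} non-invertible, which left-cancellativity alone cannot resolve. This is precisely why I would invoke the increasing-sequence characterization of right-Noetherianity recalled before the statement (which rests on left-associativity together with left-cancellativity): it converts maximality for~$\diveS$ into the non-existence of a bounded $\divS$-increasing chain, sidestepping the fixed-point difficulty. The remaining points are routine verifications, namely that non-maximality really produces a \emph{strict} successor, that the constructed sequence stays bounded by~$\gg$, and that the concluding left-cancellation in~\ITEM2 is legitimate.
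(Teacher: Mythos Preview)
Your proposal is correct and follows essentially the same route as the paper: for \ITEM1 you build a $\divS$-increasing sequence in~$\XXX$ bounded by~$\gg$ and invoke the ``no bounded strictly increasing chain'' form of right-Noetherianity to force termination at a maximal element; for \ITEM2 you take that maximal element, pair it with an arbitrary element via a common right-multiple in~$\XXX$, and use maximality to force $\eqirS$-equivalence, whence the greatest-element conclusion. The paper's proof does exactly this, only phrased constructively (the sequence terminates) rather than by contradiction, and with the \ITEM2 step compressed to one line.

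Two minor remarks. First, your explicit invocation of left-associativity (both for the chain characterization of right-Noetherianity and for the final transitivity step $\ff \diveS \hh \diveS \ff_0$) goes slightly beyond the stated hypotheses of the lemma, which assume only left-cancellativity; the paper's own proof tacitly relies on the same characterization, so this is a shared feature rather than a defect of your argument, and in every application of the lemma left-associativity is indeed in force. Second, your closing paragraph of meta-commentary about the ``complement map'' alternative is interesting but not part of the proof proper; it could be trimmed without loss.
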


\begin{proof}
\ITEM1 Let $\ff$ be an arbitrary element of~$\XXX$. Starting from $\ff_0 = 
\ff$, we construct a $\divS$-increasing sequence~$\ff_0, \ff_1,...$ 
in~$\XXX$. As long as $\ff_\ii$ is not $\divS$-maximal in~$\XXX$, we can 
find~$\ff_{\ii+1}$ in~$\XXX$ satisfying $\ff_\ii \divS \ff_{\ii+1} \diveS 
\gg$. The assumption that $\CCC$ is right-Noetherian implies that the 
construction stops after a finite number~$\dd$ of steps. Then by construction, 
the element~$\ff_\dd$ is a $\divS$-maximal element of~$\XXX$. 

\ITEM2 By~\ITEM1, there exists~$\ff$ in~$\XXX$ that is $\divS$-maximal. Let 
$\hh$ be an arbitrary element of~$\XXX$. By assumption, there exists a common 
multiple~$\ff'$ of~$\ff$ and~$\hh$ that lies in~$\XXX$. Now, by assumption, 
$\ff$ is $\divS$-maximal in~$\XXX$, so $\ff \divS \ff'$ is impossible, 
and the only possibility is $\ff' \eqirS \ff$. But, then, $\hh \dive \ff'$ 
implies $\hh 
\diveS \ff$, that is, $\ff$ is a right-multiple of every element of~$\XXX$.
\end{proof}

We can now characterize right-Noetherian  Garside  germs. 

\begin{prop}
\label{P:RecRNoethGerm}
A right-Noether\-ian germ $\SSSg$ is a Garside germ if and only if $\SSSg$ is 
left-associative, left-cancellative, and, for every~$\seqq{\gg_1}{\gg_2}$ 
in~$\Seq\SSS2$, the family $\JJJS(\gg_1, \gg_2)$ admits common 
right-multiples. 
\end{prop}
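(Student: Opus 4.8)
The plan is to deduce this from Proposition~\ref{P:RecGarGermII}, specifically from condition~\eqref{E:RecGarGermII2Bis}, which asserts that $\SSSg$ is a Garside germ if and only if it is left-associative, left-cancellative, and every family~$\JJJS(\gg_1, \gg_2)$ admits a $\diveS$-greatest element. Since the conditions ``left-associative'' and ``left-cancellative'' appear verbatim in both statements, the entire content reduces to showing that, under the standing right-Noetherianity hypothesis, the family~$\JJJS(\gg_1, \gg_2)$ admitting common right-multiples is equivalent to its admitting a $\diveS$-greatest element.

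One direction is immediate and requires no Noetherianity: if $\JJJS(\gg_1, \gg_2)$ has a $\diveS$-greatest element~$\hh$, then any two elements of~$\JJJS(\gg_1, \gg_2)$ both $\diveS$-divide~$\hh$, so $\hh$ is a common right-multiple lying in the family, and hence $\JJJS(\gg_1, \gg_2)$ admits common right-multiples. Thus every Garside germ (right-Noetherian or not) already satisfies the stated condition. The substantive direction is the converse: assuming right-Noetherianity and that each $\JJJS(\gg_1, \gg_2)$ admits common right-multiples, I must produce a $\diveS$-greatest element in each such family. This is exactly what Lemma~\ref{L:Maximum}\ITEM2 delivers. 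To invoke it with $\XXX = \JJJS(\gg_1, \gg_2)$, I need three inputs: that $\JJJS(\gg_1, \gg_2)$ is nonempty, that every element of it $\diveS$-divides a fixed~$\gg$, and that it admits common right-multiples. The last is the hypothesis. For nonemptiness, recall the remark following Definition~\ref{D:Patch} that $\id\yy$ always belongs to $\JJJS(\gg_1, \gg_2)$, where $\yy$ is the target of~$\gg_1$. For the bound, every $\gg$ in $\JJJS(\gg_1, \gg_2)$ satisfies $\gg \diveS \gg_2$ by definition~\eqref{E:J}, so $\gg_2$ itself serves as the required dominating element in the statement of Lemma~\ref{L:Maximum}. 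With these three facts, Lemma~\ref{L:Maximum}\ITEM2 yields a $\diveS$-greatest element of $\JJJS(\gg_1, \gg_2)$, establishing~\eqref{E:RecGarGermII2Bis}.

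Having verified~\eqref{E:RecGarGermII2Bis} together with left-associativity and left-cancellativity, Proposition~\ref{P:RecGarGermII} immediately concludes that $\SSSg$ is a Garside germ, completing the converse. The proof therefore assembles cleanly as: forward direction from the greatest-element characterization (no Noetherianity needed, via Proposition~\ref{P:RecGarGermII}), and backward direction by feeding the hypotheses into Lemma~\ref{L:Maximum}\ITEM2 to recover greatest elements, then reapplying Proposition~\ref{P:RecGarGermII}.

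I do not anticipate a serious obstacle here, since the machinery is entirely in place: the only genuine work, namely deriving maximal elements from right-Noetherianity and promoting them to greatest elements under the common-right-multiple hypothesis, has already been isolated in Lemma~\ref{L:Maximum}. The one point requiring care is correctly matching the roles in Lemma~\ref{L:Maximum}: the germ-level right-Noetherianity of Definition~\ref{D:RNoeth} must be the hypothesis feeding that lemma, and I should make sure the dominating element~$\gg$ demanded there is supplied by $\gg_2$ rather than left implicit. Provided these identifications are stated cleanly, the argument is short.
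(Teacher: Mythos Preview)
Your proposal is correct and follows essentially the same route as the paper: both directions go through Proposition~\ref{P:RecGarGermII} (the greatest-element/maximum-$\JJJ$-function characterization), with the converse direction invoking Lemma~\ref{L:Maximum}\ITEM2 applied to $\XXX = \JJJS(\gg_1, \gg_2)$ bounded by~$\gg_2$. Your treatment is in fact slightly more explicit than the paper's in verifying the nonemptiness and boundedness hypotheses of Lemma~\ref{L:Maximum}.
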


\begin{proof}
Assume that $\SSSg$ is a Garside germ. Then $\SSSg$ is left-associative and 
left-cancellative by Lemma~\ref{L:Necessary}. Next, by 
Proposition~\ref{P:RecGarGermII}, $\SSSg$ admits a maximum 
$\JJJ$-function~$\JJ$. Then, for every~$\seqq{\gg_1}{\gg_2}$ 
in~$\Seq\SSS2$, the element~$\JJ(\gg_1, \gg_2)$ is a right-multiple of every 
element of~$\JJJS(\gg_1, \gg_2)$, hence a common right-multiple of any two of 
them. So $\JJJS(\gg_1, \gg_2)$ admits common right-multiples. 

Conversely, assume that $\SSSg$ is right-Noetherian  and  satisfies the 
conditions of the statement. Let $\seqq{\gg_1}{\gg_2}$ belong to~$\Seq\SSS2$. 
By 
assumption, the family~$\JJJS(\gg_1, \gg_2)$ admits common 
right-multiples, and it is a subfamily of the right-Noetherian family~$\SSS$. 
Hence, by Lemma~\ref{L:Maximum}, $\JJJS(\gg_1, 
\gg_2)$ admits a $\diveS$-greatest element. Hence, by 
Proposition~\ref{P:RecGarGermII}, $\SSSg$ is a Garside germ. 
\end{proof}

\begin{rema}
In the situation of Proposition~\ref{P:RecRNoethGerm}, the whole 
category~$\Cat(\SSSg)$ must be right-Noetherian. We shall not give the proof 
here.
\end{rema}

When we go to the more special case of a germ that admits local right-lcms, 
that is, in which any two elements that admit a common right-multiple (inside 
the germ) admit a right-lcm (in the germ), we obtain a new sufficient 
condition for recognizing a Garside germ.

\begin{prop}
\label{P:RecLLcmGerm}
A germ~$\SSSg$ that is left-associative, left-cancellative, 
right-Noeth\-erian, 
admits local right-lcms, and satisfies
\begin{equation}
\label{E:RecLLcmGerm}
\BOX{\VR(3,1)for all $\gg, \hh, \hh', \hh''$ in~$\SSS$, if $\gg \OP \hh$ and 
$\gg \OP \hh'$ are defined,

\hfill then $\gg \OP \hh''$ is defined for every right-lcm~$\hh''$ of~$\hh$ 
and~$\hh'$.}
\end{equation} 
is a Garside germ.
\end{prop}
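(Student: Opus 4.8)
The plan is to reduce everything to Proposition~\ref{P:RecRNoethGerm}, which says that a right-Noetherian germ is a Garside germ precisely when it is left-associative, left-cancellative, and, for every $\seqq{\gg_1}{\gg_2}$ in $\Seq\SSS2$, the family $\JJJS(\gg_1, \gg_2)$ admits common right-multiples. Three of the hypotheses of Proposition~\ref{P:RecRNoethGerm} (left-associativity, left-cancellativity, right-Noetherianity) are directly among the assumptions of the present statement, so the entire task comes down to verifying the remaining condition: that each $\JJJS(\gg_1, \gg_2)$ admits common right-multiples.

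To do this I would fix $\seqq{\gg_1}{\gg_2}$ in $\Seq\SSS2$ and take two elements $\hh, \hh'$ of $\JJJS(\gg_1, \gg_2)$. By the very definition of $\JJJS$, every element of $\JJJS(\gg_1, \gg_2)$ has source equal to the target of~$\gg_1$, so $\hh$ and $\hh'$ automatically share a common source and the source condition in the definition of ``admits common right-multiples'' is vacuous. Since $\hh, \hh' \in \JJJS(\gg_1, \gg_2)$, we have $\hh \diveS \gg_2$ and $\hh' \diveS \gg_2$, so $\gg_2$ is a common right-multiple of $\hh$ and~$\hh'$ inside~$\SSS$. The assumption that $\SSSg$ admits local right-lcms then furnishes a right-lcm~$\hh''$ of $\hh$ and~$\hh'$ in~$\SSS$.

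It remains to check that $\hh''$ lies in $\JJJS(\gg_1, \gg_2)$, which requires two things. First, $\hh'' \diveS \gg_2$: this is immediate from the defining property of a right-lcm, since $\gg_2$ is a common right-multiple of $\hh$ and~$\hh'$, hence a right-multiple of their right-lcm. Second, $\gg_1 \OP \hh''$ must be defined: here both $\gg_1 \OP \hh$ and $\gg_1 \OP \hh'$ are defined because $\hh, \hh' \in \JJJS(\gg_1, \gg_2)$, so condition~\eqref{E:RecLLcmGerm}, applied with $\gg = \gg_1$, guarantees exactly that $\gg_1 \OP \hh''$ is defined for the right-lcm~$\hh''$. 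Thus $\hh''$ is a common right-multiple of $\hh$ and~$\hh'$ lying in $\JJJS(\gg_1, \gg_2)$, so this family admits common right-multiples, and Proposition~\ref{P:RecRNoethGerm} concludes that $\SSSg$ is a Garside germ.

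The argument is essentially routine once the reduction to Proposition~\ref{P:RecRNoethGerm} is made; the single delicate point is the composability requirement $\gg_1 \OP \hh''$, and this is exactly what hypothesis~\eqref{E:RecLLcmGerm} is designed to supply. Without it there would be no \emph{a priori} reason for the partial product with~$\gg_1$ to remain defined on the right-lcm, even though it is defined on each of the two factors~$\hh$ and~$\hh'$. I would take care to phrase the right-lcm step using the local divisibility~$\diveS$ throughout, so as not to silently pass to $\Cat(\SSSg)$, whose cancellativity is not part of the hypotheses and is not needed for this particular reduction.
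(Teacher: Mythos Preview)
Your proof is correct and follows essentially the same route as the paper's own argument: reduce to Proposition~\ref{P:RecRNoethGerm}, take two elements $\hh,\hh'$ of $\JJJS(\gg_1,\gg_2)$, use $\gg_2$ as a common right-multiple to obtain a local right-lcm~$\hh''$, and then check $\hh''\in\JJJS(\gg_1,\gg_2)$ via $\hh''\diveS\gg_2$ and condition~\eqref{E:RecLLcmGerm}. Your added remarks on sources and on working with~$\diveS$ are sound but not needed for the core argument.
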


\begin{proof}
Assume that $\seqq{\gg_1}{\gg_2}$ belongs to~$\Seq\SSS2$, and that $\hh$ and 
$\hh'$ lie in~$\JJJS(\gg_1, \gg_2)$. By assumption, we have $\hh \diveS 
\gg_2$ and $\hh' \diveS \gg_2$. As $\SSS$ admits local right-lcms, there must 
exist a right-lcm~$\hh''$ of~$\hh$ and~$\hh'$ that satisfies $\hh'' \diveS 
\gg_2$. If $\SSSg$ satisfies~\eqref{E:RecLLcmGerm}, the assumption that $\gg_1 
\OP \hh$ and $\gg_1 \OP \hh'$ are defined implies that $\gg_1 \OP \hh''$ is 
defined. But, then, $\hh''$ belongs to~$\JJJS(\gg_1, \gg_2)$ and, 
therefore, $\JJJS(\gg_1, \gg_2)$ admits common right-multiples. By 
Proposition~\ref{P:RecRNoethGerm}, it follows that $\SSSg$ is a Garside germ.
\end{proof}

It turns out that,  when right-lcms always exist,  the 
condition~\eqref{E:RecLLcmGerm} occurring in Proposition~\ref{P:RecLLcmGerm} 
follows from a slight strengthening of the left-associativity assumption. We 
shall naturally say that a germ~$\SSSg$ is \emph{right-associative} if the 
counterpart of~\eqref{E:LeftAss} is satisfied, that is, if $\ff \OP \gg$ is 
defined whenever $\ff \OP (\gg \OP \hh)$ is defined, and that $\SSSg$ is 
\emph{associative} if it is both left- and right-associative.

\begin{coro}
\label{C:RecLcmGerm}
A germ that is associative, left-cancellative, right-Noetherian, and admits 
right-lcms is a Garside germ.
\end{coro}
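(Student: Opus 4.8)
The plan is to deduce the statement directly from Proposition~\ref{P:RecLLcmGerm}. Of the hypotheses required there, almost everything is handed to us: associativity gives in particular left-associativity, left-cancellativity and right-Noetherianity are assumed verbatim, and the unconditional existence of right-lcms trivially yields local right-lcms in the sense of Proposition~\ref{P:RecLLcmGerm}. Hence the only thing that genuinely needs an argument is condition~\eqref{E:RecLLcmGerm}: given $\gg, \hh, \hh'$ in~$\SSS$ with $\gg \OP \hh$ and $\gg \OP \hh'$ defined, and given any right-lcm~$\hh''$ of~$\hh$ and~$\hh'$, one must show that $\gg \OP \hh''$ is defined. This is exactly the point where the second half of associativity, namely right-associativity, will be used.

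To verify~\eqref{E:RecLLcmGerm}, the key idea is not to work with $\hh''$ directly, but to push the problem one level up through~$\gg$. First I would set $\aa = \gg \OP \hh$ and $\aa' = \gg \OP \hh'$; these two elements of~$\SSS$ share the source of~$\gg$, so by the assumed existence of right-lcms they admit a right-lcm~$\cc$ in~$\SSS$. From $\aa \diveS \cc$ I write $\cc = (\gg \OP \hh) \OP \dd$; left-associativity~\eqref{E:LeftAss} then makes $\hh \OP \dd$ defined, and~\eqref{E:Germ3} rewrites this as $\cc = \gg \OP (\hh \OP \dd)$. Symmetrically $\cc = \gg \OP (\hh' \OP \dd')$ for a suitable~$\dd'$. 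Left-cancelling~$\gg$ gives $\hh \OP \dd = \hh' \OP \dd' =: \ee$, so that~$\ee$ is a genuine common right-multiple of~$\hh$ and~$\hh'$ lying in~$\SSS$, through which~$\gg$ still multiplies since $\gg \OP \ee = \cc$.

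Once $\ee$ is in hand, the right-lcm property of~$\hh''$ finishes the job: as $\ee$ is a common right-multiple of~$\hh$ and~$\hh'$, we have $\hh'' \diveS \ee$, say $\ee = \hh'' \OP \ff$. Then $\cc = \gg \OP \ee = \gg \OP (\hh'' \OP \ff)$ is defined, and right-associativity yields precisely that $\gg \OP \hh''$ is defined, as desired. This establishes~\eqref{E:RecLLcmGerm}, and Proposition~\ref{P:RecLLcmGerm} concludes that $\SSSg$ is a Garside germ.

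The only delicate point---and the one I expect to be the main obstacle if one tries the naive route---is that one cannot reason about $\gg \OP \hh''$ by directly factoring $\hh''$ through~$\hh$ and invoking associativity, since there is no reason for the relevant intermediate products to be defined on the nose. The trick of manufacturing the auxiliary common right-multiple~$\ee$ as a quotient (after left-cancelling~$\gg$) of the right-lcm of $\gg\OP\hh$ and $\gg\OP\hh'$, and only then using that $\hh''$ left-divides~$\ee$, is what lets right-associativity bite. Everything else is routine bookkeeping of definedness via~\eqref{E:LeftAss} and~\eqref{E:Germ3}.
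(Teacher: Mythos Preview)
Your proof is correct and follows essentially the same route as the paper: lift to a right-lcm of $\gg\OP\hh$ and $\gg\OP\hh'$, left-cancel~$\gg$ to obtain a common right-multiple of~$\hh$ and~$\hh'$ through which~$\gg$ multiplies, then use right-associativity to descend to~$\hh''$. The only cosmetic difference is that the paper invokes Lemma~\ref{L:LocalDiv} (transitivity of~$\diveS$ and left-cancellation in~$\diveS$) where you unwind those steps by hand via~\eqref{E:LeftAss} and~\eqref{E:Germ3}.
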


\begin{proof}
Assume that $\SSSg$ satisfies the hypotheses of the statement. We check that 
\eqref{E:RecLLcmGerm} is satisfied. So assume that $\gg \OP \hh$ and $\gg \OP 
\hh'$ are defined and $\hh''$ is a right-lcm of~$\hh$  and~$\hh'$.  Put $\ff = 
\gg \OP \hh$, $\ff' = \gg \OP \hh'$, and let $\fft$ be a right-lcm of~$\ff$ 
and~$\ff'$ (here we use the assumption that $\SSS$ admits right-lcms, and not 
only local right-lcms). 

First, we have $\gg \diveS \ff \diveS \fft$, whence $\gg \diveS \fft$, so 
there exists~$\hht$ satisfying $\fft = \gg \OP \hht$. Then, by 
Lemma~\ref{L:LocalDiv}, $\ff \diveS \fft$ implies $\hh \diveS \hht$ and $\ff' 
\diveS \fft$ implies $\hh' \diveS \hht$.
So $\hht$ is a common right-multiple of~$\hh$ and~$\hh'$, hence it is a 
right-multiple of their right-lcm~$\hh''$: we have $\hht = \hh'' \OP \ee$ for 
some~$\ee$. By assumption, $\gg \OP \hht$, which is $\gg \OP (\hh'' \OP \ee)$, 
is defined. By right-associativity, this implies that $\gg \OP \hh''$ is 
defined, so \eqref{E:RecLLcmGerm} is true. Then, $\SSSg$ is a Garside germ by 
Proposition~\ref{P:RecLLcmGerm}.
\end{proof}

\section{Germs derived from a groupoid}
\label{S:Appli}

We conclude with an application of the previous constructions. Starting from a 
group(oid) together with a distinguished generating family, we shall derive a 
germ, possibly leading in turn to a new category and a new groupoid. The 
latter groupoid is a sort of unfolded version of the initial one. The seminal 
example corresponds to starting with a Coxeter group and arriving at the 
ordinary and dual braid monoid of the associated Artin-Tits group. 

\subsection{Tight sequences}
\label{SS:Tight}

Our aim is to associate with every groupoid equipped with a convenient family 
of generators a certain germ, so that this germ is a Garside germ whenever the 
initial groupoid has convenient properties. In order to make the construction 
nontrivial, we shall have to consider sequences of elements in the initial 
groupoid that enjoy a certain length property called tightness.

\begin{defi}
\label{D:MonoidGen}
Assume that $\GGG$ is a groupoid. We say that a subfamily~$\Sigma$ of~$\GGG$ 
\emph{positively generates}~$\GGG$ if every element of~$\GGG$ admits an 
expression that is a $\Sigma$-path (no letter in~$\Sigma\inv$). Then, 
for~$\gg$ in~$\GGG{\setminus}\{1\}$, the \emph{$\Sigma$-length}~$\LT\gg\Sigma$ 
is defined to be the minimal number~$\ell$ such that $\gg$ admits an 
expression by a $\Sigma$-path of length~$\ell$; we complete with 
$\LT{\id\xx}\Sigma = 0$ for each object~$\xx$. 
\end{defi}

Note that, if $\Sigma$ is any family of generators for a groupoid~$\GGG$, then 
$\Sigma \cup \Sigma\inv$ positively generates~$\GGG$. 
Whenever $\Sigma$ positively generates a groupoid~$\GGG$, the $\Sigma$-length 
satisfies the triangular inequality $\LT{\ff\gg}\Sigma \le \LT\ff\Sigma + 
\LT\gg\Sigma$ and, more generally, for every path $(\gg_1 \wdots \gg_\rr)$ 
in~$\GGG$
\begin{equation}
\label{E:Triang}
\LT{\gg_1 \pdots \gg_\rr}\Sigma \le \LT{\gg_1}\Sigma + \pdots + 
\LT{\gg_\rr}\Sigma.
\end{equation}

\begin{defi}
\label{D:Tight}
Assume that $\GGG$ is a groupoid and $\Sigma$ positively generates~$\GGG$. A 
$\GGG$-path $(\gg_1 \wdots \gg_\rr)$ is called \emph{$\Sigma$-tight} if 
$\LT{\gg_1\pdots \gg_\rr}\Sigma = \LT{\gg_1}\Sigma + \pdots + 
\LT{\gg_\rr}\Sigma$ is satisfied.
\end{defi}

\begin{lemm}
\label{L:Tight}
Assume that $\GGG$ is a groupoid and $\Sigma$ positively generates~$\GGG$. 
Then $(\gg_1 \wdots \gg_\rr)$ is $\Sigma$-tight if and only if $(\gg_1 \wdots 
\gg_{\rr-1})$ and $(\gg_1\pdots \gg_{\rr-1}, \gg_\rr)$ are $\Sigma$-tight, if 
and only if $(\gg_2 \wdots \gg_\rr)$ and $(\gg_1, \gg_2\pdots \gg_\rr)$ are 
$\Sigma$-tight.
\end{lemm}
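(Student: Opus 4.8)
The plan is to reduce everything to the triangular inequality~\eqref{E:Triang} together with the observation that, by Definition~\ref{D:Tight}, a sequence is $\Sigma$-tight precisely when \eqref{E:Triang} holds with equality. Both stated equivalences follow from the same ``squeeze'' argument, so I would prove the first one in full (tightness of $(\gg_1 \wdots \gg_\rr)$ against simultaneous tightness of $(\gg_1 \wdots \gg_{\rr-1})$ and of $(\gg_1\pdots \gg_{\rr-1}, \gg_\rr)$), and then indicate that the second is obtained symmetrically by grouping on the left. Throughout I abbreviate $\ell_\ii = \LT{\gg_\ii}\Sigma$ and write $P = \gg_1 \pdots \gg_{\rr-1}$, so that $\gg_1 \pdots \gg_\rr = P \gg_\rr$.

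The easy direction is plain additivity: if $(\gg_1 \wdots \gg_{\rr-1})$ and $(P, \gg_\rr)$ are both $\Sigma$-tight, then $\LT{P\gg_\rr}\Sigma = \LT{P}\Sigma + \ell_\rr = (\ell_1 + \pdots + \ell_{\rr-1}) + \ell_\rr$, which is exactly tightness of $(\gg_1 \wdots \gg_\rr)$. For the converse, I would apply~\eqref{E:Triang} twice, once to the product $P$ and once to the pair $(P, \gg_\rr)$, obtaining $\LT{P}\Sigma \le \ell_1 + \pdots + \ell_{\rr-1}$ and $\LT{P\gg_\rr}\Sigma \le \LT{P}\Sigma + \ell_\rr$. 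Assuming $(\gg_1 \wdots \gg_\rr)$ tight and chaining these two bounds gives
\[
\ell_1 + \pdots + \ell_\rr = \LT{P\gg_\rr}\Sigma \le \LT{P}\Sigma + \ell_\rr \le \ell_1 + \pdots + \ell_\rr,
\]
so every inequality is forced to be an equality. The middle equality $\LT{P\gg_\rr}\Sigma = \LT{P}\Sigma + \ell_\rr$ is tightness of $(P, \gg_\rr)$, and the outer one $\LT{P}\Sigma = \ell_1 + \pdots + \ell_{\rr-1}$ is tightness of $(\gg_1 \wdots \gg_{\rr-1})$. This settles the first equivalence.

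For the second equivalence I would run the identical argument with the grouping taken on the left: set $Q = \gg_2 \pdots \gg_\rr$, so that $\gg_1 \pdots \gg_\rr = \gg_1 Q$, and apply~\eqref{E:Triang} to $Q$ and to the pair $(\gg_1, Q)$. The same squeeze then shows that tightness of $(\gg_1 \wdots \gg_\rr)$ is equivalent to simultaneous tightness of $(\gg_2 \wdots \gg_\rr)$ and of $(\gg_1, \gg_2\pdots \gg_\rr)$. There is really no genuine obstacle here: the only point requiring attention is invoking~\eqref{E:Triang} for the correct groupings of the factors, since tightness is by definition nothing but saturation of that inequality, and a quantity squeezed between two equal bounds forces every intermediate inequality to become an equality.
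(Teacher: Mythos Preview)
Your proof is correct and follows essentially the same approach as the paper: both arguments use the triangular inequality~\eqref{E:Triang} twice and squeeze to force equality, with the paper phrasing the computation for three entries $(\ff,\gg,\hh)$ while you write it directly for general~$\rr$ with the grouping $P=\gg_1\pdots\gg_{\rr-1}$. The logic is identical.
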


\begin{proof}
To make reading easier, we consider the case of three entries. Assume that 
$(\ff, \gg, \hh)$ is $\Sigma$-tight. By~\eqref{E:Triang}, we have
$\LT{\ff\gg\hh}\Sigma \le \LT{\ff\gg}\Sigma + \LT\hh\Sigma$, whence
$\LT{\ff\gg}\Sigma \ge \LT{\ff\gg\hh}\Sigma - \LT\hh\Sigma= \LT\ff\Sigma + 
\LT\gg\Sigma$.
On the other hand, by~\eqref{E:Triang}, we have $\LT{\ff\gg}\Sigma \le 
\LT\ff\Sigma + \LT\gg\Sigma$. We deduce $\LT{\ff\gg}\Sigma =\nobreak 
\LT\ff\Sigma +\nobreak \LT\gg\Sigma$, and $(\ff, \gg)$ is $\Sigma$-tight. Next 
we have $\LT{(\ff\gg)\hh}\Sigma = \LT\ff\Sigma + \LT\gg\Sigma + \LT\hh\Sigma$, 
whence $\LT{(\ff\gg)\hh}\Sigma = \LT{\ff\gg}\Sigma + \LT\hh\Sigma$ since, as 
seen above, $(\ff, \gg)$ is $\Sigma$-tight. Hence $(\ff\gg,\hh)$ is 
$\Sigma$-tight. 

Conversely, assume that $(\ff, \gg)$ and $(\ff\gg, \hh)$ are $\Sigma$-tight. 
Then we directly obtain $\LT{\ff\gg\hh}\Sigma = \LT{\ff\gg}\Sigma + 
\LT\hh\Sigma = \LT\ff\Sigma + \LT\gg\Sigma + \LT\hh\Sigma$, and $(\ff, \gg, 
\hh)$ is $\Sigma$-tight.

The argument is similar when gathering final entries instead of initial ones.
\end{proof}

Considering the tightness condition naturally leads to introducing two partial 
orderings on the underlying groupoid. 

\begin{defi}
\label{D:Prefix}
Assume that $\GGG$ is a groupoid and $\Sigma$ positively generates~$\GGG$. For 
$\ff, \gg$ in~$\GGG$ with the same source, we say that $\ff$ is a 
\emph{$\Sigma$-prefix} of~$\gg$, written $\ff \Pref\Sigma \gg$, if $(\ff, 
\ff\inv\gg)$ is $\Sigma$-tight. Symmetrically, we say that $\ff$ is a 
\emph{$\Sigma$-suffix} of~$\gg$, if $\ff, \gg$ have the same target and 
$(\gg\ff\inv, \ff)$ is $\Sigma$-tight.
\end{defi}

\begin{lemm}
Assume that $\GGG$ is a groupoid and $\Sigma$ positively generates~$\GGG$. 
Then being a $\Sigma$-prefix and being a $\Sigma$-suffix are partial 
orders on~$\GGG$ and $\id\xx \Pref\Sigma \gg$ holds for every~$\gg$ with 
source~$\xx$.
\end{lemm}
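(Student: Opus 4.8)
The plan is to verify the three order axioms for $\Pref\Sigma$ (and, by a mirror argument, for the suffix relation), letting Lemma~\ref{L:Tight} carry the transitivity step. Throughout I would use the basic fact that $\LT\gg\Sigma = 0$ holds exactly when $\gg$ is an identity-element: a $\Sigma$-path of length zero is an empty path, whose evaluation is an identity, so no non-identity element can have $\Sigma$-length $0$.

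First I would dispose of reflexivity and the final claim, both of which are immediate. If $\gg$ has source~$\xx$ and target~$\yy$, then $\LT{\gg\,\id\yy}\Sigma = \LT\gg\Sigma = \LT\gg\Sigma + \LT{\id\yy}\Sigma$, so $(\gg, \gg\inv\gg) = (\gg, \id\yy)$ is $\Sigma$-tight and $\gg \Pref\Sigma \gg$ holds; the same computation applied to $(\gg\gg\inv, \gg) = (\id\xx, \gg)$ gives reflexivity of the suffix relation. For $\id\xx \Pref\Sigma \gg$ one notes that $(\id\xx)\inv\gg = \gg$, and $(\id\xx, \gg)$ is $\Sigma$-tight because $\LT\gg\Sigma = \LT{\id\xx}\Sigma + \LT\gg\Sigma$. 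For antisymmetry, suppose $\ff \Pref\Sigma \gg$ and $\gg \Pref\Sigma \ff$; adding the two defining equalities $\LT\gg\Sigma = \LT\ff\Sigma + \LT{\ff\inv\gg}\Sigma$ and $\LT\ff\Sigma = \LT\gg\Sigma + \LT{\gg\inv\ff}\Sigma$ forces $\LT{\ff\inv\gg}\Sigma + \LT{\gg\inv\ff}\Sigma = 0$, hence both summands vanish, so $\ff\inv\gg$ is an identity and $\ff = \gg$. The suffix case is identical with $\ff\inv\gg$ replaced by $\gg\ff\inv$.

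The heart of the argument is transitivity. Suppose $\ff \Pref\Sigma \gg$ and $\gg \Pref\Sigma \hh$, and set $a = \ff$, $b = \ff\inv\gg$, $c = \gg\inv\hh$, so that $\gg = ab$ and $\hh = abc$. The two hypotheses say precisely that $(a, b)$ and $(ab, c)$ are $\Sigma$-tight. By the first characterization in Lemma~\ref{L:Tight}, the path $(a, b, c)$ is then $\Sigma$-tight, and by the second characterization $(a, bc)$ is $\Sigma$-tight. Since $bc = \ff\inv\hh$, this is exactly the assertion that $(\ff, \ff\inv\hh)$ is $\Sigma$-tight, i.e. $\ff \Pref\Sigma \hh$. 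The suffix case is the symmetric statement: with $c = \ff$, $b = \gg\ff\inv$, $a = \hh\gg\inv$, the hypotheses give $(b, c)$ and $(a, bc)$ tight, whence Lemma~\ref{L:Tight} (second characterization, then first) yields $(a, b, c)$ and then $(ab, c)$ tight, and $ab = \hh\ff\inv$ shows that $\ff$ is a $\Sigma$-suffix of~$\hh$.

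I do not expect any genuine obstacle here, since Lemma~\ref{L:Tight} is engineered precisely to interconvert the ``gather-the-first-factors'' and ``gather-the-last-factors'' forms of tightness that the prefix and suffix relations require. The only point demanding care is the bookkeeping: correctly identifying which three group elements $a, b, c$ to feed into the lemma so that the associativity rewriting $\ff\inv\hh = (\ff\inv\gg)(\gg\inv\hh)$ (resp. $\hh\ff\inv = (\hh\gg\inv)(\gg\ff\inv)$) matches the tight-path decompositions in the two characterizations.
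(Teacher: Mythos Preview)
Your proof is correct and follows essentially the same route as the paper's: reflexivity and the $\id\xx$ claim via trivial tightness of $(\gg,\id\yy)$ and $(\id\xx,\gg)$, antisymmetry by forcing $\LT{\ff\inv\gg}\Sigma=0$, and transitivity by feeding $(a,b,c)=(\ff,\ff\inv\gg,\gg\inv\hh)$ into Lemma~\ref{L:Tight} to pass from $(a,b)$ and $(ab,c)$ tight to $(a,bc)$ tight. The only cosmetic difference is that the paper obtains antisymmetry from the one-sided observation $\ff\Pref\Sigma\gg\Rightarrow\LT\ff\Sigma\le\LT\gg\Sigma$ rather than adding the two equalities, but the content is identical.
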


\begin{proof}
As $\LT{\id\yy}\Sigma$ is zero, every sequence $(\gg, \id\yy)$ is 
$\Sigma$-tight for every~$\gg$ with target~$\yy$, so $\gg \Pref\Sigma \gg$ 
always holds, and $\Pref\Sigma$ is reflexive. Next, as the $\Sigma$-length has 
nonnegative values, $\ff \Pref\Sigma \gg$ always implies $\LT\ff\Sigma \le 
\LT\gg\Sigma$. Now, assume $\ff \Pref\Sigma \gg$ and $\gg \Pref\Sigma \ff$. By 
the previous remark, we must have $\LT\ff\Sigma = \LT\gg\Sigma$, whence 
$\LT{\ff\inv\gg}\Sigma = 0$. Hence, $\ff\inv \gg$ is an identity-element, that 
is, $\ff = \gg$ holds. So $\Pref\Sigma$ is antisymmetric. Finally, assume $\ff 
\Pref\Sigma \gg \Pref\Sigma \hh$. Then $(\ff, \ff\inv\gg)$ and $(\gg, 
\gg\inv\hh)$, which is $(\ff\ff\inv\gg, \gg\inv\hh)$, are $\Sigma$-tight. By 
Lemma~\ref{L:Tight}, we deduce that $(\ff, \ff\inv\gg, \gg\inv\hh)$ is 
$\Sigma$-tight, and then that $(\ff, (\ff\inv\gg)(\gg\inv\hh))$, which is 
$(\ff, \ff\inv\hh)$ is $\Sigma$-tight. Hence $\ff \Pref\Sigma \hh$ holds, and 
$\Pref\Sigma$ is transitive. So $\Pref\Sigma$ is a partial order on~$\GGG$. 
Finally, as $\LT{\id\xx}\Sigma$ is zero, every path $(\id\xx, \gg)$ with~$\xx$ 
the source of~$\gg$ is $\Sigma$-tight, and $\id\xx \Pref\Sigma \gg$ holds.

The verifications for $\Sigma$-suffixes are entirely similar.
\end{proof}

By definition, $(\ff, \gg)$ is $\Sigma$-tight if and only if $\ff$ is a 
$\Sigma$-prefix of~$\ff\gg$. For subsequent use, we note the following weak 
compatibility condition of the partial order~$\Pref\Sigma$ with the product.

\begin{lemm}
\label{L:Compat}
Assume that $\GGG$ is a groupoid, $\Sigma$ positively generates~$\GGG$, and 
$\ff, \gg$ are elements of~$\GGG$. If $(\ff, \gg)$ is $\Sigma$-tight and 
$\gg'$ is a $\Sigma$-prefix of~$\gg$, then $\ff \Pref\Sigma \ff \gg' 
\Pref\Sigma \ff\gg$ holds.
\end{lemm}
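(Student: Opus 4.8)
The plan is to realize both claimed prefix relations as consequences of the $\Sigma$-tightness of a single length-three path, namely $(\ff, \gg', \gg'\inv\gg)$, by playing off the two different regroupings supplied by Lemma~\ref{L:Tight}. First I would unwind the hypotheses into tightness statements. Since $\gg' \Pref\Sigma \gg$, by definition the path $(\gg', \gg'\inv\gg)$ is $\Sigma$-tight; and since $(\ff, \gg)$ is $\Sigma$-tight while $\gg = \gg'(\gg'\inv\gg)$, the path $(\ff, \gg'(\gg'\inv\gg))$ is $\Sigma$-tight. I would first check composability: $\gg'$ shares its source with $\gg$, which is the target of~$\ff$ as $\ff\gg$ is defined, so $(\ff, \gg', \gg'\inv\gg)$ is a genuine $\GGG$-path and Lemma~\ref{L:Tight} applies to it.

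The key step is to invoke the ``gathering final entries'' half of Lemma~\ref{L:Tight}. That half asserts that $(\ff, \gg', \gg'\inv\gg)$ is $\Sigma$-tight if and only if both $(\gg', \gg'\inv\gg)$ and $(\ff, \gg'(\gg'\inv\gg))$ are $\Sigma$-tight. These are precisely the two facts just recorded, so we conclude that the three-term path $(\ff, \gg', \gg'\inv\gg)$ is $\Sigma$-tight.

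Having established this three-term tightness, I would extract the conclusions using the ``gathering initial entries'' half of the same lemma: the $\Sigma$-tightness of $(\ff, \gg', \gg'\inv\gg)$ forces both $(\ff, \gg')$ and $(\ff\gg', \gg'\inv\gg)$ to be $\Sigma$-tight. The first is, by definition of a $\Sigma$-prefix, exactly $\ff \Pref\Sigma \ff\gg'$. For the second, the only algebra needed is the groupoid identity $(\ff\gg')\inv(\ff\gg) = \gg'\inv\gg$; rewriting $(\ff\gg', \gg'\inv\gg)$ as $(\ff\gg', (\ff\gg')\inv(\ff\gg))$, its $\Sigma$-tightness is precisely the statement $\ff\gg' \Pref\Sigma \ff\gg$. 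Chaining the two gives $\ff \Pref\Sigma \ff\gg' \Pref\Sigma \ff\gg$.

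I do not expect a serious obstacle: once the auxiliary path $(\ff, \gg', \gg'\inv\gg)$ is chosen, the whole argument is a two-step application of Lemma~\ref{L:Tight}, assembling the three-term tightness via one grouping and disassembling it into the two desired two-term tightnesses via the other. The only points requiring care are verifying composability of this path and correctly matching each of the two regroupings of Lemma~\ref{L:Tight} to its role.
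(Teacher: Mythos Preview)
Your proposal is correct and follows essentially the same route as the paper: both arguments assemble the three-term tightness of $(\ff, \gg', \gg'{}\inv\gg)$ from the hypotheses via one direction of Lemma~\ref{L:Tight}, then disassemble it via the other to obtain the two desired prefix relations. Your version is slightly more explicit about which grouping of Lemma~\ref{L:Tight} is invoked at each step and about composability, but there is no substantive difference.
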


\begin{proof}
Assume $\gg' \Pref\Sigma \gg$. Then, by definition, $(\gg', \gg'{}\inv\gg)$ is 
$\Sigma$-tight. On the other hand, by assumption, $(\ff, \gg)$, which is 
$(\ff, \gg'(\gg'{}\inv\gg))$, is $\Sigma$-tight. By Lemma~\ref{L:Tight}, it 
follows that $(\ff, \gg', \gg'{}\inv\gg)$ is $\Sigma$-tight. First we deduce 
that $(\ff, \gg')$ is $\Sigma$-tight, that is $\ff \Pref\Sigma \ff\gg'$ holds. 
Next we deduce that $(\ff\gg', \gg'{}\inv\gg)$ is $\Sigma$-tight as well. As 
$\gg'{}\inv\gg$ is also $(\ff\gg')\inv(\ff\gg)$, the latter relation is 
equivalent to $\ff\gg' \Pref\Sigma \ff\gg$.
\end{proof}


\subsection{Derived germ}
\label{SS:Derived}

Here is now the basic scheme for constructing a germ. If $\HHH$ is a subfamily 
of a category, we denote by~$\Id\HHH$ the family of all 
identity-elements~$\id\xx$ for~$\xx$ source or target of an element of~$\HHH$.

\begin{defi}
\label{D:Derived}
Assume that $\GGG$ is a groupoid and $\Sigma$ positively generates~$\GGG$. 
For~$\HHH$ included in~$\GGG$, we denote by~$\Der\HHH\Sigma$ the structure 
$(\HHH, \Id\HHH, \OP)$, where $\OP$ is the partial operation on~$\HHH$ such 
that $\hh = \ff \OP \gg$ holds if and only if 
\begin{equation}
\label{E:Product}
\hh = \ff\gg \mbox{\ holds and $(\ff, \gg)$ is $\Sigma$-tight}.
\end{equation}
The structure~$\Der\HHH\Sigma$ is called the \emph{germ derived from~$\HHH$ 
and~$\Sigma$}.
\end{defi}

So we consider the operation that is induced on~$\HHH$ by the ambient product 
of~$\GGG$, but with the additional restriction that the products that are not 
$\Sigma$-tight are discarded. Speaking of germs here is legal, as we 
immediately see.

\begin{lemm}
\label{L:Derived}
Assume that $\GGG$ is a groupoid, $\Sigma$ positively generates~$\GGG$, and 
$\HHH$ is a subfamily of~$\GGG$ that includes~$\Id{\HHH}$. Then 
$\Der\HHH\Sigma$ is a cancellative germ that contains no nontrivial invertible 
element. 
\end{lemm}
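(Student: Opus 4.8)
The plan is to verify in turn that $\Der\HHH\Sigma = (\HHH, \Id\HHH, \OP)$ obeys the three germ axioms \eqref{E:Germ1}, \eqref{E:Germ2}, \eqref{E:Germ3}, that it is left- and right-cancellative in the sense of Definition~\ref{D:Cancel}, and that its only invertible elements are identity-elements. Since, by~\eqref{E:Product}, the operation $\OP$ is nothing but the restriction of the product of~$\GGG$ to those pairs $\seqq\ff\gg$ that are $\Sigma$-tight and whose product lies in~$\HHH$, the first two axioms come essentially for free. Indeed, \eqref{E:Germ1} holds because sources and targets of $\ff \OP \gg = \ff\gg$ are computed in~$\GGG$, and \eqref{E:Germ2} holds because $\LT{\id\xx}\Sigma = 0$ forces $(\id\xx, \ff)$ and $(\ff, \id\yy)$ to be $\Sigma$-tight; hence $\id\xx \OP \ff$ and $\ff \OP \id\yy$ are defined and both equal~$\ff$, using the hypothesis $\Id\HHH \subseteq \HHH$ to know that $\id\xx$ and $\id\yy$ belong to~$\HHH$.

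The one substantive point is the associativity axiom \eqref{E:Germ3}, and here I would invoke Lemma~\ref{L:Tight}. Assume $\ff \OP \gg$ and $\gg \OP \hh$ are defined, that is, $(\ff, \gg)$ and $(\gg, \hh)$ are $\Sigma$-tight. If $(\ff \OP \gg) \OP \hh$ is defined, then $(\ff\gg, \hh)$ is $\Sigma$-tight, so, combining this with the tightness of $(\ff, \gg)$, Lemma~\ref{L:Tight} shows that $(\ff, \gg, \hh)$ is $\Sigma$-tight; a second application then gives that $(\ff, \gg\hh)$ is $\Sigma$-tight, i.e. that $\ff \OP (\gg \OP \hh)$ is defined. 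The converse implication is obtained symmetrically, starting from $(\ff, \gg\hh)$ and $(\gg, \hh)$ tight, and equality of the two triple products is simply associativity in~$\GGG$. This establishes \eqref{E:Germ3}, so $\Der\HHH\Sigma$ is a germ.

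Cancellativity I would argue directly from the groupoid structure: if $\ff \OP \gg$ and $\ff \OP \gg'$ are both defined and equal, then $\ff\gg = \ff\gg'$ holds in~$\GGG$, whence $\gg = \gg'$ because every element of a groupoid is cancellable; this is left-cancellativity, and the symmetric argument applied to $\gg \OP \ff = \gg' \OP \ff$ gives right-cancellativity, so the germ is cancellative. Finally, for the absence of nontrivial invertible elements, suppose $\ee$ in~$\HHH$ is invertible in the germ, witnessed by some~$\ee'$ in~$\HHH$ with $\ee \OP \ee' = \id\xx$. By~\eqref{E:Product} the pair $(\ee, \ee')$ is $\Sigma$-tight, hence $\LT{\ee\ee'}\Sigma = \LT\ee\Sigma + \LT{\ee'}\Sigma$; but $\ee\ee' = \id\xx$ has $\Sigma$-length~$0$, and $\Sigma$-lengths are nonnegative, so $\LT\ee\Sigma = 0$. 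As an element of $\Sigma$-length zero must be an identity-element, $\ee$ is trivial.

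The only friction I anticipate lies in \eqref{E:Germ3}: one must apply Lemma~\ref{L:Tight} in the correct direction on each side of the equivalence and be careful that the two provided hypotheses ($(\ff,\gg)$ and $(\gg,\hh)$ tight) are exactly what is needed to feed the lemma. No genuinely new idea is required beyond the tightness calculus of Section~\ref{SS:Tight}; everything else reduces to cancellation and length computations in the ambient groupoid.
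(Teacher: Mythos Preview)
Your proposal is correct and follows essentially the same approach as the paper's own proof: both verify the germ axioms via Lemma~\ref{L:Tight} for~\eqref{E:Germ3}, derive cancellativity from cancellativity in the ambient groupoid, and rule out nontrivial invertibles by the $\Sigma$-length computation. The only cosmetic omission is that, when concluding that $\ff \OP (\gg \OP \hh)$ is defined, you should explicitly note that $\ff(\gg\hh) = (\ff\gg)\hh$ lies in~$\HHH$ because $(\ff \OP \gg) \OP \hh$ is assumed defined; this is implicit in your argument but worth stating.
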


\begin{proof}
The verifications are easy. First \eqref{E:Germ1} is satisfied by definition 
of~$\OP$, and so is \eqref{E:Germ2} since we assume that $\Id{\HHH}$ is 
included in~$\HHH$. Next, assume that $\ff, \gg, \hh$ belongs to~$\HHH$ and 
$\ff \OP \gg$, $\gg \OP \hh$ and $(\ff \OP \gg) \OP \hh$ are defined. This 
means that $\ff\gg$, $\gg\hh$, and $(\ff\gg)\hh$ belong to~$\HHH$ and that the 
pairs $(\ff, \gg)$, $(\gg, \hh)$, and $(\ff\gg, \hh)$ are $\Sigma$-tight 
in~$\GGG$. By Lemma~\ref{L:Tight}, $(\ff, \gg, \hh)$, and then $(\ff, 
\gg\hh)$, which is $(\ff, \gg \OP \hh)$, are $\Sigma$-tight. As $\ff(\gg\hh)$ 
belongs to~$\HHH$, we deduce that $\ff \OP \gg\hh$, that is, $\ff \OP (\gg \OP 
\hh)$, is defined, and it is equal to~$\ff(\gg\hh)$. The argument is symmetric 
in the other direction and, therefore, \eqref{E:Germ3} is satisfied. So 
$\Der\HHH\Sigma$ is a germ.

Assume now that $\ff, \gg, \gg'$ belong to~$\HHH$ and $\ff \OP \gg = \ff \OP 
\gg'$ holds. This implies $\ff \gg = \ff \gg'$ in~$\GGG$, whence $\gg = \gg'$. 
So the germ $\Der\HHH\Sigma$ is left-cancellative, hence cancellative by a 
symmetric argument.

Finally, assume $\ee \OP \ee' = \id\xx$ with $\ee, \ee'$ in~$\HHH$. Then we 
must have $\LT\ee\Sigma + \LT{\ee'}\Sigma = \LT{\id\xx}\Sigma = 0$. The only 
possibility is $\LT\ee\Sigma = \LT{\ee'}\Sigma = 0$, whence $\ee = \ee' = 
\id\xx$.
\end{proof}

We now consider Noetherianity conditions. Standard results assert that a 
germ~$\SSSg$ is right-Noetherian if and only if there exists a 
function~$\lambda : \SSS \to \Ord$ such that $\lambda(\ff) < \lambda(\gg)$ 
holds whenever $\ff$ is a proper right-divisor of~$\gg$ in~$\SSS$. We also 
consider \emph{left-Noetherianity}, defined as the well-foundedness of the 
left-divisibility relation~$\diveS$, and characterized by the existence of a 
function~$\lambda : \SSS \to \Ord$ such that $\lambda(\ff) < \lambda(\gg)$ 
holds whenever $\ff$ is a proper left-divisor of~$\gg$ in~$\SSS$.

\begin{lemm}
\label{L:DerivedNoeth}
Assume that $\GGG$ is a groupoid, $\Sigma$ positively generates~$\GGG$, and 
$\HHH$ is a subfamily of~$\GGG$ that includes~$\Id{\HHH}$. Then the derived 
germ $\Der\HHH\Sigma$ is both left- and right-Noetherian. 
\end{lemm}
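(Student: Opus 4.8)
The plan is to exhibit, for each of the two Noetherianity conditions, an $\Ord$-valued function on~$\HHH$ that strictly decreases along proper divisibility, and to take this function to be the $\Sigma$-length. By the characterizations recalled just above the statement, it suffices to check that $\lambda(\ff) = \LT{\ff}\Sigma$ satisfies $\lambda(\ff) < \lambda(\gg)$ whenever $\ff$ is a proper left-divisor, respectively a proper right-divisor, of~$\gg$ in the germ $\Der\HHH\Sigma$. Since $\Sigma$ positively generates~$\GGG$, the quantity $\LT{\ff}\Sigma$ is a well-defined non-negative integer, hence a (finite) ordinal, so $\lambda$ is a legitimate candidate for both characterizations.

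For left-divisibility, I would simply unwind the definition of~$\OP$. The relation $\ff \diveS \gg$ in $\Der\HHH\Sigma$ means $\gg = \ff \OP \gg'$ for some~$\gg'$ in~$\HHH$, that is, $\gg = \ff\gg'$ in~$\GGG$ with the pair $(\ff, \gg')$ being $\Sigma$-tight. By the very definition of tightness this yields $\LT{\gg}\Sigma = \LT{\ff}\Sigma + \LT{\gg'}\Sigma$, so in particular $\LT{\ff}\Sigma \le \LT{\gg}\Sigma$. The crux is then to upgrade this to a strict inequality when the divisor is proper. By Lemma~\ref{L:Derived} the germ $\Der\HHH\Sigma$ has no nontrivial invertible element, so $\ff \eqirS \gg$ amounts to $\ff = \gg$; hence $\ff \divS \gg$ forces $\ff \ne \gg$, and therefore $\gg'$ cannot be an identity-element. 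As $\LT{\gg'}\Sigma = 0$ holds only for identity-elements (a length-zero $\Sigma$-path is empty), we get $\LT{\gg'}\Sigma \ge 1$, whence $\LT{\ff}\Sigma < \LT{\gg}\Sigma$. This establishes left-Noetherianity.

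The argument for right-divisibility is entirely symmetric: a proper right-divisor~$\ff$ of~$\gg$ is witnessed by a factorization $\gg = \gg' \OP \ff$ with $(\gg', \ff)$ being $\Sigma$-tight, giving $\LT{\gg}\Sigma = \LT{\gg'}\Sigma + \LT{\ff}\Sigma$, and the same reasoning shows $\gg'$ cannot be an identity, so $\LT{\ff}\Sigma < \LT{\gg}\Sigma$ again. Thus $\lambda = \LT{\cdot}\Sigma$ witnesses right-Noetherianity as well, and $\Der\HHH\Sigma$ is Noetherian.

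The only mildly delicate point is the passage from the weak inequality to the strict one, namely verifying that the cofactor of a proper divisor has positive $\Sigma$-length; but this is immediate once one combines the additivity supplied by $\Sigma$-tightness with the absence of nontrivial invertible elements granted by Lemma~\ref{L:Derived}, so no genuine obstacle arises.
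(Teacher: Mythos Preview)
Your argument is correct and follows essentially the same approach as the paper: both use the $\Sigma$-length as the witness function for Noetherianity, exploiting the additivity provided by $\Sigma$-tightness together with the fact (from Lemma~\ref{L:Derived}) that a non-identity cofactor has $\Sigma$-length at least~$1$. The paper's version is simply more terse, treating the left and right cases in a single sentence.
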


\begin{proof}
Assume that $\ff, \gg$ lie in~$\HHH$ and we have $\gg = \ff \OP \gg'$ for some 
non-invertible~$\gg'$ that lies in~$\HHH$. By definition of~$\OP$, this 
implies $\LT\gg\Sigma = \LT\ff\Sigma + \LT{\gg'}\Sigma$, whence $\LT\ff\Sigma 
< \LT\gg\Sigma$ as, by definition of the $\Sigma$-length, the 
non-invertibility of~$\gg'$ implies $\LT{\gg'}\Sigma \ge 1$. So the 
$\Sigma$-length witnesses both for the left- and the right-Noetherianity 
of~$(\HHH, \OP)$. 
\end{proof}

Owing to the results of Sections~\ref{S:Germs} and~\ref{S:RecGerm}, the only 
situation when a germ leads to interesting results is when it is 
left-associative. These properties are not automatic for a derived 
germ~$\Der\HHH\Sigma$, but they turn out to be connected with closure under 
$\Sigma$-suffix and $\Sigma$-prefix, where we naturally say that $\HHH$ is 
closed under $\Sigma$-suffix (\resp.\ $\Sigma$-prefix) if every 
$\Sigma$-suffix (\resp.\ $\Sigma$-prefix) of an element of~$\HHH$ lies 
in~$\HHH$.

\begin{lemm}
\label{L:Assoc}
Assume that $\GGG$ is a groupoid, $\Sigma$ positively generates~$\GGG$, and 
$\HHH$ is a subfamily of~$\GGG$ that is closed under $\Sigma$-suffix (\resp.\ 
$\Sigma$-prefix). Then the germ~$\Der\HHH\Sigma$ is left-associative (\resp.\ 
right-associative) and an element~$\ff$ of~$\HHH$ is a local left-divisor 
(\resp.\  right-divisor) of an element~$\gg$ in~$\Der\HHH\Sigma$ if and only 
if $\ff$ is a $\Sigma$-prefix (\resp.\ $\Sigma$-suffix) of~$\gg$.
\end{lemm}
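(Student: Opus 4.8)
The plan is to reduce both assertions to the rearrangement properties of tightness recorded in Lemma~\ref{L:Tight}, using the closure hypothesis only at the one point where a newly formed subproduct must be certified to lie in~$\HHH$. Since the two halves of the statement are mirror images of one another (exchange left and right, $\Sigma$-prefix and $\Sigma$-suffix, and read products backwards), I would treat the left-associative / $\Sigma$-prefix version in full and merely observe that the right-associative / $\Sigma$-suffix version is obtained by the symmetric argument, now invoking closure under $\Sigma$-prefix where the left case invokes closure under $\Sigma$-suffix. So assume throughout that $\HHH$ is closed under $\Sigma$-suffix.

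For left-associativity I would unfold Definition~\ref{D:Derived}. Assuming $(\ff \OP \gg) \OP \hh$ is defined means, by~\eqref{E:Product}, that $\ff\gg$ and $(\ff\gg)\hh$ both lie in~$\HHH$ and that the pairs $(\ff, \gg)$ and $(\ff\gg, \hh)$ are $\Sigma$-tight. The ``gathering initial entries'' clause of Lemma~\ref{L:Tight} then gives that the triple $(\ff, \gg, \hh)$ is $\Sigma$-tight, and its ``gathering final entries'' clause yields that both $(\gg, \hh)$ and $(\ff, \gg\hh)$ are $\Sigma$-tight. To conclude that $\gg \OP \hh$ is defined it remains to check $\gg\hh \in \HHH$, and this is exactly where the hypothesis enters: the equality $\ff\cdot(\gg\hh) = \ff\gg\hh$ together with the tightness of $(\ff, \gg\hh)$ exhibits $\gg\hh$ as a $\Sigma$-suffix of the element~$\ff\gg\hh$ of~$\HHH$, so closure under $\Sigma$-suffix puts $\gg\hh$ in~$\HHH$. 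Combined with the tightness of $(\gg, \hh)$, this is precisely the conclusion~\eqref{E:LeftAss}.

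For the divisibility characterization I would simply match definitions. If $\ff$ is a local left-divisor of~$\gg$, that is $\ff \diveS \gg$ in the sense of Definition~\ref{D:Cancel}, there is $\gg'$ in~$\HHH$ with $\gg = \ff \OP \gg'$; by Definition~\ref{D:Derived} this forces $\gg' = \ff\inv\gg$ and $(\ff, \ff\inv\gg)$ $\Sigma$-tight, which is exactly the condition $\ff \Pref\Sigma \gg$ of Definition~\ref{D:Prefix}. Conversely, if $\ff \Pref\Sigma \gg$, then $(\ff, \ff\inv\gg)$ is $\Sigma$-tight; setting $\gg' = \ff\inv\gg$, the relation $\gg\cdot\gg'{}\inv = \ff$ shows that $\gg'$ is a $\Sigma$-suffix of~$\gg$, so closure under $\Sigma$-suffix gives $\gg' \in \HHH$, and then $\gg = \ff \OP \gg'$ witnesses $\ff \diveS \gg$.

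I do not expect a genuine obstacle here: the only substantive input is Lemma~\ref{L:Tight}, and the sole role of the closure hypothesis is the single membership test $\gg\hh \in \HHH$ (\resp.\ $\ff\gg \in \HHH$) that upgrades a ``tight pair'' into a ``defined $\OP$-product''. The one thing to keep straight is that $\Der\HHH\Sigma$ need not be closed under arbitrary products, so membership must be verified afresh whenever a new subproduct is formed; the prefix/suffix observation is exactly what makes the chosen closure hypothesis suffice, and the care needed is in tracking which subproduct is a prefix and which is a suffix of the element already known to lie in~$\HHH$.
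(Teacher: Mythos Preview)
Your proposal is correct and follows essentially the same route as the paper's own proof: both arguments unfold the definition of~$\OP$, apply Lemma~\ref{L:Tight} to pass between tightness of $(\ff,\gg)$, $(\ff\gg,\hh)$ and tightness of the triple and its regroupings, and invoke closure under $\Sigma$-suffix at the single membership step, with the divisibility claim handled by the same direct unwinding. The only cosmetic difference is that you state the suffix verification for~$\gg' = \ff\inv\gg$ via the identity $\gg\,\gg'^{-1} = \ff$, whereas the paper phrases it directly as ``$\ff\inv\gg$ is a $\Sigma$-suffix of~$\gg$''.
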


\begin{proof}
Assume that $\HHH$ is closed under $\Sigma$-suffix and $\ff, \gg, \hh$ are 
elements of~$\HHH$ such that $\ff \OP \gg$ and $(\ff \OP \gg) \OP \hh$ are 
defined. Then $\ff\gg$ and $(\ff\gg)\hh$ lie in~$\HHH$ and the pairs $(\ff, 
\gg)$ and $(\ff\gg, \hh)$ are $\Sigma$-tight. By Lemma~\ref{L:Tight}, $(\ff, 
\gg, \hh)$, and then $(\ff, \gg\hh)$ are $\Sigma$-tight. Hence $\gg\hh$ is a 
$\Sigma$-suffix of~$\ff\gg\hh$ in~$\GGG$ and, therefore, by assumption, 
$\gg\hh$ belongs to~$\HHH$. By Lemma~\ref{L:Tight} again, the fact that $(\ff, 
\gg, \hh)$ is $\Sigma$-tight implies that $(\gg, \hh)$ is $\Sigma$-tight, and 
we deduce $\gg\hh = \gg \OP \hh$. Thus the germ~$\Der\HHH\Sigma$ is 
left-associative. 

Assume now that $\ff, \gg$ lie in~$\HHH$ and $\ff$ is a left-divisor of~$\gg$ 
in the germ~$\Der\HHH\Sigma$. This means that $\ff \OP \gg' = \gg$ holds for 
some~$\gg'$ lying in~$\HHH$. Necessarily $\gg'$ is $\ff\inv \gg$, so $(\ff, 
\ff\inv\gg)$ has to be $\Sigma$-tight, which means that $\ff$ is a 
$\Sigma$-prefix of~$\gg$. Conversely, assume that $\ff, \gg$ lie in~$\HHH$ and 
$\ff$ is a $\Sigma$-prefix of~$\gg$. Then $(\ff, \ff\inv \gg)$ is 
$\Sigma$-tight, so $\ff\inv \gg$ is a $\Sigma$-suffix of~$\gg$. The assumption 
that $\HHH$ is closed under $\Sigma$-suffix implies that $\ff\inv \gg$ lies 
in~$\HHH$, and, then, $\ff \OP \ff\inv\gg = \gg$ holds, whence $\ff 
\divve{\Der\HHH\Sigma} \gg$. 

The arguments for right-associativity and right-divisibility 
in~$\Der\HHH\Sigma$ are entirely symmetric, using now the assumption that 
$\HHH$ is closed under $\Sigma$-prefix.
\end{proof}

We now wonder whether $\Der\HHH\Sigma$ is a Garside germ. As $\Der\HHH\Sigma$ 
is Noetherian, it is eligible for the criteria of Section~\ref{SS:NoethGerm}, and 
we are led to looking for the satisfaction of the associated conditions. The 
latter involve the left-divisibility relation of the germ and, therefore, by 
Lemma~\ref{L:Assoc}, they can be formulated inside the base groupoid in terms 
of $\Sigma$-prefixes.

\begin{prop}
\label{P:DerGarI}
Assume that $\GGG$ is a groupoid, $\Sigma$ positively generates~$\GGG$, $\HHH$ 
is a subfamily of~$\GGG$ that is closed under $\Sigma$-suffix, and
\begin{gather}
\label{E:DerGarI1}
\BOX{\VR(3,1.5) If $\gg, \gg'$ lie in~$\HHH$ and admit a common upper 
bound for~$\Pref\Sigma$\\
\null\hfill then they admit a least common upper bound for~$\Pref\Sigma$ 
in~$\HHH$,}\\
\label{E:DerGarI2}
\BOX{\VR(3,1.5) If $\gg, \gg', \gg'', \ff$ lie in~$\HHH$, $\ff \OP \gg$ 
and $\ff \OP \gg'$ are defined and lie in~$\HHH$, \\
\null\hfill \VR(3,1.5)and $\gg''$ is a least common upper bound of~$\gg$ 
and~$\gg'$ for $\Pref\Sigma$, \\
\null\hfill then $\ff \OP \gg''$ is defined.}
\end{gather}
Then $\Der\HHH\Sigma$ is a Garside germ.
\end{prop}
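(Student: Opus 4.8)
The plan is to reduce everything to the right-Noetherian criterion of Proposition~\ref{P:RecRNoethGerm}, which asserts that a right-Noetherian germ is a Garside germ as soon as it is left-associative, left-cancellative, and the family $\JJJS(\gg_1, \gg_2)$ admits common right-multiples for every $\seqq{\gg_1}{\gg_2}$ in~$\Seq\SSS2$. The first three of these properties come essentially for free from the structural results already established: left-cancellativity (in fact full cancellativity) is Lemma~\ref{L:Derived}, right-Noetherianity is Lemma~\ref{L:DerivedNoeth}, and left-associativity is Lemma~\ref{L:Assoc}, whose sole hypothesis is closure of~$\HHH$ under $\Sigma$-suffix, which we assume. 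Crucially, the same Lemma~\ref{L:Assoc} supplies the dictionary I will use throughout, namely that in $\Der\HHH\Sigma$ the local left-divisibility $\diveS$ coincides with the $\Sigma$-prefix order $\Pref\Sigma$ of the base groupoid.

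So the whole content lies in the common right-multiple condition. First I would fix $\seqq{\gg_1}{\gg_2}$ in~$\Seq\SSS2$ and take two elements $\hh, \hh'$ of $\JJJS(\gg_1, \gg_2)$; since $\gg_1 \OP \hh$ and $\gg_1 \OP \hh'$ are defined, both $\hh$ and $\hh'$ have source equal to the target of~$\gg_1$, so the ``same source'' clause of Proposition~\ref{P:RecRNoethGerm} is automatic. By definition of $\JJJS$ and the $\diveS$/$\Pref\Sigma$ correspondence, I have $\hh \Pref\Sigma \gg_2$ and $\hh' \Pref\Sigma \gg_2$, so $\gg_2$ is a common upper bound of $\hh$ and $\hh'$ for $\Pref\Sigma$. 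Condition~\eqref{E:DerGarI1} then furnishes a least common upper bound $\hh''$ of $\hh$ and $\hh'$ for $\Pref\Sigma$ lying in~$\HHH$, and this $\hh''$ is my candidate common right-multiple.

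It remains to show $\hh'' \in \JJJS(\gg_1, \gg_2)$. On one side, $\hh \Pref\Sigma \hh''$ and $\hh' \Pref\Sigma \hh''$ say, through the dictionary, that $\hh''$ is a common right-multiple of $\hh$ and $\hh'$ in the germ; and since $\hh''$ is the least upper bound while $\gg_2$ is an upper bound, $\hh'' \Pref\Sigma \gg_2$, that is $\hh'' \diveS \gg_2$. On the other side, membership in $\JJJS(\gg_1, \gg_2)$ additionally requires $\gg_1 \OP \hh''$ to be defined, and this is exactly where \eqref{E:DerGarI2} enters: I apply it with $\ff = \gg_1$, $\gg = \hh$, $\gg' = \hh'$, and $\gg'' = \hh''$, using that $\gg_1 \OP \hh$ and $\gg_1 \OP \hh'$ are defined and lie in~$\HHH$ (because $\hh, \hh' \in \JJJS(\gg_1, \gg_2)$) and that $\hh''$ is a least common $\Pref\Sigma$-upper bound of $\hh$ and $\hh'$. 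This yields that $\gg_1 \OP \hh''$ is defined, hence $\hh'' \in \JJJS(\gg_1, \gg_2)$. Thus $\JJJS(\gg_1, \gg_2)$ admits common right-multiples, and Proposition~\ref{P:RecRNoethGerm} concludes that $\Der\HHH\Sigma$ is a Garside germ.

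I do not expect a genuinely hard step: the proof is an assembly of prior results rather than a new argument. The only thing to handle with care is the systematic passage between the germ's internal data ($\OP$ and $\diveS$) and the $\Sigma$-prefix order $\Pref\Sigma$ on~$\GGG$, which Lemma~\ref{L:Assoc} licenses, together with the bookkeeping that makes \eqref{E:DerGarI1} and \eqref{E:DerGarI2} fit Proposition~\ref{P:RecRNoethGerm} precisely---in particular, that the very least upper bound produced by \eqref{E:DerGarI1} is the element fed into \eqref{E:DerGarI2}, and that it remains $\Pref\Sigma$-below~$\gg_2$ so as to stay inside $\JJJS(\gg_1, \gg_2)$.
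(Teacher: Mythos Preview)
Your proof is correct and follows essentially the same route as the paper; the only difference is packaging. The paper applies Proposition~\ref{P:RecLLcmGerm}: it uses the dictionary from Lemma~\ref{L:Assoc} to read \eqref{E:DerGarI1} as ``local right-lcms exist in the germ'' and \eqref{E:DerGarI2} as condition~\eqref{E:RecLLcmGerm}, then invokes that proposition. You instead go one step further back to Proposition~\ref{P:RecRNoethGerm} and verify the common-right-multiple property of $\JJJS(\gg_1,\gg_2)$ directly---which is precisely what the proof of Proposition~\ref{P:RecLLcmGerm} does anyway. So your argument is the paper's with the intermediate proposition inlined.
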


\begin{proof}
By Lemmas~\ref{L:Derived} and~\ref{L:Assoc}, the germ~$\Der\HHH\Sigma$ is 
left-associative, cancellative, Noetherian, and it admits no nontrivial 
invertible element. Hence, by Proposition~\ref{P:RecLLcmGerm}, 
$\Der\HHH\Sigma$ is a Garside germ if it satisfies 
\eqref{E:RecLLcmGerm}. Now, by Lemma~\ref{L:Assoc}, for~$\gg, \hh$ in~$\HHH$, 
the relation $\gg \divve{\Der\HHH\Sigma} \hh$ is equivalent to $\gg 
\Pref\Sigma \hh$ and, therefore, $\hh''$ is a right-lcm of~$\hh$ and~$\hh'$ 
in~$\Der\HHH\Sigma$ if and only it it is a least common upper bound 
of~$\hh$ and~$\hh'$ for~$\Pref\Sigma$. So \eqref{E:DerGarI1} means that $\Der\HHH\Sigma$ admits local right-lcms, whereas \eqref{E:DerGarI2} is a direct reformulation 
of~\eqref{E:RecLLcmGerm}.
\end{proof}

 In the context of Proposition~\ref{P:DerGarI}, as by assumption $\Sigma$ 
generates~$\GGG$, we can weaken~\eqref{E:DerGarI1} and~\eqref{E:DerGarI2} by 
restricting to the case when the elements~$\gg$ and~$\gg'$ lie in~$\Sigma$, 
but one then has to assume that the germ is associative on both sides, that 
is, $\HHH$ is also closed under $\Sigma$-prefix.

\begin{prop}
\label{P:DerGarIBis}
Assume  that $\GGG$  is a  groupoid, $\Sigma$  positively generates~$\GGG$, 
$\HHH$ is a subfamily of~$\GGG$ that is closed under $\Sigma$-suffix and 
$\Sigma$-prefix, and
\begin{gather}
\label{E:DerGarIBis1}
\BOX{\VR(3,1.5) If $\gg, \gg'$ lie in~$\Sigma$ and admit a common upper 
bound for~$\Pref\Sigma$\\
\null\hfill then they admit a least common upper bound for~$\Pref\Sigma$ 
in~$\HHH$,}\\
\label{E:DerGarIBis2}
\BOX{\VR(3,1.5) If $\gg, \gg'$ lie in~$\Sigma$, $\ff$ lies in~$\HHH$, $\ff 
\OP \gg$ and $\ff \OP \gg'$ are defined and lie in~$\HHH$, \\
\null\hfill \VR(3,1.5) and $\gg''$ is a least common upper bound of~$\gg$ 
and~$\gg'$ for $\Pref\Sigma$, \\
\null\hfill then $\ff \OP \gg''$ is defined.}
\end{gather}
Then $\Der\HHH\Sigma$ is a Garside germ.
\end{prop}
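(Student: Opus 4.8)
The plan is to deduce the statement from Proposition~\ref{P:DerGarI} by upgrading the restricted hypotheses \eqref{E:DerGarIBis1} and~\eqref{E:DerGarIBis2}, which only constrain pairs of \emph{generators}, to the general hypotheses \eqref{E:DerGarI1} and~\eqref{E:DerGarI2}, which constrain arbitrary pairs in~$\HHH$. First I would collect the structural facts: by Lemmas~\ref{L:Derived} and~\ref{L:DerivedNoeth} the germ~$\Der\HHH\Sigma$ is cancellative, Noetherian, and contains no nontrivial invertible element, while Lemma~\ref{L:Assoc} shows that closure under $\Sigma$-suffix makes it left-associative, closure under $\Sigma$-prefix makes it right-associative, and that, for $\ff, \gg$ in~$\HHH$, the relation $\ff \diveS \gg$ in the germ coincides with $\ff \Pref\Sigma \gg$ in~$\GGG$. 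In particular, a least common upper bound for~$\Pref\Sigma$ lying in~$\HHH$ is exactly a right-lcm in~$\Der\HHH\Sigma$, so \eqref{E:DerGarIBis1} says that any two generators sharing a common right-multiple admit a right-lcm, and \eqref{E:DerGarIBis2} is the instance of~\eqref{E:RecLLcmGerm} in which the two factors lie in~$\Sigma$.

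The core of the argument is then a single induction on the $\Sigma$-length, proving \eqref{E:DerGarI1} and~\eqref{E:DerGarI2} simultaneously. Because $\Sigma$ positively generates~$\GGG$ and $\HHH$ is closed under $\Sigma$-prefix and $\Sigma$-suffix, every non-identity element~$\gg$ of~$\HHH$ can be written $\gg = s \OP \gg_1$ with $s$ in~$\Sigma$ a $\Sigma$-prefix of~$\gg$ (so $s$ lies in~$\HHH$) and $\LT{\gg_1}\Sigma < \LT\gg\Sigma$; the idea is to peel such leading generators off and build least upper bounds one generator at a time. For \eqref{E:DerGarI1}, given $\gg, \gg'$ in~$\HHH$ with a common $\Pref\Sigma$-upper bound, I would first treat the case where one argument is a single generator (inducting on the length of the other, with~\eqref{E:DerGarIBis1} as the base case), and then combine these elementary joins: writing $\gg = s \OP \gg_1$, the least upper bound of~$\gg$ and~$\gg'$ is assembled from the join of~$s$ with~$\gg'$ followed by the join of~$\gg_1$ with the appropriate residue, the regroupings being justified by associativity and the cancellations by left-cancellativity. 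Here \eqref{E:DerGarIBis2} is what guarantees that the successive partial joins never leave~$\HHH$, so that the construction stays inside the germ.

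For \eqref{E:DerGarI2}, given $\ff$ in~$\HHH$ with $\ff \OP \gg$ and $\ff \OP \gg'$ defined and a least upper bound~$\gg''$ of~$\gg$ and~$\gg'$, I would again peel generators from~$\gg$ and~$\gg'$: right-associativity turns the hypothesis that $\ff \OP (s \OP \gg_1)$ is defined into the statement that $\ff \OP s$ is defined, which lets the generator case~\eqref{E:DerGarIBis2} apply to the leading generators of~$\gg$ and~$\gg'$ together with their join, and an iteration then propagates definedness along the inductive construction of~$\gg''$ from the previous paragraph. The main obstacle is precisely that these two statements are mutually entangled: to glue the elementary joins in~\eqref{E:DerGarI1} one already needs an instance of the compatibility~\eqref{E:DerGarI2} for the smaller pieces, while the inductive step for~\eqref{E:DerGarI2} refers to joins produced by~\eqref{E:DerGarI1}. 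The delicate point is therefore to set up the simultaneous induction so that at each stage the instances of both statements that are invoked involve strictly shorter data — essentially a form of the cube (confluence) lemma of reversing theory transported into the germ language. Once \eqref{E:DerGarI1} and~\eqref{E:DerGarI2} are established for all of~$\HHH$, Proposition~\ref{P:DerGarI} applies directly and shows that $\Der\HHH\Sigma$ is a Garside germ.
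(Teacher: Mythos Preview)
Your overall plan---upgrade \eqref{E:DerGarIBis1} and \eqref{E:DerGarIBis2} to \eqref{E:DerGarI1} and \eqref{E:DerGarI2} by induction on $\Sigma$-length, then invoke Proposition~\ref{P:DerGarI}---is exactly what the paper does, and your preliminary remarks (Lemmas~\ref{L:Derived}, \ref{L:DerivedNoeth}, \ref{L:Assoc}, and the identification of~$\diveS$ with~$\Pref\Sigma$) are correct.

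The one place where you diverge from the paper is your claim that the two statements are ``mutually entangled'' and must be proved by a simultaneous induction. In fact they decouple: the paper proves \eqref{E:DerGarI1} first, using only~\eqref{E:DerGarIBis1} together with closure of~$\HHH$ under $\Sigma$-prefix and $\Sigma$-suffix, and only afterwards proves \eqref{E:DerGarI2} using~\eqref{E:DerGarIBis2} and the already-established~\eqref{E:DerGarI1}. Your worry that ``\eqref{E:DerGarIBis2} is what guarantees that the successive partial joins never leave~$\HHH$'' is not borne out: when you split $\gg = \gg_1 \OP \gg_2$ and form the least upper bound $\gg_1\hh_1$ of~$\gg_1$ and~$\gg'$, the residue~$\hh_1$ lies in~$\HHH$ simply because it is a $\Sigma$-suffix of~$\gg_1\hh_1$, and similarly for the other pieces; no appeal to~\eqref{E:DerGarIBis2} is needed at that stage. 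The paper also makes the induction parameter explicit, which you leave vague: each of the two arguments is a double induction, the outer one on the $\Sigma$-length of the common upper bound (for~\eqref{E:DerGarI1}) or of the least upper bound~$\gg''$ (for~\eqref{E:DerGarI2}), and the inner one on $\LT\gg\Sigma + \LT{\gg'}\Sigma$. Setting it up this way avoids the ``delicate point'' you flag and makes the argument considerably cleaner than the simultaneous scheme you envisage.
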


\begin{proof}
We first establish using induction on~$\ell$ that all elements~$\gg, \gg'$ 
of~$\HHH$ that admit a common upper bound~$\gg\hh$ for~$\Pref\Sigma$ 
satisfying $\LT{\gg\hh}\Sigma \le \ell$ admit a least common upper bound 
for~$\Pref\Sigma$. For $\ell = 0$ the result is trivial and, for $\ell 
\ge 1$, we argue using induction on $\LT\gg\Sigma + \LT{\gg'}\Sigma$.  
First, the result is trivial if $\LT\gg\Sigma$ or $\LT{\gg'}\Sigma$ is zero. 
Next, if both $\gg$ and~$\gg'$ belong to~$\Sigma$, the result is true 
by~\eqref{E:DerGarIBis1}. Otherwise, assuming $\gg \notin \Sigma$, we write 
$\gg = \gg_1 \OP \gg_2$. Since $\seqq{\gg_1}{\gg_2}$ as well as 
$\seqq{\gg_1\gg_2}\hh$ are $\Sigma$-tight by assumption, 
$\seqqq{\gg_1}{\gg_2}\hh$ and, then, $\seqq{\gg_1}{\gg_2\hh}$ are 
$\Sigma$-tight by Lemma~\ref{L:Tight}. As, moreover, $\HHH$ is closed under 
$\Sigma$-suffix, we have $\gg_1 \Pref\Sigma \gg\hh$ and thus $\gg\hh$ is 
a common upper bound of~$\gg_1$ and~$\gg'$ for~$\Pref\Sigma$, as 
$\Der\HHH\Sigma$ is cancellative by Lemma~\ref{L:Derived}. As we have 
$\LT{\gg_1}\Sigma + \LT{\gg'}\Sigma < \LT\gg\Sigma + \LT{\gg'}\Sigma$, the 
induction hypothesis implies that $\gg_1$ and~$\gg'$ admit a least common 
upper bound for~$\Pref\Sigma$, say~$\gg_1\hh_1$. Then $\gg \OP \hh$ 
is a common upper bound of~$\gg_1 \OP (\gg_2 \OP \hh)$ and $\gg_1 \OP \hh_1$ for~$\Pref\Sigma$, hence $\gg_2 \OP \hh$ is a common upper bound 
of~$\gg_2 \OP \hh$ and $\hh_1$ for~$\Pref\Sigma$. By construction, 
we have $\LT{\gg_2\hh}\Sigma < \LT{\gg\hh}\Sigma$, so the induction hypothesis 
implies that $\gg_2$ and~$\hh_1$ admit a least common upper bound 
for~$\Pref\Sigma$, say~$\gg_2 \hh_2$. By Lemma~\ref{L:Compat}, we have 
$\gg\hh_2 \Pref\Sigma \gg\hh$, and $\gg\hh_2$ is a least common upper bound 
of~$\gg$ and~$\gg'$ for~$\Pref\Sigma$. So \eqref{E:DerGarIBis1} 
implies~\eqref{E:DerGarI1}.

We now establish similarly using induction on~$\ell$ that, if $\ff, \gg, \gg'$ 
lie in~$\HHH$, if $\ff \OP \gg$, $\ff \OP \gg'$ are defined and lie in~$\HHH$, 
and $\gg, \gg'$ admit a least  common upper bound~$\gg''$ for~$\Pref\Sigma$ 
satisfying  $\LT{\gg''}\Sigma \le \ell$, then $\ff \OP \gg''$ is defined. For 
$\ell = 0$, the result is trivial and, for $\ell \ge 1$, we argue using 
induction on $\LT\gg\Sigma + \LT{\gg'}\Sigma$. As above, the result is 
trivial if $\LT\gg\Sigma$ or $\LT{\gg'}\Sigma$ is zero. Next, if both 
$\gg$ and~$\gg'$ belong to~$\Sigma$, the result is true 
by~\eqref{E:DerGarIBis2}. Otherwise, assuming $\gg \notin \Sigma$, we write 
$\gg = \gg_1 \OP \gg_2$. The induction hypothesis implies that, if $\gg_1 
\hh_1$ is the least common upper bound of~$\gg_1$ and~$\gg'$ 
for~$\Pref\Sigma$, then $\ff \OP (\gg_1\hh_1)$ is defined. Next, 
writing $\gg'' = \gg \hh$, the assumption that $\HHH$ is closed under 
$\Sigma$-suffix implies that $\gg_2 \hh$ is the least common upper bound 
of~$\gg_2$ and~$\hh_1$ for~$\Pref\Sigma$. By construction, we have 
$\LT{\gg_2\hh}\Sigma < \LT{\gg''}\Sigma$ and the assumption that $\HHH$ is 
closed under $\Sigma$-prefix implies that $\ff \OP \gg_1$ lies in~$\HHH$, so 
the induction hypothesis implies that $\ff\gg_1\gg_2\hh$, that is, $\ff\gg''$ 
lies in~$\HHH$. So \eqref{E:DerGarIBis2} implies~\eqref{E:DerGarI2}, and we 
can apply Proposition~\ref{P:DerGarI}.
\end{proof}

On the other hand,  if the $\Sigma$-prefix relation~$\Pref\Sigma$ defines an 
upper-semi-lattice on the considered subfamily~$\HHH$, that is, any two 
elements of~$\HHH$ admit a least common upper bound 
for~$\Pref\Sigma$, we obtain a simpler criterion.

\begin{prop}
\label{P:DerGarII}
Assume that $\GGG$ is a groupoid, $\Sigma$ positively generates~$\GGG$, and 
$\HHH$ is a subfamily of~$\GGG$ that is closed under $\Sigma$-prefix and 
$\Sigma$-suffix and any two elements of~$\HHH$ admit a $\Pref\Sigma$-least 
upper bound. Then $\Der\HHH\Sigma$ is a Garside germ.
\end{prop}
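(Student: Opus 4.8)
The plan is to invoke Proposition~\ref{P:DerGarI}, so I would check that its hypotheses hold: closure of~$\HHH$ under $\Sigma$-suffix is granted, and there remain the two conditions \eqref{E:DerGarI1} and~\eqref{E:DerGarI2}. The first is immediate, since the assumption that any two elements of~$\HHH$ admit a $\Pref\Sigma$-least upper bound is exactly~\eqref{E:DerGarI1} (indeed it is even stronger, as it dispenses with the proviso that a common upper bound exists). By Lemma~\ref{L:Derived} the germ~$\Der\HHH\Sigma$ is cancellative, and by Lemma~\ref{L:Assoc} the closure of~$\HHH$ under $\Sigma$-suffix makes it left-associative; hence the hypotheses of Lemmas~\ref{L:LocalDiv} and~\ref{L:Assoc} are met, and for~$\gg, \hh$ in~$\HHH$ the local divisibility $\gg \diveS \hh$ in the germ coincides with~$\gg \Pref\Sigma \hh$. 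All the work will therefore lie in proving~\eqref{E:DerGarI2}.

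To establish~\eqref{E:DerGarI2}, I would fix $\ff, \gg, \gg', \gg''$ in~$\HHH$ with $\ff \OP \gg$ and $\ff \OP \gg'$ defined and lying in~$\HHH$, and with $\gg''$ a $\Pref\Sigma$-least upper bound of~$\gg$ and~$\gg'$. Since $\ff\gg$ and $\ff\gg'$ share the source of~$\ff$, they admit a $\Pref\Sigma$-least upper bound~$\hh$ in~$\HHH$. Tightness of~$(\ff, \gg)$ gives $\ff \Pref\Sigma \ff\gg \Pref\Sigma \hh$, whence $\ff \Pref\Sigma \hh$ by transitivity. I would then set $\ww = \ff\inv \hh$: the pair $(\ff, \ww)$ is $\Sigma$-tight, so $\ww$ is a $\Sigma$-suffix of~$\hh$ and thus lies in~$\HHH$ by closure under $\Sigma$-suffix, and $\hh = \ff \OP \ww$.

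The key step will be to transport the bound~$\hh$ below~$\ff$. From $\ff\gg \Pref\Sigma \hh = \ff\ww$ and Lemma~\ref{L:Assoc} one reads $\ff \OP \gg \diveS \ff \OP \ww$ in the germ; as both products are defined, Lemma~\ref{L:LocalDiv}\ITEM3 permits left-cancelling~$\ff$, yielding $\gg \diveS \ww$, i.e. $\gg \Pref\Sigma \ww$, and likewise $\gg' \Pref\Sigma \ww$. Thus $\ww$ is a common upper bound of~$\gg$ and~$\gg'$, so $\gg'' \Pref\Sigma \ww$ by minimality of~$\gg''$. Applying Lemma~\ref{L:Compat} to the $\Sigma$-tight pair~$(\ff, \ww)$ and the $\Sigma$-prefix~$\gg''$ of~$\ww$ then gives $\ff \Pref\Sigma \ff\gg'' \Pref\Sigma \ff\ww = \hh$: the first relation says $(\ff, \gg'')$ is $\Sigma$-tight, and the second, with $\hh \in \HHH$ and closure under $\Sigma$-prefix, gives $\ff\gg'' \in \HHH$, so $\ff \OP \gg''$ is defined. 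This is~\eqref{E:DerGarI2}, and Proposition~\ref{P:DerGarI} concludes that $\Der\HHH\Sigma$ is a Garside germ. I expect the delicate point to be precisely the cancellation step: it is legitimate only because $\Der\HHH\Sigma$ has already been shown left-cancellative, and it is exactly what lets one descend from an upper bound of the $\ff$-translates back to an upper bound of~$\gg$ and~$\gg'$—the closure under $\Sigma$-prefix then being needed only to keep the lifted bound $\ff\gg''$ inside~$\HHH$.
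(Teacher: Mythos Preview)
Your argument is correct. The paper, however, takes a shorter route: since $\HHH$ is closed under both $\Sigma$-prefix and $\Sigma$-suffix, Lemma~\ref{L:Assoc} gives that $\Der\HHH\Sigma$ is two-sidedly associative, and the $\Pref\Sigma$-lub hypothesis translates (via the same lemma) into the existence of right-lcms in the germ; together with left-cancellativity (Lemma~\ref{L:Derived}) and right-Noetherianity (Lemma~\ref{L:DerivedNoeth}), Corollary~\ref{C:RecLcmGerm} applies directly and finishes the proof in one line.

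Your route through Proposition~\ref{P:DerGarI} works, but your verification of~\eqref{E:DerGarI2} essentially reproves Corollary~\ref{C:RecLcmGerm} in the groupoid language: you take the right-lcm~$\hh$ of $\ff\gg$ and~$\ff\gg'$, write $\hh = \ff \OP \ww$, cancel~$\ff$ to get $\gg'' \diveS \ww$, and then use closure under $\Sigma$-prefix to conclude that $\ff \OP \gg''$ is defined---this last step is exactly where the paper's proof of Corollary~\ref{C:RecLcmGerm} invokes right-associativity, which by Lemma~\ref{L:Assoc} is the same thing. So the two arguments are really the same underneath; the paper's presentation just packages the associativity-plus-lcm step into an earlier corollary and then quotes it, whereas you unfold it in place. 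Your version has the minor expository advantage of making visible exactly where each closure hypothesis enters, at the cost of a longer proof.
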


\begin{proof}
By Lemma~\ref{L:Assoc}, the germ~$\Der\HHH\Sigma$ is (left- and right-) 
associative, and the existence of least common upper bounds 
for~$\Pref\Sigma$ in~$\GGG$ implies the existence of right-lcms 
in~$\Der\HHH\Sigma$. Moreover, $\Der\HHH\Sigma$ is right-Noetherian by 
Lemma~\ref{L:DerivedNoeth}. Then the latter is a Garside germ by Corollary~\ref{C:RecLcmGerm}.
\end{proof}

When we consider a germ derived from the whole initial groupoid, the 
conditions about closure under prefix and suffix becomes trivial, so it only 
remains the condition about lcms.

\begin{coro}
\label{C:DerGarII}
Assume that $\GGG$ is a groupoid, $\Sigma$ positively generates~$\GGG$, and 
any two elements of~$\GGG$ admit a $\Pref\Sigma$-least common upper 
bound. Then 
$\Der\GGG\Sigma$ is a Garside germ.
\end{coro}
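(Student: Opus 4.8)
The plan is to derive the statement directly from Proposition~\ref{P:DerGarII} by specializing to the case $\HHH = \GGG$. Indeed, once $\HHH$ is taken to be the whole groupoid, the hypotheses of the corollary become word for word those of Proposition~\ref{P:DerGarII}, provided one checks that the two closure conditions occurring there hold automatically in this situation. So the only work is to recognize that passing to the full groupoid trivializes the closure requirements, after which the corollary is immediate.

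First I would observe that $\GGG$, viewed as a subfamily of itself, includes $\Id\GGG$ and is closed under $\Sigma$-prefix and $\Sigma$-suffix. By Definition~\ref{D:Prefix}, a $\Sigma$-prefix (\resp.\ a $\Sigma$-suffix) of an element of~$\GGG$ is by construction itself an element of~$\GGG$, so the requirement that every such prefix or suffix lie in~$\HHH$ is vacuously satisfied when $\HHH=\GGG$. Thus the only substantive hypothesis of Proposition~\ref{P:DerGarII} that remains to be verified for $\HHH = \GGG$ is that any two elements of~$\GGG$ (necessarily sharing a source, for $\Pref\Sigma$ to relate them) admit a $\Pref\Sigma$-least common upper bound, and this is precisely the assumption made in the corollary.

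Having matched all the hypotheses, I would then invoke Proposition~\ref{P:DerGarII} with $\HHH = \GGG$ to conclude that $\Der\GGG\Sigma$ is a Garside germ. There is no genuine obstacle here: all the real content sits in Proposition~\ref{P:DerGarII} (which itself rests on Corollary~\ref{C:RecLcmGerm} together with the Noetherianity and associativity of derived germs established in Lemmas~\ref{L:Derived}, \ref{L:DerivedNoeth}, and~\ref{L:Assoc}). The single point worth spelling out explicitly is that working with the entire groupoid makes the closure-under-$\Sigma$-prefix and closure-under-$\Sigma$-suffix conditions trivial, so that the lattice-type hypothesis on~$\Pref\Sigma$ is all that is needed.
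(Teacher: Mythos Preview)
Your proposal is correct and matches the paper's own treatment exactly: the corollary is stated as an immediate specialization of Proposition~\ref{P:DerGarII} to $\HHH = \GGG$, the point being precisely that the closure under $\Sigma$-prefix and $\Sigma$-suffix becomes vacuous when $\HHH$ is the whole groupoid.
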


So the main condition for obtaining a Garside germ along the above lines is to 
find a positively generating subfamily~$\Sigma$ of~$\GGG$ such that the 
partial order~$\Pref\Sigma$ admits (local) least common upper bounds.

\subsection{The ordinary Artin--Tits monoids}
\label{SS:Artin}

A  first  important  example  of  the  construction  described above is the
construction  of the  Artin--Tits monoids  starting from  arbitrary Coxeter
groups.  We take for $(\GG,  \Sigma)$ a Coxeter system,  and keep the whole
of~$\GG$,  that is, we choose $\HH  = \GG$. Then the monoid generated by the 
germ~$\Der\GG\Sigma$
is  the usual Artin--Tits monoid  associated with~$\GG$, see~\cite{Mic}. We
will use Proposition~\ref{P:DerGarIBis} to show that we have a Garside germ.

First, we recall some well known consequences  of the exchange lemma
(see for example \cite[No.\ 1.4 lemme 3]{Bbk}), which states:

\begin{lemm} 
\label{exchange}
If  $w$ is a $\Sigma$-word of minimal length representing an element~$\gg$ 
of~$\GG$ and $\hh$ is an element of~$\Sigma$ satisfying $\LT{\gg\hh}\Sigma \le 
 \LT\gg\Sigma$, then $\gg\hh$ is represented by some proper subword~$w'$ 
of~$w$.
\end{lemm}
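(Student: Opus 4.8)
The statement is the classical \emph{Exchange Condition} for the Coxeter system $(\GG, \Sigma)$, and the natural route is through the geometric (reflection) representation and its root system; the plan is first to assemble that machinery and then to track the sign of a single root along the reduced word~$w$.

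First I would introduce the real vector space $V$ with basis $(\alpha_s)_{s \in \Sigma}$ and the symmetric bilinear form $B$ determined by $B(\alpha_s, \alpha_s) = 1$ and $B(\alpha_s, \alpha_t) = -\cos(\pi/m_{st})$, where $m_{st}$ denotes the order of~$st$, and let each generator~$s$ act on~$V$ by the reflection $\sigma_s \colon v \mapsto v - 2 B(\alpha_s, v)\,\alpha_s$. One checks that the $\sigma_s$ satisfy the Coxeter relations, so that they define a representation $\sigma \colon \GG \to \mathrm{GL}(V)$, and that $\sigma$ is faithful. Writing $\Phi = \{\sigma_\gg(\alpha_s) : \gg \in \GG,\ s \in \Sigma\}$ for the set of roots, the foundational facts I would establish by induction on the $\Sigma$-length are: the \emph{positivity dichotomy}, that every root lies in~$\Phi^+$ (all coordinates $\ge 0$) or in~$\Phi^- = -\Phi^+$; the \emph{descent criterion} $\LT{\gg s}\Sigma < \LT\gg\Sigma \Leftrightarrow \sigma_\gg(\alpha_s) \in \Phi^-$; and the decisive local fact that $\sigma_s$ maps $\alpha_s$ to $-\alpha_s$ and permutes $\Phi^+ \setminus \{\alpha_s\}$, so that $\alpha_s$ is the only positive root sent to a negative one by~$\sigma_s$.

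Granting this package, the argument is short. Since $\hh$ lies in~$\Sigma$, the triangle inequality~\eqref{E:Triang} gives $\LT{\gg\hh}\Sigma \le \LT\gg\Sigma + 1$, while the sign homomorphism $\GG \to \{\pm1\}$ shows $\LT{\gg\hh}\Sigma$ and $\LT\gg\Sigma$ have opposite parities; hence the two lengths differ by exactly one, and the hypothesis $\LT{\gg\hh}\Sigma \le \LT\gg\Sigma$ forces $\LT{\gg\hh}\Sigma < \LT\gg\Sigma$, whence $\sigma_\gg(\alpha_\hh) \in \Phi^-$ by the descent criterion. Writing the reduced expression as $w = (s_1 \wdots s_r)$, I would set $\gamma_i = \sigma_{s_{i+1}\cdots s_r}(\alpha_\hh)$ for $0 \le i \le r$, so that $\gamma_r = \alpha_\hh \in \Phi^+$ while $\gamma_0 = \sigma_\gg(\alpha_\hh) \in \Phi^-$. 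By the dichotomy there is a smallest index~$i$ with $\gamma_i \in \Phi^+$; then $i \ge 1$ and $\gamma_{i-1} = \sigma_{s_i}(\gamma_i) \in \Phi^-$, so the local fact forces $\gamma_i = \alpha_{s_i}$. Passing to reflections, $s_{i+1}\cdots s_r$ conjugates~$\hh$ to~$s_i$, that is $s_i s_{i+1}\cdots s_r = s_{i+1}\cdots s_r\,\hh$; left-multiplying by $s_1 \cdots s_{i-1}$ yields $\gg\hh = s_1 \cdots s_{i-1} s_{i+1} \cdots s_r$, which is the proper subword~$w'$ of~$w$ obtained by deleting the letter~$s_i$.

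I expect all the genuine effort to sit in the foundational package --- faithfulness of~$\sigma$, the positivity dichotomy, and the ``only $\alpha_s$ flips'' lemma --- rather than in the deletion step, which becomes purely formal once those are in place. As these are precisely the facts recorded in~\cite{Bbk}, within the paper it is cleanest simply to invoke that reference; the sketch above shows how the stated lemma is recovered from it.
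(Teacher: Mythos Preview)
Your sketch is correct and is precisely the standard root-system proof of the Exchange Condition (as in Bourbaki or Humphreys): set up the geometric representation, use the descent criterion $\LT{\gg s}\Sigma < \LT\gg\Sigma \Leftrightarrow \sigma_\gg(\alpha_s) \in \Phi^-$ together with the fact that each $\sigma_s$ flips only~$\alpha_s$ among the positive roots, then track the sign of $\sigma_{s_{i+1}\cdots s_r}(\alpha_\hh)$ along the reduced word to locate the letter to delete. The only cosmetic remark is that the parity argument, while fine, is unnecessary: $\LT{\gg\hh}\Sigma = \LT\gg\Sigma$ is already impossible since $\hh \ne 1$, so the hypothesis $\le$ immediately gives~$<$.

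As you anticipate in your closing paragraph, the paper does not prove this lemma at all: it is stated as a classical fact with a pointer to \cite[No.\ 1.4 lemme~3]{Bbk}, and then used as input for Propositions~\ref{cor1} and~\ref{coset representative}. So your recommendation to simply invoke Bourbaki matches the paper exactly; the added value of your write-up is that it makes the content of that citation explicit.
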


The first consequence (see \cite[No.\ 1.8 Corollaire 1]{Bbk}) is

\begin{prop}  
\label{cor1}  
Assume that $(\GG, \Sigma)$ is a Coxeter system and $\II$ is included 
in~$\Sigma$. Let $\GG_I$ be the
subgroup  of $\GG$ generated by $I$.  Then all minimal $\Sigma$-words 
representing elements of $\GG_I$ are $I$-words.
\end{prop}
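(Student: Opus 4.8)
The plan is to argue by induction on the $\Sigma$-length $\LT{g}{\Sigma}$ of the element $g \in \GG_I$, using the Exchange Lemma (Lemma~\ref{exchange}) as the only external input. Before running the induction I would record its standard strengthening, the \emph{deletion condition}: if a $\Sigma$-word $s_1 \pdots s_n$ is not of minimal length, then $s_1 \pdots s_n = s_1 \pdots \widehat{s_i} \pdots \widehat{s_j} \pdots s_n$ for some $i < j$. This follows by taking the first index $j$ with $\LT{s_1 \pdots s_j}{\Sigma} \le \LT{s_1 \pdots s_{j-1}}{\Sigma}$; then $s_1 \pdots s_{j-1}$ is minimal, and applying Lemma~\ref{exchange} to it and to the letter $s_j$ absorbs $s_j$ into the prefix at the cost of deleting one earlier letter $s_i$, which deletes two letters overall.

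Next I would set up two consequences. Since the generators in $\Sigma$ are involutions, $I$ positively generates~$\GG_I$, so $\LT{g}{I}$ is defined for $g \in \GG_I$ by Definition~\ref{D:MonoidGen}. Taking a minimal $I$-expression $g = t_1 \pdots t_m$, the deletion condition forbids it from being non-reduced as a $\Sigma$-word, since deleting two letters would contradict minimality of~$m$; hence $\LT{g}{\Sigma} = \LT{g}{I}$ for every $g \in \GG_I$. In particular $\Sigma \cap \GG_I = I$, because an element of~$\GG_I$ of $\Sigma$-length one has $I$-length one and is therefore a single generator from~$I$. From this I get the \emph{descent lemma}: every $g \in \GG_I \setminus \{1\}$ admits some $t \in I$ with $\LT{gt}{\Sigma} < \LT{g}{\Sigma}$; indeed, with $g = t_1 \pdots t_m$ minimal ($m \ge 1$), the element $gt_m$ lies in~$\GG_I$ and satisfies $\LT{gt_m}{\Sigma} = \LT{gt_m}{I} \le m-1 < m = \LT{g}{\Sigma}$, so $t = t_m$ works.

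With these facts the induction on $\ell = \LT{g}{\Sigma}$ is short. The case $\ell = 0$ (where $g$ is an identity and the word is empty) is trivial. For $\ell \ge 1$, let $w = s_1 \pdots s_\ell$ be any minimal $\Sigma$-word for~$g$, and choose $t \in I$ with $\LT{gt}{\Sigma} = \ell - 1$ from the descent lemma. Applying Lemma~\ref{exchange} to $g$ and $t$ deletes exactly one letter, producing a minimal word $s_1 \pdots \widehat{s_i} \pdots s_\ell$ for~$gt$. Since $gt \in \GG_I$ has $\Sigma$-length $\ell - 1$, the induction hypothesis makes this an $I$-word, so $s_k \in I$ for all $k \ne i$. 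Finally, solving $s_i = (s_1 \pdots s_{i-1})\inv \, g \, (s_{i+1} \pdots s_\ell)\inv$ exhibits $s_i$ as a product of elements of~$\GG_I$, so $s_i \in \Sigma \cap \GG_I = I$. Thus every letter of~$w$ lies in~$I$.

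The main obstacle is precisely the gap that the descent lemma and the leftover-letter computation bridge: the deletion condition gives directly only that \emph{some} reduced expression of~$g$ is an $I$-word, whereas the statement asserts this for \emph{every} reduced expression. The inductive deletion of a descent letter, together with the identity $\Sigma \cap \GG_I = I$ used to pin down the one undetermined letter~$s_i$, is what upgrades existence to universality; establishing that the descent is available for every element of~$\GG_I$ (and not merely along the minimal expression one starts from) is the step that most needs care.
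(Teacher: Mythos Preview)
Your argument is correct. The paper itself does not give a proof of this proposition; it simply cites Bourbaki \cite[No.\ 1.8 Corollaire~1]{Bbk}, so there is nothing to compare against at the level of detail. Your derivation of the deletion condition from Lemma~\ref{exchange}, the consequence $\LT{g}{\Sigma} = \LT{g}{I}$ (hence $\Sigma \cap \GG_I = I$), and the induction via a descent $t \in I$ all check out; in particular, the step where you recover the deleted letter $s_i$ as an element of $\Sigma \cap \GG_I$ is exactly what closes the argument, and your concern that the Exchange Lemma might delete more than one letter is correctly handled by the length count $\LT{gt}{\Sigma} = \ell - 1$.
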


The second is

\begin{prop}\label{coset representative}
\label{I}
Assume that $(\GG, \Sigma)$ is a Coxeter system and $\II$ is included 
in~$\Sigma$. Let $\GG_I$ be the subgroup of $\GG$
generated by $I$. Then, for $\ff$ in~$\GG$, the following are equivalent:
\begin{gather}
\label{I1}
\BOX{$\ff$ has no non-trivial $\Sigma$-suffix in $\GG_I$.}\\
\label{I2}
\BOX{\item $\LT{\ff \gg}\Sigma=\LT\ff\Sigma+\LT \gg\Sigma$ holds for all 
$\gg\in
\GG_I$.}\\
\label{I3}
\BOX{\item $\ff$ has minimal $\Sigma$-length in its coset~$\ff\GG_\II$.}
\end{gather}
Further, if $\ff$ satisfies the conditions above, it is the unique element of $\ff \GG_I$ of minimal $\Sigma$-length and, for every~$\gg$ in~$\GG_\II$, every $\Sigma$-suffix of~$\ff\gg$ in~$\GG_\II$ is a $\Sigma$-suffix of~$\gg$.
\end{prop}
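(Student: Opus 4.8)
The plan is to route everything through the minimal-length element of the coset $\ff\GG_\II$ together with a single additivity statement, from which all three conditions and both concluding claims will follow by short computations. Let $\ff_0$ be an element of minimal $\Sigma$-length in~$\ff\GG_\II$ (it exists since $\Sigma$-lengths are nonnegative integers). The crux is the following \emph{length-additivity at the minimum}: $\LT{\ff_0\gg}\Sigma = \LT{\ff_0}\Sigma + \LT\gg\Sigma$ for every~$\gg$ in~$\GG_\II$. I would prove this by induction on~$\LT\gg\Sigma$, the case $\gg = 1$ being trivial. For the inductive step, using Proposition~\ref{cor1} I write a reduced $\Sigma$-word for~$\gg$ all of whose letters lie in~$\II$, and single out its last letter $\hh \in \II$, so that $\gg = \gg'\hh$ with $\gg' \in \GG_\II$ and $\LT\gg\Sigma = \LT{\gg'}\Sigma + 1$. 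By the induction hypothesis $\LT{\ff_0\gg'}\Sigma = \LT{\ff_0}\Sigma + \LT{\gg'}\Sigma$, so the concatenation of a reduced word for~$\ff_0$ with the chosen $\II$-word for~$\gg'$ is a reduced word~$w$ for~$\ff_0\gg'$. If $\LT{\ff_0\gg'\hh}\Sigma \le \LT{\ff_0\gg'}\Sigma$, the exchange lemma (Lemma~\ref{exchange}) shows that $\ff_0\gg'\hh$ is represented by~$w$ with one letter deleted. If the deleted letter lies in the $\gg'$-block, then cancelling~$\ff_0$ on the left expresses $\gg = \gg'\hh$ by fewer than $\LT\gg\Sigma$ letters, contradicting reducedness; if it lies in the $\ff_0$-block, then $\ff_0\gg'\hh = \ff_0'\gg'$ with $\LT{\ff_0'}\Sigma = \LT{\ff_0}\Sigma - 1$, so $\ff_0' = \ff_0(\gg'\hh\gg'\inv)$ lies in $\ff_0\GG_\II = \ff\GG_\II$ and is strictly shorter than~$\ff_0$, contradicting minimality. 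Hence $\LT{\ff_0\gg'\hh}\Sigma > \LT{\ff_0\gg'}\Sigma$, and the triangular inequality~\eqref{E:Triang} forces $\LT{\ff_0\gg'\hh}\Sigma = \LT{\ff_0\gg'}\Sigma + 1$ since $\LT\hh\Sigma = 1$, completing the induction.

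From this additivity the remaining points are short. Uniqueness of the minimal representative is immediate: if $\ff_1 = \ff_0\gg$ is also minimal, then $\LT{\ff_1}\Sigma = \LT{\ff_0}\Sigma + \LT\gg\Sigma$ forces $\LT\gg\Sigma = 0$, i.e.\ $\gg = 1$ and $\ff_1 = \ff_0$. For the equivalences, \eqref{I3}$\Rightarrow$\eqref{I2} is the additivity statement applied to $\ff = \ff_0$; \eqref{I2}$\Rightarrow$\eqref{I1} holds because a non-trivial $\Sigma$-suffix $s \in \GG_\II$ of~$\ff$ would give $\LT\ff\Sigma = \LT{\ff s\inv}\Sigma + \LT s\Sigma$, whereas \eqref{I2} applied to $s\inv \in \GG_\II$ gives $\LT{\ff s\inv}\Sigma = \LT\ff\Sigma + \LT s\Sigma$, whence $\LT s\Sigma = 0$ and $s = 1$, a contradiction; and \eqref{I1}$\Rightarrow$\eqref{I3} is the contrapositive, since if $\ff \ne \ff_0$ then writing $\ff = \ff_0\gg$ with $\gg \ne 1$ and invoking the additivity statement shows $\LT\ff\Sigma = \LT{\ff\gg\inv}\Sigma + \LT\gg\Sigma$, i.e.\ $\gg$ is a non-trivial $\Sigma$-suffix of~$\ff$ in~$\GG_\II$. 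This argument also identifies the element satisfying the conditions as the unique minimal-length representative of~$\ff\GG_\II$.

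For the final assertion, let $\ff$ satisfy the equivalent conditions, fix $\gg \in \GG_\II$, and let $s \in \GG_\II$ be a $\Sigma$-suffix of~$\ff\gg$, so that $\LT{\ff\gg}\Sigma = \LT{\ff\gg s\inv}\Sigma + \LT s\Sigma$. Since $\gg$ and $\gg s\inv$ lie in~$\GG_\II$, condition~\eqref{I2} gives $\LT{\ff\gg}\Sigma = \LT\ff\Sigma + \LT\gg\Sigma$ and $\LT{\ff\gg s\inv}\Sigma = \LT\ff\Sigma + \LT{\gg s\inv}\Sigma$; substituting and cancelling $\LT\ff\Sigma$ yields $\LT\gg\Sigma = \LT{\gg s\inv}\Sigma + \LT s\Sigma$, which says exactly that $s$ is a $\Sigma$-suffix of~$\gg$. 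Throughout I would use Lemma~\ref{L:Tight} to translate between tightness of pairs and additivity of $\Sigma$-lengths. The main obstacle is the additivity statement: the delicate bookkeeping is in the exchange-lemma case analysis, namely checking that the deletion cannot fall inside the $\gg'$-block (controlled by reducedness of the $\II$-word supplied by Proposition~\ref{cor1}) and that a deletion inside the $\ff_0$-block genuinely produces a shorter element of the same coset; every other step reduces to~\eqref{E:Triang} and elementary cancellation.
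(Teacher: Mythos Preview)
Your proof is correct and follows essentially the same approach as the paper: the core step in both is to show \eqref{I3}$\Rightarrow$\eqref{I2} by applying the exchange lemma (Lemma~\ref{exchange}) to a reduced word for $\ff_0\gg'$ and the final letter~$\hh$, then arguing that the deleted letter cannot lie in the $\gg'$-block (else $\gg$ would be too short) and cannot lie in the $\ff_0$-block (else the coset would contain a shorter element). The only differences are cosmetic---you phrase the argument as an induction on~$\LT\gg\Sigma$ where the paper uses a minimal counterexample, and you route \eqref{I1}$\Leftrightarrow$\eqref{I3} through~\eqref{I2} (implicitly using $\LT{s\inv}\Sigma = \LT s\Sigma$, valid since $\Sigma$ consists of involutions) where the paper argues the equivalence directly.
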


The analogous result (reversing left and right) applies to $\GG_I\ff$.

\begin{proof}
Equation \eqref{I2} implies
that $\ff$ has minimal $\Sigma$-length in its coset $\ff \GG_I$.
Conversely, assume that $\ff$ satisfies \eqref{I3} and let $\gg$ be an
element of minimal $\Sigma$-length in $\GG_I$ such that \eqref{I2}
does not hold. Then, if $w$ is a minimal word representing~$\ff$
and $u\aa$ is a minimal word representing $\gg$ with
$\aa$ in~$I$ and $u$ an $I$-word,
by minimality of $\gg$ the word $wu$ is minimal. Since
the word $wua$ is not minimal, Lemma \ref{exchange}
implies that there is a subword $w'$ of $wu$ representing also $\ff\gg$.
Since $ua$ is a minimal word, the word $w'$
must have the form $w''u$ with $w''$ a subword of $w$.
This contradicts the minimality of $\ff$
in $\ff \GG_I$. We have shown the equivalence of \eqref{I2}
and \eqref{I3}.

Conditions \eqref{I1} and \eqref{I3}
are  equivalent. Indeed,  if $\ff$  has a  non-trivial $\Sigma$-suffix 
$\hh \in \GG_I$, then we have
$\ff=\gg\hh$ with $\LT\gg\Sigma=\LT\ff\Sigma-\LT\hh \Sigma$ so that $\ff$
is not an element of minimal $\Sigma$-length in $\ff\GG_I$.
Hence \eqref{I3} implies \eqref{I1}.  Conversely,  if  $\ff'$ 
is an element of minimal $\Sigma$-length in $\ff\GG_I$, 
we  have $\ff=\ff'\hh$  with $\hh\in\GG_I$ and
$\LT\ff\Sigma=\LT{\ff'}+\LT \hh \Sigma$ by \eqref{I2} applied to $\ff'$ (we
use that \eqref{I3} implies \eqref{I2}). 
Then $\hh$  is a  nontrivial $\Sigma$-suffix  of $\ff$ in $\GG_I$,
contradicting \eqref{I1}.

Now \eqref{I2} shows that all elements of $\ff\GG_I$ have a
$\Sigma$-length strictly larger than that of $\ff$, whence the unicity.
Moreover, if an element~$\hh$ of~$\GG_I$ is a $\Sigma$-suffix of $\ff\gg$ with $\gg\in\GG_I$, then
$\LT {\ff\gg}\Sigma=\LT{\ff\gg\hh\inv}\Sigma+\LT \hh\Sigma$ and by \eqref{I2}
we have $\LT{\ff\gg}\Sigma=\LT\ff\Sigma+\LT{\gg}\Sigma$ and
$\LT{\ff\gg\hh\inv}\Sigma= \LT\ff\Sigma+\LT{\gg\hh\inv}\Sigma$ which gives 
$\LT{\gg}\Sigma= \LT{\gg\hh\inv}\Sigma+\LT\hh\Sigma$, so that $\hh$ is a
$\Sigma$-suffix of $\gg$.
\end{proof}

\begin{prop}
For every Coxeter system~$(\GG, \Sigma)$, the germ~$\Der\GG\Sigma$ is a 
Garside germ, and the corresponding monoid is the braid monoid associated 
with~$(\GG, \Sigma)$.
\end{prop}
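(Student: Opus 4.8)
The plan is to apply Proposition~\ref{P:DerGarIBis} in the case $\HHH = \GG$. Since $\HHH$ is then the whole group, it is trivially closed under $\Sigma$-prefix and $\Sigma$-suffix, so the only work is to verify the two local conditions \eqref{E:DerGarIBis1} and \eqref{E:DerGarIBis2} for elements $s, t$ of~$\Sigma$. Throughout I would use Lemma~\ref{L:Assoc}: for $\ff, \gg$ in~$\GG$ the relation $\ff \Pref\Sigma \gg$ is exactly local left-divisibility in~$\Der\GG\Sigma$. Concretely, $s \Pref\Sigma \gg$ means that $s$ is a left descent of~$\gg$ (that is $\LT{s\gg}\Sigma < \LT\gg\Sigma$), while $\ff \OP s$ is defined exactly when $s$ is a right ascent of~$\ff$, i.e.\ $\LT{\ff s}\Sigma = \LT\ff\Sigma + 1$.

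For \eqref{E:DerGarIBis1}, I would take $s, t$ in~$\Sigma$ with a common $\Pref\Sigma$-upper bound~$\gg$, so $s$ and $t$ are both left descents of~$\gg$. Setting $I = \{s, t\}$, write $\gg = w \OP \dd$ with $w \in \GG_I$ and $\dd$ the minimal-length representative of the left coset $\GG_I \gg$. The left-right mirror of Proposition~\ref{I} gives $\LT{u\dd}\Sigma = \LT u\Sigma + \LT\dd\Sigma$ for every $u \in \GG_I$; applying this to $w$, $sw$, and $tw$ shows that $s$ and $t$ are both left descents of~$w$ inside the dihedral group~$\GG_I$. A direct inspection of dihedral groups shows that an element having both $s$ and $t$ as left descents can exist only when $\GG_I$ is finite, and is then forced to be the longest element~$w_I$; hence $w = w_I$ and $w_I \Pref\Sigma \gg$. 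As $w_I$ is itself a common $\Pref\Sigma$-upper bound of $s$ and~$t$, it is their least common upper bound, and it lies in $\GG = \HHH$.

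For \eqref{E:DerGarIBis2}, I would take $\ff$ in~$\GG$ with $\ff \OP s$ and $\ff \OP t$ defined, i.e.\ $s$ and $t$ both right ascents of~$\ff$, and let $\gg'' = w_I$ be the least common upper bound constructed above. The claim is that $\ff$ has no nontrivial $\Sigma$-suffix in~$\GG_I$: if $\hh$ in $\GG_I \setminus \{1\}$ were such a suffix, then by Proposition~\ref{cor1} a minimal word for~$\hh$ is an $I$-word, so its last letter $a \in \{s, t\}$ would be a right descent of~$\ff$, contradicting that $s$ and $t$ are right ascents. Proposition~\ref{I} then lets me pass from \eqref{I1} to \eqref{I2}, giving $\LT{\ff\gg}\Sigma = \LT\ff\Sigma + \LT\gg\Sigma$ for all $\gg \in \GG_I$; taking $\gg = w_I$ shows $(\ff, w_I)$ is $\Sigma$-tight, that is, $\ff \OP \gg''$ is defined. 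With both conditions verified, Proposition~\ref{P:DerGarIBis} yields that $\Der\GG\Sigma$ is a Garside germ. Finally, $\Cat(\Der\GG\Sigma)$ is presented by the relations $\seqq\ff\gg = \seq{\ff\OP\gg}$ recording the $\Sigma$-tight products; restricting attention to the generators in~$\Sigma$ these collapse to the braid relations $\underbrace{sts\cdots}_{m_{st}} = \underbrace{tst\cdots}_{m_{st}}$, identifying the associated monoid with the Artin--Tits (braid) monoid of~$(\GG, \Sigma)$, as in~\cite{Mic}.

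The main obstacle is condition \eqref{E:DerGarIBis1}: establishing that two generators with a common upper bound in the weak order admit a least upper bound. This is where the coset machinery of Propositions~\ref{cor1} and~\ref{I}, together with the explicit analysis of the dihedral subgroup~$\GG_I$, is indispensable. By contrast, \eqref{E:DerGarIBis2} is comparatively routine once the suffix characterization of Proposition~\ref{I} is in hand, and the identification of the monoid with the braid monoid is a formal consequence of the germ presentation.
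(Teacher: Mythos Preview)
Your proposal is correct and follows essentially the same route as the paper: both apply Proposition~\ref{P:DerGarIBis} with $\HHH=\GG$, establish~\eqref{E:DerGarIBis1} by decomposing a common upper bound along the coset $\GG_I\gg$ (via the mirror of Proposition~\ref{I}) and reading off that the $\GG_I$-part must be the longest element of the dihedral subgroup, and establish~\eqref{E:DerGarIBis2} by observing that $\ff$ has no nontrivial $\Sigma$-suffix in~$\GG_I$ so that \eqref{I1}$\Rightarrow$\eqref{I2} yields tightness of $(\ff,w_I)$. Your write-up is in fact slightly more explicit than the paper's on why~\eqref{I1} holds for~$\ff$, but the argument is the same.
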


\begin{proof}
We prove that $(\GG, \Sigma)$ is eligible for Proposition~\ref{P:DerGarIBis}. 
We  first  look  at  \eqref{E:DerGarIBis1};  let  $a,b\in\Sigma$  which  have  
a common upper bound for~$\Pref\Sigma$. We let  $I=\{a,b\}$ and let 
 $\ff$ be the upper
bound.  Write  $\ff=\gg\hh$  where  $\hh$  is of minimal $\Sigma$-length in
$\GG_I\ff$ and $\gg\in\GG_I$ with $\LT\gg\Sigma+\LT\hh\Sigma=\LT\ff\Sigma$.
Then, since $a$ and $b$  are $\Sigma$-prefixes of $f$, by Proposition~\ref{coset representative} they are $\Sigma$-prefixes of $\gg$ thus $\gg$ is a common upper bound of $a$ and $b$ for~$\Pref\Sigma$ in $\GG_I$.
Since every element of $\GG_I$ is equal to a product $aba\ldots $ or $bab\ldots$ with a number of factors at most the order of $ab$ (if finite), $ab$ has finite order and we have $\gg=\Delta_{a,b}$. Thus we found that $\Delta_{a,b}$ is a least common upper bound of $a$ and $b$ for~$\Pref\Sigma$.

Next,  we show \eqref{E:DerGarIBis2}, thus we assume this time that $a,b$ in~$\Sigma$ and$\ff$ in~$\GG$ satisfy $\LT\ff\Sigma+1=\LT{\ff  a}\Sigma=\LT{\ff b}\Sigma$, and we  assume  that  $a,b$  have a common upper bound for~$\Pref\Sigma$, which we have seen is~$\Delta_{a,b}$. We have to show $\LT\ff\Sigma+\LT{\Delta_{a,b}}\Sigma =  \LT{\ff  \Delta_{a,b}}\Sigma$; but this is exactly the fact that \eqref{I1} implies \eqref{I2}.

That the corresponding monoid is the braid monoid of~$(\GG, \Sigma)$ results 
from the presentation of the monoid associated with~$\Der\GG\Sigma$.
\end{proof}

\subsection{The dual monoid}

Another   important  example  (which  was  part  of  motivating  the  above
developments)  is  the  dual  monoid  for  spherical Artin groups, or, more
generally,  for  the  braid  groups  associated  to  well-generated complex
reflection groups.

This  time  we  take  for  $(\GG,  \Sigma)$ a well-generated finite complex
reflection  group together with the set of all its reflections. We choose a
Coxeter  element $c$ in~$\GG$, that is, an $h$-regular element where $h$ is
the   highest   reflection   degree   (which   is  unique  since  $\GG$  is
well-generated)  and  we  take  for  $\HH$  the set~$\Div(\cc)$ of all left
$\Sigma$-prefixes  of~$\cc$. Then  the monoid  $\Der\HH\Sigma$ is  the dual
braid  monoid for~$\GG$ in the sense  of David Bessis \cite[8.1]{BesK};
proposition   \cite  [8.8]{BesK}   constructs  this   monoid  according  to
Proposition~\ref{P:DerGarII}.   Bessis  has  shown   \cite[8.2]{BesK}  that  
the  group
presented  by this germ is  the braid group of  $\GG$. The lattice property
for  the  case  of  the  dual  monoid  is  a  deep  result  of which only a
case-by-case  proof is known in general; see \cite[8.14]{BesK}. Previous to
this work, Bessis had given a construction for the real case in \cite{Bes},
using  case-by-case  arguments  for  the  lattice  property. There exists a
case-free  proof  for  finite  Coxeter  groups  due  to Brady and Watt, see
\cite{BW}.

The  same strategy can be applied to Artin groups of affine type. This time
we  take for $(\GG, \Sigma)$ a Coxeter group of affine type with the set of
all its reflections. We choose again a Coxeter element $c$, defined here as
the  product of all  simple reflections in  some chosen order,  and we take
again  for $\HH$ the set~$\Div(\cc)$. It has  been proved that, if $\GG$ is
of  type $\tilde G_2$ or $\tilde C_n$ or  $\tilde A_n$ and in the last case
the order of the simple reflections is such that two consecutive elements do
not commute, then this germ satisfies the assumptions of
Proposition~\ref{P:DerGarII}  and that the group  presented by this germ is
the  corresponding Artin  group. Moreover  these are  the only  cases where
$\Div(\cc)$ is a Garside germ. This last fact and the $\tilde G_2$-case are
unpublished  results of  Crisp and  McCammond; for  the $\tilde A$ case see
\cite{Di1} and for the $\tilde C$ case see~\cite{Di2}.

\bibliographystyle{plain}

\begin{thebibliography}{}

\end{thebibliography}


\begin{thebibliography}{10}

\bibitem{Adj}
{\relax S.I}.~Adyan.
\newblock {Fragments of the word {D}elta in a braid group}.
\newblock {\em Mat. Zametki Acad. Sci. SSSR}, 36(1):25--34, 1984.
\newblock (Russian); English translation in \emph{Math. Notes of the Acad. Sci.
  USSR} {\bf 36} (1984), no.~1, p. 505--510.

\bibitem{BesK}
D.~Bessis.
\newblock Finite complex reflection arrangements are $K(\pi,1)$.
\newblock arXiv: math.GR/0610777.

\bibitem{BesD}
D.~Bessis.
\newblock {Garside categories, periodic loops and cyclic sets}.
\newblock math.GR/0610778.

\bibitem{Bes}
D.~Bessis.
\newblock The dual braid monoid.
\newblock {\em Ann. Sci. {\'E}cole Norm. Sup.}, 36:647--683, 2003.

\bibitem{Bbk}
N.~Bourbaki.
\newblock Groupes et alg\`ebres de Lie, chapitres 4,5 et 6.
\newblock {\em Hermann}, 1968.

\bibitem{BW}
T.~Brady and C.~Watt
\newblock Non-crossing partition lattices in finite real reflection groups.
\newblock{\em Trans. AMS \bf 360} (2008) 1983--2005.

\bibitem{BrS}
E.~Brieskorn and K.~Saito.
\newblock {Artin-Gruppen und Coxeter-Gruppen}.
\newblock {\em Invent. Math.}, 17:245--271, 1972.

\bibitem{Cha}
R.~Charney.
\newblock {Artin groups of finite type are biautomatic}.
\newblock {\em Math. Ann.}, 292(4):671--683, 1992.

\bibitem{Dgk}
P.~Dehornoy.
\newblock {Groupes de Garside}.
\newblock {\em Ann. Sci. {\'E}cole Norm. Sup. (4)}, 35:267--306, 2002.

\bibitem{Dgo}
P.~Dehornoy.
\newblock {Thin groups of fractions}.
\newblock {\em Contemporary Math.}, 296:96--129, 2002.

\bibitem{Dht}
P.~Dehornoy.
\newblock {Left-Garside categories, self-distributivity, and braids}.
\newblock {\em Ann. Math. Blaise Pascal}, 16:189--244, 2009.

\bibitem{Dfx}
P.~Dehornoy and L.~Paris.
\newblock {Gaussian groups and Garside groups, two generalisations of Artin
  groups}.
\newblock {\em Proc. London Math. Soc.}, 79(3):569--604, 1999.

\bibitem{Dig}
P.~Dehornoy and V.Gebhardt.
\newblock {Algorithms for Garside calculus}.
\newblock arXiv: 1301.3277.

\bibitem{Garside}
P.~Dehornoy, {with F. Digne}, E.~Godelle, and J.~Michel.
\newblock Garside calculus.
\newblock in progress; http://www.math.unicaen.fr/$\sim$garside/Garside.pdf.

\bibitem{Dlg}
P.~Deligne.
\newblock {Les immeubles des groupes de tresses g\'en\'eralis\'es}.
\newblock {\em Invent. Math.}, 17:273--302, 1972.

\bibitem{Di1}
F.~Digne.
\newblock {Pr\'esentations duales pour les groupes de tresses de type affine 
$\tilde A$}.
\newblock {\em Comm. Math. Helv.}, 86:23--47, 2006.

\bibitem{Di2}
F.~Digne.
\newblock {A Garside presentation for Artin-Tits groups of type $\tilde C_n$}.
\newblock {\em Ann. Inst. Fourier}, 62(2):641--666, 2012.

\bibitem{DiM}
F.~Digne and J.~Michel.
\newblock {Garside and locally Garside categories}.
\newblock arXiv: math.GR/0612652.

\bibitem{ElM}
{\relax E.A}.~El-Rifai and {\relax H.R}.~Morton.
\newblock Algorithms for positive braids.
\newblock {\em Quart.~J. Math. Oxford Ser.}, 45(2):479--497, 1994.

\bibitem{Eps}
D.~Epstein, with {\relax J.W}.~Cannon, {\relax D.F}.~Holt, {\relax 
S.V.F}.~Levy,
  {\relax M.S}.~Paterson, and {\relax W.P}.~Thurston.
\newblock {\em {Word Processing in Groups}}.
\newblock Jones and Bartlett Publ., 1992.

\bibitem{Gar}
{\relax F.A}.~Garside.
\newblock {The braid group and other groups}.
\newblock {\em Quart.~J. Math. Oxford Ser.}, 20:235--254, 1969.

\bibitem{Mac}
S.~Mac Lane.
\newblock {Categories for the Working Mathematician}.
\newblock Graduate Texts in Mathematics (Springer), 1998.

\bibitem{McC}
J.~McCammond.
\newblock {An introduction to Garside structures}.
\newblock circulated notes, 2005.

\bibitem{Mic}
J.~Michel.
\newblock {A note on words in braid monoids}.
\newblock {\em J. Algebra}, 215:366--377, 1999.

\bibitem{Thu}
W.~Thurston.
\newblock Finite state algorithms for the braid group.
\newblock circulated notes, 1988.

\end{thebibliography}

\end{document}